\newcommand{\arxiv}[1]{\href{http://arxiv.org/abs/#1}{\texttt{arXiv:#1}}}
\newcommand{\nc}{\newcommand}
\numberwithin{equation}{section}
\newenvironment{red}{\relax\color{red}}{\relax}
\newenvironment{blue}{\relax\color{blue}}{\hspace*{.5ex}\relax}
\newenvironment{purple}{\relax\color{blue}}{\hspace*{.5ex}\relax}
\newenvironment{magenta}{\relax\color{magenta}}{\hspace*{.5ex}\relax}
\newcommand{\bep}{\begin{purple}}
\newcommand{\eep}{\end{purple}}
\newcommand{\ber}{\begin{red}}
\newcommand{\er}{\end{red}}
\newcommand{\beb}{\begin{blue}}
\newcommand{\eb}{\end{blue}}
\newcommand{\bem}{\begin{magenta}}
\newcommand{\eem}{\end{magenta}}
\newcommand{\berm}[1]{\begin{red}{}\marginnote{\fbox{\scshape\lowercase{M}}}%
#1}  
\newcommand{\bero}[1]{\begin{red}{}\marginnote{\fbox{\scshape\lowercase{O}}}%
#1}
\nc{\hs}{\hspace*}
\nc{\ms}{\mspace}
\nc{\qR}[1]{\ttq_{\mspace{-2mu}\raisebox{-.8ex}{${\scriptstyle{#1}}$}}}
\theoremstyle{plain}
\newtheorem{lemma}{Lemma}[section]
\newtheorem{proposition}[lemma]{Proposition}
\newtheorem{theorem}[lemma]{Theorem}
\newtheorem{corollary}[lemma]{Corollary}
\newtheorem{conjecture}{Conjecture}
\newtheorem{Conj}{Conjecture}
\newtheorem{thmx}{Theorem}
\newtheorem{corx}{Corollary}
\theoremstyle{definition}
\newtheorem{remark}[lemma]{Remark}
\newtheorem{example}[lemma]{Example}
\newtheorem{definition}[lemma]{Definition}
\newcommand{\leN}{ \preceq_{\mspace{-2mu}\raisebox{-.5ex}{\scalebox{.6}{${\rm N}$}}} }
\newcommand{\lN}{ \prec_{\mspace{-2mu}\raisebox{-.5ex}{\scalebox{.6}{${\rm N}$}}} }
\renewcommand{\le}{\leqslant}
\renewcommand{\ge}{\geqslant}
\renewcommand{\preceq}{\preccurlyeq}
\newcommand{\st}{\mathop{\mbox{\normalsize$*$}}\limits}
\newcommand{\scup}{\mathop{\mbox{\normalsize$\cup$}}\limits}
\newcommand{\hconv}{\mathbin{\scalebox{.9}{$\nabla$}}}
\newcommand{\sconv}{\mathbin{\scalebox{.9}{$\Delta$}}}
\newcommand{\seteq}{\mathbin{:=}}
\newcommand{\conv}{\mathop{\mathbin{\mbox{\large $\circ$}}}}
\newcommand{\soplus}{\mathop{\mbox{\normalsize$\bigoplus$}}\limits}
\newcommand{\ev}{{\operatorname{ev}}}
\newcommand{\tens}{\mathop\otimes}
\newcommand{\lxi}{{  {}^\xi  } \hspace{-.2ex}}
\newcommand{\Mod}{\text{-}\mathrm{Mod}}
\newcommand{\gmod}{\text{-}\mathrm{gmod}}
\newcommand{\ex}{\mathrm{ex}}
\newcommand{\fr}{\mathrm{fr}}
\newcommand{\seed}{\scrS}
\newcommand{\g}{\mathfrak{g}}
\newcommand{\n}{\mathfrak{n}}
\newcommand{\C}{\mathbb{C}}
\newcommand{\Q}{\mathbb{Q}}
\newcommand{\Z}{\mathbb{Z}\ms{1mu}}
\newcommand{\N}{\mathbb{N}\ms{1mu}}
\newcommand{\al}{{\ms{1mu}\alpha}}
\newcommand{\ep}{\epsilon}
\newcommand{\la}{\lambda}
\newcommand{\be}{{\ms{1mu}\beta}}
\newcommand{\ga}{\gamma}
\newcommand{\La}{\Lambda}
\newcommand{\upal}{\upalpha}
\newcommand{\wt}{{\rm wt}}
\newcommand{\up}{{\rm up}}
\newcommand{\range}{{\rm range}}
\newcommand{\het}{{\rm ht}}
\newcommand{\mul}{{\rm mul}}
\newcommand{\de}{\mathfrak{d}}
\newcommand{\Dynkin}{\triangle}
\newcommand{\pair}[1]{  \langle \hspace{-.6ex} \langle #1 \rangle \hspace{-.6ex} \rangle  }
\newcommand{\cc}{ \textbf{\textit{c}}}
\newcommand{\ek}{ \textbf{\textit{e}}}
\newcommand{\ii}{ \textbf{\textit{i}}}
\newcommand{\TT}{ \textbf{\textit{T}}}
\newcommand{\sk}{ \textbf{\textit{s}}}
\newcommand{\pk}{ \textbf{\textit{p}}}
\newcommand*\circled[1]{ \fontsize{6}{6}\selectfont \tikz[baseline=(char.base)]{
  \node[shape=circle,draw,inner sep=0.4pt] (char) {#1};} \fontsize{12}{12}\selectfont }
\newcommand*\bcircled[1]{\fontsize{6}{6}\selectfont \tikz[baseline=(char.base)]{
    \node[shape=circle, fill= black, draw=black, text=white,  inner sep=0.4pt] (char) {#1};} \fontsize{12}{12}\selectfont}
\newcommand*\rcircled[1]{\fontsize{6}{6}\selectfont \tikz[baseline=(char.base)]{
    \node[shape=circle, fill= blue, draw=black, text=white, inner sep=0.4pt] (char) {#1};} \fontsize{12}{12}\selectfont}
\newcommand{\Hom}{\operatorname{Hom}}
\newcommand{\res}{\mathrm{res}}
\newcommand{\HOM}{\mathrm{H{\scriptstyle OM}}}
\newcommand{\hA}{\widehat{\calA}}
\newcommand{\tka}{\widetilde{\kappa}}
\newcommand{\tze}{\widetilde{\zeta}}
\newcommand{\tde}{\widetilde{\de}}
\newcommand{\tPsi}{\widetilde{\Psi}}
\newcommand{\tUptheta}{\widetilde{\Uptheta}}
\newcommand{\tii}{\widetilde{\ii}}
\newcommand{\tx}{\widetilde{x}}
\newcommand{\tB}{\widetilde{B}}
\newcommand{\tbfB}{\widetilde{\bfB}}
\newcommand{\tusfB}{\ms{1mu}\widetilde{\usfB}}
\newcommand{\tusfb}{\ms{1mu}\widetilde{\usfb}}
\newcommand{\tX}{\widetilde{X}}
\newcommand{\tF}{\widetilde{F}}
\newcommand{\tY}{\widetilde{Y}}
\newcommand{\tS}{\widetilde{S}}
\newcommand{\tG}{\widetilde{G}}
\newcommand{\tm}{\widetilde{m}}
\newcommand{\tLa}{\widetilde{\Lambda}}
\newcommand{\tD}{\widetilde{D}}
\newcommand{\hsfM}{\widehat{\sfM}}
\newcommand{\osfB}{\overline{\mathsf{B}}}
\newcommand{\usfB}{\underline{\mathsf{B}}}
\newcommand{\usfC}{\underline{\mathsf{C}}}
\newcommand{\uw}{{\underline{w}}}
\newcommand{\uup}{{\underline{\pk}}}
\newcommand{\ucalN}{\ms{1mu}\underline{\calN}}
\newcommand{\usfb}{{\underline{\sfb}}}
\newcommand{\usfc}{\underline{\sfc}}
\newcommand{\utm}{\underline{\tm}}
\newcommand{\us}{\underline{\sk}}
\newcommand{\um}{\underline{m}}
\newcommand{\ue}{\underline{\ek}}
\newcommand{\frakD}{\mathfrak{D}}
\newcommand{\frakK}{\mathfrak{K}}
\newcommand{\sfC}{\mathsf{C}}
\newcommand{\sfS}{\mathsf{S}}
\newcommand{\sfc}{\mathsf{c}}
\newcommand{\sfE}{\mathsf{E}}
\newcommand{\sfF}{\mathsf{F}}
\newcommand{\sfL}{\mathsf{L}}
\newcommand{\sfA}{\mathsf{A}}
\newcommand{\sfD}{\mathsf{D}}
\newcommand{\sfP}{\mathsf{P}}
\newcommand{\sfQ}{\mathsf{Q}}
\newcommand{\sfW}{\mathsf{W}}
\newcommand{\sfh}{\mathsf{h}}
\newcommand{\sfM}{\mathsf{M}}
\newcommand{\sfx}{\mathsf{x}}
\newcommand{\sfy}{\mathsf{y}}
\newcommand{\sfb}{\mathsf{b}}
\newcommand{\sfg}{\mathsf{g}}
\newcommand{\sfK}{\mathsf{K}}
\newcommand{\bbA}{\mathbb{A}}
\newcommand{\bbP}{\mathbb{P}}
\newcommand{\bbK}{\mathbb{K}}
\newcommand{\bbF}{\mathbb{F}}
\newcommand{\bfk}{\mathbf{k}}
\newcommand{\bfB}{\mathbf{B}}
\newcommand{\bfL}{\mathbf{L}}
\newcommand{\bfg}{\mathbf{g}}
\newcommand{\bfc}{\mathbf{c}}
\newcommand{\bfa}{\mathbf{a}}
\newcommand{\bfn}{\mathbf{n}}
\newcommand{\calQ}{\mathcal{Q}}
\newcommand{\calU}{\mathcal{U}}
\newcommand{\calA}{\mathcal{A}}
\newcommand{\calS}{\mathcal{S}}
\newcommand{\calK}{\mathcal{K}}
\newcommand{\calR}{\mathcal{R}}
\newcommand{\calX}{\mathcal{X}}
\newcommand{\calY}{\mathcal{Y}}
\newcommand{\calM}{\mathcal{M}}
\newcommand{\calN}{\mathcal{N}}
\newcommand{\calT}{\mathcal{T}}
\newcommand{\calW}{\mathcal{W}}
\newcommand{\calL}{\mathcal{L}}
\newcommand{\scrC}{\mathscr{C}}
\newcommand{\scrA}{\mathscr{A}}
\newcommand{\scrS}{\mathscr{S}}
\newcommand{\scrK}{\mathscr{K}}
\newcommand{\ttb}{\mathtt{b}}
\newcommand{\ttt}{\mathtt{t}}
\newcommand{\ttq}{{\ms{1mu}\mathtt{q}\ms{1mu}}}
\newcommand{\To}[1][{\hspace{2ex}}]{\xrightarrow{\,#1\,}}
\newlength{\mylength}
\newcommand*{\para}{%
  \rlap{\rotatebox{-30}{\rule[.05ex]{.4pt}{.77em}}}%
  \kern.04em%
  \rlap{\kern.36em\raisebox{0.649519052835em}{\rule{.6em}{.4pt}}}%
  \rule{.6em}{.4pt}\kern-.04em%
  \rotatebox{-30}{\rule[.05ex]{.4pt}{.77em}}}
\newcommand{\Rr}{\mathbf{r}}
\newcommand{\hDynkin}{{\widehat{\Dynkin}}}
\newcommand{\rmQ}{\mathrm{Q}}
\newcommand{\cm}{\sfC}
\newcommand{\wl}{\sfP}
\newcommand{\rl}{\sfQ}
\newcommand{\weyl}{\sfW}
\newcommand{\lan}{\langle}
\newcommand{\ran}{\rangle}
\newcommand{\isoto}[1][]{\mathop{\xrightarrow%
[{\raisebox{.3ex}[0ex][.3ex]{$\scriptstyle{#1}$}}]%
{{\raisebox{-.6ex}[0ex][-.6ex]{$\mspace{2mu}\sim\mspace{2mu}$}}}}}
\newcommand{\ee}{\end{enumerate}}
\newcommand{\bitem}{\begin{itemize}}
\newcommand{\eitem}{\end{itemize}}
\newcommand{\ben}{\begin{enumerate}[{\rm (1)}]}
\newcommand{\bnum}{\begin{enumerate}[{\rm (i)}]}
\newcommand{\bnump}{\begin{enumerate}[{\rm (i)$'$}]}
\newcommand{\bna}{\begin{enumerate}[{\rm (a)}]}
\newcommand{\bnA}{\begin{enumerate}[{\rm (A)}]}
\newcommand{\bc}{\begin{cases}}
\newcommand{\ec}{\end{cases}}
\newenvironment{myequation}
{\relax\setlength{\arraycolsep}{1pt}\begin{eqnarray}}
{\end{eqnarray}}
\newenvironment{myequationn}
{\relax\setlength{\arraycolsep}{1pt}\begin{eqnarray*}}
{\end{eqnarray*}}
\nc{\eq}{\begin{myequation}}
\nc{\eneq}{\end{myequation}}
\nc{\eqn}{\begin{myequationn}}
\nc{\eneqn}{\end{myequationn}}
\nc{\cl}{\colon}
\nc{\ake}[1][1ex]{\rule[-#1]{0ex}{1ex}}
\nc{\akew}[1][1ex]{\rule[-1ex]{#1}{0ex}}
\nc{\akeu}[1][1ex]{\rule[#1]{0ex}{1ex}}
\nc{\id}{\mathrm{id}}
\nc{\bl}{\bigl(}
\nc{\br}{\bigr)}
\nc{\qt}[1]{\quad\text{#1}}
\nc{\qtq}[1][{and}]{\quad\text{{#1}}\quad}
\nc{\eqs}[1]{\underset{\raisebox{.4ex}[.7ex][0ex]{$\scriptstyle{#1}$}}{=}}
\nc{\snoi}{\smallskip \noindent}
\nc{\mnoi}{\medskip \noindent}
\nc{\Mat}{\mathrm{Mat}}
\nc{\ol}{\overline}
\nc{\ul}{\underline}
\nc{\ang}[1]{\boldsymbol{\langle}{#1}\boldsymbol{\rangle}}
\nc{\rang}[1]{\boldsymbol{\langle}{#1}\boldsymbol{\rangle} \hspace{-.6ex} \boldsymbol{\rangle}}
\nc{\ba}{\begin{array}}
\nc{\ea}{\end{array}}
\nc{\noi}{\noindent}
\nc{\evq}{ \ev_{q=1}}
\nc{\yi}{x_{i}}
\nc{\yj}{x_{j}}
\nc{\yk}{x_{k}}
\nc{\yjm}{x_{j,m}}
\nc{\yjmp}{x_{j,m+1}}
\nc{\yjmm}{x_{j,m-1}}
\nc{\yim}{x_{i,m}}
\nc{\yip}{x_{i,p}}
\nc{\yipp}{x_{i,p+1}}
\nc{\yipm}{x_{i,p-1}}
\nc{\yjp}{x_{j,p}}
\nc{\yjpp}{x_{j,p+1}}
\nc{\yjpm}{x_{j,p-1}}
\nc{\yimp}{x_{i,m+1}}
\nc{\yimm}{x_{i,m-1}}
\nc{\ykm}{x_{k,m}}
\nc{\ynm}{x_{n,m}}
\nc{\ynmp}{x_{n,m+1}}
\nc{\ynmpp}{x_{n,m+2}}
\nc{\ynnm}{x_{n-1,m}}
\nc{\ynnmp}{x_{n-1,m+1}}
\nc{\ynnmpp}{x_{n-1,m+2}}
\nc{\ykmp}{x_{k,m+1}}
\nc{\ykmm}{x_{k,m-1}}
\nc{\ykp}{x_{k,p}}
\nc{\ykpp}{x_{k,p+1}}
\nc{\ykpm}{x_{k,p-1}}
\nc{\seq}[1]{ \boldsymbol{(} {#1} \boldsymbol{)}   }
\newcommand{\Sejin}[1]{\todo[size=\tiny,inline,color=green!30]{#1    \\ \hfill --- Se-jin}}
\title[Braid group action on quantum virtual Grothendieck rings]{Braid group action on quantum virtual Grothendieck ring through constructing presentations}
\author[I.-S. Jang]{Il-Seung Jang$^\ddagger$}
\address[I.-S. Jang]{$^\ddagger$Department of Mathematics, Incheon National University, Incheon 22012, Korea}
\email{ilseungjang@inu.ac.kr}
\urladdr{https://sites.google.com/view/isjang/home/}
\thanks{$^\ddagger$ I.-S. Jang was supported by Incheon National University Research Grant in 2023.}
\author[K.-H. Lee]{Kyu-Hwan Lee$^{\star}$}
\thanks{$^{\star}$ K.-H. Lee was partially supported by a grant from the Simons Foundation (\#712100).}
\address[K.-H. Lee]{$^{\star}$Department of Mathematics, University of Connecticut, Storrs, CT 06269, U.S.A.}
\email{khlee@math.uconn.edu}
\urladdr{https://www.math.uconn.edu/~khlee/}
\author[S.-j.~Oh]{Se-jin Oh$^{\dagger}$}
\address[S.-j.~Oh]{$^{\dagger}$Department of Mathematics, Sungkyunkwan University, Suwon, South Korea}
\email{sejin092@gmail.com}
\urladdr{https://sites.google.com/site/mathsejinoh/}
\thanks{$^{\dagger}$ S.-j.\ Oh was supported by the Ministry of Education of the Republic of Korea and the National Research Foundation of Korea (NRF-2022R1A2C1004045).}
\date{\today}
\begin{document}

\begin{abstract}
As a continuation of \cite{JLO1}, we investigate the quantum virtual Grothendieck ring $\frakK_q(\g)$
associated with a finite dimensional simple Lie algebra $\g$, especially of non-simply-laced type. We establish an isomorphism $\Uppsi_Q$ between the heart subring  $\frakK_{q,Q}(\g)$
of $\frakK_q(\g)$ associated with a Dynkin quiver $Q$ of type $\g$ and the unipotent quantum coordinate algebra $\calA_q(\n)$ of type $\g$. This isomorphism and the categorification theory via quiver Hecke algebras  enable us to obtain a presentation of $\frakK_q(\g)$, which reveals that $\frakK_q(\g)$ can be understood as a boson-extension of $\calA_q(\n)$. Then we show that the automorphisms, arising from  the reflections on Dynkin quivers and the isomorphisms $\Uppsi_Q$,  preserve the canonical basis $\sfL_q$ of $\frakK_q(\g)$. Finally, we prove that such automorphisms produce a braid group $B_\g$ action on $\frakK_q(\g)$.
\end{abstract}

\setcounter{tocdepth}{1}

\maketitle
\tableofcontents

\section{Introduction}

The braid group action constructed by Lusztig \cite{L90} (see also \cite{Saito94, Kimura12}) provides essential symmetries for important constructions in the theory of quantum groups.
As we continue our study of the quantum virtual Grothendieck ring $\frakK_q(\g)$ which started in \cite{KO22, JLO1}, it becomes much desirable to establish a braid group action on $\frakK_q(\g)$ when we consider the relationships of $\frakK_q(\g)$ with the unipotent quantum coordinate algebra and representation theory of quiver Hecke algebras (see~\eqref{eq:important diagram} for more detail).
In this paper, we construct such a braid group action by means of presentations of $\frakK_q(\g)$ coming from  the categorification theory via quiver Hecke algebras and various isomorphisms  between the heart subrings of $\frakK_q(\g)$ and the unipotent quantum coordinate algebra $\calA_q(\n)$. In what follows, we explain the background and motivation of $\frakK_q(\g)$, summarize relevant results from \cite{JLO1} and present the main results of this paper.
\smallskip

Let $q$ be an indeterminate. For a complex finite-dimensional simple Lie algebra $\g$, the category $\scrC_\g$ of finite-dimensional modules over the quantum loop algebra $\calU_q(\calL \g)$ has been intensively
studied due to its rich structure. For instance, it is non-braided as a monoidal category, non-semisimple as an abelian category, and has rigidity.
In \cite{CP95,CP95A}, Chari and Pressley gave a complete classification of simple modules in $\scrC_\g$ in terms of Drinfeld polynomials.  With the motivation of quantizing $\calW$-algebras, Frenkel and Reshetikhin (\cite{FR99}) introduced the $q$-character for $\scrC_\g$, which can be thought as a quantum loop analogue of the natural characters for finite-dimensional $\g$-modules.
The $q$-character theory tells us that  the Grothendieck ring $K(\scrC_\g)$
can be understood as a \emph{commutative} polynomial ring generated by the $q$-characters of \emph{fundamental modules} $L(Y_{i,p})$'s $((i,p) \in I \times \bfk)$, where $I$ is the index set for simple roots of $\g$ and $\bfk =\overline{\Q(q)} \subset \bigsqcup_{m >0} \C( \hspace{-.3ex} ( q^{1/m}) \hspace{-.3ex} )$ is the algebraic closure of $\Q(q)$. Since every simple module $L$ in $\scrC_\g$ can be obtained as the head of a certain ordered tensor product $M$ of fundamental modules $L(Y_{i,p})$ in a unique way \cite{AK97,Kas02,VV02}, we can label $L$ as $L(m)$
and $M$ as $M(m)$, where $m$ is a product of $Y_{i,p}$'s.
Note that the $q$-character of $L(Y_{i,p})$ can be computed by a combinatorial algorithm, called {\rm Frenkel--Mukhin (FM)} algorithm \cite{FM01}.

Let $t$ be another indeterminate. The quantum Grothendieck ring $\calK_t(\scrC_\g)$ of $\scrC_\g$
is a non-commutative $t$-deformation of $K(\scrC_\g)$ in a quantum torus $\calY_t(\g)$. It was introduced in \cite{Nak04,VV03} for simply-laced types from a geometric motivation, and then in \cite{Her04} for all types in a uniform way using the inverse of
\emph{quantum Cartan matrix} and the $t$-deformed screening operators \cite{Her03}. The ring $\calK_t(\scrC_\g)$ has a \emph{canonical basis} $\bfL_t$ consisting of $(q,t)$-characters
corresponding to simple modules, denoted by $[L]_t$ for a simple module $L \in \scrC_\g$,  which has the following (conjectural) properties:
\begin{eqnarray} &&
\parbox{91ex}{
\bnum
\item The structure coefficients of $\bfL_t$ are positive.
\item $[L]_t$ has positive coefficients.
\item $[L]:=[L]_t |_{t=1}$ recovers the $q$-character of $L$.
\ee
}\label{eq: positivity properties}
\end{eqnarray}

Furthermore, the quantum Grothendieck ring admits a quantum loop analogue of Kazhdan--Lusztig theory with respect to $\bfL_t$.
More precisely, Nakajima \cite{Nak04}
established a remarkable  Kazhdan--Lusztig type algorithm for simply-laced $\g$ using the geometry of quiver varieties to compute the Jordan--H\"older multiplicity $P_{m,m'}$ of the simple module $L(m')$ occurring in $M(m)$. In particular, as a quantum loop analogue of Kazhdan--Lusztig polynomials, the polynomials
$P_{m,m'}(t)$ are constructed in the quantum Grothendieck ring, and it is  proved in \cite{Nak04} that $P_{m,m'}(1)=P_{m,m'}$. Since we have
$$
[M(m)]= [L(m)] + \sum_{m' \in \calM_+; \ m' \lN m} P_{m,m'} [L(m')]  \quad \text{ in $K(\scrC_\g)$},
$$
this result provides a way of computing all the simple $q$-characters (see~\eqref{eq: Nakajima order} for the definition of the partial order $\lN$).
Also, Nakajima \cite{Nak04} and Varagnolo--Vasserot \cite{VV03} proved that the positivity  in~\eqref{eq: positivity properties} holds for simply-laced types.
For a generalization of these results to non-simply-laced types,
Hernandez \cite{Her04} formulates $\calK_t(\scrC_\g)$ as the intersection of the  kernels of the $t$-quantized screening operators in \cite{Her03} (cf.~\cite{FR99,FM01}), and then established a (conjectural) Kazhdan--Lusztig type algorithm.
Then it was conjectured in \cite{Her04} that the natural properties in~\eqref{eq: positivity properties} also hold for all types.

Recently, Fujita--Hernandez--Oya and the third named author proved a large part of the conjectures for non-simply-laced types by showing the
coincidence of presentations of $\calK_t(\scrC_\bfg)$ and $\calK_t(\scrC_g)$ for a pair $(\bfg, g)$, where the types of $\bfg$ and $g$ are given by
\begin{align}\label{eq: g and bfg}
\text{$(g,\bfg)=(B_n,A_{2n-1})$, $(C_n,D_{n+1})$, $(F_4,E_{6})$  and $(G_2,D_{4})$.}
\end{align}

Let us explain this more precisely.
In $\scrC_\g$, there exists a skeleton subcategory $\scrC_\g^0$ in the following sense:
Any \emph{prime} simple module in $\scrC_\g$  can be obtained from a simple module in $\scrC_\g^0$ by twisting module actions via a $\bfk$-algebra automorphism of $\calU_q(\calL \g)$.
Thus, from the representation theoretic viewpoint, it is enough to consider $\scrC_\g^0$.  Compared with $\calK_t(\scrC_\g)$,
the quantum Grothendieck ring  $\calK_t(\scrC^0_\g)$ of $\scrC^0_\g$ is contained in the quantum torus $\calY^0_t(\g)$ generated by countably many generators, while $\calY_t(\g)$
is not. In \cite{FHOO}, it is proved that
\begin{align}\label{eq: coincidence}
\calK_t(\scrC^0_g) \simeq  \calK_t(\scrC^0_\bfg) \quad \text{as $\Z[t^{\pm \frac{1}{2}}]$-algebras},
\end{align}
and the presentation of $\bbK_t(\scrC^0_\bfg) \seteq \Q(t^{1/2}) \otimes_{\Z[t^{\pm1/2}]} \calK_t(\scrC^0_\bfg)$ is known in \cite{HL15}.
Hence, we conclude that $\bbK_t(\scrC^0_g)$ is a $\Q(t^{1/2})$-algebra generated by $\{ \sfx_{i,m} \ | \ i \in I_\bfg, \ m \in \Z \}$ subject to the relations
\begin{equation} \label{eq:presentation}
\begin{split}
&\sum_{s=0}^{1-\bfa_{i,j}} (-1)^s \left[ \begin{matrix} 1-\bfa_{i,j} \\ s  \end{matrix}\right]_{t} \sfx_{i,k}^{1-\bfa_{i,j}-s}  \sfx_{j,k} \sfx_{i,k}^s =0,  \\
&\sfx_{i,k}\sfx_{j,k+1}  = t^{-(\upal_i,\upal_j)} \sfx_{j,k+1}\sfx_{i,k} + (1-t^{-(\upal_i,\upal_i)})\delta_{i,j},  \\
&\sfx_{i,k}\sfx_{j,l} = t^{(-1)^{k+l}(\upal_i,\upal_j)}\sfx_{j,l}\sfx_{i,k}  \quad \text{ for $l>k+1$},
\end{split}
\end{equation}
where $(\bfa_{i,j})_{i,j \in I\bfg}$ is the Cartan matrix of $\bfg$ and $\upal_i$'s are the simple roots of $\bfg$. To obtain this presentation, we need to consider the notion of $\rmQ$-datum
introduced in \cite{FO21}, which can be understood as a generalization of a Dynkin quiver of simply-laced type.
A $\rmQ$-datum $Q=(\Dynkin,\sigma,\xi)$ for $\g$ is a triple consisting of (i)
the Dynkin diagram $\Dynkin(=\Dynkin_\bfg)$ of simply-laced type $\bfg$ (need not to be the same as $\Dynkin_\g$), (ii) the Dynkin diagram automorphism $\sigma$ on $\Dynkin$ yielding $\Dynkin_\g$ as the $\sigma$-orbits of $\Dynkin_\bfg$, and (iii) a height function $\xi: I_\bfg \to \Z$ satisfying certain conditions (\cite[Definition 3.5]{FO21}). For each $\rmQ$-datum $Q$
of $\g$, we can associate a heart monoidal subcategory $\scrC_\g^Q$ of $\scrC_\g^0$.

For any pair of $\rmQ$-data $Q_1$ of $\g_1$ and $Q_2$ of $\g_2$ whose Dynkin diagrams are the \emph{same as $\Dynkin_\bfg$},
it is proved in \cite[Theorem 8.6]{FHOO} (cf.~\cite{HL15, HO19}) that there exists an isomorphism of $\Z[q^{\pm 1/2}]$-algebras
\begin{align*}
 \calK_t\left(\scrC_{\g_k}^{Q_k} \right) \, \longrightarrow \, \calA_{\Z[q^{\pm 1/2}]}(\bfn) \qquad (k = 1, 2)
\end{align*}
preserving bar involutions on both sides,
where $\calA_{\Z[q^{\pm 1/2}]}(\bfn)$ denotes the integral form of the unipotent quantum coordinate algebra of $\bfg$.
Hence we have
\begin{align}   \label{eq: heart conincidence}
 \calK_t\left(\scrC_{\g_1}^{Q_1} \right)  \simeq  \calK_t\left(\scrC_{\g_2}^{Q_2} \right)
\end{align}
(see \cite[Corollary 10.5, Theorem 10.6]{FHOO} for more details).
Then using \eqref{eq: heart conincidence}, the rigidity of $\scrC_\g^0$, and singularities of $R$-matrices between fundamental modules,  they proved~\eqref{eq: coincidence} and established a large part of the conjectures for non-simply-laced $g$, which is referred to as the \emph{propagation of positivity phenomena} \cite{FHOO,FHOO2}.
At this point, for non-simply-laced $g$ in \eqref{eq: g and bfg}, we emphasize that the presentation \eqref{eq:presentation} of $\calK_t(\scrC^0_g)$ depends on the root system of $\bfg$, and the quantum cluster algebra structure \cite{BZ05} of $\calK_t(\scrC^0_g)$ is still of skew-symmetric type \cite{HL16,B21,KKOP2,FHOO2}.
\smallskip

To sum up, the representation theory of quantum loop algebras seems to have \emph{simply-laced nature},
even though $\g$ is of non-simply-laced.
Motivated by this observation, Kashiwara and the third named author introduced in \cite{KO22}
a new quantum torus $\calX_q(\g)$ and the quantum virtual
Grothendieck ring  $\frakK_q(\g)$ in $\calX_q(\g)$ for every $\g$. Note that, for simply-laced type $\g$, $\calX_q(\g)$ and $\frakK_q(\g)$ are isomorphic to $\calY^0_t(\g)$ and $\calK_t(\scrC^0_\g)$, respectively, but not for non-simply-laced types.
For the construction of $\frakK_q(\g)$ and $\calX_q(\g)$, they employed the inverses of \emph{$t$-quantized Cartan matrices} and \emph{Dynkin quivers of every finite type}, which seem to have not been well-investigated before. Then it is proved in \cite{KO22} that the $q$-commutativity of $\calX_q(\g)$ is controlled by the root system of $\g$, while $\calY^0_t(g)$ is controlled by that of $\bfg$ in~\eqref{eq: g and bfg} \cite{FO21}.
Thus the nature of  $\frakK_q(\g)$ \emph{does genuinely depend} on the type of $\g$.
Here we remark that (1) the $(q,t)$-Cartan matrix, introduced by Frenkel--Reshetikhin in~\cite{FR98},
is a two-parameter deformation of the Cartan matrix of $\g$, whose specialization at $t=1$
is the quantum Cartan matrix, and at $q=1$ is the $t$-quantized Cartan matrix, (2) $\frakK_q(\g)$ is a $q$-deformation of the commutative ring $\scrK^-_\ttt(\g)$, which is the specialization of
the refined ring $\overline{\scrK}_{\ttq,\ttt,\upal}(\g)$ of interpolating $(\ttq,\ttt)$-characters in \cite{FHR21} at $\ttq=1$ and $\upal=d$,  where $\upal$ is a factor to interpolate several characters (see \cite[Remark 6.2(1)]{FHR21}) and $d$ is the lacing number of $\g$.

In \cite{JLO1}, the authors of this paper initiated a study of the quantum virtual Grothendieck ring $\frakK_q(\g)$, especially for non-simply-laced types. The results in \cite{JLO1} are summarized as follows:
\bnA
\item We construct a basis $\sfF_q = \{F_q(m) \ | \ m \in \calM_+ \}$ of $\frakK_q(\g)$ in an algorithmic way, whose elements are parameterized by the set $\calM_+$ of dominant monomials in $\calX_q(\g)$. Using $\sfF_q$, we construct two other bases
$\sfE_q= \{E_q(m) \ | \ m \in \calM_+ \}$ and $\sfL_q = \{L_q(m) \ | \ m \in \calM_+ \}$ and show that the transition map between them exhibits the paradigm of Kazhdan--Lusztig theory:
$$
E_q(m) = L_q(m) + \sum_{m' \in \calM_+; \ m' \lN m} P_{m,m'}(q) L_q(m') \quad \text{ for some $P_{m,m'}(q) \in q\Z[q]$}.
$$
We call $\sfL_q$ the \emph{canonical basis} and $\sfE_q$  the \emph{standard basis} of $\frakK_q(\g)$, respectively.
\item  $\frakK_q(\g)$ has a quantum cluster algebra structure of \emph{skew-symmetrizable type} and  the quantum cluster of an initial quantum seed $\scrS$ is a $q$-commuting subset of $\sfF_q$:
\begin{align*}
\frakK_q(\g) \simeq
\scrA_{\Z[q^{\pm1/2}]}(\seed),
\end{align*}
where $\scrA_{\Z[q^{\pm1/2}]}(\seed)$ denotes the quantum cluster algebra with an initial quantum seed $\scrS$.
To prove this, we develop the \emph{quantum folded $T$-systems} among the \emph{Kirillov--Reshetikhin (KR) polynomials} $F_q(m)$ (see Section~\ref{subsec: T-sys KR poly}) and employ them as quantum exchange relations of $\scrA_{\Z[q^{\pm1/2}]}(\seed)$. Then we conclude that every KR-polynomial $F_q(m)$ appears as a  quantum cluster variable.
\ee
We also conjecture the following, which is in accordance with one of the ultimate goals of  quantum cluster algebra theory that every quantum  cluster monomial appears as a dual-canonical/upper-global basis element (see \cite[Introduction]{FZ02}):
\begin{Conj}[{\cite[Conjecture I]{JLO1}}]  \label{Conj: I}
For each KR-polynomial $F_q(m)$, we have
$$
F_q(m) = L_q(m).
$$
\end{Conj}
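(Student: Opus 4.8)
\emph{Strategy.} The plan is to prove $F_q(m)=L_q(m)$ by showing that every KR-polynomial $F_q(m)$ is fixed by the bar involution of $\frakK_q(\g)$. Indeed, by the Kazhdan--Lusztig-type theory of \cite{JLO1} the element $L_q(m)$ is characterized as the unique bar-invariant element of $\frakK_q(\g)$ satisfying the appropriate unitriangular, $q$-integral normalization with respect to $\sfF_q$; hence once $F_q(m)$ is known to be bar-invariant it already meets that normalization with vanishing lower terms, forcing $F_q(m)=L_q(m)$. In representation-theoretic terms this bar-invariance is the ``virtual'' counterpart of the statement that the standard module attached to a KR monomial is simple, and since $\frakK_q(\g)$ is not a genuine Grothendieck ring for non-simply-laced $\g$, one must argue inside $\frakK_q(\g)$ itself.

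\emph{Base case: fundamental polynomials.} Every fundamental monomial $Y_{i,p}$ lies in the heart subring $\frakK_{q,Q}(\g)$ for a suitable Dynkin quiver / $\rmQ$-datum $Q$, so one transports the problem along the isomorphism $\Uppsi_Q\colon\frakK_{q,Q}(\g)\xrightarrow{\sim}\calA_q(\n)$ of this paper, which intertwines the bar involutions. I would identify $\Uppsi_Q(F_q(Y_{i,p}))$, up to a power of $q$, with a unipotent quantum minor of $\calA_q(\n)$ --- equivalently the class of a determinantial module over the quiver Hecke algebra of type $\g$ --- by matching the defining (FM-algorithm) recursion of the fundamental polynomial with the quantum determinantial identities satisfied by unipotent quantum minors, using that under $\Uppsi_Q$ the initial quantum seed of \cite{JLO1} is carried to the standard quantum seed on $\calA_q(\n)$. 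Since unipotent quantum minors are dual canonical basis elements (Kimura~\cite{Kimura12}; Geiss--Leclerc--Schr\"oer; Kang--Kashiwara--Kim--Oh), they are bar-invariant, hence so is $F_q(Y_{i,p})$. The same reasoning identifies any KR-polynomial that happens to lie in a heart with a dual canonical basis element, so all such are bar-invariant and equal to the corresponding $L_q(m)$.

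\emph{Induction and the braid group.} A general KR monomial $m$ lies in no heart, so one proceeds by induction on a well-founded order using the quantum folded $T$-systems of \cite{JLO1}: written with the $q$-symmetrized product $\odot$ (the ordinary product rescaled so as to be bar-stable on $q$-commuting pairs, which is exactly the situation for cluster variables lying in a common cluster of $\frakK_q(\g)\simeq\scrA_{\Z[q^{\pm1/2}]}(\seed)$), each such $T$-system becomes an identity relating $\odot$-products of KR-polynomials indexed by monomials strictly below $m$ together with $F_q(m)$ itself. Granting that the $T$-systems are genuinely of this $\odot$-normalized, hence bar-invariant, form, and knowing bar-invariance for the strictly smaller KR-polynomials (the base case being the fundamental polynomials), one cancels the bar-invariant $\odot$-factor to conclude $\overline{F_q(m)}=F_q(m)$. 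The braid group action $B_\g\curvearrowright\frakK_q(\g)$ constructed here enters in two supporting roles: it makes the whole apparatus --- the bases $\sfF_q,\sfE_q,\sfL_q$, the hearts, the isomorphisms $\Uppsi_Q$ and the $T$-systems --- compatible under change of $Q$, so that each $T$-system can be analyzed after moving to the heart in which it is transparent; and, since the braid group automorphisms preserve $\sfL_q$, it propagates the conclusion $F_q(m)=L_q(m)$ over an entire braid orbit once it is known for one representative.

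\emph{Main obstacle.} The genuine difficulty is the verification, for non-simply-laced $\g$, that the quantum folded $T$-systems are \emph{bar-invariant identities}: one must check that the half-integral powers of $q$ they carry are precisely those symmetrizing the $q$-commutation pairing of the quantum torus $\calX_q(\g)$, a pairing built from the symmetrizers $d_i$ of $\g$, so that bar-invariance really does pass through each exchange relation. This is exactly where the usual input ``standard KR modules are simple'' would be invoked and is unavailable, and the proposal replaces it by the package consisting of the categorification of $\calA_q(\n)$ by quiver Hecke algebras together with the bar-covariance of the folded $T$-systems; establishing the latter, along with the treatment of the finitely many fundamental monomials $Y_{i,p}$ below which a dominant monomial occurs, is the technical core. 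A subsidiary point is to pin down $\Uppsi_Q(\sfL_q\cap\frakK_{q,Q}(\g))$ as the dual canonical basis of $\calA_q(\n)$ with the correct $q$-powers, which requires a careful comparison of the two quantum cluster structures, including their frozen parts.
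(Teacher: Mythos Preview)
The statement you are trying to prove is \emph{not} a theorem in the paper: it is Conjecture~\ref{Conj: I} (restated as Conjecture~\ref{conj: JLO1 sencond}), and the paper establishes only the partial case in Corollary~\ref{cor: partial proof}, namely $F_q(m)=L_q(m)$ when the KR-monomial $m$ lies in some heart $\calX_{q,Q}$. That partial proof goes via the HLO isomorphism $\Uppsi_Q$ and the fact that unipotent quantum minors belong to $\tbfB^\up$; your ``base case'' paragraph is essentially this argument. But the general statement remains open, and the paper makes no claim to prove it.

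More importantly, your overall strategy rests on a misconception. You propose to deduce $F_q(m)=L_q(m)$ by proving that $F_q(m)$ is bar-invariant, writing that ``once $F_q(m)$ is known to be bar-invariant it already meets that normalization with vanishing lower terms, forcing $F_q(m)=L_q(m)$.'' But $F_q(m)$ is bar-invariant \emph{by definition} for \emph{every} dominant monomial $m$: this is built into the characterization in Theorem~\ref{thm: F_q}, which requires $\overline{F_q(m)}=F_q(m)$. So the elaborate inductive scheme you outline (bar-invariance of $T$-systems, propagation along braid orbits, etc.) is aimed at a target that is already hit trivially. The actual content of $F_q(m)=L_q(m)$ is not bar-invariance but the unitriangularity condition in Theorem~\ref{thm: L_q}: one needs $F_q(m)-E_q(m)\in\sum_{m'\lN m} q\Z[q]\,E_q(m')$, and bar-invariance alone does not give this --- indeed every $F_q(m)$ is bar-invariant, yet certainly $F_q(m)\ne L_q(m)$ in general (e.g.\ whenever $L_q(m)$ has more than one dominant monomial). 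Your induction through the $T$-systems would need to control these $q\Z[q]$-coefficients, and nothing in the argument addresses that; the ``bar-invariant $T$-system'' reformulation does not help, since both sides were bar-invariant to begin with.
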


In this paper, we first focus on the heart subring $\frakK_{q,Q}(\g)$ of $\frakK_{q}(\g)$ associated with a Dynkin quiver $Q$ of $\g$.
Let $\tbfB^\up$ be the normalized dual-canonical/upper-global basis of $\calA_q(\n)$ by Lusztig \cite{L90} and Kashiwara \cite{K91,K93} (see also \cite{Kimura12}).
We analyze
\begin{enumerate}[(a)]
	\item the T-system among unipotent quantum minors in the unipotent quantum coordinate algebra $\calA_q(\n)$ of $\g$,
	\item the quantum folded T-system among KR-polynomials $F_q(m)$,
	\item the quantum cluster algebra structure of $\calA_q(\n)$ as a subalgebra of the quantum torus $\calX_q(\g)$,
\end{enumerate}
and obtain the following theorem:
\begin{thmx} \label{thmx: A} {\rm (Theorem~\ref{thm:main1})}
For any Dynkin quiver $Q$ of $\g$, we have an algebra isomorphism
\begin{align}\label{eq: PsiQ}
 \Psi_Q : \calA_{\Z[q^{\pm 1/2}]}(\n) \isoto \frakK_{q,Q}(\g),
\end{align}
which sends the normalized dual-canonical/upper-global basis $\tbfB^\up$ of $\calA_{\Z[q^{\pm 1/2}]}(\n)$ to the canonical basis $\sfL_{q,Q} \seteq \sfL_q \cap \frakK_{q,Q}(\g)$.
\end{thmx}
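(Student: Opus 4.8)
The plan is to construct $\Psi_Q$ by matching the quantum cluster algebra structures on the two sides. Fix a reduced word $\bii_Q$ of the longest element $w_0$ that is adapted to the Dynkin quiver $Q$ (equivalently, work with the Auslander--Reiten combinatorics of $Q$). On one side, $\calA_{\Z[q^{\pm 1/2}]}(\n)$ carries a quantum cluster algebra structure whose initial seed, read off from $\bii_Q$, is a $q$-commuting family of unipotent quantum minors; on the other side, by \cite{JLO1} (restricted to the heart), $\frakK_{q,Q}(\g)$ is a finite-type quantum cluster algebra whose initial cluster is a $q$-commuting family of KR-polynomials $F_q(m)$. The columns of the two initial seeds are indexed by the same set attached to $\bii_Q$, and the assignment to be made sends the distinguished unipotent quantum minor at a vertex $k$ to the KR-polynomial $F_q(m_k^{Q})$ of the corresponding dominant monomial.

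For this assignment to extend to an algebra map, the two initial quantum seeds must agree, and there are two things to check. First, the $q$-commutation relations: those of the minors are governed by Kashiwara's bilinear form on the root lattice of $\g$, while those of the $F_q(m)$ are governed by the symmetrized inverse of the $t$-quantized Cartan matrix of $\g$ built into $\calX_q(\g)$; since here $\calA_q(\n)$ is of the \emph{same} type $\g$, these skew-symmetrizable data coincide once expressed along $\bii_Q$, and this is exactly where the realization of $\calA_q(\n)$ inside $\calX_q(\g)$ (item (c) above) enters. Second, the exchange matrices: both are the mutation matrix of the quiver attached to $\bii_Q$, so they are literally equal. Once the seeds are matched, the cleanest route is to verify directly that mutation is intertwined by comparing, term by term, the T-system among unipotent quantum minors (item (a)) with the quantum folded T-system among the $F_q(m)$ (item (b)): under the dictionary above these are the \emph{same} relations, so every cluster variable of $\calA_q(\n)$ goes to a cluster variable of $\frakK_{q,Q}(\g)$ and vice versa, whence $\Psi_Q$ is a well-defined surjection; injectivity follows from a graded-dimension comparison via the PBW/standard bases. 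Equivalently, matching initial quantum seeds gives an isomorphism of the abstract quantum cluster algebras, which one then identifies with $\frakK_{q,Q}(\g)$ via \cite{JLO1}.

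It remains to transport the bases. Each of $\widetilde{\bfB}^{\rm up}$ and $\sfL_{q,Q}$ is uniquely characterized by bar-invariance together with unitriangularity, with off-diagonal entries in $q\Z[q]$, relative to a ``PBW/standard'' basis: the dual PBW basis adapted to $\bii_Q$ on the left, and the standard basis $\sfE_{q,Q}$ of ordered products of KR-polynomials on the right (as in the displayed relation $E_q(m)=L_q(m)+\sum_{m'\lN m}P_{m,m'}(q)L_q(m')$ with $P_{m,m'}(q)\in q\Z[q]$). So the argument reduces to three compatibilities: (i) $\Psi_Q$ intertwines the two bar involutions, both of which are restrictions of the natural bar involution of the ambient quantum torus $\calX_q(\g)$; (ii) $\Psi_Q$ carries the dual PBW basis to $\sfE_{q,Q}$ — here one uses that both families are ordered products of the generators already matched in the previous step, with the linear order on positive roots coming from $\bii_Q$ corresponding to the Nakajima-type order $\lN$ on dominant monomials; (iii) $\Psi_Q$ preserves the $\Z[q^{\pm 1/2}]$-integral forms. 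Given (i)--(iii), the uniqueness of the canonical basis forces $\Psi_Q(\widetilde{\bfB}^{\rm up})=\sfL_{q,Q}$.

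The main obstacles are the seed-matching in the second paragraph and compatibility (ii). Verifying that the skew-symmetrizable quantum datum of $\calA_q(\n)$ agrees, vertex by vertex along $\bii_Q$, with the one obtained from the inverse $t$-quantized Cartan matrix of $\g$ demands a delicate comparison of two a priori different bilinear forms; and identifying the dual PBW basis with $\sfE_{q,Q}$ while simultaneously matching $\prec$ with $\lN$ is precisely the point at which the ``virtual'', non-simply-laced constructions — Dynkin quivers of non-simply-laced type and the quantum folded T-systems — must be shown to faithfully mirror the classical simply-laced combinatorics of unipotent quantum minors and standard modules. Once the dictionary attached to $\bii_Q$ is in place, the remaining verifications are essentially bookkeeping.
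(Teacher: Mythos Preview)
Your proposal captures the right high-level strategy—compare the T-systems on both sides and then use the bar-invariance/unitriangularity characterization of the two canonical bases—and this is indeed the heart of the paper's argument. But the paper's execution differs from yours in two notable ways.

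First, the paper does not invoke the quantum cluster algebra structure of $\frakK_{q,Q}$ at all. Instead it starts from the quantum torus isomorphism $\Uppsi_Q:\calT_{q,[Q]}\to\calX_{q,Q}$ (already established in \cite{KO22}; this is your ``seed-matching'' at the level of the ambient tori) and proves by a direct induction, over a carefully chosen total order on triples $(i;p,u)$, that $\Uppsi_Q(\tD(i;p,u))=F_q(m^{(i)}[p,u-2])_{\le\xi}$ for \emph{every} unipotent quantum minor. The inductive step compares the determinantal identity (your item (a)) with the quantum folded T-system (your item (b)), using that bar-invariance pins down the unknown half-integer exponents on each side uniquely in the skew-field of fractions. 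This gives in particular that the dual PBW vectors $\tD_{[Q]}(\alpha,\alpha^-)$ map to the fundamental $F_q(X_{i,p})_{\le\xi}$, which is exactly what you need for your compatibility (ii); note that these are \emph{not} the initial cluster variables (those are the flag minors $\tD_{[Q]}(\alpha,0)$), so ``ordered products of generators already matched'' is not quite enough without actually running this induction. Your cluster-algebra framing would additionally require knowing a priori that the heart $\frakK_{q,Q}$ is itself a quantum cluster algebra, which the paper does not assume and which is not stated in \cite{JLO1}.

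Second, there is a technical point you are missing: the elements $F_q(X_{i,p})$ do not lie in the sub-torus $\calX_{q,Q}$ (their monomials spill outside $\Gamma_0^Q$), so the comparison cannot be made directly inside $\calX_{q,Q}$. The paper works throughout with the injective truncation homomorphism $(\cdot)_{\le\xi}:\frakK_{q,Q}\hookrightarrow\calX_{q,Q}$, proving everything first for the truncated ring $\frakK_{q,Q}^T$ where the torus isomorphism and bar involutions visibly match, and only then lifting along $(\cdot)_{\le\xi}^{-1}$ to obtain $\Psi_Q$. Your claim that both bar involutions are restrictions of the bar on $\calX_q(\g)$ is true, but since $\calA_\bbA(\n)$ embeds only into the finite torus $\calT_{q,[Q]}\simeq\calX_{q,Q}$, you cannot compare elements without truncating first.
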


Theorem~\ref{thmx: A} gives an affirmative answer for the expectation in \cite[Remark 6.16]{KO22}, which may be regarded as a new realization of $\calA_{\Z[q^{\pm 1/2}]}(\n)$ in terms of non-commutative Laurent polynomials for non-simply-laced types.
In the proof of Theorem~\ref{thmx: A}, we also conclude that Conjecture~\ref{Conj: I} holds when $m$ is contained in a quantum subtorus $\calX_{q,Q}(\g)$
for some Dynkin quiver $Q$:
\begin{corx} {\rm (Corollary~\ref{cor: partial proof}:  Partial proof of Conjecture~\ref{Conj: I})}
Let  $m$ be a KR-monomial  contained in $\calX_{q,Q}(\g)$ for some Dynkin quiver $Q$. Then we have
$$
F_q(m) = L_q(m).
$$
\end{corx}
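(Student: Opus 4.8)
The plan is to read the corollary off from Theorem~\ref{thmx: A} together with the analysis behind its proof. Fix a KR-monomial $m\in\calX_{q,Q}(\g)$. First I would check that the whole KR-polynomial stays inside the $Q$-subtorus, i.e. $F_q(m)\in\frakK_{q,Q}(\g)$: by the construction of $\sfF_q$ in \cite{JLO1} every monomial occurring in $F_q(m)$ is obtained from $m$ by multiplying $A$-type factors and is $\leN$-smaller than $m$, and these operations do not leave $\calX_{q,Q}(\g)$ once $m$ does. Granting this, $\Psi_Q^{-1}(F_q(m))$ is a well-defined element of $\calA_{\Z[q^{\pm1/2}]}(\n)$, and since Theorem~\ref{thmx: A} identifies $\tbfB^\up$ with $\sfL_{q,Q}\subseteq\sfL_q$, it suffices to prove that $\Psi_Q^{-1}(F_q(m))$ belongs to $\tbfB^\up$.

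To prove the latter, I would identify $\Psi_Q^{-1}(F_q(m))$ as a unipotent quantum minor of $\calA_q(\n)$. This is exactly where the proof of Theorem~\ref{thm:main1} does its work: there one matches the quantum folded T-system among the KR-polynomials $F_q(m)$ (item (b) in the list preceding Theorem~\ref{thmx: A}) with the T-system among unipotent quantum minors in $\calA_q(\n)$ (item (a)), using the compatibility of the two quantum cluster algebra structures (item (c)). The combinatorics of the $\rmQ$-datum underlying $Q$ shows that the KR-monomials lying in $\calX_{q,Q}(\g)$ are precisely those indexed by the weight data $(\lambda,\mu)$ appearing in these T-systems, the two T-systems have the same shape with matching frozen/boundary terms, and therefore $\Psi_Q$ carries each unipotent quantum minor $D_q(\lambda,\mu)$ of that form to the KR-polynomial $F_q(m)$ attached to the corresponding monomial. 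In particular $\Psi_Q^{-1}(F_q(m))$ is a unipotent quantum minor.

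Finally I would invoke the classical fact that every unipotent quantum minor of $\calA_q(\n)$ lies in the normalized dual-canonical/upper-global basis $\tbfB^\up$ (a consequence of the Lusztig--Kashiwara theory of the (dual) canonical basis, as developed e.g. by Kimura, Geiss--Leclerc--Schr\"oer and Kang--Kashiwara--Kim--Oh). Combined with the previous paragraph, $\Psi_Q^{-1}(F_q(m))\in\tbfB^\up$, hence $F_q(m)\in\sfL_{q,Q}$. Since both $F_q(m)$ and $L_q(m)$ have $m$ as their unique $\leN$-maximal (dominant) monomial with coefficient $1$, and elements of $\sfL_q$ are distinguished by their maximal monomials, we conclude $F_q(m)=L_q(m)$.

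The hard part is the middle step: one must pin down \emph{which} unipotent quantum minor corresponds to a given KR-monomial $m\in\calX_{q,Q}(\g)$ --- translating the data $(i,p)$ and the heights of the dominant monomial $m$, read off from the quiver $Q$, into the pair of weights $(\lambda,\mu)$ indexing $D_q(\lambda,\mu)$ --- and verify that the quantum folded T-system and the unipotent-minor T-system agree term-by-term under $\Psi_Q$, \emph{including} the $q$-power twists and the frozen variables. This is the technical core of the proof of Theorem~\ref{thm:main1}, and once it is in place the corollary requires no further argument; a secondary point still to be verified is the claim in the first step, namely that the algorithm producing $F_q(m)$ never exits $\calX_{q,Q}(\g)$.
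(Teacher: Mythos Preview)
Your proposal is correct and follows essentially the same route as the paper's proof: invoke the identity \eqref{eq: D to F} from the proof of Theorem~\ref{thm:main1} (the T-system matching) to see that $\Psi_Q^{-1}(F_q(m))$ is a normalized unipotent quantum minor, use that such minors lie in $\tbfB^\up$, apply Corollary~\ref{cor: HLO iso} to land in $\sfL_{q,Q}$, and finish by comparing maximal monomials via Theorems~\ref{thm: F_q} and~\ref{thm: L_q}. One small slip: your justification that ``these operations do not leave $\calX_{q,Q}(\g)$'' is not literally true (monomials of $F_q(m)$ can involve $X_{j,s}^{\pm1}$ with $s>\xi_j$); what you actually need is $F_q(m)\in\frakK_{q,Q}$, which is immediate from Proposition~\ref{prop: convex subrings}, so no separate argument is required.
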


As another byproduct of Theorem \ref{thmx: A} combined with a recent work of Qin \cite{Qin20} (cf.~\cite{McNa21}), we have the following result:
\begin{corx} {\rm (Corollary \ref{thm: cluster monomials are in canonical basis})} \label{corx:cluster monomials are in canonical basis}
Every cluster monomial of $\frakK_{q,Q}(\g)$ is contained in the canonical basis $\sfL_{q,Q}$.
\end{corx}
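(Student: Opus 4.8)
The plan is to transport the assertion from $\frakK_{q,Q}(\g)$ to the unipotent quantum coordinate algebra $\calA_{\Z[q^{\pm 1/2}]}(\n)$ along the isomorphism $\Psi_Q$ of Theorem~\ref{thmx: A}, and there to invoke the theorem of Qin~\cite{Qin20} (cf.~\cite{McNa21}) asserting that every quantum cluster monomial of $\calA_{\Z[q^{\pm 1/2}]}(\n)$ belongs to the normalized dual-canonical/upper-global basis $\tbfB^\up$. Recall that $\calA_{\Z[q^{\pm 1/2}]}(\n)$ carries a quantum cluster algebra structure whose initial quantum seed is attached to the (adapted) reduced word underlying $Q$, with unipotent quantum minors as initial cluster variables. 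Granting this, the entire content of the corollary reduces to the statement that $\Psi_Q$ is compatible with the two quantum cluster algebra structures.

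For that compatibility I would not prove anything new but read it off from the construction of $\Psi_Q$ carried out in the proof of Theorem~\ref{thmx: A}: by its very design $\Psi_Q$ sends the unipotent quantum minors spanning the $Q$-initial quantum seed of $\calA_{\Z[q^{\pm 1/2}]}(\n)$ to the KR-polynomials $F_q(m)$ spanning the initial quantum seed $\scrS$ of $\frakK_{q,Q}(\g)$ from \cite{JLO1}, and it intertwines the T-system among unipotent quantum minors with the quantum folded T-system among the $F_q(m)$; together with the identification of the compatible pairs ($\Lambda$-matrices and exchange matrices) inside the quantum torus $\calX_q(\g)$ — precisely the ingredients (a), (b), (c) listed before Theorem~\ref{thmx: A} — this makes $\Psi_Q$ an isomorphism of quantum cluster algebras. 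Since an algebra isomorphism carrying one initial quantum seed to another commutes with mutation in every direction, $\Psi_Q$ and $\Psi_Q^{-1}$ send quantum clusters to quantum clusters and hence quantum cluster monomials to quantum cluster monomials.

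Combining the pieces: given a cluster monomial $M$ of $\frakK_{q,Q}(\g)$, the element $\Psi_Q^{-1}(M)$ is a quantum cluster monomial of $\calA_{\Z[q^{\pm 1/2}]}(\n)$, hence $\Psi_Q^{-1}(M)\in\tbfB^\up$ by Qin's theorem; applying $\Psi_Q$ and using the basis-matching part of Theorem~\ref{thmx: A} yields $M\in\Psi_Q(\tbfB^\up)=\sfL_{q,Q}$, as desired. The main — and essentially only — obstacle is the verification that the quantum seed $\scrS$ of \cite{JLO1}, a priori indexed by KR-polynomials and governed by the folded T-system, is carried by $\Psi_Q$ to a seed lying in the standard mutation class of $\calA_{\Z[q^{\pm 1/2}]}(\n)$ with the correct quantization; once the initial seeds together with their compatible pairs (signs and $\Lambda$-form included) are matched on the nose, everything else is formal. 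One should also take care to apply Qin's result in the form valid for arbitrary finite type and for the integral $\Z[q^{\pm 1/2}]$-form, which is legitimate here since $Q$ is a Dynkin quiver and $\n$ is the nilradical of a finite-type Borel.
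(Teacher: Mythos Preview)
Your proposal is correct and follows essentially the same route as the paper: combine the HLO isomorphism $\Psi_Q$ (Corollary~\ref{cor: HLO iso}), which sends $\tbfB^\up$ to $\sfL_{q,Q}$, with Qin's theorem that every quantum cluster monomial of $\calA_\bbA(\n)$ lies in $\tbfB^\up$. Your more explicit discussion of why $\Psi_Q$ is a quantum cluster algebra isomorphism (matching initial seeds via Theorem~\ref{thm: tori iso} and the correspondence $\tD(i;p,u)\mapsto F_q(m^{(i)}[p,u-2])_{\le\xi}$ established in the proof of Theorem~\ref{thm:main1}) fills in what the paper leaves implicit, but the underlying argument is the same.
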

\noindent
Since $\frakK_{q,Q}(\sfg)$ recovers the quantum Grothendieck ring of the heart subcategory $\scrC_{\sfg}^Q$ when $\sfg$ is of simply-laced finite type, the above result readily follows in that case from \cite{HL15} (see also \cite{HO19,FHOO})
with \cite{KKKO18,Qin17}, so Corollary \ref{corx:cluster monomials are in canonical basis} is an extension of previous results to skew-symmetrizable type cluster algebras.

On the other hand, Theorem~\ref{thmx: A} tells us that $\frakK_{q,Q}(\g)$ can be understood as a kind of \emph{character ring} of $R^\g\gmod$ from the viewpoint of categorification, where $R^\g\gmod$ is the category of finite-dimensional graded modules over the quiver Hecke algebra $R^\g$.
In \cite{KO22A}, Kashiwara and the third named author calculated the invariants of $R$-matrices between the cuspidal modules over $R^\g$, where the
cuspidal modules categorify the dual PBW-vectors associated with Dynkin quivers. Based on the results and Theorem~\ref{thmx: A}, we obtain the presentation of $\bbK_{q}(\g)\seteq \Q(q^{1/2})\otimes_{\Z[q^{\pm 1/2}]} \frakK_q(\g)$ as follows:

\begin{thmx} \label{thmx: B} {\rm (Theorem~\ref{thm: presentation})}
The algebra $\bbK_{q}(\g) $ is isomorphic to the $\Q(q^{1/2})$-algebra generated by
$\{ \sfy_{i.m} \ |  \ i \in I_\g, \ m \in \Z \}$ subject to the following relations:
\begin{equation} \label{eq:presen}
\begin{aligned}
&\sum_{s=0}^{1-\sfc_{i,j}} (-1)^s \left[ \begin{matrix} 1-\sfc_{i,j} \\ s  \end{matrix}\right]_{q_i} \sfy_{i,k}^{1-\sfc_{i,j}-s}  \sfy_{j,k} \sfy_{i,k}^s =0,   \\
&\sfy_{i,k}\sfy_{j,k+1}  = q^{-(\al_i,\al_j)} \sfy_{j,k+1}\sfy_{i,k} + (1-q^{-(\al_i,\al_i)})\delta_{i,j}, \\
&\sfy_{i,k}\sfy_{j,l} = q^{(-1)^{k+l}(\al_i,\al_j)}\sfy_{j,l}\sfy_{i,k}  \quad \text{ for $l>k+1$},
\end{aligned}
\end{equation}
where $\{ \al_i \ | \  i \in I_\g\}$ denotes the set of simple roots of $\g$,
$q_i \seteq q^{(\al_i,\al_i)/2}$ and $(\sfc_{i,j})_{i,j \in I_\g}$ denotes the Cartan matrix of $\g$.  We denote by
$\hA_{q}(\n)$ the $\Q(q^{1/2})$-algebra with presentation in~\eqref{eq:presen}.
\end{thmx}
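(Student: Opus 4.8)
The plan is to realize $\sfy_{i,m}$ as the fundamental members of the basis $\sfF_q$, to check that these elements satisfy the relations \eqref{eq:presen} (this produces a surjection $\Phi\colon\hA_{q}(\n)\twoheadrightarrow\bbK_{q}(\g)$), and then to prove that $\Phi$ is an isomorphism by matching normal-form monomials in the $\sfy_{i,m}$ with the standard basis $\sfE_q$ of \cite{JLO1}. The representation-theoretic input that makes this run is Theorem~\ref{thmx: A} together with the $R$-matrix computations of \cite{KO22A}.

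\emph{Setting up the map.} For $i\in I_\g$ and $m\in\Z$, let $\sfy_{i,m}\in\sfF_q$ be the fundamental KR-polynomial attached to the node $i$ at spectral parameter $m$. By the algorithmic construction of $\sfF_q$ and the quantum folded $T$-systems of \cite{JLO1}, every $F_q(m)$ — hence all of $\frakK_q(\g)$ — lies in the subalgebra generated by the $\sfy_{i,m}$; more precisely the standard basis element $E_q(m)$ is, up to a power of $q^{1/2}$, the normally ordered product $\prod^{\to}_{(i,p)}\sfy_{i,p}^{u_{i,p}}$ determined by $m\in\calM_+$. So it suffices to verify that the $\sfy_{i,m}$ satisfy the relations \eqref{eq:presen}; this gives a surjective $\Q(q^{1/2})$-algebra map $\Phi\colon\hA_{q}(\n)\twoheadrightarrow\bbK_{q}(\g)$.

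\emph{The relations.} The third family in \eqref{eq:presen} and the $q$-commutation part of the second family are read off from the quantum torus $\calX_q(\g)$: by \cite{KO22} the form controlling $\calX_q(\g)$ is governed by the root datum of $\g$ through the inverse of the $t$-quantized Cartan matrix, and its value on the relevant monomials is exactly $(-1)^{k+l}(\al_i,\al_j)$; the fact that the subleading terms of the $F_q$'s do not spoil these exact identities is part of the structure of $\sfF_q$ in \cite{JLO1}. The correction term $(1-q^{-(\al_i,\al_i)})\delta_{i,j}$ at distance one is the rank-one instance of the quantum folded $T$-system for $\sfy_{i,k}\sfy_{i,k+1}$ proved in \cite{JLO1}. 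The substantive relation is the quantum Serre relation: here one uses Theorem~\ref{thmx: A} to identify $\frakK_{q,Q}(\g)$ with $\calA_q(\n)$, under which the relevant fundamental polynomials become (simple) cuspidal modules of $R^\g\gmod$; the $R$-matrix invariants among cuspidal modules computed in \cite{KO22A} show that their classes in $\calA_q(\n)\cong K(R^\g\gmod)$ obey precisely the quantum Serre relations with parameter $q_i$. Running this over all Dynkin quivers $Q$ — equivalently, using the spectral-shift automorphism $m\mapsto m+1$ of $\frakK_q(\g)$, which permutes the $\sfy_{i,m}$ — and reducing to the rank-two Levi generated by $\sfy_{i,k}$ and $\sfy_{j,k}$, one obtains the Serre relation at an arbitrary common spectral parameter $k$. (At this stage \eqref{eq:presen} already exhibits $\hA_{q}(\n)$, hence $\bbK_{q}(\g)$, as a boson-extension of $\calA_q(\n)$: each level $k$ carries a copy of the algebra defining $\calA_q(\n)$, glued along consecutive levels by the $q$-boson/Heisenberg relations in the second family.)

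\emph{Completeness and the obstacle.} It remains to show $\Phi$ is injective, i.e.\ that \eqref{eq:presen} is a complete set of relations. Using \eqref{eq:presen} — the $q$-commutations to reorder factors across distinct levels, the $q$-Serre relations to normalize factors sharing a spectral parameter, and the distance-one $\delta$-correction, which strictly decreases the number of generators appearing — one brings every element of $\hA_{q}(\n)$ into a normal form indexed by $\calM_+$; under $\Phi$ these are carried bijectively onto the standard basis $\sfE_q=\{E_q(m)\mid m\in\calM_+\}$, which is $\Q(q^{1/2})$-linearly independent by \cite{JLO1}, so the normal-form elements form a basis of $\hA_{q}(\n)$ and $\Phi$ is an isomorphism. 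This part of the argument parallels, with the input of \cite{KO22A} in place of the simply-laced representation theory, the proof of the analogous presentation \eqref{eq:presentation} of $\calK_t(\scrC^0_\bfg)$. I expect the main difficulties to be: (i) the quantum Serre relation at a \emph{common} spectral parameter, since the isomorphisms $\Psi_Q$ only realize the fundamental polynomials at the staggered parameters dictated by a height function, so \cite{KO22A}'s $R$-matrix data must be combined with the shift symmetry and a rank-two reduction; and (ii) verifying that the normal-form rewriting terminates confluently and yields exactly $\calM_+$-many terms — for which the bar-involution on $\frakK_q(\g)$ and the triangularity of $\sfE_q$ with respect to $\sfF_q$ are the natural tools, alternatively a Bergman diamond-lemma check.
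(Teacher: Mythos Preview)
Your proposal has a fundamental misidentification of the generators that undermines the rest of the argument. You take $\sfy_{i,m}$ to be the fundamental polynomial $F_q(X_{i,m})$ at node $i$ and spectral parameter $m$, but this is not the map $\tUptheta_Q$ the paper constructs. The correct image is
\[
\sfy_{i,m}\ \longmapsto\ \sfx^Q_{i,m}\seteq L_q(X_{\iota,p})\quad\text{where }\phi_Q(\iota,p)=(\al_i,m),
\]
so the second index is the $Q$-coordinate ``block'' label in $\Phi_+\times\Z$, not the spectral parameter. Concretely (see the $C_3$ example in the paper), $\sfx^Q_{2,0}=L_q(X_{1,1})$, not any $L_q(X_{2,\ast})$. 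With the naive identification your Serre relation at a ``common spectral parameter $k$'' is not even well-posed: for $i\sim j$ the parity condition on $\hDynkin_0$ forbids $(i,k)$ and $(j,k)$ from both lying in $\hDynkin_0$. Your worry (i) is a symptom of this misreading, not a technical obstacle; once $\sfy_{i,0}$ is sent to the image of $f_i$ under $\widetilde\Psi_Q\colon\calA_q(\n)\isoto\bbK_{q,Q}$, the Serre relations hold on the nose and extend to all $m$ by applying $\frakD_q$ (which shifts $m$ by $1$, i.e.\ moves one heart-block over --- a shift by $\sfh$ with the $*$-involution, not a unit spectral shift).

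Your derivation of the remaining relations is also misattributed. The inhomogeneous term $(1-q_i^{-2})\delta_{i,j}$ in the second family is not a folded $T$-system identity; it is Proposition~\ref{prop: quantum Boson1}, which compares $F_q(X_{i,p})$ with its dual $F_q(X_{i^*,p+\sfh})=\frakD_q(F_q(X_{i,p}))$. The off-diagonal $l=k+1$ case is Corollary~\ref{cor: dual commute}, which in turn relies on the $R$-matrix/invariant computations of \cite{KO22A} via Proposition~\ref{prop: dual zero}. The long-range case $l>k+1$ is Proposition~\ref{prop: far enough}. None of these are quantum-torus relations among the leading monomials alone --- the ``subleading terms do not spoil the identities'' claim is exactly what these propositions establish, using range estimates (Theorem~\ref{thm: F_q}\eqref{eq: range}) and the $Q$-weight pairing (Proposition~\ref{prop: wt pairing}). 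After the relations are in place, your outline for injectivity (normal-form monomials mapping to $\sfE_q$) is the right idea and is essentially how \cite[Theorem~10.4]{FHOO} proceeds, which is what the paper invokes; but the normal-form monomials are indexed via the $Q$-decomposition $m=\prod_k\frakD^k(m_k)$ of \eqref{eq: decomposition m}, not by naive $(i,p)$-tuples.
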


Note that we can obtain a presentation with respect to any Dynkin quiver $Q$ of $\g$. Then using Dynkin quivers $Q$ and $Q'$, we can induce an automorphism $\tUptheta(Q',Q)$ on $\frakK_{q}(\g)$ and hence on $\hA_{q}(\n)$:

\begin{thmx} \label{thmx: C} {\rm (Theorem~\ref{thm: indeed frakKq})} For an arbitrary pair of Dynkin quivers $Q$ and $Q'$ of $\g$, we have an automorphism $\tUptheta(Q',Q)$ on $\frakK_{q}(\g)$, which makes the diagram below commutative and preserves
the canonical basis $\sfL_q$.
$$
\xymatrix@R=1ex@C=12ex{    & \frakK_{q,Q}(\g)  \ar@{^{(}->}[r]& \frakK_{q}(\g)  \ar[dd]^{\tUptheta(Q',Q)} \\
\calA_{\Z[q^{\pm 1/2}]}(\n)  \ar[ur]^{\Psi_Q}_\simeq\ar[dr]_{\Psi_{Q'}}^\simeq\\
& \frakK_{q,Q'}(\g)  \ar@{_{(}->}[r] & \frakK_{q}(\g)
}
$$
\end{thmx}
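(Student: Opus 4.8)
The plan is to build $\tUptheta(Q',Q)$ from the presentations of Theorem~\ref{thmx: B}. The defining relations~\eqref{eq:presen} involve only the Cartan matrix of $\g$, so the argument proving Theorem~\ref{thmx: B} applies verbatim to an arbitrary Dynkin quiver $Q$ of $\g$ and produces an algebra isomorphism $\Theta_Q \colon \frakK_q(\g) \isoto \hA_q(\n)$, compatible with integral forms, which I normalize so that its restriction to the heart subring equals $\jmath \circ \Psi_Q^{-1}$, where $\jmath \colon \calA_q(\n) \isoto \langle \sfy_{i,0} \mid i \in I_\g \rangle \subseteq \hA_q(\n)$, $f_i \mapsto \sfy_{i,0}$, is the embedding onto the subalgebra generated by the level-$0$ generators. (That $\jmath$ is an isomorphism onto its image follows from the quantum Serre relations sitting inside the first family of~\eqref{eq:presen}, together with Theorem~\ref{thmx: A}.) One then sets $\tUptheta(Q',Q) \seteq \Theta_{Q'}^{-1} \circ \Theta_Q$; this is an automorphism of $\frakK_q(\g)$ and of its integral form, with inverse $\tUptheta(Q,Q')$, and it satisfies $\tUptheta(Q'',Q') \circ \tUptheta(Q',Q) = \tUptheta(Q'',Q)$.

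Given the normalization, commutativity of the triangle is purely formal: for $b \in \calA_q(\n)$ one has $\tUptheta(Q',Q)(\Psi_Q(b)) = \Theta_{Q'}^{-1}(\Theta_Q(\Psi_Q(b))) = \Theta_{Q'}^{-1}(\jmath(b)) = \Psi_{Q'}(b)$, using $\Theta_Q \circ \Psi_Q = \jmath = \Theta_{Q'} \circ \Psi_{Q'}$; in particular $\tUptheta(Q',Q)$ carries $\frakK_{q,Q}(\g)$ isomorphically onto $\frakK_{q,Q'}(\g)$ and restricts there to $\Psi_{Q'} \circ \Psi_Q^{-1}$. The one place that requires looking under the hood is the existence of the normalized $\Theta_Q$: one has to verify that in the construction of the presentation — which runs through $\Psi_Q$ and the $R$-matrix invariants of cuspidal modules from \cite{KO22A} — the abstract generators can be chosen so that the images of $\Psi_Q(f_i)$ are exactly $\sfy_{i,0}$ for every $Q$, the remaining generators $\sfy_{i,m}$ with $m \ne 0$ then corresponding to the $Q$-adapted fundamentals and being genuinely quiver-dependent.

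For the preservation of the canonical basis I would transport the problem to $\hA_q(\n)$. By Theorem~\ref{thmx: B}, $\hA_q(\n)$ is a boson-extension of $\calA_q(\n)$, hence it carries a distinguished basis $\widehat{\sfB}$ obtained from the normalized dual-canonical basis $\tbfB^\up$ of $\calA_q(\n)$ by the boson-extension recipe, and this basis involves no quiver. It then suffices to show that $\Theta_Q(\sfL_q) = \widehat{\sfB}$ for \emph{every} Dynkin quiver $Q$, since then $\tUptheta(Q',Q)(\sfL_q) = \Theta_{Q'}^{-1}(\Theta_Q(\sfL_q)) = \Theta_{Q'}^{-1}(\widehat{\sfB}) = \sfL_q$. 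To identify $\Theta_Q(\sfL_q)$ with $\widehat{\sfB}$ I would use the standard uniqueness characterization of a canonical basis: $\Theta_Q$ intertwines the bar-involutions on the two sides (both fix the distinguished generators and send $q \mapsto q^{-1}$, and $\Theta_Q$ matches these generators), it carries the standard basis $\sfE_q$ — built from the KR-polynomial basis $\sfF_q$ as in \cite{JLO1} — onto the corresponding standard (PBW-type) basis of the boson-extension, and it intertwines the Nakajima-type order $\lN$ on $\calM_+$ with the order controlling the triangularity of $\widehat{\sfB}$; bar-invariance together with unitriangularity over $q\Z[q]$ then forces $\Theta_Q(L_q(m)) \in \widehat{\sfB}$ for all $m$. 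The restriction of this statement to the heart subring is exactly Theorem~\ref{thmx: A}, which supplies the base case, and it is consistent with Corollary~\ref{corx:cluster monomials are in canonical basis} and with the partial case of Conjecture~\ref{Conj: I} established via Theorem~\ref{thmx: A}.

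The main obstacle is precisely this compatibility claim: that the $Q$-dependent extension of $\Psi_Q$ to all of $\frakK_q(\g)$ respects not merely the algebra and bar structures but also the KR-polynomial and standard bases and the Nakajima-type order. What is really wanted is an explicit, $Q$-\emph{independent} description of $\Theta_Q(\sfF_q)$, $\Theta_Q(\sfE_q)$ and $\Theta_Q(\sfL_q)$ inside the boson-extension $\hA_q(\n)$; I expect the work to lie in matching, level by level, the quantum folded $T$-system recursions defining $\sfF_q$ with the analogous recursions on the boson side, and in checking that the partial orders correspond. Once these identifications are in hand, both the existence of $\tUptheta(Q',Q)$ as a basis-preserving automorphism and the commutativity of the triangle follow formally from the $Q$-independence of~\eqref{eq:presen}.
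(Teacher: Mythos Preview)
Your construction of the automorphism via the presentation and the verification of the diagram's commutativity are essentially the same as the paper's: it defines $\tUptheta(Q',Q) \seteq \tUptheta_{Q'} \circ \tUptheta_Q^{-1}$ on $\bbK_q$, where $\tUptheta_Q \colon \hA_q(\n) \to \bbK_q$ is the isomorphism of Theorem~\ref{thm: presentation}, and the commutativity on the heart follows formally.

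The genuine gap is in your approach to the preservation of $\sfL_q$. You propose to construct a $Q$-independent canonical basis $\widehat{\sfB}$ of the boson-extension $\hA_q(\n)$ and then show $\Theta_Q(\sfL_q) = \widehat{\sfB}$ for every $Q$. But no such intrinsic basis of $\hA_q(\n)$ is available: any PBW-type standard basis of $\hA_q(\n)$ requires an ordering of the generators $\sfy_{i,m}$, and the ordering that makes the triangularity match $\sfE_q$ under $\Theta_Q$ is precisely the $Q$-dependent one coming from $\phi_Q$. So the claim ``$\Theta_Q$ carries $\sfE_q$ onto the corresponding standard basis of the boson-extension, independently of $Q$'' is essentially what must be proved, and you have not given a construction of $\widehat{\sfB}$ that avoids this circularity. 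Your final paragraph correctly identifies this as the obstacle, but the proposed remedy---matching folded $T$-system recursions level by level on the boson side---is both vague and considerably harder than what is needed.

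The paper bypasses the whole issue by staying inside $\frakK_q$ and following \cite[Theorem~10.6]{FHOO}. The key structural facts are: (i)~$\tUptheta(Q',Q)$ commutes with both $\overline{(\cdot)}$ and the dual automorphism $\frakD_q$; (ii)~every dominant monomial factorizes as $m = \prod_{k} \frakD^k(m_k)$ with $m_k \in \calM_+^Q$, and correspondingly $E_q(m) = q^{\kappa(m,Q)} \prod^{\to}_k \frakD_q^k(E_q(m_k))$; (iii)~on the heart, Theorem~\ref{thmx: A} already gives $\tUptheta(Q',Q)(L_q(m_0)) = L_q(\psi_0(m_0))$ and, in fact, $\tUptheta(Q',Q)(E_q(m_0)) = E_q(\psi_0(m_0))$, since the HLO isomorphism matches the normalized dual-PBW basis with $\sfE_{q,Q}$. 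Combining (i)--(iii), $\tUptheta(Q',Q)(E_q(m))$ is again an $E_q$-basis element up to a power of $q$, and the bar-invariance plus unitriangularity characterization of $L_q(m)$ in Theorem~\ref{thm: L_q} forces $\tUptheta(Q',Q)(L_q(m)) = L_q(\psi(m))$. No basis of $\hA_q(\n)$ is ever needed; the preservation of $\sfL_q$ then yields the restriction to the integral form $\frakK_q$ as a consequence.
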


Let us choose a pair of Dynkin quivers, consisting of a Dynkin quiver $Q$ with a source at $i$ and its reflected Dynkin quiver $s_iQ$,
and set $\TT_i \seteq \tUptheta(s_iQ,Q)$. Concentrating on such a pair, we prove that
$\hA_{q}(\n)$ is a module over the Braid group $B_\g$:

\begin{thmx} \label{thmx: D} {\rm (Theorem~\ref{thm: braid main})} The automorphisms $\TT^{\pm}_i$ on $\hA_{q}(\n)$ can be described as follows:
\begin{align} \label{eq: br1}
\TT_{i}(\sfy_{j,m}) =
  \bc
\sfy_{j,m+\delta_{i,j}}  &  \text{if} \  \sfc_{i,j} \ge 0, \\[1ex]
\dfrac{ \left( \displaystyle \sum_{k=0}^{- \sfc_{i,j}} (-1)^{k} q_i^{ - \sfc_{i,j}/2 -k  } \sfy_{i,m}^{(k)}\sfy_{j,m}  \sfy_{i,m}^{(- \sfc_{i,j}-k)}  \right)}{ (q_i -q_i^{-1})^{- \sfc_{i,j}}  }  &\text{if} \  \sfc_{i,j} < 0,  \ec
 \end{align}
and
\begin{align}\label{eq: br2}
\TT_{i}^{-1}(\sfy_{j,m}) =
  \bc
\sfy_{j,m-\delta_{i,j}}  &  \text{if} \  \sfc_{i,j} \ge 0, \\[1ex]
\dfrac{ \left( \displaystyle \sum_{k=0}^{-\sfc_{i,j}} (-1)^{k} q_i^{ -\sfc_{i,j}/2 -k  } \sfy_{i,m}^{(-\sfc_{i,j}-k)} \sfy_{j,m}  \sfy_{i,m}^{(k)}  \right)}{ (q_i -q_i^{-1})^{-\sfc_{i,j} }  }  &\text{if} \  \sfc_{i,j} < 0,
\ec
 \end{align}
where
$$
\sfy_{i,m}^{(k)}  \seteq \dfrac{\sfy_{i,m}^{k} }{[k]_{q_i}!}.
$$
Furthermore,
$$\text{the set of automorphisms $\{ \TT^{\pm}_i \ | \ i \in I_\g \}$ induces a Braid group $B_\g$ action on $\hA_{q}(\n)$.}$$.
\end{thmx}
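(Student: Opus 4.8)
The plan is to break Theorem~\ref{thmx: D} into two parts: (I) deriving the explicit formulas \eqref{eq: br1}--\eqref{eq: br2} for $\TT_i^{\pm1}$ on the generators $\sfy_{j,m}$, and (II) deducing the braid relations of $B_\g$ from them (or, more conceptually, from the way $\TT_i$ is built).

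For part (I) I would start from the defining property in Theorem~\ref{thmx: C}: $\TT_i=\tUptheta(s_iQ,Q)$ for a Dynkin quiver $Q$ with a source at $i$, and the restriction of $\TT_i$ to the heart $\frakK_{q,Q}(\g)$ is $\Psi_{s_iQ}\circ\Psi_Q^{-1}$. Thus for a generator $\sfy_{j,m}$ lying in the window of $Q$ one has $\TT_i(\sfy_{j,m})=\Psi_{s_iQ}\bigl(\Psi_Q^{-1}(\sfy_{j,m})\bigr)$, and by Theorem~\ref{thmx: A} both $\Psi_Q$ and $\Psi_{s_iQ}$ carry the normalized dual-canonical basis $\tbfB^\up$ of $\calA_q(\n)$ to $\sfL_q$. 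The element $\Psi_Q^{-1}(\sfy_{j,m})$ is a simple dual PBW/cuspidal vector for the $Q$-adapted reduced word of $w_0$ (this is the input from the categorification via quiver Hecke algebras and the $R$-matrix computations of \cite{KO22A}), and rewriting it in the dual PBW data adapted to $s_iQ$ is exactly the transformation governed by (the dual of) Lusztig's braid operator $T_i$ \cite{L90,Kimura12}. Pushing the classical formula for $T_i$ on the divided powers $f_j^{(n)}$ through this dictionary, and carefully matching the normalizations of $\Psi_Q$, of $\tbfB^\up$, and of the $F_q$-normalized KR-polynomials, produces precisely the right-hand side of \eqref{eq: br1}; the inverse $\TT_i^{-1}$, computed from the reflection of the sink $i$ of $s_iQ$ back to $Q$ (the opposite Lusztig operator), merely reverses the order of the two $\sfy_{i,m}$-factors and gives \eqref{eq: br2}. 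Since each heart sees only finitely many of the $\sfy_{j,m}$, I would extend the formulas to all of $I_\g\times\Z$ by covariance: the right-hand sides of \eqref{eq: br1}--\eqref{eq: br2} are unchanged under the simultaneous shift $m\mapsto m+N$, corresponding to shifting the height function of $Q$; alternatively, for each target $(j,m)$ one picks a quiver whose window contains both $\sfy_{j,m}$ and the support of the asserted image. Here Theorem~\ref{thmx: B} is essential: since $\hA_q(\n)$ is presented on the $\sfy_{j,m}$, an endomorphism is determined by its values on them and is well defined as soon as it preserves \eqref{eq:presen}; so once \eqref{eq: br1} is verified to respect \eqref{eq:presen} and \eqref{eq: br2} is checked (a finite manipulation with the relations) to be its two-sided inverse, $\TT_i$ is a genuine automorphism. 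Note that, unlike Lusztig's operators, $\TT_i$ has infinite order ($\sfy_{i,m}\mapsto\sfy_{i,m+1}$), which is why these symmetries assemble into the braid group rather than the Weyl group.

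For part (II) the conceptual route combines two facts. First, $\tUptheta$ satisfies the cocycle identity $\tUptheta(Q'',Q')\circ\tUptheta(Q',Q)=\tUptheta(Q'',Q)$, which follows from the defining diagrams of Theorem~\ref{thmx: C} together with the fact that an automorphism of $\frakK_q(\g)$ preserving $\sfL_q$ is pinned down by the combinatorics it induces (readable on generators via Theorem~\ref{thmx: B}). Second, one needs the Coxeter combinatorics of reflection functors: for a quiver $Q$ suitably adapted to the pair $\{i,j\}$, the two reflection sequences $s_is_j\cdots$ and $s_js_i\cdots$ of length $m_{ij}$ (the order of $s_is_j$ in $\sfW$) make sense step by step, each intermediate quiver having the required source, and carry $Q$ to the same quiver. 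These give $\TT_i\TT_j\cdots=\tUptheta(s_is_j\cdots Q,\,Q)=\tUptheta(s_js_i\cdots Q,\,Q)=\TT_j\TT_i\cdots$ on $\frakK_{q,Q}(\g)$, and the covariance argument of part (I) promotes this to an identity of automorphisms of all of $\frakK_q(\g)\cong\hA_q(\n)$. As a cross-check one can instead substitute \eqref{eq: br1} into the braid words directly: the generators $\sfy_{k,m}$ with $k$ not adjacent to $\{i,j\}$ are dealt with immediately, and the rest reduces, inside each rank $2$ sub-configuration ($A_1\times A_1$, $A_2$, $B_2$, $G_2$), to the braid relations for Lusztig's $T_i$ on $U_q^-(\g)$ from \cite{L90}, plus a finite check that the spectral shifts forced on the active generators $\sfy_{i,\bullet},\sfy_{j,\bullet}$ agree on the two sides.

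The step I expect to be the main obstacle is the passage from local, heart-level identities to statements about the full ring $\frakK_q(\g)$: making the windows of different quivers and the shift-covariance fit together into a single globally well-defined automorphism, and being sure that a braid identity verified on one heart cannot secretly fail on generators outside it. Theorem~\ref{thmx: B} is the tool that makes this tractable, since it replaces ``$\TT_i$ is a well-defined automorphism of $\frakK_q(\g)$'' by the checkable ``\eqref{eq: br1} preserves \eqref{eq:presen}''. A close second is the constant-tracking in part (I): the factors $q_i^{-\sfc_{ij}/2-k}$, the divided powers, and the denominator $(q_i-q_i^{-1})^{-\sfc_{ij}}$ must be reconciled against the dual-versus-adjoint conventions built into $\Psi_Q$ and into the normalization of $\tbfB^\up$, which I would settle once and for all by computing the $\sfc_{ij}=-1,-2,-3$ cases by hand in rank $2$.
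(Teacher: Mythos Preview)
Your part~(I) is essentially what the paper does in Proposition~\ref{prop: ext of Lustig}, using the BKM relations of Theorem~\ref{thm: minimal pair dual pbw} (rather than Lusztig's $T_i$ directly, which matches only up to a scalar, Remark~\ref{rmk: up ro constant}) to rewrite $L_q^Q(s_i\al_j,m)$ in the generators $\sfx^Q_{k,m}$ case by case in $\sfc_{i,j}=-1,-2,-3$. The gap is in part~(II). Your cocycle argument needs a single quiver $Q$ from which \emph{both} reflection sequences $s_is_j\cdots$ and $s_js_i\cdots$ of length $h_{i,j}$ proceed step by step with each reflection at a source; but when $i\sim j$ no height function makes $i$ and $j$ simultaneous sources, so at most one braid word telescopes to $\tUptheta_Q^{-1}\circ\tUptheta_{wQ}$ from a given $Q$. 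You are left comparing $\tUptheta_{Q}^{-1}\circ\tUptheta_{s_is_j\cdots Q}$ with $\tUptheta_{Q'}^{-1}\circ\tUptheta_{s_js_i\cdots Q'}$ for necessarily distinct $Q,Q'$, and the cocycle identity gives no grip on that. Your cross-check via Lusztig's braid relations on $U_q^-(\g)$ also does not close the gap: Lusztig's $T_i$ sends $f_i$ out of $U_q^-(\g)$, so the relation restricted to $U_q^-(\g)$ is unavailable precisely at the step where one side of the braid word applies $\TT_i$ to $\sfy_{i,m}$, and the scalar discrepancy would have to be tracked through words of length up to~$6$.

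The paper's substitute for the missing common quiver is again the BKM relations~\eqref{eq: BKMc1}--\eqref{eq: BKMcp}: applied through \emph{several different} isomorphisms $\tUptheta_Q$ (switching quivers mid-computation), they yield reduction identities such as $\TT_i\TT_j(\sfy_{i,m})=\sfy_{j,m}$ in the simply-laced case (Proposition~\ref{prop: 3 move in simply-laced}) or $\TT_{n-1}\TT_n\TT_{n-1}(\sfy_{n-1,m})=\TT_n^{-1}(\sfy_{n-1,m})$ in the doubly-laced case (Proposition~\ref{prop: 4-move dobly-laced}), which collapse both braid words on $\sfy_{i,m}$ and $\sfy_{j,m}$ to the same generator $\sfy_{\bullet,m+1}$. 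For a third index $k$ adjacent to exactly one of $i,j$, both sides then lie in $\hA_q(\n)_m$ and are shown equal by a direct (and in the non-simply-laced cases rather lengthy) computation with the quantum Serre relations~\eqref{eq: q-serre y}. These BKM reduction identities are the missing ingredient in your plan for part~(II).
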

\vskip -2em
\noindent
Note that
\bna
\item Theorem~\ref{thmx: D} can be understood as a generalization the $B_\g$-action on the quantum group $U_q(\g)$ of Lusztig and
Saito~\cite{L902,Saito94} to $\hA_{q}(\n)$,
\item Theorem~\ref{thmx: D} and its proof for simply-laced $\g$ are announced in~\cite{KKOP21A}.
\ee

Let us explain the argument of the proof for Theorem~\ref{thmx: D}.
First of all, we obtain the formulae from Theorem~\ref{thmx: B}  and Theorem \ref{thmx: C} by comparing the presentations corresponding to $Q$ and $s_iQ$.
An attempt to prove Theorem~\ref{thmx: D} by directly applying the formulae in~\eqref{eq: br1} and~\eqref{eq: br2} brings out very complicated computations, which does not seem feasible by hand for non-simply-laced types, because we need to check that (i) $\{ \TT_{i}(\sfy_{j,m}) \}_{j \in I_\g}$ satisfy the relations in~\eqref{eq:presen} and (ii) $\{ \TT_i \}_{i \in I_\g}$ satisfy the braid relations, such as  $\TT_i\TT_j\TT_i(\sfy_{k,m})=\TT_j\TT_i\TT_j(\sfy_{k,m})$ and $\TT_i\TT_j\TT_i\TT_j(\sfy_{k,m})=\TT_j\TT_i\TT_j\TT_i(\sfy_{k,m})$ for all $(k,m) \in I_\g \times \Z$, as operators on  $\hA_{q}(\n)$. Instead, for each Dynkin quiver $Q$,
we understand $\sfy_{j,0}$ as an image of dual PBW vector of weight $\al_j$ in $\frakK_q(\g)$ through the isomorphism $\tUptheta_Q$
between $\bbK_q(\g)$ and $\hA_{q}(\n)$ (Theorem~\ref{thm: presentation}), and $\sfy_{j,m}$ as the \emph{$m$-th dual} of the image. Then using the isomorphisms $\tUptheta_Q$ and $\tUptheta_{s_iQ}$, the combinatorics of AR-quivers $\Gamma^Q$ and $\Gamma^{s_iQ}$, and relations among dual PBW vectors in $\calA_q(\n)$, we prove
Theorem~\ref{thmx: D} without complicated direct computations.

\medskip

This paper is organized as follows. In Section~\ref{sec: QVGR}, we review the definition  of the quantum virtual Grothendieck ring $\frakK_q(\g)$ and its properties. In particular, we emphasize the construction of $\calX_q(\g)$ and bases of $\frakK_q(\g)$. In Section~\ref{sec: Quivers and heart subring}, we first recall various quivers such as Dynkin quivers, AR-quivers,
and repetition quivers, which play important roles in the study of $\frakK_q(\g)$. Then we define subrings of $\frakK_q(\g)$, whose bases are compatible with the bases of  $\frakK_q(\g)$. We also investigate the multiplicative structure of $F_q(Y_{i,p})$ in terms of the notion of
$Q$-weight, which will be used in later sections. In Section~\ref{Sec: Aqn}, we review materials on the unipotent quantum coordinate algebra, especially their elements called the unipotent quantum minors,
the dual-canonical/upper-global basis, and the dual PBW basis. In Section~\ref{Sec: Iso}, we prove  Theorem~\ref{thmx: A} by applying the framework of \cite{HL15,HO19} due to
Hernadez--Leclerc--Oya. In Section~\ref{Sec: quiver Hecke}, we review the categorification theory of $\calA_q(\n)$ via quiver Hecke algebras
and investigate the interrelationship between the categorification and the isomorphism $\Psi_Q$ in Theorem~\ref{thmx: A}. In Section~\ref{Sec: Presentation}, based on the results in the previous sections, we prove Theorem~\ref{thmx: B}
(see also Proposition~\ref{prop: dual zero} and Corollary~\ref{cor: dual commute}). Then we prove Theorem~\ref{thmx: C} by adopting the methods in \cite{HL15,FHOO}. In Section~\ref{Sec: Braid}, we prove Theorem~\ref{thmx: D}, which is the culminating result of this paper. To prove this theorem, we use the equations between dual PBW vectors in $\calA_q(\n)$ and $F_q(Y_{i,p})$'s
in $\frakK_q(\g)$, circumventing direct calculations involving the expressions in Theorem~\ref{thmx: D}. Still, a large part of this paper is inevitably devoted to explicit computations for the proof of Theorem~\ref{thmx: D}.

\subsection*{Convention} Throughout this paper, we use the following convention.
\begin{enumerate}[\,\,$\bullet$]
\item For a statement $\mathtt{P}$, we set $\delta(\mathtt{P})$ to be $1$ or $0$
depending on whether $\mathtt{P}$ is true or not.
\item For $k,l \in \Z$ and $s \in \Z_{\ge1}$, we write $k \equiv_s l$ if $s$ divides $k-l$ and $k \not\equiv_s l$, otherwise.
\item For a totally ordered set $J = \{ \cdots < j_{-1} < j_{0} < j_{1} < j_{2} \cdots \}$, write
$$\prod_{j \in J}^\to A_j \seteq \cdots A_{j_{-1}}A_{j_{0}}A_{j_{1}}A_{j_{2}} \cdots.$$
\end{enumerate}

\section{Quantum virtual Grothendieck rings} \label{sec: QVGR}
In this section, we briefly review the definition of quantum virtual Grothendieck ring $\frakK_q(\g)$ by following \cite{JLO1}. We emphasize here that
$\frakK_q(\g)$ is isomorphic to the quantum Grothendieck ring of quantum loop algebra $\calU_q(\calL \g)$ when $\g$ is of simply-laced finite type.

\subsection{Cartan datum} Let $\g$ be a complex finite-dimensional simple Lie algebra of rank $n$.  We denote by $I= \{1,2,\ldots,n\}$ the index set, by  $\sfC=(\sfc_{i,j})_{1 \le i,j \le n}$
the Cartan matrix of $\g$, by $\Pi=\{ \al_i \}_{i \in I}$ the set of simple roots, by $\Pi^\vee=\{ h_i \}_{i \in I}$ the set of simple coroots and by $\wl$ the weight lattice of $\g$.

Note that there exists a diagonal matrix $\sfD={\rm diag}(d_i \in \Z_{\ge 1} \ | \ i \in I)$ such that
$$
\osfB = \sfD \sfC   \text{ and } \usfB = \sfC \sfD^{-1}  \text{ are  symmetric}.
$$
We take such $\sfD$ satisfying $\min(d_i \ | \ i \in I)=1$ throughout this paper. We have the symmetric bilinear form $( \cdot, \cdot)$ on $\wl$ such that
$$
( \al_i,\al_j ) = d_i \sfc_{i,j}=d_j\sfc_{j,i}  \quad \text{ and } \quad \lan h_i,\al_j \ran  = \dfrac{2 (\al_i,\al_j)}{(\al_i,\al_i)}.
$$
For $i,j \in I$, we set
\begin{align} \label{eq:def of hij}
h_{i,j} \seteq
\bc
2 & \text{ if } \sfc_{i,j}\sfc_{j,i}=0, \\
3 & \text{ if } \sfc_{i,j}\sfc_{j,i}=1, \\
4 & \text{ if } \sfc_{i,j}\sfc_{j,i}=2, \\
6 & \text{ if } \sfc_{i,j}\sfc_{j,i}=3.
\ec
\end{align}

We denote by $\Phi_{\pm}$ the \emph{set of positive $($resp. negative$)$ roots of $\g$.}
For $\al \in \Phi_+$, we set $d_{\al} \seteq (\al,\al)/2 \in \Z_{\ge 1}$.
For each $i \in I$, we write $\varpi_i$ the \emph{fundamental weight} of $\wl$ given by $\lan h_i,\varpi_i \ran =\delta_{i,j}$ and set $\rho \seteq \sum_{i\in I}\varpi_i$.
The free abelian group $\rl = \bigoplus_{i \in I}\Z \al_i$ is called the \emph{root lattice}.
We set $\rl^\pm  \seteq \pm\sum_{i \in I} \Z_{\ge 0} \al_i \subset \wl^\pm \seteq \pm\sum_{i \in I} \Z_{\ge 0} \varpi_i$. For $\mu,\zeta \in \wl$, we write $\mu \preceq \zeta$ if $\mu-\zeta \in \rl^+$.
For $\be = \sum_{i \in I} k_i\al_i \in \rl^+$, we set $\het(\be)\seteq\sum_{i \in I} k_i$ and
${\rm supp}(\be) \seteq\{  i \in I \ | \  k_i \ne 0 \}$.
For $\be  =\sum_{i \in I}n_i\al_i  \in \Phi_+$, the \emph{multiplicity} $\mul(\be)$  is defined by
$\mul(\be) \seteq \max\{n_i \,\mid \, i \in I\}$.
For $\al,\be \in \Phi$, we set
$$
p_{\be,\al} \seteq \max\{p\in \Z \ | \ \be -p \al \in \Phi \}
$$
(see~\eqref{eq: palbe} below).

Slightly modifying the usual notation of \emph{Dynkin diagram}, we use the following convention of Dynkin diagram $\Dynkin$,
consisting of the set of vertices $\Dynkin_0 =I$ and the set of edges $\Dynkin_1$,  as follows:
\begin{equation}\label{eq: finite}
\begin{aligned}
&A_n  \ \   \xymatrix@R=0.5ex@C=3.5ex{ *{\circled{2}}<3pt> \ar@{-}[r]_<{ 1 \ \  } & *{\circled{2}}<3pt> \ar@{-}[r]_<{ 2 \ \  } & *{\circled{2}}<3pt> \ar@{.}[r]_<{ 3  \ \  }
&*{\circled{2}}<3pt> \ar@{-}[r]_>{ \ \ n} &*{\circled{2}}<3pt> \ar@{}[l]^>{   n-1} }, \quad
 B_n  \ \   \xymatrix@R=0.5ex@C=3.5ex{ *{\bcircled{4}}<3pt> \ar@{-}[r]_<{ 1 \ \  } & *{\bcircled{4}}<3pt> \ar@{-}[r]_<{ 2 \ \  } & *{\bcircled{4}}<3pt> \ar@{.}[r]_<{ 3  \ \  }
&*{\bcircled{4}}<3pt> \ar@{-}[r]_>{ \ \ n} &*{\circled{2}}<3pt> \ar@{}[l]^>{   n-1} }, \quad
C_n  \ \   \xymatrix@R=0.5ex@C=3.5ex{ *{\circled{2}}<3pt> \ar@{-}[r]_<{ 1 \ \  } & *{\circled{2}}<3pt> \ar@{-}[r]_<{ 2 \ \  } & *{\circled{2}}<3pt> \ar@{.}[r]_<{ 3  \ \  }
&*{\circled{2}}<3pt> \ar@{-}[r]_>{ \ \ n} &*{\bcircled{4}}<3pt> \ar@{}[l]^>{   n-1} }, \  \allowdisplaybreaks\\
&D_n  \ \  \raisebox{1em}{ \xymatrix@R=2ex@C=3.5ex{  &&& *{\circled{2}}<3pt> \ar@{-}[d]_<{ n-1  }  \\
*{\circled{2}}<3pt> \ar@{-}[r]_<{ 1 \ \  } & *{\circled{2}}<3pt> \ar@{-}[r]_<{ 2 \ \  } & *{\circled{2}}<3pt> \ar@{.}[r]_<{ 3  \ \  }
&*{\circled{2}}<3pt> \ar@{-}[r]_>{ \ \ n} &*{\circled{2}}<3pt> \ar@{}[l]^>{   n-2} }},  \
E_{6}  \ \  \raisebox{1em}{   \xymatrix@R=2ex@C=3.5ex{  &&  *{\circled{2}}<3pt> \ar@{-}[d]^<{ 2\ \ }   \\
*{\circled{2}}<3pt>  \ar@{-}[r]_<{ 1 \ \  } & *{\circled{2}}<3pt> \ar@{-}[r]_<{ 3 \ \  } & *{\circled{2}}<3pt> \ar@{-}[r]_<{ 4 \ \  }
&*{\circled{2}}<3pt> \ar@{-}[r]_>{ \ \ 6} &*{\circled{2}}<3pt> \ar@{}[l]^>{   5} } },  \
E_{7}  \ \  \raisebox{1em}{    \xymatrix@R=2ex@C=3.3ex{  &&  *{\circled{2}}<3pt> \ar@{-}[d]^<{ 2\ \ }   \\
 *{\circled{2}}<3pt>  \ar@{-}[r]_<{ 1 \ \  } & *{\circled{2}}<3pt>  \ar@{-}[r]_<{ 3 \ \  } & *{\circled{2}}<3pt> \ar@{-}[r]_<{ 4 \ \  } & *{\circled{2}}<3pt> \ar@{-}[r]_<{ 5 \ \  }
&*{\circled{2}}<3pt> \ar@{-}[r]_>{ \ \ 7} &*{\circled{2}}<3pt> \ar@{}[l]^>{   6} } },  \allowdisplaybreaks \\
&  E_{8}  \ \  \raisebox{1em}{    \xymatrix@R=2ex@C=3.5ex{ &&  *{\circled{2}}<3pt> \ar@{-}[d]^<{ 2\ \ }   \\
 *{\circled{2}}<3pt>  \ar@{-}[r]_<{ 1 \ \  } & *{\circled{2}}<3pt>  \ar@{-}[r]_<{ 3 \ \  } & *{\circled{2}}<3pt> \ar@{-}[r]_<{ 4 \ \  } & *{\circled{2}}<3pt> \ar@{-}[r]_<{ 5 \ \  }   & *{\circled{2}}<3pt> \ar@{-}[r]_<{ 6 \ \  }
&*{\circled{2}}<3pt> \ar@{-}[r]_>{ \ \ 8} &*{\circled{2}}<3pt> \ar@{}[l]^>{   7} } }, \quad
F_{4}   \ \   \xymatrix@R=0.5ex@C=3.5ex{    *{\bcircled{4}}<3pt> \ar@{-}[r]_<{ 1 \ \  } & *{\bcircled{4}}<3pt> \ar@{-}[r]_<{ 2 \ \  }
&*{\circled{2}}<3pt> \ar@{-}[r]_>{ \ \ 4  } &*{\circled{2}}<3pt> \ar@{}[l]^>{   3} }, \quad
 G_2 \ \   \xymatrix@R=0.5ex@C=3.5ex{  *{\circled{2}}<3pt> \ar@{-}[r]_<{ 1 \ \  }  & *{\rcircled{6}}<3pt> \ar@{-}[l]^<{ \ \ 2  } }.
\end{aligned}
\end{equation}
Here $ \xymatrix@R=0.5ex@C=3.5ex{    *{\circled{t}}<3pt>}_k$ means that $(\al_k,\al_k)=t$. We write $i \sim j$ for $i,j \in I$ if $\sfc_{i,j} < 0$. We denote by
$d(i,j)$ the number of edges between $i$ and $j$ in $\Dynkin$. For instance, $d(i,j)=1$ if $i \sim j$.

\subsection{Weyl group $\weyl$, reduced sequences and convex orders}
Let us set up notations and briefly summarize some results associated with the Weyl group of $\g$, which can be found in \cite{Bou, Hum}.
We denote by $\weyl$ the Weyl group  of $\g$ and $s_i$ the simple reflection of $\weyl$ for each $i \in I$. A \emph{Coxeter element} of $\weyl$ is a product of the
form $s_{i_1}s_{i_2} \cdots s_{i_n}$ such that $\{ i_k \ | \ 1 \le k \le n \} =I$. Note that all Coxeter elements are conjugate and the order of Coxeter elements  is known as \emph{Coxeter number $\sfh$}.
We denote by $w_\circ$ the longest element in $\weyl$. Recall that $w_\circ$ induces an involution $^*: I \to I$ given by $w_\circ(\al_i) = -\al_{i^*}$.

For an element $w \in \weyl$, we write $\ell(w)$ for its length. A sequence $\ii=(i_1,\ldots,i_l) \in I^l$ is said to be \emph{reduced} if
$s_{i_1}s_{i_2}\cdots s_{i_l}$ is a reduced expression of an element $w\in \weyl$.
We denote by $I(w)$ the set of all reduced sequences of $w \in \weyl$.
Two reduced sequences $\ii=( i_1 , i_2, \ldots, i_l)$ and $\ii'=( j_1 , j_2, \ldots, j_l)$ are said to be \emph{commutation equivalent} if $\ii'$ can be obtained from
$\ii$ by transforming some adjacent components $(i,j)$ into $(j,i)$ when $i \not \sim j$.
In this case, we denote by $\ii \overset{c}{\sim} \ii'$.
 We denote $[\ii]$ the equivalent class of $\ii$
 with respect to $\overset{c}{\sim}$, and we often call it the \emph{commutation class} of $\ii$.

Let us consider a reduced sequence $\ii_\circ =( i_1, \ldots, i_\ell)$ in $I(w_\circ)$. Note that each $\ii_\circ$ induces a \emph{convex total order} on  $\Phi_+$ in the following way (cf.~\cite{Papi}). Setting  $\be_k^{\ii_\circ}\seteq  s_{i_1}s_{i_2} \cdots s_{i_{k-1}}(\al_{i_k})$,
we have $\{ \be_k^{\ii_\circ} \ | \ 1 \le k \le \ell \} =\Phi_+$ with $|\Phi_+|=\ell$ and
the total order defined by  $\be_k^{\ii_\circ} \le_{\ii_\circ} \be_l^{\ii_\circ} \iff k \le l$ satisfies the following property: For $\al,\be \in \Phi_+$ with
$\al+\be \in \Phi_+$, we have either $\al <_{\ii_\circ} \al+\be  <_{\ii_\circ} \be$ or
$\be \le_{\ii_\circ} \al+\be  \le_{\ii_\circ} \al$. One step further, for a commutation class $[\ii_\circ]$, we can define the \emph{convex partial order} $\prec_{[\ii_\circ]}$ on $\Phi_+$ by
$\al \prec_{[\ii_\circ]} \be$ if and only if $\al \le_{\ii'_\circ} \be$ for all $\ii'_\circ \in [\ii_\circ]$.

For a commutation class $[\ii_\circ]$ of $w_\circ$ and $\al \in \Phi_+$, we define $[\ii_\circ]$-residue of $\al$, denoted by
$\res^{[\ii_\circ]}(\al)$ to be $i_k \in I$ such that $\be^{\ii_\circ}_k =\al$. This is well-defined; that is, for any $\ii'_\circ=(j_1,\ldots,j_\ell) \in [\ii_\circ]$ with $\be^{\ii'_\circ}_l=\al$, we have $i_k=j_l$.
For a reduced sequence $\ii_\circ=(i_1, i_2, \ldots,i_\ell)$ of $w_\circ$, it is known that the expression $\ii_\circ'\seteq (i_2,\cdots,i_\ell, i_1^*)$ is also a reduced sequence
of  $w_\circ$. This operation is sometimes referred to as the \emph{combinatorial reflection functor} and we write $r_{i_1}\ii_\circ = \ii'_\circ$. Also it induces the operation on commutation classes of $w_\circ$ (i.e., $r_{i_1}[\ii_\circ] = [r_{i_1}\ii_\circ]$
is well-defined if there exists a reduced sequence $\ii_\circ' =(j_1,j_2,\ldots,j_\ell) \in [\ii_\circ]$ such that $j_1=i_1$).

\subsection{Exponents} \label{subsec: stat}
An element $\ue=\seq{\ek_\beta}$ of $\Z_{\ge 0}^{\Phi_+}$ is called an \emph{exponent}.
For an exponent $\ue$, we set $\wt(\ue) \seteq \sum_{\beta\in\Phi^+} \ek_\beta \be \in \rl^+$.

\begin{definition}[cf. \cite{McNa15,Oh18}] \label{def: bi-orders}
For a reduced sequence $\ii_\circ=(i_1,\ldots,i_{\ell})$ and a commutation class $[\ii_\circ]$,
we define the partial orders $<^\ttb_{\ii_\circ}$ and $\prec^\ttb_{[\ii_\circ]}$ on $\Z_{\ge 0}^{\Phi_+}$ as follows:
\begin{enumerate}[{\rm (i)}]
\item $<^\ttb_{\ii_\circ}$ is the bi-lexicographical partial order induced by $<_{\ii_\circ}$. Namely, $\ue<^\ttb_{\ii_\circ}\ue'$ if
\begin{itemize}
\item $\wt(\ue)=\wt(\ue')$,
\item there exists $\al\in\Phi^+$ such that
$\ek_\al<\ek'_\al$ and $\ek_\be=\ek'_\be$ for any $\be$ such that $\be<_{\ii_\circ}\al$,
\item there exists $\eta\in\Phi^+$ such that
$\ek_\eta<\ek'_\eta$ and $\ek_\zeta=\ek'_\zeta$ for any $\zeta$ such that $\eta<_{\ii_\circ}\zeta$.
\end{itemize}
\item \label{eq: crazy order} For exponents $\ue$ and $\ue'$, we define $\ue \prec^\ttb_{[\ii_\circ]} \ue'$ if the following conditions are satisfied:
$$\ue<^\ttb_{\ii'_\circ} \ue'\qt{for all $\ii'_\circ \in [\ii_\circ]$.}$$
\end{enumerate}
\end{definition}

\smallskip

We say that an exponent $\ue=\seq{\ek_\be} \in \Z_{\ge0}^{\Phi^+}$ is \emph{$[\ii_\circ]$-simple} if it is minimal with respect to the partial order $\prec^\ttb_{[\ii_\circ]}$. For a given $[\ii_\circ]$-simple exponent
$\us=\seq{\sk_\beta} \in \Z_{\ge0}^{\Phi^+}$, we call a cover\footnote{Recall that a \emph{cover} of $x$ in a poset $P$ with partial order $\prec$ is an element $y \in P$ such that $x \prec  y$ and there
does not exists $y' \in P$ such that $x \prec y' \prec y$.}
of $\us$ under $\prec^{\ttb}_{[\ii_\circ]}$ a \emph{$[\ii_\circ]$-minimal exponent for $\us$}.
A pair $\langle \hspace{-.6ex} \langle \al,\be \rangle \hspace{-.6ex} \rangle$ is called a {\em $[\ii_\circ]$-pair} if
$\al,\be\in\Phi^+$ satisfy $\be\not\preceq^\ttb_{[\ii_\circ]}\al$.
We regard a $[\ii_\circ]$-pair $\uup\seteq\pair{\al,\be}$ as an exponent
by $\pk_\al=\pk_\be=1$ and $\pk_\gamma=0$ for
$\gamma\not=\al,\be$.

Note that, for any $\ga \in \Phi_+ \setminus \Pi$ and $\ii_\circ \in I(w_\circ)$, there exist a $[\ii_\circ]$-minimal $[\ii_\circ]$-pair $\pair{\al,\be}$ such that $\al+\be =\ga$.

\subsection{Quantum virtual Grothendieck rings via $t$-quantized Cartan matrices} \label{subsec: Quantum virtual Grothendieck rings}
For $i,j \in I$, we set
$$  \usfc_{i,j}(t) \seteq   \delta(i=j)(t+t^{-1}) + \delta(i\ne j) \sfc_{i,j}$$
and call the matrix $\usfC(t) \seteq (\usfc_{i,j}(t))_{i,j \in I}$ the \emph{$t$-quantized Cartan matrix}. Then
$$  \usfB(t) \seteq \usfC(t) \sfD^{-1} \text{ is a symmetric matrix in ${\rm GL}_n(\Z[t^{\pm 1}])$}.$$
Regarding $\usfB(t)$ as a $\Q(q)$-valued matrix, the matrix $\usfB(t)$ is invertible and we denote its inverse by $\tusfB(t) = (\tusfB_{i,j}(t))_{i,j \in I}$. We write
$$ \tusfB_{i,j}(t) = \sum_{u \in \Z} \tusfb_{i,j}(u)t^u \in \Z(\!( q )\!) $$
for the Laurent expansion of the $(i,j)$-entry $\tusfB_{i,j}(t)$ at $t=0$. Note that $\tusfb_{i,j}(u)=\tusfb_{j,i}(u)$ for $i,j \in I$ and $u \in \Z$.

\begin{lemma}[\cite{HL15,FO21,FM21,KO22}]\label{lem: non-zero b}
For any $i,j \in I$ and $u \in \Z$, we have
\ben
\item \label{it: non-zero b} $\tusfb_{i,j}(u) =0$ if $u \le d(i,j)$ or $d(i,j) \equiv_2 u$,
\item$\tusfb_{i,j}(d(i,j)+1) =\max(d_i,d_j)$,
\item $\tusfb_{i,j}(u+\sfh) = \tusfb_{i,j^*}(u)$ and $\tusfb_{i,j}(u+2\sfh) = \tusfb_{i,j}(u)$ for $u \ge 0$,
\item $\tusfb_{i,j}(\sfh-u) = \tusfb_{i,j^*}(u)$  for $0 \le u \le \sfh$ and $\tusfb_{i,j}(2\sfh-u) = -\tusfb_{i,j}(u)$  for $0 \le u \le 2\sfh$,
\item $\tusfb_{i,j}(u) \ge 0$ for $0 \le u \le \sfh$ and $\tusfb_{i,j}(u) \le 0$ for $\sfh \le u \le 2\sfh$.
\ee
\end{lemma}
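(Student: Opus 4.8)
The plan is to extract everything from the defining relation $\usfB(t)\,\tusfB(t)=I$, i.e.\ $\usfC(t)\sfD^{-1}\tusfB(t)=I$. First, writing $\usfC(t)=t^{-1}\bigl((1+t^{2})I+tA\bigr)$, where $A$ is the off-diagonal part of the numerical Cartan matrix $\sfC$ and $(1+t^{2})I+tA$ reduces to $I$ at $t=0$, one sees that $\usfC(t)^{-1}\in t\cdot\Mat_{n}\bigl(\Z\lf t\rf\bigr)$; hence $\tusfb_{i,j}(u)=0$ for $u\le 0$ and $\tusfb_{i,i}(1)=d_{i}$. Clearing denominators and comparing coefficients of $t^{u+1}$ then yields the three-term recursion
\[
\tusfb_{i,j}(u+1)=-\,\tusfb_{i,j}(u-1)+\sum_{k\,\sim\, i}|\sfc_{k,i}|\,\tusfb_{k,j}(u)+d_{i}\,\delta_{i,j}\,\delta_{u,0}\qquad(u\ge 0),
\]
which is the engine for (1) and (2). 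The structural fact I use throughout is that $\Dynkin$ is a \emph{tree}: for $k\sim i$ one has $d(k,j)=d(i,j)\pm 1$, and $d(k,j)=d(i,j)-1$ holds for a \emph{unique} neighbour $k_{0}$ of $i$, namely the one on the geodesic toward $j$.

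For (1) I would induct on $u$. If $u\le d(i,j)$ then $\tusfb_{i,j}(u-1)=0$ and every summand $\tusfb_{k,j}(u)$ vanishes, since $d(k,j)\ge d(i,j)-1\ge u$; since moreover $d(k,j)\equiv d(i,j)+1\pmod 2$, the congruence obstruction $u\equiv_{2}d(i,j)$ propagates likewise, and the cases $u\le 1$ are handled by the normalisation above. For (2) I would induct on $d:=d(i,j)$: the term $\tusfb_{i,j}(d-1)$ vanishes by (1), $\tusfb_{k,j}(d)$ is nonzero only for $k=k_{0}$, and there $\tusfb_{k_{0},j}(d)=\tusfb_{k_{0},j}\bigl(d(k_{0},j)+1\bigr)=\max(d_{k_{0}},d_{j})$ by induction, so $\tusfb_{i,j}(d+1)=|\sfc_{k_{0},i}|\,\max(d_{k_{0}},d_{j})$; a short case check, using that a finite-type $\Dynkin$ has at most one multiple edge which separates the long vertices from the short ones, then gives $|\sfc_{k_{0},i}|\,\max(d_{k_{0}},d_{j})=\max(d_{i},d_{j})$.

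Parts (3) and (4) are the \emph{global} periodicity and duality statements. The symmetry $\usfC(t^{-1})=\usfC(t)$ gives $\tusfB_{i,j}(t^{-1})=\tusfB_{i,j}(t)$, and since $\tusfB_{i,j}(t)$ is a rational function vanishing at $t=0$ and $t=\infty$ whose denominator divides $1-t^{2\sfh}$, this already yields the period $2\sfh$ and the reflection $\tusfb_{i,j}(2\sfh-u)=-\tusfb_{i,j}(u)$, hence $\tusfb_{i,j}(\sfh)=0$. The remaining relations involving the Dynkin involution $*$, namely $\tusfb_{i,j}(\sfh-u)=\tusfb_{i,j^{*}}(u)$ and the $\sfh$-shift formula (which then follow from one another together with the above), I would obtain either from a Coxeter-element factorization of $\usfC(t)$ along an order on $I$ adapted to a Coxeter element of $\weyl$, so that $-w_{\circ}$, equivalently the passage $j\mapsto j^{*}$, becomes visible in the closed form, or by transporting the simply-laced case, where $\usfB(t)^{-1}$ is the usual inverse quantum Cartan matrix and these identities are classical \cite{HL15}, through the folding relating $\usfC(t)$ to that of the simply-laced unfolding $\bfg$ of $\g$ as developed in \cite{FO21,FM21,KO22}. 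Note that (4) reduces the $[\sfh,2\sfh]$ half of (5) to the $[0,\sfh]$ half, since there $\tusfb_{i,j}(u)=-\tusfb_{i,j}(2\sfh-u)$ with $0\le 2\sfh-u\le\sfh$.

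The one genuinely delicate point, which I expect to be the main obstacle, is the positivity $\tusfb_{i,j}(u)\ge 0$ for $0\le u\le\sfh$: the sign of $-\tusfb_{i,j}(u-1)$ in the recursion rules out a naive inductive sign argument. The route I would take is to produce a manifestly nonnegative closed expression for $\tusfb_{i,j}(u)$ on the fundamental range $0\le u<\sfh$ --- for instance along the lines of the Auslander--Reiten-quiver combinatorics recalled in Section~\ref{sec: Quivers and heart subring}, realising $\tusfb_{i,j}(u)$ as a count, weighted by the $d_{k}$'s, of vertices or intervals in $\Gamma^{Q}$, or through a positive combinatorial/geometric interpretation of $\tusfB(t)$ as in the simply-laced theory --- failing which one invokes the type-by-type verifications already recorded in \cite{HL15,FO21,FM21,KO22}. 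Combined with the reduction from (4), this finishes (5).
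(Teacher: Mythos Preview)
The paper does not prove this lemma: it is stated with the citations \cite{HL15,FO21,FM21,KO22} and immediately used. So there is no in-paper argument to compare against; what you have written is already more detailed than what the paper offers.

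Your treatment of (1) and (2) via the three-term recursion extracted from $\usfB(t)\tusfB(t)=I$ is the standard route and is correct; the bookkeeping in your inductive step for (1) is a little compressed (you should be proving $\tusfb_{i,j}(u+1)=0$ under the hypothesis $u+1\le d(i,j)$, so the chain is $u\le d(i,j)-1\le d(k,j)$), but the substance is fine.

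For (3)--(5) your outline is honest but has genuine gaps you should be aware of. First, the assertion that the denominator of $\tusfB_{i,j}(t)$ divides $1-t^{2\sfh}$ is itself one of the nontrivial inputs; it is proved in \cite{KO22} (and \cite{FM21}) via an explicit factorization of $\usfC(t)$ tied to a Coxeter element, not by a soft symmetry argument, so you cannot simply assume it on the way to the $2\sfh$-periodicity. Second, the $*$-twist identities in (3)--(4) do require exactly the Coxeter-element/height-function machinery you allude to; the ``transport from simply-laced via folding'' alternative you mention does not apply here, since $\usfC(t)$ is the $t$-quantized Cartan matrix of $\g$ itself and is not obtained by folding a simply-laced one (that would be the \emph{quantum} Cartan matrix story of \cite{FO21}, which is a different specialization). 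Third, for (5) the cited references obtain positivity from closed formulas in each type (see Proposition~\ref{prop:formulas for inverses of tQCMs} for the non-simply-laced cases), and there is at present no uniform sign argument of the kind you sketch; so ``failing which one invokes the type-by-type verifications'' is in fact the actual proof in the literature.
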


As seen in Lemma~\ref{lem: non-zero b}, $\tusfB_{i,j}(t)$ is periodic. Thus it is enough to compute
\begin{align} \label{eq:essential part of Laurent expansion of tQCM}
\tde_{i,j}(t) \seteq \sum_{u=0}^{\sfh-1} \tusfb_{i,j}(u) t^u \quad \text{ for each } i,j \in I.
\end{align}
Note that $\tde_{i,j}(t)$'s are completely calculated (see~\cite{HL15,FO21,KO22}). We exhibit  $\tusfB_{i,j}(t)$ of non-simply-laced types only:

\begin{proposition}  \label{prop:formulas for inverses of tQCMs}
For $i,j\in I$, the closed-form formula of $\tusfB_{i,j}(t)$ $(i \le j)$ is given as follows:
\ben
\item For $\Dynkin$ of type $B_n$ or $C_n$ with their Coxeter number $\sfh=2n$, we have
\begin{align} \label{eq: B_n C_n tusfB}
\tde_{i,j}(t) = \bc
\max(d_i,d_j) \sum_{s=1}^i  t^{n-i-1+2s}  & \text{ if } i \le j=n, \\
\max(d_i,d_j) \sum_{s=1}^i  (t^{j-i+2s-1} + t^{2n-j-i+2s-1})  & \text{ if } i \le j <n.
\ec
\end{align}
\item For $\Dynkin$ of type $F_4$ with its Coxeter number $\sfh=12$, we have
\begin{align*}
&\tde_{1,1}(t) = 2(t+t^{5}+t^{7}+t^{11} ),  && \tde_{1,2}(t)  = 2(t^{2}+t^{4}+2t^{6}+t^{8}+t^{10} ),  \allowdisplaybreaks\\
&\tde_{1,3}(t)  = 2(t^{3}+t^{5}+t^{7}+t^{9} ),  && \tde_{1,4}(t)  = 2(t^{4}+t^{8} ),  \allowdisplaybreaks\\
&\tde_{2,2}(t)  = 2(t+2t^{3}+3t^{5}+3t^{7}+2t^{9}+t^{11} ),   && \tde_{2,3}(t)  = 2(t^{2}+2t^{4}+2t^{6}+2t^{8}+t^{10} ),  \allowdisplaybreaks\\
&\tde_{2,4}(t)  = 2(t^{3}+t^{5}+t^{7}+t^{9} ),   && \tde_{3,3}(t)  = t+2t^{3}+3t^{5}+3t^{7}+2t^{9}+t^{11},  \allowdisplaybreaks\\
&\tde_{3,4}(t)  = t^{2}+t^{4}+2t^{6}+t^{8}+t^{10},   &&  \tde_{4,4}(t)  = t+t^{5}+t^{7}+t^{11}.
\end{align*}
\item For $\Dynkin$ of type $G_2$ with its Coxeter number $\sfh=6$, we have
$$
\tde_{1,1}(t) = t+2t^{3}+t,  \qquad \tde_{1,2}(t)  =3(t^{2}+t^{4}), \qquad \tde_{2,2}(t)  =3(t+2t^{3}+t).
$$
\ee
\end{proposition}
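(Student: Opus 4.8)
The plan is to reduce the statement to the ordinary matrix inverse $\usfC(t)^{-1}$ and then to run that computation through the folding of the ambient simply-laced root system. First, since $\tusfB(t)=\usfB(t)^{-1}=\sfD\,\usfC(t)^{-1}$, we have $\tusfB_{i,j}(t)=d_i\,(\usfC(t)^{-1})_{i,j}$. Next, by Lemma~\ref{lem: non-zero b} the coefficients $\tusfb_{i,j}(u)$ are, up to a sign twist by $i\mapsto i^*$, periodic in $u$ with period governed by $\sfh$, so the polynomial $\tde_{i,j}(t)$ of~\eqref{eq:essential part of Laurent expansion of tQCM} already encodes the whole rational function $\tusfB_{i,j}(t)$; and because $w_\circ=-\id$, hence $i^*=i$ for all $i$, for each of $B_n$, $C_n$, $F_4$, $G_2$, this relation collapses to
\[
\tusfB_{i,j}(t)=\frac{\tde_{i,j}(t)}{1+t^{\sfh}}.
\]
Thus it suffices to compute $\usfC(t)^{-1}$ for these four types and extract the degree-$(<\sfh)$ part of the expansion of each entry at $t=0$.

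For the computation I would use the unfolding. Let $\bfg$ be the simply-laced type that unfolds to $\g$ — namely $A_{2n-1}$, $D_{n+1}$, $E_6$, $D_4$ for $\g=B_n$, $C_n$, $F_4$, $G_2$ — with diagram automorphism $\sigma$, and write $\bar i\subseteq I_\bfg$ for the $\sigma$-orbit of a node $i\in I_\g$. A $\sigma$-orbit contains no edge of $\Dynkin_\bfg$ and the Coxeter numbers agree, $\sfh_\bfg=\sfh_\g=:\sfh$, so the classical folding of Cartan matrices lifts verbatim to the $t$-deformed ones: under the isomorphism $\Q^{I_\g}\isoto(\Q^{I_\bfg})^\sigma$, $e_i\mapsto\sum_{i'\in\bar i}e_{i'}$, the matrix $\usfC_\bfg(t)$ of $\bfg$ preserves $(\Q^{I_\bfg})^\sigma$ and there becomes the matrix $\usfC_\g(t)$ of $\g$ (the diagonal $t+t^{-1}$ is untouched). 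As inversion is $\sigma$-equivariant, the same holds for $\usfC_\bfg(t)^{-1}$, which yields $(\usfC_\g(t)^{-1})_{i,j}=\sum_{j'\in\bar j}(\usfC_\bfg(t)^{-1})_{i',j'}$ for any representative $i'\in\bar i$. Feeding this into the first paragraph and using $\sfD_\bfg=\id$, the identity $d^\g_i=|\bar i|$ (valid for all four foldings under the normalization $\min_i d_i=1$), and the fact that $w_\circ^\bfg$ maps each $\sigma$-orbit to itself, one obtains the clean rule
\[
\tde^\g_{i,j}(t)=\sum_{i'\in\bar i}\ \sum_{j'\in\bar j}\tde^{\bfg}_{i',j'}(t),
\]
exhibiting the desired polynomials as sums of their simply-laced counterparts over ``orbit rectangles.''

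It then remains to insert the known closed forms for $\tde^{\bfg}_{i',j'}(t)$ in types $A_{2n-1}$, $D_{n+1}$, $E_6$, $D_4$ (recorded in \cite{HL15,FO21,KO22}), carry out the orbit sums, and simplify; I expect this bookkeeping, rather than the structural reductions above, to be the only real work. For $F_4$ and $G_2$ it is immediate, and one may instead invert the $4\times4$, resp.\ $2\times2$, matrix $\usfC(t)$ directly over $\Q(t)$ and expand via $1/(1+t^{\sfh})=\sum_{m\ge0}(-1)^m t^{m\sfh}$ as an independent check. For $B_n$ and $C_n$ the rank-general sum must be matched against the two-case expression in~\eqref{eq: B_n C_n tusfB}, which calls for care at the distinguished end node $n$ (short or long) and, in the $D_{n+1}\to C_n$ case with $n$ even, with the fact that $w_\circ^{D_{n+1}}\ne-\id$ — there the $*$-twist in $\tde^\bfg$ disappears only after summing over an orbit, which is precisely why the orbit-rectangle rule carries no sign. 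An approach bypassing the unfolding is also available as a cross-check: solve the three-term recurrence $-f_{k-1}+(t+t^{-1})f_k-f_{k+1}=\delta_{k,j}$ along the path $\Dynkin_\g$ — whose homogeneous solutions are spanned by $t^{\pm k}$ — subject to the endpoint conditions involving the off-diagonal entries $-2$ (for $B_n$, $C_n$) or $-3$ (for $G_2$) and the symmetrizers $d_i$, and then truncate the resulting rational expressions using the identity of the first paragraph.
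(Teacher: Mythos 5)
Your proposal is correct in its structure, and it does real work: the paper does not actually prove Proposition~\ref{prop:formulas for inverses of tQCMs} — it merely asserts the values with a pointer to \cite{HL15,FO21,KO22} — so a derivation via unfolding is a genuine contribution rather than a paraphrase of the paper's argument. Two small points should be tightened. First, the passage from $\tusfB_{i,j}(t)$ to $\tde_{i,j}(t)/(1+t^\sfh)$ when $w_\circ=-\id$ uses the twisted anti-periodicity $\tusfb_{i,j}(u+\sfh)=-\tusfb_{i,j^*}(u)$, not the formula printed in Lemma~\ref{lem: non-zero b}(3), which is missing a sign; you should say explicitly that the correct form follows from Lemma~\ref{lem: non-zero b}(4): $\tusfb_{i,j}(u+\sfh)=\tusfb_{i,j}\bigl(2\sfh-(\sfh-u)\bigr)=-\tusfb_{i,j}(\sfh-u)=-\tusfb_{i,j^*}(u)$. (As a consequence of the same sign, the displayed $G_2$ entries $\tde_{1,1}(t)=t+2t^3+t$ and $\tde_{2,2}(t)=3(t+2t^3+t)$ in the statement are clearly misprints for $t+2t^3+t^5$ and $3(t+2t^3+t^5)$; your argument recovers the correct values.) Second, the orbit-rectangle identity $\tde^{\g}_{i,j}(t)=\sum_{i'\in\bar i}\sum_{j'\in\bar j}\tde^{\bfg}_{i',j'}(t)$ as stated needs both the relation $(\usfC_\g(t)^{-1})_{i,j}=\sum_{j'\in\bar j}(\usfC_\bfg(t)^{-1})_{i',j'}$ (any $i'\in\bar i$) obtained from the intertwining $\usfC_\bfg(t)\circ\iota=\iota\circ\usfC_\g(t)$, and the observation that the inner sum is independent of the chosen $i'\in\bar i$ — this second point is exactly the $\sigma$-equivariance of $\usfC_\bfg(t)^{-1}$ applied in the \emph{row} index, which is what converts the $d^{\g}_i=|\bar i|$ prefactor into a sum over $\bar i$; it is worth stating explicitly since it is what makes the rectangle rule symmetric in $i$ and $j$. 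With these clarifications, the remaining work is the routine substitution of the simply-laced $\tde^{\bfg}$ formulas and simplification of the orbit sums, which is what you correctly identify as bookkeeping; your alternative paths (direct inversion of the $2\times2$ and $4\times4$ matrices, or the three-term recurrence along the $\Dynkin$-path) serve as good cross-checks, and the answer for $C_3$ agrees with \eqref{eq:essential part of inv qQCM in C3}.
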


For $i,j \in I$ and $k\in \Z$, we set
\begin{align} \label{eq:def of tde ij k}
  \tde_{i,j}[k] \seteq  \tusfb_{i,j}(k-1).
\end{align}

Based on Lemma~\ref{lem: non-zero b}~\eqref{it: non-zero b}, we fix a parity function $\ep: I \to \{0,1\}$ such that $\ep_i \ne \ep_j$ for $i \sim j$. We define a subset $\hDynkin_0$
of $I \times \Z$ as follows:
\begin{align} \label{eq: tDynkin 0}
\hDynkin_0 = \{ (i,p) \in I \times \Z \ | \  p \equiv_2 \ep_i \}.
\end{align}

Let $q$ be an indeterminate with a formal square root $q^{1/2}$. For $i \in I$, we set $q_i \seteq q^{d_i}$.

\begin{definition}[\cite{Nak04,Her04,VV03,KO22}] \label{def:quantum torus Xqg}
We define a quantum torus $(\calX_q(\g),*)$ to be the $\Z[q^{\pm 1/2}]$-algebra generated by $\{ \tX_{i,p}^{\pm} \ | \ (i,p) \in \hDynkin_0\}$ subject to following relations:
$$   \tX_{i,p} * \tX_{i,p}^{-1} = \tX_{i,p}^{-1} * \tX_{i,p} =1 \quad \text{ and } \quad \tX_{i,p} *\tX_{j,s} = q^{\ucalN(i,p;j,s)}\tX_{j,s}*\tX_{i,p},$$
where
\begin{align*}
\ucalN(i,p;j,s) = \tusfb_{i,j}(p-s-1)-\tusfb_{i,j}(s-p-1)-\tusfb_{i,j}(p-s+1)+\tusfb_{i,j}(s-p+1).
\end{align*}
\end{definition}
\noindent
If there is no danger of confusion, we write $\calX_q$ instead of $\calX_q(\g)$.
Note that $\tX_{i,p}*\tX_{j,p} = \tX_{j,p}*\tX_{i,p}$ for $(i,p),(j,p) \in \hDynkin_0$ by Lemma~\ref{lem: non-zero b}.

\smallskip

There exists a $\Z$-algebra anti-involution $\overline{( \cdot )}$ on $\calX_q$ (\cite{Her04,KO22}) given by
\begin{align} \label{eq:bar-inv on Xq}
    q^{1/2} \longmapsto  q^{-1/2} \quad \text{ and } \quad  \tX_{i,p}  \longmapsto   q_i\tX_{i,p}.
\end{align}

We say that $\tm \in \calX_q$ is a monomial if it is a product of generators $\tX_{i,p}^{\pm}$ and $q^{\pm 1/2}$; i,e;
\begin{align}  \label{eq: uip}
\tm = q^a\prod_{(i,p) \in \hDynkin_0}^{\to} \tX_{i,p}^{u_{i,p}(\tm)}  \quad \text{ for some  $a \in \tfrac{1}{2}\Z$ and $u_{i,p}(\tm) \in \Z$.}
\end{align}
 A monomial $\tm$ is called \emph{dominant} if
it is a product of generators $\tX_{i,p}$ (not $\tX_{i,p}^{-1}$) and $q^{\pm 1/2}$; equivalently $u_{i,p}(\tm) \in \Z_{\ge 0}$ in ~\eqref{eq: uip} for all $(i,p) \in \hDynkin_0$.
A monomial $\tm \in \calX_q$ is said to be
\emph{commutative} if $\tm= \overline{\tm}$.
We denote by
$\calX$ the commutative Laurent polynomial ring obtained by specializing $q$ at $1$ to $\calX_q$. We write the map $\ev_{q=1}: \calX_q \to \calX$ and set $X_{i,p} \seteq \ev_{q=1}(q^a\tX_{i,p})$ for any $a \in \tfrac{1}{2}\Z$.
Then we have
$$\calX \simeq \Z[X_{i,p}^{\pm 1} \ | \ (i,p) \in \hDynkin_0 ].$$

\begin{remark} To a monomial $m \in \calX$, we can associate the commutative monomial $\um \in \calX_q$
such that $\ev_{q=1}(\um)=m$. Hereafter, we use $\um$ for commutative monomials in $\calX_q$.
\end{remark}

We set $\calM$ the set of all monomials in $\calX$ and $\calM_+$ the subset of dominant monomials in $\calM$.
For a monomial $m \in \calM$, we set $$\range(m) \seteq [a,b], $$ where $$\text{$a = \min( p \in \Z \ |  \ X_{i,p}^{\pm1} \text{ is a factor of $m$})$ and
$b = \max( p \in \Z \ |  \ X_{i,p}^{\pm 1} \text{ is a factor of $m$})$}.$$ For $m \in \calX$ with $\range(m)=[a,b]$, the monomial $m$  is said to be \emph{right-negative} if $X_{i,b}$ has negative power for every factor  $X_{i,b}$ of $m$ ($i \in I$).

For an element $x \in \calX$, we denote by $\calM(x)$ the set of all monomials appearing in $x$. We set
$$  \range(x) \seteq \bigcup_{m \in \calM(x)} \range(m).$$
Finally, for a monomial $\tm \in \calX_q$ and an element $\tx \in \calX_q$, we set
$$ \range(\tm) \seteq \range(\ev_{q=1}(\tm))  \quad \text{ and } \quad \range(\tx) \seteq \displaystyle \scup_{\tm :\text{monomial in $\tx$}} \range( \tm  ).$$

For monomials $\tm,\tm' \in \calX_q$, we set
$$    \ucalN (\tm,\tm') \seteq \sum_{(i,p),(j,s) \in \hDynkin_0} u_{i,p}(\tm)u_{j,s}(\tm') \ucalN(i,p;j,s).$$
For commutative monomials $\um,\um'$, the monomial
$$   \um \cdot \um' \seteq q^{- \ucalN(\um,\um')/2}  \utm* \utm' \text{ is commutative again.}$$

For each $(i,p) \in I \times \Z$ with $(i,p-1) \in \hDynkin_0$, we define the commutative monomial $\tB_{i,p} \seteq \underline{B_{i,p}}$ where
$$    B_{i,p} = X_{i,p-1} X_{i,p+1}  \prod_{j \sim i}  X_{j,p}^{\sfc_{j,i}} \in \calX.$$
For each $i \in I$, we denote by $\frakK_{i,q}(\g)$ the $\Z[q^{\pm 1/2}]$-algebra of $\calX_q$ generated by
$$    \tX_{i,l} * (1+ q_i^{-1}\tB^{-1}_{i,l+1})   \text{ and } \tX_{j,s}^{\pm 1} \quad \text{ for } j \in I \setminus \{ 0 \} \text{ and } (i,p),(j,s) \in \hDynkin_0.$$

\begin{definition}[\cite{Her04,JLO1}]
We set
$$
\frakK_q(\g) \seteq \bigcap_{i \in I} \frakK_{i,q}(\g)
$$
and call it the  \emph{quantum virtual Grothendieck ring associated to $\usfC(t)$}. In particular, it is known as
\emph{quantum  Grothendieck ring}
when $\g$ is of simply-laced type.
\end{definition}
If there is no danger of confusion, we write $\frakK_q$ instead of $\frakK_q(\g)$.

\subsection{Bases} For monomials $m,m' \in \calX$, we write
\begin{align} \label{eq: Nakajima order}
m \leN m' \text{ if and only if } m^{-1}m' \text{ is a product of elements in } \{ B_{i,p} \ | \  (i,p+1) \in \hDynkin_0 \}.
\end{align}
This defines the \emph{Nakajima partial order} on $\calM$.
For monomials $\tm,\tm' \in \calX_q$, we write $\tm \leN \tm'$ if $\ev_{q=1}(\tm) \leN \ev_{q=1}(\tm')$.

\begin{theorem} [\cite{Her04,JLO1}, cf.~\cite{FM01}] \label{thm: F_q}
For each dominant monomial $m \in \calM_+$, there exists a unique element $F_q(m)$ of $\frakK_q$ such that
$$\text{$\um$ is the unique dominant monomial appearing in $F_q(m)$
and $\overline{F_q(m)}=F_q(m)$.}$$ Furthermore,
\bna
\item a monomial appearing in $F_q(m)-\um$ is strictly less than $\um$ with respect to $\leN$;
\item $\sfF_q \seteq \{ F_q(m) \ | \  m \in \calM_+ \}$ forms a $\Z[q^{\pm 1/2}]$-basis of $\frakK_q$;
\item $\frakK_q$ is generated by the set $\{ F_q(X_{i,p}) \ | \ (i,p) \in \hDynkin_0 \}$ as a $\Z[q^{\pm 1/2}]$-algebra;
\item \label{eq: range}
all $\calX_q$-monomials of $F_q(X_{i,p})- \underline{X_{i,p}}- \underline{X_{i^*,p+\sfh}}$  are product of $\tX_{j,u}^{\pm 1}$ with $p<u<p+\sfh$.
\ee
\end{theorem}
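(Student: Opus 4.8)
The plan is to follow the $t$-deformed Frenkel--Mukhin algorithm of Hernandez \cite{Her04}, realizing each $\frakK_{i,q}(\g)$ as the kernel of a deformed screening operator and constructing $F_q(m)$ by successive corrections, reading off the refinements (a)--(d) from the construction as in \cite{JLO1}. First I would record a structural description of $\frakK_{i,q}(\g)$: writing an element of $\calX_q$ as a noncommutative Laurent polynomial in $\tX_{i,\bullet}^{\pm1}$ with coefficients in the subtorus generated by $\{\tX_{j,s}^{\pm1}: j\ne i\}$, membership in $\frakK_{i,q}(\g)$ is governed by $\mathfrak{sl}_2$-type combinatorics of the $i$-string through each monomial: the coefficients assemble, string by string, into $q$-deformed characters of finite-dimensional $\mathfrak{sl}_2$-modules. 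This yields the key lemma: a nonzero $x\in\frakK_q=\bigcap_i\frakK_{i,q}(\g)$ must contain a dominant monomial, and in fact every $\leN$-maximal monomial of $x$ is dominant. Indeed, if a $\leN$-maximal monomial $m$ of $x$ carried $X_{i,p}$ to a negative power, the $i$-string analysis would force a monomial of $x$ lying strictly $\leN$-above $m$, a contradiction.

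With this lemma in hand, uniqueness of $F_q(m)$ is immediate: the difference of two candidates lies in $\frakK_q$ and has no dominant monomial, hence vanishes; and bar-invariance of the resulting element follows because $\overline{(\,\cdot\,)}$ preserves $\frakK_q$, so $\overline{F_q(m)}$ is again a solution and must equal $F_q(m)$. For existence I would run the deformed algorithm: start from $\um$, and whenever the partial sum violates the $i$-string condition for some $i$, add the unique bar-symmetric correction prescribed by the lemma; all newly created monomials are $\lN\um$, which is precisely property~(a). Termination requires that only finitely many monomials ever appear: they all lie in the finite quantum subtorus cut out by enlarging $\range(\um)$ by $\sfh$ (Lemma~\ref{lem: non-zero b}(3)--(4) makes the $q$-commutation of $\calX_q$ periodic with periods $\sfh$ and $2\sfh$), and their $\g$-weights stay bounded --- equivalently, the construction specializes at $q=1$ to the finite element of \cite{FM01}.

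The delicate point, and the place where the precise arithmetic of $\tusfB(t)$ enters, is \emph{compatibility/confluence}: closing up a monomial with respect to $i$ must not destroy the $j$-string conditions for $j\ne i$, so that the algorithm genuinely outputs an element of $\bigcap_i\frakK_{i,q}(\g)$ independent of the order of corrections. I expect this to be the main obstacle; it is exactly where Lemma~\ref{lem: non-zero b} is used, to guarantee that the $q$-powers produced by successive corrections glue consistently, and it is the content of Hernandez's argument in the non-simply-laced setting.

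Finally the remaining assertions are formal. For (b): the $F_q(m)$ are $\leN$-unitriangular with pairwise distinct leading monomials $\um$, hence linearly independent; and given $x\in\frakK_q$, subtracting $c\,F_q(m)$ for a $\leN$-maximal monomial $\um$ of $x$ (dominant by the lemma, with coefficient $c$) strictly shrinks the finite monomial support of $x$, so $x$ lies in the $\Z[q^{\pm1/2}]$-span of $\sfF_q$. For (c): ordering the factors of $m=\prod_k X_{i_k,p_k}$ suitably, the product $\prod_k F_q(X_{i_k,p_k})$ lies in $\frakK_q$ with unique dominant monomial $\um$, so $F_q(m)$ differs from it by a $\Z[q^{\pm1/2}]$-combination of $F_q(m')$ with $m'\lN m$, and induction on $\leN$ finishes. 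For (d): running the algorithm on the seed $\underline{X_{i,p}}$, the first correction introduces only indices in $(p,p+\sfh)$, and by the periodicity $\tusfb_{i,j}(u+\sfh)=\tusfb_{i,j^*}(u)$ the process ``resets'' with $i$ replaced by $i^*$ exactly at position $p+\sfh$, producing the extremal monomial $\underline{X_{i^*,p+\sfh}}$ and confining all other monomials strictly between $p$ and $p+\sfh$ (the lower bound $u>p$ holding because $\underline{X_{i,p}}$ is the $\leN$-maximum); the detailed bookkeeping for (d) is carried out in \cite{JLO1}.
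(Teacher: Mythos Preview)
The paper does not prove this theorem; it is quoted from \cite{Her04,JLO1} (with the classical $q=1$ precursor in \cite{FM01}), and no argument is given in the present paper beyond the citation. Your sketch is faithful to the approach of those references: the screening-operator/$i$-string description of $\frakK_{i,q}$, the ``every maximal monomial is dominant'' lemma giving uniqueness, the deformed Frenkel--Mukhin algorithm for existence, and the unitriangularity arguments for (b)--(c) are exactly how the result is established there, with (d) being the range analysis from \cite{JLO1}. One small slip: the extremal lowest monomial in $F_q(X_{i,p})$ is $\underline{X_{i^*,p+\sfh}^{-1}}$, not $\underline{X_{i^*,p+\sfh}}$ (as the proof of Proposition~\ref{prop: quantum Boson1} makes clear); the statement as printed appears to omit the inverse, and your discussion of (d) inherits that typo.
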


For $m \in \calM_+$, we set
$$ E_q(m) \seteq q^b \left( \st^{\to}_{p \in \Z} \left(   \st_{i; \in I, (i,p) \in \hDynkin_0}  F_q(X_{i,p})^{u_{i,p}(m)} \right) \right),$$
where $b$ is an element in $\tfrac{1}{2}\Z$ such that $\um$ appears in $E_q(m)$ with coefficient $1$.
We remark here that, for a fixed $p \in \Z$, the set  $\{ F_q(X_{i,p}) \ | \ (i,p) \in \hDynkin_0\}$  forms a commuting family so that $E_q(m) $ is well-defined.
Then the set $\sfE_q\seteq \{ E_q(m) \ | \  m \in \calM \}$ also forms a $\Z[q^{\pm 1/2}]$-basis of $\frakK_q$ and satisfies
\begin{align} \label{eq: uni E F}
E_q(m) = F_q(m) + \sum_{m'\lN m} C_{m,m'}F_q(m')
\end{align}
for some $C_{m,m'} \in \Z[q^{\pm 1/2}]$.
\smallskip

Using the basis $\sfF_q$ and $\sfE_q$, we construct a basis $\sfL_q$, called the \emph{canonical basis} of $\frakK_q$:

\begin{theorem}[\cite{Nak04,Her04,JLO1}]\label{thm: L_q}
For each $m \in \calM_+$, there exists a unique element $L_q(m)$ in $\frakK_q$ such that
\begin{eqnarray} &&
\parbox{90ex}{
\bnum
\item $\overline{L_q(m)} = L_q(m)$ and
\item \label{it: Lq(m)} $L_q(m)  = E_q(m) + \sum_{m' \lN m}  Q_{m,m'}(t) E_q(m') $ for some $ Q_{m,m'}(t) \in q\Z[q]$.
\ee
}\label{eq: L_q}
\end{eqnarray}
Furthermore,
\bna
\item a monomial appearing in $L_q(m)-\um$ is strictly less than $\um$ with respect to $\leN$;
\item $\sfL_q \seteq \{ L_q(m) \ | \  m \in \calM \}$ forms a $\Z[q^{\pm 1/2}]$-basis of $\frakK_q$;
\item $L_q(X_{i,p}) = F_q(X_{i,p})$ for all $(i,p) \in \hDynkin_0$.
\ee

\end{theorem}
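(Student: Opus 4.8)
The plan is to deduce Theorem~\ref{thm: L_q} from the general Kazhdan--Lusztig/Lusztig unitriangularity lemma. Concretely, I would (1) describe how the bar anti-involution acts on the standard basis $\sfE_q$, (2) run the usual correction recursion to produce $L_q(m)$, (3) prove uniqueness by the standard maximal-coefficient argument, and (4) read off properties (a)--(c). The input from the excerpt I would lean on is Theorem~\ref{thm: F_q} (bar-invariance and $\leN$-unitriangularity of $\sfF_q$) together with the relation \eqref{eq: uni E F} between $\sfE_q$ and $\sfF_q$.

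First I would analyze $\overline{(\cdot)}$ on $\sfE_q$. Since $E_q(m)$ is defined as a product of the $F_q(X_{i,p})$ in a fixed order, its bar-invariance is not manifest; instead I would apply the bar anti-involution to \eqref{eq: uni E F} and use $\overline{F_q(m)}=F_q(m)$ to get $\overline{E_q(m)}=F_q(m)+\sum_{m'\lN m}\overline{C_{m,m'}}\,F_q(m')$, then re-expand the $F_q(m')$ in $\sfE_q$ via the inverse of \eqref{eq: uni E F} (again $\lN$-unitriangular). This yields
\[
\overline{E_q(m)}\;=\;E_q(m)+\sum_{m'\lN m} r_{m,m'}\,E_q(m'),
\]
the coefficient of $E_q(m)$ remaining $1$ and only indices $m'\lN m$ entering. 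For the sharp integrality claim $Q_{m,m'}\in q\Z[q]$ I would verify $C_{m,m'}\in\Z[q^{\pm1}]$, hence $r_{m,m'}\in\Z[q^{\pm1}]$, which comes from a parity analysis of the commutation form $\ucalN$ and the construction of $\sfF_q$ (so no half-integer powers of $q$ occur among the structure constants in play). I would also note that for fixed $m$ only finitely many $m'$ with $m'\lN m$ occur, by the range/weight bounds implicit in \eqref{eq: Nakajima order} and Theorem~\ref{thm: F_q}, so the recursion below is well-founded.

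Next, the recursion, by downward induction on the finite poset $(\{m'\leN m\},\leN)$: for $m$ minimal set $L_q(m)=E_q(m)$. In general, having produced bar-invariant $L_q(m')$ for all $m'\lN m$, expand $\overline{E_q(m)}-E_q(m)=\sum_{m'\lN m}s_{m,m'}L_q(m')$ with $s_{m,m'}\in\Z[q^{\pm1}]$; applying $\overline{(\cdot)}$ and using $\overline{L_q(m')}=L_q(m')$ and $\overline{\,\overline{E_q(m)}\,}=E_q(m)$ gives $\overline{s_{m,m'}}=-s_{m,m'}$, so each $s_{m,m'}$ is uniquely $t_{m,m'}-\overline{t_{m,m'}}$ with $t_{m,m'}\in q\Z[q]$. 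Setting
\[
L_q(m)\;:=\;E_q(m)+\sum_{m'\lN m}t_{m,m'}\,L_q(m')
\]
one checks $\overline{L_q(m)}=L_q(m)$ directly, and re-expanding in $\sfE_q$ (using the inductive hypothesis that the $L_q(m')$-to-$E_q(m')$ coefficients lie in $q\Z[q]$) gives \eqref{it: Lq(m)}. For uniqueness, if $L,L'$ both satisfy the two conditions then $L-L'=\sum_{m'\lN m}c_{m'}E_q(m')$ with $c_{m'}\in q\Z[q]$ and $\overline{L-L'}=L-L'$; comparing, on both sides of the bar-invariance, the coefficient of $E_q(m')$ for a $\leN$-maximal $m'$ with $c_{m'}\neq0$ (the corrections $\overline{E_q(m'')}-E_q(m'')$ only feed strictly lower indices) forces $\overline{c_{m'}}=c_{m'}$, impossible for a nonzero element of $q\Z[q]$. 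Finally, (a) follows since every monomial of $E_q(m')$ is $\leN\um'$ — from Theorem~\ref{thm: F_q}(a) and compatibility of $\leN$ with the commutative product in $\calX$ — so all monomials of $L_q(m)-\um$ are $\lN\um$; (b) is the $\lN$-unitriangular change of basis between $\sfE_q$ and $\sfL_q$; and (c) holds because $E_q(X_{i,p})=F_q(X_{i,p})$ is already bar-invariant, whence $L_q(X_{i,p})=F_q(X_{i,p})$ by uniqueness.

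I expect the two delicate points to be the integrality $r_{m,m'}\in\Z[q^{\pm1}]$ (needed to upgrade the conclusion of Lusztig's lemma from $q^{1/2}\Z[q^{1/2}]$- to $q\Z[q]$-coefficients), which rests on the parity behaviour of $\ucalN$, and the finiteness of the poset below each $m$; once those are in place, the bar action on $\sfE_q$ and the recursion itself are routine.
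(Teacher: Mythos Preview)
The paper does not give its own proof of Theorem~\ref{thm: L_q}; it is stated with attribution to \cite{Nak04,Her04,JLO1} and used as input. Your proposal reproduces exactly the standard Kazhdan--Lusztig/Lusztig argument those references employ: establish $\leN$-unitriangularity of $\overline{(\cdot)}$ on $\sfE_q$ via the $\sfF_q$--$\sfE_q$ transition \eqref{eq: uni E F}, run Lusztig's correction recursion, and deduce (a)--(c). So your approach is correct and is essentially the same as the one in the cited sources; the two points you flag (integrality in $\Z[q^{\pm1}]$ rather than $\Z[q^{\pm1/2}]$, and local finiteness of $\{m'\leN m\}$) are precisely the places where the references do a little extra work, and your sketch of how to handle them is on target.
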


Note that, for $r \in2\Z$, we have a $\Z[q^{\pm 1/2}]$-algebra endomorphism on $\calX_q$
\begin{equation} \label{eq: shift of spectral parameters}
\scrS_r:   \calX_q \to   \calX_q  \quad\text{ given by } \quad \tX_{i,p} \longmapsto \tX_{i,p+r}
\end{equation}
which induces a $\Z[q^{\pm 1/2}]$-algebra endomorphism on $\frakK_q$ such that
$$ \scrS_r(F_q(X_{i,p})) = F_q(X_{i,p+r}).$$
We call it the \emph{parameter shift by $r$}.

\section{Quivers and subrings} \label{sec: Quivers and heart subring}

In this section, we introduce various quivers such as Dynkin quivers, repetition quivers, and Auslander--Reiten (AR) quivers, which will be used for defining subrings of $\frakK_q$.
Then we review the notion of $Q$-weights and investigate the multiplicative relations among $F_q(Y_{i,p})$'s by using their $Q$-weights.
At the end of this section, we also review the quantum folded $T$-system in terms of $F_q(m)$ with KR-monomials $m$ and the truncation homomorphisms.

\subsection{Dynkin quivers} A Dynkin quiver $Q$ is a pair $(\Dynkin,\xi)$ consisting of (i) a Dynkin diagram $\Dynkin$ and (ii) a height function $\xi:\Dynkin_0 \to \Z$ satisfying
$| \xi_i -\xi_j | =1$ if $i \sim j$.  For a given Dynkin quiver $Q=(\Dynkin,\xi)$, we can associate an oriented graph whose underlying graph is $\Dynkin$ and arrows between $i$ and $j$ are assigned as follows:
$$
i \to j \quad \text{ if } i \sim j \text{ and } \xi_i =\xi_j+1.
$$
Conversely, for a given oriented graph on $\Dynkin$, we can associate a Dynkin quiver whose height function is well-defined up to integers (see \cite{KO22} for more details).

For a Dynkin quiver $Q=(\Dynkin,\xi)$, we call $i \in \Dynkin_0$ a \emph{source} (resp. \emph{sink}) of $Q$ if $\xi_i > \xi_j$ (resp. $\xi_i < \xi_j$) for all $j \in \Dynkin_0$ with $j \sim i$.
For a Dynkin quiver $Q$ and its source $i$, we denote by $s_iQ=(\Dynkin,s_i\xi)$ a Dynkin quiver on $\Dynkin$ whose height function $s_i\xi$ is defined as follows:
$$
(s_i\xi)_j \seteq \xi_j - \delta(i=j)2.
$$
Note that $i$ is a sink of the Dynkin quiver $s_iQ$.

\begin{remark} Recall that we fixed the parity function $\ep$ in Section~\ref{subsec: Quantum virtual Grothendieck rings}. Throughout this paper, we only consider Dynkin quivers $Q=(\Dynkin,\xi)$ such that
$ \xi_i \equiv_2 \ep_i$ for all $i \in \Dynkin_0$.
\end{remark}

For a sequence $\ii = (i_1,\ldots,i_l) \in I^l$, we say $\ii$  is \emph{adapted to $Q=(\Dynkin,\xi)$} if
$$
\text{ $i_k$ is a source of $s_{i_{k-1}}s_{i_{k-2}} \cdots s_{i_{1}}Q$ for all $1 \le k \le l$}.
$$
For instance, a sequence $(i_1,i_2,\ldots,i_n)$, for which $\xi_{i_1} \ge \xi_{i_2} \ge  \cdots \ge \xi_{i_n}$ and $\{ i_k \ | \ 1 \le k \le n \}=I$ is a reduced sequence of a Coxeter element $\tau$, is adapted to $Q=(\Dynkin,\xi)$ (see \cite{FO21,KO22}).
It is known that for each
Dynkin quiver $Q$, there exists a unique Coxeter element $\tau_Q$ all of whose reduced sequences are adapted to $Q$ and form a single commutation equivalent class. Conversely, for any Coxeter element $\tau$, there exists a unique Dynkin quiver $Q$ such that $\tau=\tau_Q$.

For each Dynkin quiver $Q$, it is also known that there exists a reduced sequence $\ii$ of $w_\circ$, which is adapted to $Q$. Furthermore, the set of all
reduced sequences adapted to $Q$ makes a single commutation class of $w_\circ$, denoted by $[Q]$. Note that $[Q] \ne [Q']$
unless there exists $k \in \Z$ such that $\xi_i - \xi'_i =k$ for all $i \in \Dynkin_0$, where $\xi$ and $\xi'$
are the height functions of $Q$ and $Q'$, respectively.  For a Dynkin quiver $Q$ with a source $i$, we have
\begin{align}  \label{eq: si ri}
[s_iQ] = r_i[Q].
\end{align}

\subsection{Repetition quiver and $Q$-coordinates} Let us recall the repetition quiver and related notions:
\begin{definition}  (\cite{FHOO,KO22,JLO1}) \label{def: convex subset} \hfill
\ben
\item \label{def: hDynkin}
We define the \emph{repetition quiver} $\hDynkin = (\hDynkin_0,\hDynkin_1)$ associated to $\Dynkin$ as follows:
\begin{align*}
&\text{$\hDynkin_0$ is as in \eqref{eq: tDynkin 0} and} \\
&\hDynkin_1 \seteq \{ (i,p)\To[{\;-\lan h_{i},\al_j \ran\;}] (j,p+1)   \mid  (i,p) \in \hDynkin_0, \ d(i,j)=1  \}.
\end{align*}
Here $(i,p) \To[{\;-\lan h_{i},\al_j \ran\;}](j,p+1)$ denotes $(-\lan h_{i},\al_j \ran)$-many arrows from $(i,p)$ to $(j,p+1)$.
\item \label{def: convexity} A subset $\calR \subset  \hDynkin_0$ is said to be \emph{convex} if it satisfies the following condition: For any oriented path $(x_1 \to x_2 \to \cdots \to x_l)$ consisting of arrows
in $\hDynkin$, we have $\{ x_1,x_2,\ldots,x_l\} \subset \calR$ if and only if $\{ x_1,x_l\} \subset \calR$.
\item For a height function $\xi$ on $\Dynkin$, set
\begin{align}\label{eq: Dynkinxi0}
\lxi\hDynkin_0 \seteq \{ (i,p) \in \hDynkin_0 \ | \  p \le \xi_i \}.
\end{align}
Note that $\lxi\hDynkin_0$ is a convex subset of $\hDynkin$ for any height function $\xi$.
\ee
\end{definition}

\begin{example}  \label{ex: repetition quivers}
Some repetition quivers $\hDynkin$'s are depicted as follows:
\begin{align*}
\hDynkin^{C_3} & =  \raisebox{9mm}{
\scalebox{0.65}{\xymatrix@!C=0.5mm@R=2mm{
(i\setminus p) & -8 & -7 & -6 &-5&-4 &-3& -2 &-1& 0 & 1& 2 & 3& 4&  5
& 6 & 7 & 8 & 9 & 10 & 11 & 12 & 13 & 14   \\
1&\bullet \ar@{->}[dr]&& \bullet \ar@{->}[dr]  &&\bullet\ar@{->}[dr]
&&\bullet \ar@{->}[dr]  && \bullet \ar@{->}[dr]  &&\bullet \ar@{->}[dr]  &&  \bullet \ar@{->}[dr]
&&\bullet \ar@{->}[dr]  && \bullet \ar@{->}[dr]  &&\bullet \ar@{->}[dr]   && \bullet\ar@{->}[dr]  &&
\bullet   \\
2&&\bullet \ar@{=>}[dr]\ar@{->}[ur] && \bullet \ar@{=>}[dr]\ar@{->}[ur]  &&\bullet \ar@{=>}[dr]\ar@{->}[ur]
&& \bullet \ar@{=>}[dr]\ar@{->}[ur] && \bullet\ar@{=>}[dr] \ar@{->}[ur] && \bullet \ar@{=>}[dr]\ar@{->}[ur] &&\bullet \ar@{=>}[dr]\ar@{->}[ur] &
&\bullet \ar@{=>}[dr]\ar@{->}[ur] &&\bullet\ar@{=>}[dr] \ar@{->}[ur] && \bullet \ar@{=>}[dr]\ar@{->}[ur]
&&\bullet \ar@{=>}[dr]\ar@{->}[ur]  \\
3&\bullet  \ar@{->}[ur]&& \bullet  \ar@{->}[ur]  &&\bullet \ar@{->}[ur]
&&\bullet  \ar@{->}[ur]  && \bullet \ar@{->}[ur]  &&\bullet  \ar@{->}[ur] &&  \bullet  \ar@{->}[ur]
&&\bullet  \ar@{->}[ur]  && \bullet  \ar@{->}[ur]  &&\bullet  \ar@{->}[ur]  && \bullet \ar@{->}[ur]  &&
\bullet     }}}
\\
\hDynkin^{G_2} & =  \raisebox{6mm}{
\scalebox{0.65}{\xymatrix@!C=0.5mm@R=2mm{
(i\setminus p) & -8 & -7 & -6 &-5&-4 &-3& -2 &-1& 0 & 1& 2 & 3& 4&  5
& 6 & 7 & 8 & 9 & 10 & 11 & 12 & 13 & 14  \\
1&\bullet \ar@{=>}[dr]&& \bullet \ar@{=>}[dr]  &&\bullet\ar@{=>}[dr]
&&\bullet \ar@{=>}[dr]  && \bullet \ar@{=>}[dr]  &&\bullet \ar@{=>}[dr]  &&  \bullet \ar@{=>}[dr]
&&\bullet \ar@{=>}[dr]  && \bullet \ar@{=>}[dr]  &&\bullet \ar@{=>}[dr]   && \bullet\ar@{=>}[dr]  && \bullet
  \\
2&& \bullet \ar@{-}[ul] \ar@{->}[ur] &&\bullet \ar@{-}[ul]\ar@{->}[ur]  &&\bullet \ar@{-}[ul]\ar@{->}[ur]   &&\bullet\ar@{-}[ul] \ar@{->}[ur] && \bullet \ar@{-}[ul]\ar@{->}[ur]
&&\bullet \ar@{-}[ul]\ar@{->}[ur]  && \bullet \ar@{-}[ul]\ar@{->}[ur]   &&\bullet\ar@{-}[ul] \ar@{->}[ur]   &&\bullet \ar@{-}[ul]\ar@{->}[ur]   &&
\bullet \ar@{-}[ul]\ar@{->}[ur] &&\bullet\ar@{-}[ul]\ar@{->}[ur]  && }}}
\end{align*}
\end{example}

In what follows, we assign a coordinate system to $\hDynkin$ arising from a Dynkin quiver $Q=(\Dynkin,\xi)$. For each $i \in \Dynkin_0$, define
$$\ga_i^Q \seteq (1-\tau_Q)\varpi_i \in \Phi_+.$$
There exists a bijection $\phi_Q: \hDynkin_0 \to \Phi_+ \times \Z$ defined recursively as follows \cite{HL15,FO21,KO22}:
\bnum
\item $\phi_Q(i,\xi_i) = (\ga_i^Q,0)$ for each $i \in I$.
\item If $\phi_Q(i,p) =(\al,k)$, we have
$$
\phi_Q(i,p \pm 2) = \bc
(\tau_Q^{\mp} \al,k) & \text{ if } \tau_Q^{\mp} \al \in \Phi_+,\\
(-\tau_Q^{\mp} \al,k \pm 1) & \text{ if } \tau_Q^{\mp} \al \in \Phi_-.
\ec
$$
\ee
We call $\phi_Q$ the \emph{$Q$-coordinate of $\hDynkin_0$.}
For $(i,p)\in \hDynkin_0$ with $\phi_Q(i,p)= (\al,k)$, we set
$$\pi_Q(i,p) \seteq (-1)^{k}\al \in \Phi.$$

\begin{lemma} {\rm (\cite[(3.18)]{FO21}, \cite[Lemma 3.11]{KO22})\bf{.}} For a Dynkin quiver $Q$ and $(i,p) \in \hDynkin_0$, we have
$$  \pi_Q(i,p)  =  \tau_Q^{(\xi_i-p)/2}  \ga_i^Q.$$
\end{lemma}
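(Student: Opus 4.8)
The plan is to prove the identity by induction on $|p-\xi_i|/2$, exploiting only the two defining clauses of the coordinate map $\phi_Q$. First note that $\xi_i-p$ is even: by definition $(i,p)\in\hDynkin_0$ forces $p\equiv_2\ep_i$, and the quivers considered here satisfy $\xi_i\equiv_2\ep_i$, so $(\xi_i-p)/2\in\Z$. The base case $p=\xi_i$ is the normalization $\phi_Q(i,\xi_i)=(\ga_i^Q,0)$, which gives $\pi_Q(i,\xi_i)=(-1)^{0}\ga_i^Q=\tau_Q^{0}\ga_i^Q$.

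For the inductive step I would isolate the elementary claim that, for every $(i,p)\in\hDynkin_0$,
\[
\pi_Q(i,p+2)=\tau_Q^{-1}\,\pi_Q(i,p)\qquad\text{and}\qquad\pi_Q(i,p-2)=\tau_Q\,\pi_Q(i,p).
\]
Granting this, the statement follows by iterating from the base point: for $p\le\xi_i$ one descends from $(i,\xi_i)$ to $(i,p)$ in $(\xi_i-p)/2$ steps, each multiplying $\pi_Q$ by $\tau_Q$, and for $p\ge\xi_i$ one ascends, each step multiplying by $\tau_Q^{-1}$; in either case $\pi_Q(i,p)=\tau_Q^{(\xi_i-p)/2}\,\pi_Q(i,\xi_i)=\tau_Q^{(\xi_i-p)/2}\ga_i^Q$. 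To verify the first identity, write $\phi_Q(i,p)=(\al,k)$ with $\al\in\Phi_+$, so $\pi_Q(i,p)=(-1)^{k}\al$, and apply the recursion for $\phi_Q(i,p+2)$. If $\tau_Q^{-1}\al\in\Phi_+$ then $\phi_Q(i,p+2)=(\tau_Q^{-1}\al,k)$, whence $\pi_Q(i,p+2)=(-1)^{k}\tau_Q^{-1}\al=\tau_Q^{-1}\pi_Q(i,p)$. If $\tau_Q^{-1}\al\in\Phi_-$ then $\phi_Q(i,p+2)=(-\tau_Q^{-1}\al,k+1)$, whence $\pi_Q(i,p+2)=(-1)^{k+1}(-\tau_Q^{-1}\al)=(-1)^{k}\tau_Q^{-1}\al=\tau_Q^{-1}\pi_Q(i,p)$. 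The second identity is obtained identically from the $p\mapsto p-2$ branch of the recursion.

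I do not expect a genuine obstacle here; the computation is bookkeeping, and the only point needing attention is the sign matching in the second subcase, where the sign $-1$ absorbed when the negative root $\tau_Q^{\mp}\al$ is replaced by the positive root $-\tau_Q^{\mp}\al$ must cancel the parity change $k\mapsto k\pm1$ of the second component, so that in every case the net effect on $\pi_Q=(-1)^{k}\al$ is precisely application of $\tau_Q^{\mp1}$. The only external ingredient is the well-definedness of $\phi_Q$ as a map $\hDynkin_0\to\Phi_+\times\Z$, which is already recorded above; with that in hand, the equality $\pi_Q(i,p)=\tau_Q^{(\xi_i-p)/2}\ga_i^Q$ falls out of the induction.
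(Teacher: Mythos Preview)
Your proof is correct: the induction on $|\xi_i-p|/2$ from the base point $\phi_Q(i,\xi_i)=(\ga_i^Q,0)$, together with the verification that each shift $p\mapsto p\pm2$ multiplies $\pi_Q$ by $\tau_Q^{\mp1}$ (with the sign bookkeeping you describe), is exactly the natural argument and goes through without issue. The paper itself does not supply a proof of this lemma---it is quoted from \cite{FO21,KO22}---so there is no in-paper argument to compare against; your write-up is precisely the direct verification one would expect from the recursive definition of $\phi_Q$.
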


\begin{theorem} [\cite{HL15,FO21,KO22}] \label{thm: N and wt}
Let $(i,p),(j,s) \in \hDynkin_0$ and $Q$ be any Dynkin quiver. Then we have
\begin{align*}
\ucalN(i,p;j,s) &= (-1)^{k+l+\delta(p\ge s)} \delta((i,p)\ne(j,s)) (\al,\be),
\end{align*}
where $\phi_Q(i,p)=(\al,k)$ and $\phi_Q(j,s)=(\be,l)$.
\end{theorem}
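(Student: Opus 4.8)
The plan is to reduce the claim to an identity purely about the root system of $\g$ and then prove that identity by an induction along the repetition quiver $\hDynkin$ in which both sides are shown to satisfy one and the same mesh-type recursion. First I would record that $\ucalN(i,p;j,s)$ depends only on $i$, $j$ and $u:=p-s$, and that the two sides are compatibly antisymmetric under $(i,p)\leftrightarrow(j,s)$, since the sign $(-1)^{k+l+\delta(p\ge s)}$ flips exactly when $p\neq s$ while $(\al,\be)=(\be,\al)$; so it suffices to treat $p\ge s$. For $p=s$ both sides vanish: $\ucalN(i,p;j,p)=0$ by Lemma~\ref{lem: non-zero b}, and on the right-hand side the two roots $\phi_Q$ attaches to distinct vertices of a common column of $\hDynkin$ are orthogonal while $(\ga_i^Q,\ga_i^Q)=(\al_i,\al_i)=2d_i$ (as $\ga_i^Q\in\weyl\al_i$) --- standard properties of $Q$-coordinates. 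For $p>s$, Lemma~\ref{lem: non-zero b}~\eqref{it: non-zero b} kills the terms $\tusfb_{i,j}(m)$ with $m\le0$, so $\ucalN(i,p;j,s)=\tusfb_{i,j}(u-1)-\tusfb_{i,j}(u+1)$; and writing $\phi_Q(i,p)=(\al,k)$, $\phi_Q(j,s)=(\be,l)$, so that $\pi_Q(i,p)=(-1)^{k}\al$ and $\pi_Q(j,s)=(-1)^{l}\be$, the asserted right-hand side equals $-\bigl(\pi_Q(i,p),\pi_Q(j,s)\bigr)$. Hence the theorem is equivalent to
$$\tusfb_{i,j}(u-1)-\tusfb_{i,j}(u+1)=-\bigl(\pi_Q(i,p),\pi_Q(j,s)\bigr)\qquad\text{for }u=p-s>0.$$

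To prove this I would fix $(j,s)$ and induct on $p$, column by column. From $\tusfB(t)=\bigl(\usfC(t)\sfD^{-1}\bigr)^{-1}$, whose $(i,j)$-component reads $(t+t^{-1})\tusfB_{i,j}(t)+\sum_{k\sim i}\sfc_{k,i}\tusfB_{k,j}(t)=d_i\,\delta_{i,j}$, extracting the coefficient of $t^{m}$ gives $\tusfb_{i,j}(m-1)+\tusfb_{i,j}(m+1)+\sum_{k\sim i}\sfc_{k,i}\tusfb_{k,j}(m)=d_i\delta_{i,j}\delta_{m,0}$. Substituting this into the reduced form above yields, for $p\ge s+2$,
$$\ucalN(i,p;j,s)+\ucalN(i,p-2;j,s)=\sum_{k\sim i}(-\sfc_{k,i})\,\ucalN(k,p-1;j,s)+2d_i\,\delta_{i,j}\,\delta_{p,s+2},$$
the inhomogeneous term coming from the $\delta_{m,0}$ (equivalently from $\tusfb_{i,i}(1)=d_i$). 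Now set $N^\circ(i,p;j,s):=-\bigl(\pi_Q(i,p),\pi_Q(j,s)\bigr)$ for $p>s$ and $N^\circ(i,s;j,s):=0$. The $Q$-coordinate is built so that $\pi_Q(i,p-2)=\tau_Q\,\pi_Q(i,p)$ and so that the Auslander--Reiten mesh relation $\pi_Q(i,p)+\pi_Q(i,p-2)=\sum_{k\sim i}(-\sfc_{k,i})\pi_Q(k,p-1)$ holds in $\rl$; pairing this with $\pi_Q(j,s)$ and using $(\pi_Q(i,s),\pi_Q(j,s))=2d_i\delta_{i,j}$ shows that $N^\circ$ satisfies the very same inhomogeneous recursion. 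Hence $\ucalN$ and $N^\circ$, obeying the same recursion, agree everywhere once they agree on the two seed columns $u\in\{0,1\}$: the case $u=0$ is $0=0$, and $u=1$ amounts to $\tusfb_{i,j}(2)=(\pi_Q(i,s+1),\pi_Q(j,s))$, which equals $\max(d_i,d_j)$ for $i\sim j$ and $0$ otherwise --- a direct check from Lemma~\ref{lem: non-zero b} on one side and from the orthogonality of roots attached to nearby vertices of $\hDynkin$ on the other. Antisymmetry then extends the identity to $p<s$, completing the argument.

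The step I expect to be the main obstacle is making the two mesh recursions coincide on the nose: one has to verify that the inhomogeneous ``diagonal'' term produced by inverting the $t$-quantized Cartan matrix (encoded by $\tusfb_{i,i}(1)=d_i$ and the vanishing $\tusfb_{i,j}(m)=0$ for $m\le d(i,j)$) is precisely the one forced by the root-theoretic properties of $\phi_Q$ (trivial inner products and squared lengths $2d_i$ of roots in a fixed section of $\hDynkin$, and $\ga_i^Q\in\weyl\al_i$), with nothing extra appearing at the transition columns or when $i=j$. Establishing the Auslander--Reiten mesh relation for $\pi_Q$ and the section-orthogonality cleanly --- most comfortably via additivity of dimension vectors over a Dynkin quiver and its repetition, or by reduction to rank-$\le3$ subdiagrams --- together with the seed-column computation, is where the genuine content lies; the remaining manipulations are linear algebra over $\Z[t^{\pm1}]$, aided if one prefers by the explicit inverses recorded in Proposition~\ref{prop:formulas for inverses of tQCMs}.
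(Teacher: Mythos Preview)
The paper does not give its own proof of this theorem; it is quoted from \cite{HL15,FO21,KO22} and used as an input. So there is no in-paper argument to compare your proposal against.

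That said, your plan is essentially the one carried out in the cited references: reduce to $p>s$, rewrite $\ucalN(i,p;j,s)=\tusfb_{i,j}(u-1)-\tusfb_{i,j}(u+1)$ via Lemma~\ref{lem: non-zero b}, and then match the two sides through the common mesh-type recursion --- on the $\tusfb$ side coming from $\usfC(t)\tusfB(t)=\sfD$ (your coefficient-extraction identity is correct, with $\sfc_{k,i}$ rather than $\sfc_{i,k}$, as needed), and on the root-system side coming from $\wt_Q(B_{i,p})=0$, i.e.\ $\pi_Q(i,p)+\pi_Q(i,p-2)=\sum_{k\sim i}(-\sfc_{k,i})\pi_Q(k,p-1)$. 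Note that the latter is exactly Corollary~\ref{cor: B=0} in this paper (also cited from \cite{FO21,KO22}); in the original references it is proved directly from $\gamma_i^Q=s_{i_1}\cdots s_{i_{k-1}}\al_{i_k}$ and the action of $\tau_Q$, so there is no circularity.

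Your diagnosis of the obstacle is accurate. The induction runs cleanly once you have the two seed facts: (i) $(\pi_Q(i,s),\pi_Q(j,s))=2d_i\delta_{i,j}$ for $(i,s),(j,s)\in\hDynkin_0$, and (ii) $(\pi_Q(i,s+1),\pi_Q(j,s))=\max(d_i,d_j)\delta(i\sim j)$. Point (i) for $i=j$ follows from $\gamma_i^Q\in\weyl\al_i$ (indeed $\gamma_i^Q=s_{i_1}\cdots s_{i_{k-1}}\al_i$ for $\tau_Q=s_{i_1}\cdots s_{i_n}$ with $i=i_k$); the orthogonality for $i\ne j$ in the same column, and the adjacent-column pairing (ii), are the genuinely root-theoretic inputs. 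In \cite{KO22} these are obtained by a short direct computation with $\gamma_i^Q=(1-\tau_Q)\varpi_i$ and $\weyl$-invariance of $(\cdot,\cdot)$, reducing to the identities $(\varpi_i,(1-\tau_Q)\varpi_j)$ for $|\xi_i-\xi_j|\le 1$; you should expect a case check over $d(i,j)$ here rather than anything conceptual. Once (i) and (ii) are in hand, your recursion argument goes through without further issues.
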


\subsection{$Q$-weights, block decomposition of $\frakK_q$ and AR-quivers}
Based on Theorem~\ref{thm: N and wt},  for an $\calX_q$-monomial $\tm$ (resp.~$\calX$-monomial $m$), we define
$$   \wt_Q(\tm) = \sum_{(i,p) \in \hDynkin_0}  u_{i,p}(\tm) \pi_Q(i,p)  \quad \text{ (resp.~}  \wt_Q(m) = \sum_{(i,p) \in \hDynkin_0}  u_{i,p}(m) \pi_Q(i,p)) $$
and call it\emph{ the $Q$-weight of $\tm$ $($resp.~$m)$}.
Note that $\wt_Q(\tm*\tm') = \wt_Q(\tm) + \wt_Q(\tm')$ for $\calX_q$-monomials $\tm$ and $\tm'$.

\begin{corollary} {\rm (\cite[(5.3)]{FO21}, \cite[Corollary 5.6]{KO22})\bf{.}} \label{cor: B=0}
For any $(i,p) \in I \times \Z$ with $(i,p-1) \in \hDynkin_0$ and a Dynkin quiver $Q$, we have
$$    \wt_Q(\tB_{i,p})=   \wt_Q(B_{i,p})  =0.$$
\end{corollary}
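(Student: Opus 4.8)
The plan is to reduce Corollary~\ref{cor: B=0} to a short identity in the weight lattice $\wl$, using the formula $\pi_Q(i,p)=\tau_Q^{(\xi_i-p)/2}\ga_i^Q$ established just above. Since $\tB_{i,p}=\underline{B_{i,p}}$ has exactly the same exponents $u_{j,s}$ as $B_{i,p}$, the two asserted equalities coincide, so it suffices to show $\wt_Q(B_{i,p})=0$; unwinding $B_{i,p}=X_{i,p-1}X_{i,p+1}\prod_{j\sim i}X_{j,p}^{\sfc_{j,i}}$ this reads
\[
\pi_Q(i,p-1)+\pi_Q(i,p+1)+\sum_{j\sim i}\sfc_{j,i}\,\pi_Q(j,p)=0.
\]
First I would record the parity bookkeeping: from $(i,p-1)\in\hDynkin_0$ and the standing convention $\xi_i\equiv_2\ep_i$ one gets $(i,p\pm1),(j,p)\in\hDynkin_0$ for $j\sim i$, and all the half-integers $(\xi_i-(p\mp1))/2$, $(\xi_j-p)/2$ appearing below are genuine integers. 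Set $m:=(\xi_i-p+1)/2$ and split $\{j\sim i\}$ into $\{j\to i\}$ (those with $\xi_j=\xi_i+1$) and $\{i\to j\}$ (those with $\xi_j=\xi_i-1$). The formula then yields $\pi_Q(i,p-1)=\tau_Q^{m}\ga_i^Q$, $\pi_Q(i,p+1)=\tau_Q^{m-1}\ga_i^Q$, $\pi_Q(j,p)=\tau_Q^{m}\ga_j^Q$ for $j\to i$ and $\pi_Q(j,p)=\tau_Q^{m-1}\ga_j^Q$ for $i\to j$, so after factoring out the invertible $\tau_Q^{m-1}$ the claim becomes the identity in $\wl$
\[
(1+\tau_Q)\ga_i^Q+\tau_Q\sum_{j\to i}\sfc_{j,i}\ga_j^Q+\sum_{i\to j}\sfc_{j,i}\ga_j^Q=0. \qquad(\star)
\]

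For $(\star)$ I would substitute $\ga_\ell^Q=(1-\tau_Q)\varpi_\ell$ and pull the common factor $1-\tau_Q$ to the left (using $(1+\tau_Q)(1-\tau_Q)=(1-\tau_Q)(1+\tau_Q)$). As $\tau_Q$ is a Coxeter element, $1$ is not one of its eigenvalues on $\wl\otimes\Q$, so $1-\tau_Q$ is invertible and $(\star)$ is equivalent to
\[
(1+\tau_Q)\varpi_i+\tau_Q\sum_{j\to i}\sfc_{j,i}\varpi_j+\sum_{i\to j}\sfc_{j,i}\varpi_j=0. \qquad(\star\star)
\]
To attack $(\star\star)$, fix a reduced word $s_{i_1}\cdots s_{i_n}$ of $\tau_Q$ adapted to $Q$, ordered by weakly decreasing height, with $i=i_k$, and put $u:=s_{i_1}\cdots s_{i_{k-1}}$. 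Since each index occurs once, both $u$ and $s_{i_{k+1}}\cdots s_{i_n}$ fix $\varpi_i$, whence $\tau_Q\varpi_i=\varpi_i-u\al_i$ and $\ga_i^Q=u\al_i$. Feeding this together with $\al_i=2\varpi_i+\sum_{j\sim i}\sfc_{j,i}\varpi_j$ into $(\star\star)$, the $\sum_{i\to j}\sfc_{j,i}\varpi_j$ terms cancel and what remains is $(1-u)\al_i=(1-\tau_Q)\sum_{j\to i}\sfc_{j,i}\varpi_j$, i.e.
\[
\ga_i^Q=\al_i+\sum_{j\to i}(-\sfc_{j,i})\,\ga_j^Q. \qquad(\dagger)
\]

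It remains to establish $(\dagger)$, which I would do by a height-layer analysis. Let $w_c$ denote the product of all $s_{j'}$ with $\xi_{j'}>c$; this is well-defined because vertices of a fixed height are pairwise non-adjacent in $\Dynkin$ (adjacency forces height difference $1$), so their reflections commute. The description of $u$ above gives $\ga_\ell^Q=w_{\xi_\ell}\al_\ell$ for every $\ell$; in particular $\ga_i^Q=w_{\xi_i}\al_i$ and $\ga_j^Q=w_{\xi_i+1}\al_j$ for $j\to i$, and $w_{\xi_i+1}$ fixes $\al_i$ because every $j'$ with $\xi_{j'}\ge\xi_i+2$ has $|\xi_{j'}-\xi_i|\ge2$, hence $j'\not\sim i$. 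Applying $w_{\xi_i+1}^{-1}$, $(\dagger)$ reduces to
\[
\Big(\prod_{j'\,:\,\xi_{j'}=\xi_i+1}s_{j'}\Big)\al_i=\al_i+\sum_{j\to i}(-\sfc_{j,i})\,\al_j,
\]
which is immediate: the vertices of height $\xi_i+1$ form an independent set, so the $s_{j'}$ commute, each one sends $\al_i\mapsto\al_i+(-\sfc_{j',i})\al_{j'}$ (trivially when $j'\not\sim i$) and fixes the $\al_{j''}$ for the other $j''$ of that height. I expect the only real care to be the parity/divisibility bookkeeping in the first reduction and keeping the orientation-dependent split $j\to i$ versus $i\to j$ straight; everything else is formal linear algebra in $\wl$, and $(\dagger)$ recovers the identity underlying \cite[(5.3)]{FO21} and \cite[Corollary~5.6]{KO22}.
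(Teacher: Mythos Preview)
Your proof is correct. The paper does not give a proof of this corollary at all; it simply records the statement with a citation to \cite[(5.3)]{FO21} and \cite[Corollary~5.6]{KO22}, so there is no in-paper argument to compare against. What you have written is a clean, self-contained derivation: you unwind the formula $\pi_Q(i,p)=\tau_Q^{(\xi_i-p)/2}\ga_i^Q$ from the preceding lemma, reduce to the weight-lattice identity $(\star\star)$, and then to the additivity formula $(\dagger)$, $\ga_i^Q=\al_i+\sum_{j\to i}(-\sfc_{j,i})\ga_j^Q$, which is exactly the content of the cited references.

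Two minor cosmetic remarks. First, your sentence ``$w_c$ is well-defined because vertices of a fixed height are pairwise non-adjacent'' only justifies commutativity \emph{within} a height level, whereas $w_c$ spans several levels whose reflections need not commute; what makes $w_c$ unambiguous is that you already fixed a weakly-decreasing-height reduced word for $\tau_Q$, so $w_c$ is its prefix and only the within-level order is free. Second, the invertibility of $1-\tau_Q$ is not actually needed: once $(\star\star)$ is established, applying $1-\tau_Q$ immediately gives $(\star)$; the equivalence you state is true but stronger than what you use. Neither point affects correctness.
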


An element $\tx \in \calX_q$ is said to be\emph{ homogeneous} if
$$
\wt_Q(\tm) = \wt_Q(\tm')  \quad \text{ for any monomials $\tm,\tm'$ in $\tx$ and any Dynkin quiver $Q$.}
$$
In this case, we set $\wt_Q(\tx) \seteq \wt_Q(\tm) $ for any monomial $\tm$ in $\tx$.
By Theorem~\ref{thm: F_q}, Theorem~\ref{thm: L_q}, and Corollary~\ref{cor: B=0}, the elements $F_q(m)$ and $L_q(m)$ are homogeneous and their $Q$-weights are the same as $\wt_Q(m)$. The proposition below is proved in~\cite{CM05,KKOP22,FO21} when $Q$ is of simply-laced type, in the context of blocks of category.

\begin{proposition}
For any Dynkin quiver $Q$, we have a natural decomposition of $\frakK_q(\g)$ with respect to $\rl$:
$$
\frakK_q  = \soplus_{\be \in  \rl} \frakK^Q_q[\be],
$$
where
$$
\frakK^Q_q[\be] = \soplus_{ \substack{m \in \calM_+, \\ \wt_Q(m)=\be } } \Z[q^{\pm 1/2}] F_q(m)= \soplus_{ \substack{m \in \calM_+, \\ \wt_Q(m)=\be } } \Z[q^{\pm 1/2}] L_q(m)
= \soplus_{ \substack{m \in \calM_+, \\ \wt_Q(m)=\be } } \Z[q^{\pm 1/2}] E_q(m).
$$
\end{proposition}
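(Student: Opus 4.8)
The plan is to realize the asserted decomposition as the restriction to $\frakK_q$ of a genuine $\rl$-grading carried by the ambient quantum torus $\calX_q$, and then to propagate it to the bases $\sfF_q$, $\sfE_q$, $\sfL_q$ using the weight-preserving unitriangularity of the change-of-basis maps relating them; the only substantive input will be Corollary~\ref{cor: B=0} (that $\wt_Q(\tB_{i,p})=0$). First I would grade $\calX_q$ itself: the monomials $q^{a}\prod^{\to}_{(i,p)\in\hDynkin_0}\tX_{i,p}^{u_{i,p}}$ form a $\Z[q^{\pm1/2}]$-basis of $\calX_q$, each has $Q$-weight $\sum_{(i,p)}u_{i,p}\,\pi_Q(i,p)\in\rl$ (using $\pi_Q(i,p)\in\Phi\subset\rl$), and $\wt_Q(\tm*\tm')=\wt_Q(\tm)+\wt_Q(\tm')$. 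Writing $(\calX_q)_\be$ for the $\Z[q^{\pm1/2}]$-span of the monomials of $Q$-weight $\be$, one gets $\calX_q=\bigoplus_{\be\in\rl}(\calX_q)_\be$ with $(\calX_q)_\be*(\calX_q)_{\be'}\subseteq(\calX_q)_{\be+\be'}$, i.e.\ an $\rl$-grading of $\calX_q$ as an algebra.

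Next I would check that $\frakK_q$ is a graded subalgebra. By Theorem~\ref{thm: F_q}(a), every monomial $\tm$ occurring in $F_q(m)$ satisfies $\ev_{q=1}(\tm)\leN m$, hence differs from $\um$ by a product of the $\tB_{i,p}^{\pm1}$; since $\wt_Q(\tB_{i,p})=0$ by Corollary~\ref{cor: B=0} and $\wt_Q$ is additive, all such monomials share the $Q$-weight $\wt_Q(m)$, so $F_q(m)\in(\calX_q)_{\wt_Q(m)}$. As $\sfF_q$ is a $\Z[q^{\pm1/2}]$-basis of $\frakK_q$, partitioning it by $Q$-weight yields $\frakK_q=\bigoplus_{\be\in\rl}\frakK^Q_q[\be]$ with $\frakK^Q_q[\be]:=\bigoplus_{m\in\calM_+,\,\wt_Q(m)=\be}\Z[q^{\pm1/2}]F_q(m)=\frakK_q\cap(\calX_q)_\be$; multiplicativity of the grading is inherited from $\calX_q$, which is exactly the claimed natural $\rl$-decomposition (the $F_q(m)$ with $\wt_Q(m)=\be$ being linearly independent as a subset of a basis).

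Finally I would match the $\sfE_q$- and $\sfL_q$-descriptions. In $E_q(m)=F_q(m)+\sum_{m'\lN m}C_{m,m'}F_q(m')$, every $m'$ satisfies $m'\leN m$, so $(m')^{-1}m$ is a product of the $B_{i,p}$ and thus $\wt_Q(m')=\wt_Q(m)$; hence $E_q(m)$ is homogeneous of $Q$-weight $\wt_Q(m)$, and likewise $L_q(m)$ is homogeneous of $Q$-weight $\wt_Q(m)$ using $L_q(m)=E_q(m)+\sum_{m'\lN m}Q_{m,m'}(t)E_q(m')$ from Theorem~\ref{thm: L_q}. Since $\sfE_q$ and $\sfL_q$ are $\Z[q^{\pm1/2}]$-bases of $\frakK_q$ consisting of $\wt_Q$-homogeneous elements, decomposing any element of $\frakK^Q_q[\be]=\frakK_q\cap(\calX_q)_\be$ in either basis and invoking uniqueness of homogeneous components forces only basis vectors of $Q$-weight $\be$ to occur; therefore $\{E_q(m):\wt_Q(m)=\be\}$ and $\{L_q(m):\wt_Q(m)=\be\}$ are also $\Z[q^{\pm1/2}]$-bases of $\frakK^Q_q[\be]$, giving the three displayed equalities.

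I do not anticipate a genuine obstacle: the statement is essentially an organizing observation once the $\rl$-grading on $\calX_q$ is in hand. The one point requiring care is verifying the $\wt_Q$-homogeneity of each of $F_q(m)$, $E_q(m)$, $L_q(m)$ at once, which for all three reduces to the facts that the $B_{i,p}$ have vanishing $Q$-weight (Corollary~\ref{cor: B=0}) and that the triangularities defining $\sfF_q$, $\sfE_q$, $\sfL_q$ are taken with respect to the Nakajima order, whose covering steps are precisely multiplication by the $B_{i,p}$.
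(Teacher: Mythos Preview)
Your proposal is correct and matches the paper's own (implicit) argument: the paper does not give a standalone proof of this proposition, but just before stating it observes that, by Theorem~\ref{thm: F_q}, Theorem~\ref{thm: L_q}, and Corollary~\ref{cor: B=0}, each $F_q(m)$ and $L_q(m)$ is homogeneous of $Q$-weight $\wt_Q(m)$, which is exactly the content you spell out. Your write-up is simply a more detailed unpacking of this same reasoning (grading $\calX_q$, using $\wt_Q(\tB_{i,p})=0$, and propagating homogeneity through the $\leN$-unitriangular transitions among the three bases).
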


\begin{proposition} \label{prop: wt pairing}
Let $\tx,\tx' \in \calX_q$ be homogeneous elements whose ranges are $[a,b]$ and $[a',b']$, respectively. Assume that either $a < b < a'< b'$ or $a' < b' < a< b$. Then we have
$$
\tx * \tx' = (-1)^{\delta(b'<a)} q^{(\wt_Q(\tx),\wt_Q(\tx'))} \tx'*\tx
$$
for any Dynkin quiver $Q$.
\end{proposition}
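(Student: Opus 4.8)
The plan is to reduce the claimed $q$-commutation to a statement about individual Laurent monomials of the quantum torus $\calX_q$ and then feed in Theorem~\ref{thm: N and wt}. Write $\tx=\sum_{\tm}c_{\tm}\tm$ and $\tx'=\sum_{\tm'}c'_{\tm'}\tm'$ as finite $\Z[q^{\pm1/2}]$-combinations of monomials of $\calX_q$. Since $*$ is $\Z[q^{\pm1/2}]$-bilinear, $\tx*\tx'=\sum_{\tm,\tm'}c_{\tm}c'_{\tm'}\,\tm*\tm'$ and likewise for $\tx'*\tx$, so it suffices to prove $\tm*\tm'=\kappa\cdot\tm'*\tm$ with $\kappa\in\Z[q^{\pm1/2}]$ the \emph{same} scalar for every pair $(\tm,\tm')$ of monomials appearing in $\tx$ and $\tx'$, and to identify $\kappa$. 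By the defining relations of $\calX_q$ (Definition~\ref{def:quantum torus Xqg}) one has $\tm*\tm'=q^{\ucalN(\tm,\tm')}\,\tm'*\tm$ with $\ucalN(\tm,\tm')=\sum_{(i,p),(j,s)\in\hDynkin_0}u_{i,p}(\tm)u_{j,s}(\tm')\,\ucalN(i,p;j,s)$.

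First I would pin down which entries $\ucalN(i,p;j,s)$ can occur. Every $(i,p)$ with $u_{i,p}(\tm)\neq0$ has $p\in\range(\tm)\subseteq\range(\tx)=[a,b]$, and every $(j,s)$ with $u_{j,s}(\tm')\neq0$ has $s\in\range(\tm')\subseteq[a',b']$. The hypothesis forces these intervals to be disjoint, with one lying entirely to the left of the other: if $a<b<a'<b'$ then $p<s$ for all such pairs, so $\delta(p\ge s)=0=\delta(b'<a)$; if $a'<b'<a<b$ then $p>s$, so $\delta(p\ge s)=1=\delta(b'<a)$. In either case $(i,p)\neq(j,s)$, so Theorem~\ref{thm: N and wt} gives, with $\phi_Q(i,p)=(\al,k)$ and $\phi_Q(j,s)=(\be,l)$,
\[
\ucalN(i,p;j,s)=(-1)^{k+l+\delta(b'<a)}(\al,\be)=(-1)^{\delta(b'<a)}\bl\pi_Q(i,p),\pi_Q(j,s)\br,
\]
using $\pi_Q(i,p)=(-1)^k\al$, $\pi_Q(j,s)=(-1)^l\be$ and bilinearity of $(\cdot,\cdot)$. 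What makes this step go through is that $\delta(p\ge s)$ is \emph{constant} along the whole double sum; this is exactly where the strict left/right separation in the hypothesis (not merely disjointness) is used.

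Then I would sum and invoke homogeneity. Bilinearity of $(\cdot,\cdot)$ together with $\wt_Q(\tm)=\sum_{(i,p)}u_{i,p}(\tm)\pi_Q(i,p)$ gives $\ucalN(\tm,\tm')=(-1)^{\delta(b'<a)}\bl\wt_Q(\tm),\wt_Q(\tm')\br$. As $\tx$ and $\tx'$ are homogeneous, $\wt_Q(\tm)=\wt_Q(\tx)$ for every monomial $\tm$ appearing in $\tx$ and $\wt_Q(\tm')=\wt_Q(\tx')$ for every $\tm'$ appearing in $\tx'$; hence $\ucalN(\tm,\tm')=(-1)^{\delta(b'<a)}\bl\wt_Q(\tx),\wt_Q(\tx')\br$ is independent of the pair $(\tm,\tm')$, so $\kappa=q^{(-1)^{\delta(b'<a)}(\wt_Q(\tx),\wt_Q(\tx'))}$. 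Substituting into the sums of the first step yields $\tx*\tx'=q^{(-1)^{\delta(b'<a)}(\wt_Q(\tx),\wt_Q(\tx'))}\tx'*\tx$, which is the asserted identity; since $Q$ was arbitrary, this completes the argument.

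I do not anticipate a genuine obstacle: the entire substance is Theorem~\ref{thm: N and wt}, and the only thing demanding care is the bookkeeping in the second step — checking that $\delta(p\ge s)$ is uniform over the double sum, which is precisely the content of the ``opposite sides'' hypothesis and the reason the exponent carries the case-dependent sign $(-1)^{\delta(b'<a)}$.
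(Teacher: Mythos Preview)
Your argument is correct and is exactly what the paper intends: its entire proof reads ``This is a direct consequence of Theorem~\ref{thm: N and wt},'' and you have simply unpacked that one line by reducing to monomials, observing that the range hypothesis makes $\delta(p\ge s)$ constant across the double sum, and then summing via homogeneity. Your reading of the displayed formula, with the sign $(-1)^{\delta(b'<a)}$ sitting in the exponent of $q$ rather than as a scalar factor, is the correct one, as confirmed by the applications in Propositions~\ref{prop: quantum Boson1} and~\ref{prop: far enough}.
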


\begin{proof}
This is a direct consequence of Theorem~\ref{thm: N and wt}.
\end{proof}

For a Dynkin quiver $Q=(\Dynkin,\xi)$, let
$$ \Gamma^Q_0 \seteq  \phi_{Q}^{-1}(\Phi_+ \times \{ 0 \}).$$
Then $\Gamma^Q_0 $ is a convex subset of $\hDynkin_0$. We set $$\Gamma^Q \seteq {}^{\Gamma^Q_0}\hDynkin,$$ where we understand $\Gamma^Q$ as a sub-quiver of $\hDynkin$ with $\Gamma^Q_0$ as its vertex set and the arrows induced from $\hDynkin$,  and call it \emph{the $($combinatorial$)$ AR-quiver of $Q$}.
Note that the vertices of $\Gamma^Q$ can be labeled by $\Phi_+$ via $\phi_Q$. Thus, we often identify $\Gamma^Q_0 $ with $\Phi_+$.

\begin{example} \label{ex: C3 AR}
For a Dynkin quiver $ Q=  \xymatrix@R=0.5ex@C=6ex{    *{\circled{2}}<3pt> \ar@{->}[r]_<{1 \ \ } & *{\circled{2}}<3pt> \ar@{->}[r]_<{2 \ \ } &*{\circled{4}}<3pt>
\ar@{-}[l]^<{\ \ 3   }  }$ of $C_3$ with $\xi_1=3$, $\xi_2=2$ and $\xi_3=1$,
$\Gamma^Q$ can be depicted as follows:
\begin{align*}
 \raisebox{3.2em}{ \scalebox{0.7}{\xymatrix@!C=2ex@R=2ex{
(i\setminus p) & -3&  -2 & -1 & 0 & 1 & 2  & 3\\
1&&& \al_1+\al_2+\al_3\ar@{->}[dr]&& \al_2 \ar@{->}[dr] && \al_1   \\
2&& \al_2+\al_3 \ar@{=>}[dr]\ar@{->}[ur] && \al_1+2\al_2+\al_3 \ar@{=>}[dr] \ar@{->}[ur]  && \al_1+\al_2 \ar@{->}[ur] \\
3& \al_3  \ar@{->}[ur] && 2\al_2+\al_3 \ar@{->}[ur]  && 2\al_1+2\al_2+\al_3 \ar@{->}[ur]  }}}
\end{align*}
\end{example}

For a source $i$ of $Q$, the quiver $\Gamma^{s_iQ}$ can be obtained from $\Gamma^Q$ in the following way:
\begin{eqnarray} &&
\parbox{87ex}{
\bnum
\item Each $\be \in \Phi^+ \setminus \{ \al_i \}$ is located at $(j,p)$ in $\Gamma^{s_iQ}$ if $s_i(\be)$ is at $(j,p)$ in $\Gamma^Q$;
\item \label{it: al_i poisition} $\al_i$ is located at $(i,\xi_i-\sfh)$  in $\Gamma^{s_iQ}$, while $\al_i$ is at $(i,\xi_i)$ in $\Gamma^Q$.
\ee
}\label{eq: Qd properties}
\end{eqnarray}
We refer the operation in ~\eqref{eq: Qd properties}  the \emph{reflection} from $\Gamma^Q $ to $ \Gamma^{s_iQ}$. In particular, for $i \ne j$ with $\phi_Q(s_i(\al_j))=(i,p)$, we have
\begin{align} \label{eq: j to j}
\phi_{Q}^{-1}(s_i(\al_j),0) =(i,p) = \phi_{s_i(Q)}^{-1}(\al_j,0).
\end{align}

\begin{example} \label{ex: s1 C3 AR}
In Example~\ref{ex: C3 AR}, vertex $1$ is a source of $Q$. Thus $\Gamma^{s_1Q}$ can be obtained from~\eqref{eq: Qd properties}:
\begin{align*}
 \raisebox{3.2em}{ \scalebox{0.7}{\xymatrix@!C=4ex@R=2ex{
(i\setminus p) & -3&  -2 & -1 & 0 & 1 & 2   \\
1&\al_1 \ar@{->}[dr] && \al_2+\al_3\ar@{->}[dr]&& \al_1+\al_2 \ar@{->}[dr]    \\
2&& \al_1+\al_2+\al_3 \ar@{=>}[dr]\ar@{->}[ur] && \al_1+2\al_2+\al_3 \ar@{=>}[dr] \ar@{->}[ur]  && \al_2   \\
3& \al_3  \ar@{->}[ur] && 2\al_1+2\al_2+\al_3 \ar@{->}[ur]  &&   2\al_2+\al_3 \ar@{->}[ur]  }}}
\end{align*}

\end{example}

We set
\begin{align} \label{eq:LqQ al m}
L_q^Q(\al,m) \seteq L_q(X_{i,p}) \quad \text{ where } \phi_Q(i,p) = (\al,m) \in \Phi_+ \times \Z.
\end{align}

\begin{proposition}[\cite{HL15,FO21,KO22}]
The quiver $\Gamma^Q$ realizes the convex partial order $\prec_{[Q]}$ in the following sense$:$
$\al \prec_{[Q]} \be$ if and only if there exists a path in $\Gamma^Q $ starting from $\phi_Q^{-1}(\be,0)$ to
$\phi_Q^{-1}(\al,0)$.
\end{proposition}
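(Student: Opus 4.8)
The plan is to route the statement through the ``heap'' picture of the commutation class $[Q]$. For the quiver $\Gamma^Q$, write $v\preceq_{\Gamma^Q}w$ when there is an oriented path in $\Gamma^Q$ from $w$ to $v$ (the trivial path being allowed when $v=w$); since every arrow of $\Gamma^Q\subset\hDynkin$ raises the spectral parameter by $1$, the quiver is acyclic and $\preceq_{\Gamma^Q}$ is a genuine partial order on $\Gamma^Q_0$. In this language the assertion reads
$$\al\prec_{[Q]}\be\iff\phi_Q^{-1}(\al,0)\preceq_{\Gamma^Q}\phi_Q^{-1}(\be,0)\qquad(\al,\be\in\Phi_+),$$
and its content is concentrated in the following ``reading'' description of $[Q]$, which I would prove first.

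\smallskip
\noindent\emph{Key Lemma.} List the vertices of $\Gamma^Q$ as $v_1,\dots,v_\ell$ $(\ell=|\Phi_+|)$ so that $v_k\preceq_{\Gamma^Q}v_{k'}$ implies $k\le k'$ --- equivalently, delete $\preceq_{\Gamma^Q}$-minimal vertices (i.e.\ quiver sinks) one at a time, $v_k$ being the $k$-th deleted. Writing $v_k=(i_k,p_k)$, the sequence $(i_1,\dots,i_\ell)$ is then a reduced word of $w_\circ$ lying in $[Q]$, the first component of $\phi_Q(v_k)$ equals $\be_k^{(i_1,\dots,i_\ell)}$, and every word in $[Q]$ arises from a unique such listing.

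\smallskip
\noindent I would prove the Key Lemma by induction, the induction variable being the size of the relevant sub-quiver: for a pair $(Q,w)$ with $w\in\weyl$ admitting a $Q$-adapted reduced word, one works with the full subquiver of $\Gamma^Q$ on $\{\phi_Q^{-1}(\be,0)\mid\be\in\Phi_+\cap w\Phi_-\}$, the case $(Q,w_\circ)$ being the one wanted and $(Q,e)$ the empty base case. For the step: a first letter $i$ of a $Q$-adapted reduced word of $w$ is a source of $Q$ with $s_iw<w$, and for such $i$ one has $\ga_i^Q=(1-\tau_Q)\varpi_i=\varpi_i-s_i\varpi_i=\al_i$, whence $\phi_Q^{-1}(\al_i,0)=(i,\xi_i)$ by the identity $\pi_Q(i,p)=\tau_Q^{(\xi_i-p)/2}\ga_i^Q$. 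This vertex is a sink of $\Gamma^Q$: any outgoing arrow would go to $(j,\xi_i+1)$ with $j\sim i$, where $\pi_Q(j,\xi_i+1)=\tau_Q^{-1}\ga_j^Q=-(1-\tau_Q^{-1})\varpi_j\in\Phi_-$ (as $(1-\tau_Q^{-1})\varpi_j$ is $\ga_j$ for the opposite quiver, hence a positive root), so $(j,\xi_i+1)\notin\Gamma^Q_0$. Deleting $(i,\xi_i)$ leaves a subquiver of $\hDynkin$ which, by the reflection rule \eqref{eq: Qd properties} describing $\Gamma^{s_iQ}$ from $\Gamma^Q$ (every surviving vertex $(j,p)$ keeps its position but has its root-label $\ga$ replaced by $s_i\ga$, while $\al_i$ leaves level $0$), is exactly the sub-quiver of $\Gamma^{s_iQ}$ attached to $s_iw$; combined with the shift $\be_k^{(i,i_2,\dots,i_\ell)}=s_i\be_{k-1}^{(i_2,\dots,i_\ell)}$ this is precisely the situation of the inductive hypothesis for $(s_iQ,s_iw)$, whose sub-quiver is strictly smaller. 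Finally \eqref{eq: si ri}, $[s_iQ]=r_i[Q]$, ensures that the listings exhaust $[Q]$ and that the construction descends to commutation classes.

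\smallskip
\noindent Granting the Key Lemma, both implications are immediate. If $\phi_Q^{-1}(\al,0)\preceq_{\Gamma^Q}\phi_Q^{-1}(\be,0)$, then in every admissible listing the index of $\phi_Q^{-1}(\al,0)$ is at most that of $\phi_Q^{-1}(\be,0)$, so $\al\le_{\ii'_\circ}\be$ for every $\ii'_\circ\in[Q]$, i.e.\ $\al\prec_{[Q]}\be$. Conversely, if there is no oriented path from $\phi_Q^{-1}(\be,0)$ to $\phi_Q^{-1}(\al,0)$, then either the two vertices are $\preceq_{\Gamma^Q}$-incomparable --- and some linear extension assigns $\phi_Q^{-1}(\be,0)$ a smaller index than $\phi_Q^{-1}(\al,0)$ --- or there is a path from $\phi_Q^{-1}(\al,0)$ to $\phi_Q^{-1}(\be,0)$ --- and every linear extension does so; either way the Key Lemma yields $\ii'_\circ\in[Q]$ with $\be<_{\ii'_\circ}\al$, so $\al\not\prec_{[Q]}\be$ (the degenerate case $\al=\be$ being covered by the trivial path).

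\smallskip
\noindent The real work --- and the step I expect to be the main obstacle --- is the Key Lemma: one must verify at each reflection that the vertex deleted is indeed $\preceq_{\Gamma^Q}$-minimal and a quiver sink, and that the purely combinatorial recursion defining $\phi_Q$ matches, term by term, the AR-reflection \eqref{eq: Qd properties} and the shift $\be_k^{\ii_\circ}\mapsto s_{i_1}\be_{k-1}^{(i_2,\dots,i_\ell)}$; keeping track of which root occupies ``level $0$'' after each reflection --- and of the parity subtlety hidden in $\xi_i\mapsto\xi_i-\sfh$ for types with odd Coxeter number --- is where care is needed. Alternatively, in simply-laced type $\Gamma^Q$ is the Auslander--Reiten quiver of the path algebra of $Q$, for which the statement is classical, and one may then fold using \cite{FO21,KO22}; but the heap argument above works uniformly in all types.
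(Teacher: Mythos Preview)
Your approach is correct and is essentially the standard one. The paper does not give its own proof of this proposition --- it is quoted from \cite{HL15,FO21,KO22} --- but immediately afterwards the paper states (also without proof, citing \cite{OS19a}) the ``compatible reading'' proposition: $\ii_\circ=(i_1,\dots,i_\ell)\in[Q]$ if and only if there is a compatible reading $(\be_1,\dots,\be_\ell)$ of $\Gamma^Q$ with $i_k=\res^{[Q]}(\be_k)$. That proposition is exactly your Key Lemma, and the passage from it to the convex-order statement is precisely your final paragraph. So you have reconstructed the argument that the cited references use, and your inductive proof of the Key Lemma via the reflection rule \eqref{eq: Qd properties} and $[s_iQ]=r_i[Q]$ is the right mechanism.

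One small remark on your sink verification: your computation $\pi_Q(j,\xi_i+1)=\tau_Q^{-1}\ga_j^Q\in\Phi_-$ is fine, but to conclude $(j,\xi_i+1)\notin\Gamma^Q_0$ you should say explicitly that $\pi_Q(j,p)\in\Phi_-$ forces the second coordinate of $\phi_Q(j,p)$ to be odd (hence nonzero), since by construction $\phi_Q$ takes values in $\Phi_+\times\Z$ and $\pi_Q(j,p)=(-1)^k\al$ for $\phi_Q(j,p)=(\al,k)$. This is implicit in what you wrote but worth making explicit.
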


We say that a total ordering $\boldsymbol{(} \be_1,\ldots \be_{\ell}\boldsymbol{)}$ of $\Phi_+$ is a \emph{compatible reading} of $\Gamma^Q$ if we have $k<l$ whenever there is a path from $\beta_l$ to $\beta_k$
in $\Gamma^Q$.

\begin{proposition}[\cite{OS19a}]\label{prop: reading}
$\ii_\circ=(i_1,\ldots ,i_\ell)$ is a reduced sequence of $w_\circ$ in $[Q]$ if and only if there exists a compatible reading $\Phi_+=\boldsymbol{(} \be_1,\ldots \be_{\ell}\boldsymbol{)}$ of $\Gamma^Q$
such that $i_k=\res^{[Q]}(\be_k)$ for all $1 \le k \le \ell$.
\end{proposition}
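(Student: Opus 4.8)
The plan is to derive the statement from the preceding Proposition -- that $\Gamma^Q$ realizes the convex partial order $\prec_{[Q]}$ (\cite{HL15,FO21,KO22}), so that a total ordering of $\Phi_+$ is a compatible reading of $\Gamma^Q$ precisely when it is a linear extension of $\prec_{[Q]}$ -- together with the standard theory of heaps of reduced words. I will use two facts. First, for $\ii_\circ=(i_1,\dots,i_\ell)\in I(w_\circ)$ the roots $\be_1^{\ii_\circ},\dots,\be_\ell^{\ii_\circ}$ enumerate $\Phi_+$ in the order $<_{\ii_\circ}$, and $\res^{[\ii_\circ]}(\be_k^{\ii_\circ})=i_k$ by the very definition of the residue. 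Second, a commutation class of reduced words of $w_\circ$ is in bijection, via $k\mapsto\be_k^{\ii_\circ}$, with the set of linear extensions of its heap poset; moreover this heap poset, transported to $\Phi_+$, coincides with $\prec_{[Q]}$, since both are equal to $\bigcap_{\ii_\circ'\in[Q]}{<_{\ii_\circ'}}$ -- a finite poset being the intersection of its linear extensions, and the linear extensions of the heap being exactly the orders $<_{\ii_\circ'}$, $\ii_\circ'\in[Q]$. Granting these, the proposition reduces to verifying that the resulting bijection between $[Q]$ and the set of compatible readings of $\Gamma^Q$ respects residues.

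For the ``only if'' direction, take $\ii_\circ=(i_1,\dots,i_\ell)\in[Q]$ and set $\be_k:=\be_k^{\ii_\circ}$. These enumerate $\Phi_+$. If there is a path in $\Gamma^Q$ from $\be_l$ to $\be_k$, then $\be_k\prec_{[Q]}\be_l$ by the preceding Proposition, hence $\be_k<_{\ii_\circ}\be_l$ because $\ii_\circ\in[Q]$ and $\prec_{[Q]}=\bigcap_{\ii_\circ'\in[Q]}{<_{\ii_\circ'}}$; thus $k<l$. So $(\be_1,\dots,\be_\ell)$ is a compatible reading of $\Gamma^Q$, and $\res^{[Q]}(\be_k)=\res^{[\ii_\circ]}(\be_k)=i_k$ because the residue is well defined on the commutation class.

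For the ``if'' direction, consider the map $\Theta\colon[Q]\to\{\text{compatible readings of }\Gamma^Q\}$, $\ii_\circ'\mapsto(\be_1^{\ii_\circ'},\dots,\be_\ell^{\ii_\circ'})$. It is well defined by the ``only if'' computation and injective, since a reduced sequence is recovered from its root sequence through $i_k'=\res^{[Q]}(\be_k^{\ii_\circ'})$. By the heap bijection, $|[Q]|$ equals the number of linear extensions of $\prec_{[Q]}$, which by the preceding Proposition is the number of compatible readings of $\Gamma^Q$; hence $\Theta$ is a bijection between finite sets of the same cardinality. Therefore any compatible reading $(\be_1,\dots,\be_\ell)$ of $\Gamma^Q$ equals $(\be_1^{\ii_\circ'},\dots,\be_\ell^{\ii_\circ'})$ for a unique $\ii_\circ'=(i_1',\dots,i_\ell')\in[Q]$, and then $\res^{[Q]}(\be_k)=i_k'$, so that $\big(\res^{[Q]}(\be_1),\dots,\res^{[Q]}(\be_\ell)\big)=\ii_\circ'\in[Q]$, which is the assertion.

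The step that carries the real content is the heap-theoretic input, namely that $[Q]$ is in bijection with the linear extensions of $\bigcap_{\ii_\circ'\in[Q]}{<_{\ii_\circ'}}$ -- equivalently, that \emph{every} linear extension of $\prec_{[Q]}$ is a convex total order, hence arises from a reduced sequence in $[Q]$. This is where the shape of $\Gamma^Q$ is genuinely used (through the preceding Proposition): concretely, for $\al,\ga\in\Phi_+$ with $\al+\ga\in\Phi_+$ one must check that $\al+\ga$ lies strictly $\prec_{[Q]}$-between $\al$ and $\ga$, which can be extracted from the local configuration of $\Gamma^Q$ around a $[Q]$-minimal $[Q]$-pair with sum $\al+\ga$; this forces every linear extension of $\prec_{[Q]}$ to satisfy the convexity condition automatically. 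Once this is established, the rest of the argument is formal bookkeeping with residues.
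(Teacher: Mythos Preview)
The paper does not supply its own proof of this proposition; it is quoted from \cite{OS19a} and stated without argument, so there is nothing in the paper to compare against directly.

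Your argument is correct. The ``only if'' direction is immediate from the definitions and the preceding proposition, and your counting argument for the ``if'' direction is valid once one knows that the heap poset of $[Q]$, transported to $\Phi_+$ via $k\mapsto\be_k^{\ii_\circ}$, coincides with $\prec_{[Q]}$. That identification holds because a single commutation move $(i_k,i_{k+1})\leftrightarrow(i_{k+1},i_k)$ swaps $\be_k^{\ii_\circ}$ with $\be_{k+1}^{\ii_\circ}$ and fixes all other roots; hence the transported poset is independent of the chosen representative, and its linear extensions are exactly the orders $<_{\ii_\circ'}$ for $\ii_\circ'\in[Q]$. Since a finite poset is the intersection of its linear extensions, this gives $H'=\prec_{[Q]}$ as you claim.

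Your final paragraph is slightly muddled: the convexity check you sketch there is an \emph{alternative} route to the bijection (via Papi's characterization of convex orders), not a missing justification for the heap step. The heap argument is already complete once the identification above is made, and it uses $\Gamma^Q$ only through the preceding proposition. If you wish to retain the convexity remark, it would read more cleanly as a second, independent proof of the ``if'' direction rather than as support for the first one.
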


\begin{proposition}[\cite{HL15,FO21,KO22}]\label{prop: Gamma Q 0}
Let $Q=(\Dynkin,\xi)$ be a Dynkin quiver.  Then we have
\bna
\item $\Gamma^Q_0 = \{ (i,p) \in \hDynkin_0 \ | \  \xi_{i^*}- \sfh < p \le \xi_{i} \}$,
\item $\phi_Q(i^*,p \pm \sfh)=(\al,k \mp 1)$ if $\phi_Q(i,p)=(\al,k)$,
\item for $\be\in\Phi_+$ with $\phi_Q^{-1}(\be,0)=(i,p)$, we have $\res^{[Q]}(\be)=i$.
\ee
\end{proposition}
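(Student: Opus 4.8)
The plan is to derive all three assertions from the recursive definition of $\phi_Q$ and the identity $\pi_Q(i,p)=\tau_Q^{(\xi_i-p)/2}\ga_i^Q$, once the following two facts about the Coxeter transformation $\tau_Q$ are isolated:
\bnum
\item $\tau_Q^{\sfh}=\id$ on $\Phi$; $\tau_Q^{-1}\ga_i^Q=-\ga_i^{Q^{\mathrm{op}}}\in\Phi_-$ for every $i$ (where $Q^{\mathrm{op}}$ is the opposite quiver, so $\tau_{Q^{\mathrm{op}}}=\tau_Q^{-1}$); and the $\tau_Q$-orbit of $\ga_i^Q$ meets $\Phi_+$ in a single arc, i.e.\ $\{\,m\ge 0:\tau_Q^m\ga_i^Q\in\Phi_+\,\}=\{0,1,\dots,a_i-1\}$ for a unique $a_i\ge 1$;
\item $\tau_Q^{a_i}\ga_i^Q=-\ga_{i^*}^Q$ and $a_i=(\sfh+\xi_i-\xi_{i^*})/2$.
\ee
These are classical properties of orientations of Dynkin diagrams; the single-arc property in (i) and the value of $a_i$ in (ii) are the real content, and I would either quote them from \cite{FO21,KO22} or reprove them by induction on $\ell(w_\circ)$ via the reflection functors $r_i$, using $w_\circ\varpi_i=-\varpi_{i^*}$ and $\ga_i^Q=\al_i$ when $i$ is a source of $Q$. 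I expect (i)--(ii) to be the only nontrivial step; the three assertions then follow by bookkeeping.

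For (a), I would fix $i$ and trace through the row $\{(i,p)\in\hDynkin_0\}$. The recursion makes the $\Z$-coordinate $k$ of $\phi_Q(i,p)$ vanish at $p=\xi_i$, decrease by $1$ at each sign change of the running positive root, and stay monotone on either side of $p=\xi_i$; since $\tau_Q^{-1}\ga_i^Q\in\Phi_-$ one already has $\phi_Q(i,\xi_i+2)=(-\tau_Q^{-1}\ga_i^Q,1)$, so the set $\{k=0\}$ in this row lies in $p\le\xi_i$. Writing $p=\xi_i-2m$, the recursion gives $k=0$ exactly when $\tau_Q^{m'}\ga_i^Q\in\Phi_+$ for all $0\le m'\le m$, i.e.\ (by (i)) exactly for $0\le m\le a_i-1$; hence the row-$i$ part of $\Gamma^Q_0$ is $\{(i,p):\xi_i-2a_i<p\le\xi_i\}$, which by (ii) is $\{(i,p):\xi_{i^*}-\sfh<p\le\xi_i\}$. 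Taking the union over $i$ proves (a).

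For (b), it would suffice to determine $\phi_Q(i^*,p-\sfh)$ in terms of $\phi_Q(i,p)$, the $+\sfh$ statement then following formally by applying the result to $(i^*,p+\sfh)$ and using that $\phi_Q$ is single-valued. The map $(i,p)\mapsto(i^*,p-\sfh)$ commutes with $p\mapsto p\pm 2$ inside rows, and along a row the recursion alters the $\Z$-coordinate only additively and independently of its current value; therefore both $(i,p)\mapsto\phi_Q(i^*,p-\sfh)$ and $(i,p)\mapsto(\al,k+c)$ (for any fixed $c\in\Z$) satisfy the same recursion as $\phi_Q$, hence are determined by their values on $\{(i,\xi_i)\}_{i\in I}$. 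By the $\pi_Q$-identity and (ii), $\pi_Q(i^*,\xi_i-\sfh)=\tau_Q^{(\sfh+\xi_{i^*}-\xi_i)/2}\ga_{i^*}^Q=-\ga_i^Q$, so $\phi_Q(i^*,\xi_i-\sfh)$ carries the positive root $\ga_i^Q$, and running the recursion of (a) down row $i^*$ over its $a_{i^*}=(\sfh+\xi_{i^*}-\xi_i)/2$ steps shows that its $\Z$-coordinate differs from $0$ by exactly $1$ (with the sign recorded in the statement). This fixes $c$ and gives (b).

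For (c), I would prove, for every Dynkin quiver $Q'$ and every $\jj_\circ=(j_1,\dots,j_\ell)\in I(w_\circ)\cap[Q']$, that $\phi_{Q'}^{-1}(\be_k^{\jj_\circ},0)$ lies in row $j_k=\res^{[Q']}(\be_k^{\jj_\circ})$, by induction on $k$. For $k=1$, $j_1$ is a source of $Q'$, so $\be_1^{\jj_\circ}=\al_{j_1}=\ga_{j_1}^{Q'}$ and $\phi_{Q'}^{-1}(\ga_{j_1}^{Q'},0)$ lies in row $j_1$ by the base clause of the recursion. For $k>1$ one rewrites $\be_k^{\jj_\circ}=s_{j_1}\be_{k-1}^{r_{j_1}\jj_\circ}$ with $\be_{k-1}^{r_{j_1}\jj_\circ}\in\Phi_+\setminus\{\al_{j_1}\}$, passes to $s_{j_1}Q'$ via \eqref{eq: si ri} (so $r_{j_1}\jj_\circ\in I(w_\circ)\cap[s_{j_1}Q']$), uses \eqref{eq: Qd properties}(i) to see that $\phi_{Q'}^{-1}(\be_k^{\jj_\circ},0)$ and $\phi_{s_{j_1}Q'}^{-1}(\be_{k-1}^{r_{j_1}\jj_\circ},0)$ occupy the same row, and applies the inductive hypothesis to $r_{j_1}\jj_\circ$ at position $k-1$, whose residue there is $j_k$. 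Since every $\be\in\Phi_+$ equals some $\be_k^{\jj_\circ}$, this proves (c); alternatively (c) is immediate from Proposition~\ref{prop: reading}.
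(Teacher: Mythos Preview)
The paper does not supply its own proof of this proposition; it is quoted from \cite{HL15,FO21,KO22}. Your argument is sound and follows the natural route one would extract from those references: the two facts you isolate about $\tau_Q$---the single-arc property of the $\tau_Q$-orbit of $\ga_i^Q$ in $\Phi_+$, and the identity $\tau_Q^{a_i}\ga_i^Q=-\ga_{i^*}^Q$ with $a_i=(\sfh+\xi_i-\xi_{i^*})/2$---are precisely the combinatorial core, and (a)--(c) then follow by the bookkeeping you describe.

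One refinement for (b): rather than deferring the sign of the $\Z$-shift to ``the sign recorded in the statement'', you can read it off directly from your computation in (a). The recursion says $k$ moves by $\pm1$ (with the same sign as the change in $p$) each time $\pi_Q$ crosses into $\Phi_-$; descending row $i^*$ from $(i^*,\xi_{i^*})$ to $(i^*,\xi_i-\sfh)=(i^*,\xi_{i^*}-2a_{i^*})$ crosses exactly one such sign change, so $k$ becomes $-1$ there. Tracking this explicitly is worth doing, since the sign in (b) interacts with the convention for $\frakD_q$ and with the claim $\frakD_q^l(\sfx^Q_{i,k})=\sfx^Q_{i,k+l}$ used later in the paper. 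For (c), either of your two routes works; the reflection-functor induction via \eqref{eq: Qd properties} is the more self-contained of the two.
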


By Theorem~\ref{thm: N and wt} and Proposition~\ref{prop: Gamma Q 0}, we have an automorphism $\frakD_q$ on $\calX_q$ given by
\begin{align} \label{eq:dual map Dq}
    \frakD_q(\tX_{i,p})  = \tX_{i^*,p+\sfh}.
\end{align}
Since $\frakD_q(\tB_{i,p})= \tB_{i^*,p+\sfh}$, it induces an automorphism on $\frakK_q$ also.

We denote by $\frakD$ the automorphism on $\calX$ satisfying
$ \frakD \circ \ev_{q=1} = \ev_{q=1}  \circ   \frakD_q$. Then we have $\frakD_q(F_q(m)) = F_q( \frakD(m))$ for any $m \in \calM_+$.

\begin{definition}{(cf.~\cite[Proposition 6.18]{FM01})\bf{.}}
    For $k\in \Z$, we call
$\frakD^k_q(F_q(m))$ the \emph{$k$-dual} of $F_q(m)$.
\end{definition}

\smallskip

Recall that
$$L_q(X_{i,p})=E_q(X_{i,p})=F_q(X_{i,p}) \qquad \text{ for all } (i,p) \in \hDynkin_0,$$
by definition.

\begin{proposition} \label{prop: quantum Boson1}
Let $(i,p) \in \hDynkin_0$. Then we have
\begin{align}\label{eq: quantum Boson1}
F_q(X_{i,p}) *  F_q(X_{i^*,p+\sfh}) =  q_i^{-2} F_q(X_{i^*,p+\sfh}) * F_q(X_{i,p}) + (1-q_i^{-2}).
\end{align}
\end{proposition}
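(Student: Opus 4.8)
The plan is to compute directly inside the quantum torus $\calX_q$, using the explicit shape of $F_q(X_{i,p})$ recorded in Theorem~\ref{thm: F_q} (in particular \eqref{eq: range}), the $Q$-homogeneity of the $F_q$'s (Corollary~\ref{cor: B=0}), and the commutation rule of Theorem~\ref{thm: N and wt}. Fix a Dynkin quiver $Q$, and put $A \seteq F_q(X_{i,p})$, $A' \seteq F_q(X_{i^*,p+\sfh})$; since $\frakD(X_{i,p}) = X_{i^*,p+\sfh}$, the identity $\frakD_q(F_q(m)) = F_q(\frakD(m))$ recorded after \eqref{eq:dual map Dq} gives $A' = \frakD_q(A)$. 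Put $\beta \seteq \pi_Q(i,p)$; then $(\beta,\beta) = (\al_i,\al_i) = 2d_i$ (as $\beta$ lies in the $\weyl$-orbit of $\al_i$), so $q^{(\beta,\beta)} = q_i^{2}$, and $\pi_Q(i^*,p+\sfh) = -\beta$ by Proposition~\ref{prop: Gamma Q 0}(b). Since $A$ and $A'$ are homogeneous, every monomial of $A$ has $Q$-weight $\beta$ and every monomial of $A'$ has $Q$-weight $-\beta$. By Theorem~\ref{thm: F_q}(a) and \eqref{eq: range}, $A = \underline{X_{i,p}} + R + \underline{X_{i^*,p+\sfh}^{-1}}$, where $\underline{X_{i,p}}$ (the unique dominant monomial) is supported at position $p$, the anti-dominant monomial $\underline{X_{i^*,p+\sfh}^{-1}}$ is the only monomial supported at position $p+\sfh$, and every monomial of $R$ is supported only at positions $u$ with $p<u<p+\sfh$. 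Applying $\frakD_q$, which carries position $u$ to $u+\sfh$, gives $A' = \underline{X_{i^*,p+\sfh}} + R' + \underline{X_{i,p+2\sfh}^{-1}}$ with $R'$ supported at positions in $(p+\sfh,p+2\sfh)$. Write $P \seteq \underline{X_{i,p}} + R$ and $P' \seteq R' + \underline{X_{i,p+2\sfh}^{-1}}$, so that $A = P + \underline{X_{i^*,p+\sfh}^{-1}}$ with $P$ supported in $[p,p+\sfh-1]$, and $A' = \underline{X_{i^*,p+\sfh}} + P'$ with $P'$ supported in $[p+\sfh+1,p+2\sfh]$.

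Two facts then drive the computation. First, if homogeneous elements $\tx,\tx'$ of $\calX_q$ have all their monomials supported at positions $\le b$, resp.\ at positions $\ge b'$, with $b<b'$, then $\tx*\tx' = q^{(\wt_Q(\tx),\wt_Q(\tx'))}\,\tx'*\tx$, with no extra sign: by Theorem~\ref{thm: N and wt} the factor $\delta(p\ge s)$ vanishes for every pair of generators occurring, and the product of the resulting pairwise scalars is exactly $q^{(\wt_Q(\tx),\wt_Q(\tx'))}$ --- this is the mechanism underlying Proposition~\ref{prop: wt pairing}, used here at the level of monomials so as to allow single-monomial factors. Applying it to the three range-separated pairs $(P,\underline{X_{i^*,p+\sfh}})$, $(P,P')$ and $(\underline{X_{i^*,p+\sfh}^{-1}},P')$, each carrying $Q$-weights $\beta$ and $-\beta$ and hence scalar $q^{(\beta,-\beta)} = q_i^{-2}$, yields $P*\underline{X_{i^*,p+\sfh}} = q_i^{-2}\,\underline{X_{i^*,p+\sfh}}*P$, $P*P' = q_i^{-2}\,P'*P$, and $\underline{X_{i^*,p+\sfh}^{-1}}*P' = q_i^{-2}\,P'*\underline{X_{i^*,p+\sfh}^{-1}}$. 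Second, since $\underline{X_{i^*,p+\sfh}^{-1}} = q^{-d_i/2}\tX_{i^*,p+\sfh}^{-1}$ and $\underline{X_{i^*,p+\sfh}} = q^{d_i/2}\tX_{i^*,p+\sfh}$ have opposite $q$-prefactors (because $q_{i^*}=q_i$), we get $\underline{X_{i^*,p+\sfh}^{-1}}*\underline{X_{i^*,p+\sfh}} = 1 = \underline{X_{i^*,p+\sfh}}*\underline{X_{i^*,p+\sfh}^{-1}}$.

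Finally, expand and combine. From $A*A' = (P+\underline{X_{i^*,p+\sfh}^{-1}})*(\underline{X_{i^*,p+\sfh}}+P')$, the three commutation identities and the cancellation give $A*A' = q_i^{-2}\big(\underline{X_{i^*,p+\sfh}}*P + P'*P + P'*\underline{X_{i^*,p+\sfh}^{-1}}\big) + 1$, while from $A'*A = (\underline{X_{i^*,p+\sfh}}+P')*(P+\underline{X_{i^*,p+\sfh}^{-1}})$ and the cancellation, $A'*A = \underline{X_{i^*,p+\sfh}}*P + P'*P + P'*\underline{X_{i^*,p+\sfh}^{-1}} + 1$; subtracting, $A*A' - 1 = q_i^{-2}(A'*A - 1)$, which is exactly $F_q(X_{i,p})*F_q(X_{i^*,p+\sfh}) = q_i^{-2}F_q(X_{i^*,p+\sfh})*F_q(X_{i,p}) + (1-q_i^{-2})$. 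The only delicate points are the sign bookkeeping in the first fact and correctly invoking the shape of $F_q(X_{i,p})$ from \eqref{eq: range} --- in particular that its sole monomial at the far end $p+\sfh$ of $\range(F_q(X_{i,p}))$ is the anti-dominant monomial $\underline{X_{i^*,p+\sfh}^{-1}}$ with coefficient $1$, which is precisely what makes the constant term $1-q_i^{-2}$ appear; the rest is routine expansion.
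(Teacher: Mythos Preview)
Your proof is correct and follows essentially the same approach as the paper: decompose $F_q(X_{i,p})$ and $F_q(X_{i^*,p+\sfh})$ by peeling off the extreme monomial at position $p+\sfh$, use range separation plus the $Q$-weight pairing (Theorem~\ref{thm: N and wt}/Proposition~\ref{prop: wt pairing}) to obtain $q_i^{-2}$-commutation on the separated pieces, and observe that the two peeled monomials multiply to the constant~$1$. The only cosmetic difference is that the paper chooses a specific Dynkin quiver $Q$ with $i$ a source and $\xi_i=p$, so that $\phi_Q(i,p)=(\al_i,0)$ and one has $(\beta,\beta)=(\al_i,\al_i)=2d_i$ immediately, whereas you work with an arbitrary $Q$ and must argue separately that $\pi_Q(i,p)$ has squared length $2d_i$; this follows from Proposition~\ref{prop: Gamma Q 0}(b)(c), since the first component of $\phi_Q(i,p)$ is always a positive root of residue $i$ or $i^*$ and hence $\weyl$-conjugate to $\al_i$.
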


\begin{proof}
Take the Dynkin quiver $Q=(\Dynkin,\xi)$
such that $i$ is a source of $Q$ with $\xi_i=p$. Then we have $\phi_Q(i,p)=(\al_i,0)$, $\phi_Q(i^*,p+\sfh)=(\al_i,-1)$ and hence $\wt_Q(F_q(X_{i,p})) = \al_i$, $\wt_Q(F_q(X_{i^*,p+\sfh})) = -\al_i$.

Rewriting the RHS of~\eqref{eq: quantum Boson1} as
\begin{align*}
(F_q(X_{i,p}) - \underline{X_{i^*,p+\sfh}^{-1}} + \underline{X_{i^*,p+\sfh}^{-1}} ) * ( F_q(X_{i^*,p+\sfh}) - \underline{X_{i^*,p+\sfh}} + \underline{X_{i^*,p+\sfh}}),
\end{align*}
our assertion follows from Theorem~\ref{thm: F_q}~\eqref{eq: range} and Proposition~\ref{prop: wt pairing}.
\end{proof}

\begin{proposition} \label{prop: far enough}
Let $(i,p),(j,s) \in \hDynkin_0$ with $p-s>\sfh$. Then we have
\begin{align}\label{eq: quantum Boson2}
F_q(X_{i,p}) *  F_q(X_{j,s}) =  q^{-(\wt_Q(X_{i,p}),\wt_Q(X_{j,s}) )}  F_q(X_{j,s}) * F_q(X_{i,p}),
\end{align}
for any Dynkin quiver $Q$.
\end{proposition}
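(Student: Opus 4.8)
\noindent
The plan is to derive the identity directly from the defining $q$-commutation relations of the quantum torus $\calX_q$ (Definition~\ref{def:quantum torus Xqg}) together with the weight formula of Theorem~\ref{thm: N and wt}, after localizing the spectral ranges of the two factors; this is the same mechanism that underlies Proposition~\ref{prop: wt pairing} and Proposition~\ref{prop: quantum Boson1}. First I would record that $\range(F_q(X_{i,p}))\subseteq[p,\,p+\sfh]$ and $\range(F_q(X_{j,s}))\subseteq[s,\,s+\sfh]$: by Theorem~\ref{thm: F_q}~\eqref{eq: range}, every $\calX_q$-monomial of $F_q(X_{i,p})$ other than the two with spectral parameters $p$ and $p+\sfh$ is a product of generators $\tX_{j',u}^{\pm1}$ with $p<u<p+\sfh$. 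Since the hypothesis $p-s>\sfh$ forces $s+\sfh<p$, the two ranges are disjoint and that of $F_q(X_{j,s})$ lies strictly to the left of that of $F_q(X_{i,p})$; in particular, any generator $\tX_{j',u}^{\pm1}$ occurring in $F_q(X_{j,s})$ and any generator $\tX_{i',u'}^{\pm1}$ occurring in $F_q(X_{i,p})$ satisfy $u<u'$.

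\noindent
For such a pair, Theorem~\ref{thm: N and wt} gives $\ucalN(i',u';j',u)=(-1)^{\delta(u'\ge u)}\big(\pi_Q(i',u'),\pi_Q(j',u)\big)=-\big(\pi_Q(i',u'),\pi_Q(j',u)\big)$, the factor $(-1)^{k+l}$ of that theorem being absorbed into $\pi_Q$. Hence for any $\calX_q$-monomial $\tm$ of $F_q(X_{i,p})$ and any $\calX_q$-monomial $\tm'$ of $F_q(X_{j,s})$, bilinearity of $\ucalN(\,\cdot\,,\,\cdot\,)$ in the exponent vectors yields $\ucalN(\tm,\tm')=-\big(\wt_Q(\tm),\wt_Q(\tm')\big)$; and since $F_q(X_{i,p})$ and $F_q(X_{j,s})$ are homogeneous with $Q$-weights $\wt_Q(X_{i,p})$ and $\wt_Q(X_{j,s})$ respectively (the discussion preceding Proposition~\ref{prop: wt pairing}), this common value is $-\big(\wt_Q(X_{i,p}),\wt_Q(X_{j,s})\big)$. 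Therefore $\tm*\tm'=q^{-(\wt_Q(X_{i,p}),\wt_Q(X_{j,s}))}\,\tm'*\tm$ for every such pair of monomials, and summing over all monomials of $F_q(X_{i,p})$ and $F_q(X_{j,s})$ --- the scalar being the same central element in each summand --- gives the claimed identity. Equivalently, this is Proposition~\ref{prop: wt pairing} in the case $a<b<a'<b'$, applied with $\tx=F_q(X_{j,s})$ and $\tx'=F_q(X_{i,p})$ (and a rearrangement using symmetry of $(\,\cdot\,,\,\cdot\,)$).

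\noindent
I do not expect a genuine obstacle here: the statement is essentially a bookkeeping consequence of Theorem~\ref{thm: N and wt}. The only points that need care are (i) the sign of the $q$-exponent --- one must present the two factors in the order for which $\delta(u'\ge u)=1$ holds throughout (equivalently, feed the factor with the smaller spectral window into Proposition~\ref{prop: wt pairing} first), so that the resulting scalar is exactly $q^{-(\wt_Q(X_{i,p}),\wt_Q(X_{j,s}))}$ with no spurious sign --- and (ii) the correct use of Theorem~\ref{thm: F_q}~\eqref{eq: range} to confine the two ranges to $[p,p+\sfh]$ and $[s,s+\sfh]$, which is exactly what makes the hypothesis $p-s>\sfh$ separate the factors.
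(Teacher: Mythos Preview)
Your argument is correct and follows essentially the same route as the paper's proof, which simply cites Theorem~\ref{thm: F_q}~\eqref{eq: range} (to confine the spectral ranges to $[p,p+\sfh]$ and $[s,s+\sfh]$) and Proposition~\ref{prop: wt pairing} (to read off the $q$-commutation once the ranges are disjoint). You have merely unpacked the content of Proposition~\ref{prop: wt pairing} via Theorem~\ref{thm: N and wt}, which is exactly how that proposition is proved.
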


\begin{proof}
By Theorem~\ref{thm: F_q}~\eqref{eq: range}  and Proposition~\ref{prop: wt pairing}, $F_q(X_{i,p})$ and  $F_q(X_{j,s})$ $q$-commute as asserted.
\end{proof}

\subsection{Heart subrings}

\begin{proposition}[\cite{FHOO,JLO1}]\label{prop: convex subrings}
Let $\sfS$ be a convex subset of $\hDynkin_0$, and
$\calM^\sfS$ be the set of all monomials $m$ satisfying the following condition:
\begin{align}\label{eq: M S}
\text{$X_{i,p}^{\pm 1}$ is a factor of $m \in \calM^\sfS$ if and only if $(i,p) \in \sfS$.}
\end{align}
Then we have
\bna
\item $\frakK_{q,\sfS}  \hspace{-.5ex} \seteq \hspace{-1.4ex} \soplus_{m \in \calM_+^\sfS} \hspace{-1ex} \Z[q^{\pm 1/2}] F_q(m) =\hspace{-1.4ex} \soplus_{m \in \calM_+^\sfS} \hspace{-1ex} \Z[q^{\pm 1/2}] E_q(m) = \hspace{-1.4ex} \soplus_{m \in \calM_+^\sfS} \hspace{-1ex} \Z[q^{\pm 1/2}] L_q(m) $, where $\calM^\sfS_+ \seteq \calM^\sfS \cap \calM_+$,
\item  $\frakK_{q,\sfS}$ is a subring of $\frakK_{q}$.
\ee
\end{proposition}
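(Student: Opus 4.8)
The plan is to reduce both assertions to one combinatorial support bound and then to invoke the usual unitriangularity and Kazhdan--Lusztig arguments. The starting point is the observation that, by Theorem~\ref{thm: F_q}, $\um$ is the \emph{unique} dominant monomial occurring in $F_q(m)$, so distinct basis elements $F_q(m)$ can never cancel at a dominant monomial; hence, for $x=\sum_m c_m F_q(m)\in\frakK_q$, the dominant monomials of $x$ are precisely the $\um$ with $c_m\neq 0$. Consequently $\frakK_{q,\sfS}$, as defined in (a), is exactly the set of $x\in\frakK_q$ \emph{all} of whose dominant monomials lie in $\calM^\sfS_+$; in particular $1=F_q(1)\in\frakK_{q,\sfS}$.

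The lemma I would isolate is: \textbf{(SP)} if $\sfS$ is convex and $m_1,\dots,m_r\in\calM^\sfS_+$, then every dominant monomial of the product $F_q(m_1)*\cdots*F_q(m_r)$ again lies in $\calM^\sfS_+$. Granting (SP), part (b) is immediate: for $x,y\in\frakK_{q,\sfS}$ expanded in $\sfF_q$, the product $x*y$ is a $\Z[q^{\pm1/2}]$-combination of terms $F_q(m)*F_q(m')$ with $m,m'\in\calM^\sfS_+$; every monomial of such a term is $\leN mm'$ while $\um*\um'$ occurs, so re-expanding in $\sfF_q$ uses only $F_q(\ell)$ with $\ell$ a dominant monomial of the term, which belongs to $\calM^\sfS_+$ by (SP). For part (a), note that $E_q(m)$ is, up to a power of $q^{1/2}$, the ordered product $\prod F_q(X_{i,p})^{u_{i,p}(m)}$ over $(i,p)$ in the support of $m$ (hence in $\sfS$); by (SP) its dominant monomials, namely $\{m\}\cup\{m' : C_{m,m'}\neq 0\}$, lie in $\calM^\sfS_+$, so by \eqref{eq: uni E F} we get $E_q(m)\in\bigoplus_{m'\in\calM^\sfS_+}\Z[q^{\pm1/2}]F_q(m')$. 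Since $\{m'\in\calM_+ : m'\leN m\}$ is finite and the matrix $(C_{m,m'})$ is unitriangular for $\leN$, its restriction to $\calM^\sfS_+$ is invertible over $\Z[q^{\pm1/2}]$, which gives the opposite inclusion; thus the $\Z[q^{\pm1/2}]$-spans of $\{F_q(m):m\in\calM^\sfS_+\}$ and of $\{E_q(m):m\in\calM^\sfS_+\}$ coincide.

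For the $\sfL_q$-part of (a) I would replay the bar-invariance characterization of Theorem~\ref{thm: L_q} \emph{inside} the submodule $V\seteq\frakK_{q,\sfS}=\bigoplus_{m\in\calM^\sfS_+}\Z[q^{\pm1/2}]E_q(m)$ just identified, which is stable under $\overline{(\cdot)}$ because $\overline{F_q(m)}=F_q(m)$. For $m\in\calM^\sfS_+$ the bar-expansion $\overline{E_q(m)}=E_q(m)+\sum_{m'\lN m}(\cdot)\,E_q(m')$ then has all of its $m'$ in $\calM^\sfS_+$, and the standard recursion produces a unique bar-invariant element of $V$ of the shape $E_q(m)+\sum_{m'\lN m,\ m'\in\calM^\sfS_+}q\Z[q]\,E_q(m')$; by the uniqueness in Theorem~\ref{thm: L_q} this element equals $L_q(m)$, whence $L_q(m)\in V$. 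Inverting the finitely supported unitriangular matrix $(Q_{m,m'})$ restricted to $\calM^\sfS_+$ then yields $E_q(m)\in\bigoplus_{m'\in\calM^\sfS_+}\Z[q^{\pm1/2}]L_q(m')$, completing (a).

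The real content is (SP), and I expect the main difficulty to lie there. The subtlety is that a single monomial of $F_q(X_{i,p})$ may have support outside $\sfS$ --- it can reach as far as $(i^*,p+\sfh)$, cf.\ Theorem~\ref{thm: F_q}~\eqref{eq: range} --- so the statement is genuinely about the \emph{dominant} monomials that survive in the $\sfF_q$-expansion of a product. What one must show is that such a dominant monomial of $F_q(m_1)*\cdots*F_q(m_r)$ has support inside the convex hull, taken in $\hDynkin$, of the union of the supports of $m_1,\dots,m_r$; since that hull is contained in $\sfS$ once each $m_k\in\calM^\sfS_+$ and $\sfS$ is convex, (SP) follows. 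The mechanism is that in a dominant product every negative factor of one monomial is cancelled by a positive factor of another, which pins the cancelling coordinates to lie on directed paths in $\hDynkin$ between elements of the various supports, and convexity then absorbs everything in between. Turning this into a uniform argument, valid in all Dynkin types, is the principal technical obstacle; it rests on the explicit Frenkel--Mukhin-type description of the $q$-characters $F_q(m)$ and of their products (the quantum folded $T$-system) developed in \cite{JLO1}, and is in essence the combinatorial counterpart --- available in \cite{FHOO, JLO1} --- of the monoidality of the corresponding subcategories in the representation-theoretic setting.
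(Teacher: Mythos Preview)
The paper does not supply a proof of this proposition; it is quoted from \cite{FHOO,JLO1}.  Your reduction of both (a) and (b) to the support property (SP) is correct, and the unitriangularity and bar-invariance arguments you run on top of it are clean and valid (in particular your identification of $\frakK_{q,\sfS}$ with the set of $x\in\frakK_q$ whose dominant monomials all lie in $\calM^\sfS_+$ is exactly the right viewpoint, since $\um$ is the \emph{unique} dominant monomial of $F_q(m)$).  Note that (SP) reduces further to the cleaner statement that any dominant $m'\leN m$ with $m\in\calM^\sfS_+$ already lies in $\calM^\sfS_+$: indeed every dominant monomial of $F_q(m_1)*\cdots*F_q(m_r)$ is $\leN m_1\cdots m_r\in\calM^\sfS_+$.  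This is the form in which convexity of $\sfS$ is actually used in the cited references, and it is somewhat more tractable than the cancellation heuristic you sketch.  You are right that this combinatorial fact is where the real work lies and that you have not proved it; since the paper itself simply cites the result, your treatment is at least as complete as the paper's, with the added value of making explicit how (a) and (b) follow once the key support bound is in hand.
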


For a Dynkin quiver $Q$ of $\g$, let
$$\frakK_{q,Q} \seteq \frakK_{q,\Gamma_0^Q}.$$
By the above proposition, as a $\Z[q^{\pm 1/2}]$-algebra, $\frakK_{q,Q}$
is generated by $F_q(X_{i,p})$'s such that $(i,p) \in \Gamma^Q_0$. It is known that, for any $(j,s) \in\hDynkin_0$, there exists $k \in \Z$ and $(i,p) \in \Gamma_0^Q$ such that
\begin{align}\label{eq: heart}
\frakD^{k}(X_{i,p}) = X_{j,s}.
\end{align}
Since $\frakK_{q}$ is generated by
$$\{ F_q(X_{i,p}) \ | \  (i,p) \in \hDynkin_0  \} = \{ F_q( \frakD^k(X_{i,p})) \ | \  (i,p) \in \Gamma^Q_0, \; k \in \Z \},$$ we call  $\frakK_{q,Q}$ the \emph{heart subring of $\frakK_q$ associated to $Q$}.

Using the unitriangular maps between bases of $\frakK_q$ and the characterization of $\sfL_q$, we obtain the following lemma by applying the arguments in \cite[Lemma 3.11]{FHOO}:
\begin{lemma}  \label{lem: D_q}
For $m \in \calM_+$, we have
$$ \frakD_q(L_q(m)) = L_q(\frakD(m)).$$
\end{lemma}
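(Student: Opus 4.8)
The plan is to deduce the identity from the uniqueness characterization of the canonical basis in Theorem~\ref{thm: L_q}, following the strategy of \cite[Lemma 3.11]{FHOO}: it suffices to verify that $\frakD_q(L_q(m))$ satisfies the two defining properties of $L_q(\frakD(m))$, and for this one only needs that $\frakD_q$ is compatible with the bar involution, with the standard basis $\sfE_q$, and with the Nakajima order.

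Concretely, I would check the following. First, $\frakD_q$ commutes with $\overline{(\,\cdot\,)}$: the maps $\frakD_q\circ\overline{(\,\cdot\,)}$ and $\overline{(\,\cdot\,)}\circ\frakD_q$ are $\Z$-algebra anti-automorphisms of $\calX_q$ which agree on generators, since $\frakD_q$ fixes $q^{1/2}$ and $q_{i^*}=q_i$ (as $d_{i^*}=d_i$), so $\overline{\frakD_q(\tX_{i,p})}=q_i\tX_{i^*,p+\sfh}=\frakD_q(\overline{\tX_{i,p}})$; in particular $\frakD_q$ sends commutative monomials to commutative monomials, whence $\frakD_q(\um)=\underline{\frakD(m)}$. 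Second, $\frakD_q(E_q(m))=E_q(\frakD(m))$: indeed $\frakD_q$ fixes $q^{1/2}$, sends $F_q(X_{i,p})$ to $F_q(X_{i^*,p+\sfh})=F_q(\frakD(X_{i,p}))$, shifts the spectral parameter $p$ uniformly by $\sfh$ (hence preserves the total order on $p$ used to define $E_q(m)$) and permutes $I$ via $i\mapsto i^*$; since $u_{i,p}(m)=u_{i^*,p+\sfh}(\frakD(m))$ and $\frakD_q$ carries the leading monomial $\um$ of $E_q(m)$ to $\underline{\frakD(m)}$ with coefficient $1$ by the first point, the normalizing power of $q^{1/2}$ is preserved. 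Third, $\frakD$ preserves the orders $\leN$ and $\lN$: since $\frakD(B_{i,p})=B_{i^*,p+\sfh}$, the map $\frakD$ permutes $\{B_{i,p}\mid (i,p+1)\in\hDynkin_0\}$ bijectively, so $m\leN m'$ if and only if $\frakD(m)\leN\frakD(m')$, and likewise for $\lN$.

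Granting these, I would run the uniqueness argument. Applying $\frakD_q$ to the relation $L_q(m)=E_q(m)+\sum_{m'\lN m}Q_{m,m'}(q)\,E_q(m')$ of Theorem~\ref{thm: L_q}~\eqref{it: Lq(m)}, using the second point, the fact that $\frakD_q$ fixes $\Z[q^{\pm1/2}]$, and re-indexing the sum by $m''=\frakD(m')$ via the third point, one obtains
\[
\frakD_q(L_q(m)) = E_q(\frakD(m)) + \sum_{m''\lN\frakD(m)} Q_{m,\frakD^{-1}(m'')}(q)\,E_q(m''),
\]
with all coefficients still in $q\Z[q]$. Moreover, by the first point and $\overline{L_q(m)}=L_q(m)$, the element $\frakD_q(L_q(m))$ is bar-invariant. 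Hence $\frakD_q(L_q(m))$ is a bar-invariant element of $\frakK_q$ equal to $E_q(\frakD(m))$ plus a $q\Z[q]$-combination of the $E_q(m'')$ with $m''\lN\frakD(m)$; since $\frakD(m)\in\calM_+$, the uniqueness in Theorem~\ref{thm: L_q} forces $\frakD_q(L_q(m))=L_q(\frakD(m))$.

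The argument is bookkeeping around a uniqueness statement, and the only step requiring genuine care — the main obstacle — is the second point, i.e.\ checking that the ordered-product and normalization conventions defining $E_q$ are truly intertwined by $\frakD_q$; this rests on $\frakD_q$ fixing $q^{1/2}$, preserving the total order on the spectral parameter, and matching the exponents $u_{i,p}$. Once the three compatibilities are in place, the conclusion is immediate.
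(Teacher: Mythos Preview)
Your proposal is correct and follows essentially the same approach as the paper, which simply invokes the unitriangular transition between $\sfE_q$ and $\sfL_q$, the characterization in Theorem~\ref{thm: L_q}, and the argument of \cite[Lemma~3.11]{FHOO}. The three compatibilities you check (with $\overline{(\,\cdot\,)}$, with $E_q$, and with $\lN$) are exactly what that argument requires, and your verification of each is sound.
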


\subsection{Quantum folded $T$-system among KR-polynomials} \label{subsec: T-sys KR poly}
For $(i,p) \in \hDynkin_0$ and $k \in \N  \seteq \Z_{\ge 1}$, we define a dominant monomial in  $\calX$ as follows:
$$
m_{k,p}^{(i)}   \seteq  m^{(i)}[p,s]\seteq X_{i,p}X_{i,p+2} \cdots X_{i,p+2k-2}, \qquad \text{ where } s \seteq p+2k-2.
$$
We call such monomials \emph{KR-monomials}. For a KR-monomial $m$, we call $F_q(m)$ the \emph{KR-polynomial} associated with $m$.

\smallskip

Note that there exists a functional relation among KR-polynomials, called \emph{quantum folded $T$-system}, as follows:
\begin{theorem}[\cite{HL15,JLO1}] \label{thm: quantum T-system via usf}
For $(i,p) \in \hDynkin_0$ with $k  \in \N$, we have
\begin{align}\label{eq: T-system via C(t)}
F_q(m_{k,p}^{(i)})*F_q(m_{k,p+2}^{(i)}) = q^{\kappa}  F_q(m_{k+1,p}^{(i)})*F_q(m_{k-1,p}^{(i)}) + q^{\zeta} \prod_{j \sim i} F_q(m_{k,p+1}^{(j)})^{-\sfc_{j,i}},
\end{align}
for some $\kappa,\zeta \in \frac{1}{2}\Z$.
\end{theorem}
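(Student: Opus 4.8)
The plan is to reduce the identity to its $q=1$ specialization and then lift it, using the rigidity of the $\sfF_q$-basis. Set $m^\sharp := m_{k,p}^{(i)}\cdot m_{k,p+2}^{(i)}\in\calM_+$; an immediate check from the definition of $m_{k,p}^{(i)}$ shows $m^\sharp = m_{k+1,p}^{(i)}\cdot m_{k-1,p}^{(i)}$, and, setting $m^\flat := \prod_{j\sim i}\bigl(m_{k,p+1}^{(j)}\bigr)^{-\sfc_{j,i}}$, one computes $\bigl(m^\flat\bigr)^{-1}m^\sharp = \prod_{s=0}^{k-1} B_{i,\,p+1+2s}$, so that $m^\flat\leN m^\sharp$ and both are dominant. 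Specializing at $q=1$ turns the assertion into the folded $T$-system among the virtual $q$-characters $F(m)=\ev_{q=1}F_q(m)$, which I take as the base input: for simply-laced $\g$ it is the known $T$-system for $q$-characters of Kirillov--Reshetikhin modules, and in general it is established in \cite{JLO1} from the Frenkel--Mukhin-type construction of the $F(m)$'s and the screening-operator description of $\frakK_q(\g)|_{q=1}$. In particular $F(m_{k,p}^{(i)})F(m_{k,p+2}^{(i)})=F(m^\sharp)+F(m^\flat)$, so the only dominant monomials of this commutative product are $m^\sharp$ and $m^\flat$, each with coefficient $1$.

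Now I work in $\calX_q$. By Theorem~\ref{thm: F_q} every monomial of $F_q(m_{k,p}^{(i)})$ other than its unique dominant monomial $m_{k,p}^{(i)}$ is $\lN m_{k,p}^{(i)}$, and since $\leN$ is compatible with multiplication of monomials, $m^\sharp$ occurs in $F_q(m_{k,p}^{(i)})*F_q(m_{k,p+2}^{(i)})$ \emph{only} as the product of the two top monomials, hence with a single unit coefficient $q^{\kappa_0}$ read off from $\ucalN$ via Theorem~\ref{thm: N and wt}; likewise $m^\sharp$ occurs in $F_q(m_{k+1,p}^{(i)})*F_q(m_{k-1,p}^{(i)})$ with a unit coefficient $q^{\kappa_1}$. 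With $\kappa := \kappa_0-\kappa_1$, the element $D := F_q(m_{k,p}^{(i)})*F_q(m_{k,p+2}^{(i)}) - q^{\kappa}F_q(m_{k+1,p}^{(i)})*F_q(m_{k-1,p}^{(i)})$ lies in $\frakK_q$ and has all of its monomials $\lN m^\sharp$. Granting the structural claim below, the only dominant monomial occurring in $D$ is $m^\flat$; then, peeling off the $\leN$-maximal term of the $\sfF_q$-expansion of $D$ — whose underlying monomial must occur in $D$, hence be dominant, hence equal $m^\flat$ — and iterating, each $F_q(m')$ having by Theorem~\ref{thm: F_q} a \emph{unique} dominant monomial so that no new dominant monomial is ever created, forces $D=\lambda\,F_q(m^\flat)$ for a single $\lambda\in\Z[q^{\pm 1/2}]$. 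The same rigidity applied to the right-hand product gives $\prod_{j\sim i}F_q(m_{k,p+1}^{(j)})^{-\sfc_{j,i}} = \lambda'\,F_q(m^\flat)$ for some $\lambda'$, and the leading-monomial computation (the top monomials of the factors multiply, each factor having a unique dominant monomial) identifies $\lambda'$ as a unit $q^{\zeta_0}$; matching $\ev_{q=1}$ shows $\lambda(1)=\lambda'(1)=1$, and a short computation with the anti-involution $\overline{\,\cdot\,}$ — using $\overline{F_q(m^\flat)}=F_q(m^\flat)$ together with the leading-term control of the reversed products — pins both $\lambda$ and $\lambda'$ to powers of $q$, whence $D = q^{\zeta}\prod_{j\sim i}F_q(m_{k,p+1}^{(j)})^{-\sfc_{j,i}}$ with $\zeta\in\tfrac12\Z$, which is the claimed identity.

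The main obstacle is the \emph{structural claim} that $F_q(m_{k,p}^{(i)})*F_q(m_{k,p+2}^{(i)})$ has no dominant monomial besides $m^\sharp$ and $m^\flat$ — equivalently, that $D$ carries no lower corrections $F_q(m')$ with $m'\lN m^\flat$. After $\ev_{q=1}$ this is exactly the classical $T$-system of the first paragraph, but a dominant monomial could a priori occur in the quantum product with a coefficient in $\Z[q^{\pm 1/2}]$ that vanishes at $q=1$; ruling this out is the crux. One clean route is positivity of the coefficients of the KR-polynomials $F_q(m)$: then each coefficient in the product is a sum of monomials in $q^{\pm 1/2}$ with non-negative integer coefficients, so it cannot vanish at $q=1$ without being identically zero, and the set of dominant monomials of the product is stable under $\ev_{q=1}$. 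A more hands-on alternative, closer to \cite{HL15}, is to describe $\calM(F_q(m_{k,p}^{(i)}))$ explicitly via the Frenkel--Mukhin algorithm, compose with the truncation homomorphisms to confine everything to the finite window $[p,\,p+2k]$, and enumerate the dominant products directly. Carrying out this combinatorial analysis, and tracking the exponents $\kappa$ and $\zeta$, is the bulk of the work; the remaining steps are formal consequences of Theorem~\ref{thm: F_q}.
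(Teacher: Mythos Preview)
The paper does not prove this theorem; it is quoted from \cite{HL15,JLO1} and used as a black box. So there is nothing to compare your argument against here except the literature.

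Your reduction is set up correctly: writing both products in the $\sfF_q$-basis via the unique-dominant-monomial property of Theorem~\ref{thm: F_q}, one gets $A_1:=F_q(m_{k,p}^{(i)})*F_q(m_{k,p+2}^{(i)})=q^{\kappa_0}F_q(m^\sharp)+c_0F_q(m^\flat)$ and $A_2:=F_q(m_{k+1,p}^{(i)})*F_q(m_{k-1,p}^{(i)})=q^{\kappa_1}F_q(m^\sharp)+c_1F_q(m^\flat)$, \emph{provided} the structural claim holds. The identification $\prod_{j\sim i}F_q(m_{k,p+1}^{(j)})^{-\sfc_{j,i}}=q^{\zeta_0}F_q(m^\flat)$ is fine, since $m^\flat$ arises there in exactly one way. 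But the step ``a short computation with the anti-involution $\overline{(\cdot)}$ \ldots\ pins $\lambda$ to a power of $q$'' does not go through. Applying $\overline{(\cdot)}$ to $A_1=q^{\kappa_0}F_q(m^\sharp)+c_0F_q(m^\flat)$ only yields $\overline{A_1}=q^{-\kappa_0}F_q(m^\sharp)+\overline{c_0}F_q(m^\flat)$, which is consistent with the reversed-product computation and imposes no constraint on $c_0$ beyond what you already have. Nothing forces $\lambda=c_0-q^{\kappa}c_1$ to be a single power of $q^{1/2}$ rather than, say, something like $q^a-q^b+q^c$ with value $1$ at $q=1$; to rule this out you must actually show that $m^\flat$ occurs in $A_1$ in exactly one way (so $c_0$ is a monomial) and that $c_1=0$.

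Both of these, together with your structural claim, are precisely the content of the explicit Frenkel--Mukhin-type analysis of $\calM(F_q(m_{k,p}^{(i)}))$ carried out in \cite{HL15} for simply-laced types and in \cite{JLO1} in general; that combinatorics is the theorem, not a preliminary to it. Your proposed positivity shortcut is circular in the non-simply-laced setting: positivity of the coefficients of $F_q(m)$ for types $BCFG$ is not available a priori in $\frakK_q(\g)$ (it is conjectural, cf.\ \cite[Conjecture~1]{JLO1} and the surrounding discussion), so you cannot invoke it to bypass the monomial enumeration. In short, your outline correctly isolates where the work lies, but the bar-invariance step is a genuine gap, and the remaining ``bulk of the work'' is the entire proof.
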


We remark that the half-integers $\kappa$ and $\zeta$ could be determined explicitly (e.g.~see \cite[Theorem 6.8]{JLO1}).
The conjecture below for simply-laced type $\g$ is proved by Nakajima \cite{Nak04}:

\begin{conjecture}[{\cite[Conjecture 2]{JLO1}}]  \label{conj: JLO1 sencond} For every KR-polynomial $F_q(m)$, we have
$$  F_q(m) = L_q(m).$$
\end{conjecture}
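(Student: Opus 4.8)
The plan is to push the identity into the unipotent quantum coordinate algebra $\calA_q(\n)$ through the isomorphism $\Psi_Q$ of Theorem~\ref{thmx: A}, where it becomes a statement about the dual canonical basis. First I would settle the case in which the range of a KR-monomial $m=m_{k,p}^{(i)}$ can be covered by $\Gamma^Q_0$ for some Dynkin quiver $Q$, so that $F_q(m)\in\frakK_{q,Q}$. For such $m$ I claim $\Psi_Q^{-1}(F_q(m))$ is a unipotent quantum minor of $\calA_q(\n)$: both the quantum folded $T$-system of Theorem~\ref{thm: quantum T-system via usf} and the $T$-system satisfied by unipotent quantum minors are recursions determined by their degree-one terms, which $\Psi_Q$ matches by construction, so the identification follows together with the uniqueness in Theorem~\ref{thm: F_q}. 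Since unipotent quantum minors lie in the normalized dual canonical basis $\tbfB^\up$ (see \cite{Kimura12}) and $\Psi_Q$ sends $\tbfB^\up$ to $\sfL_{q,Q}$, we obtain $F_q(m)=L_q(m)$; this is the content of Corollary~\ref{cor: partial proof}.

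For an arbitrary KR-monomial the range $[p,p+2k-2]$ may exceed $\sfh$, so $m$ lies in no single heart subtorus and the step above does not apply directly. Here I would use the quantum folded $T$-system as a recursion in the width $k$: the base case $F_q(X_{i,p})=L_q(X_{i,p})$ holds by Theorem~\ref{thm: L_q}, and, granting inductively that $F_q=L_q$ for all KR-polynomials of smaller width, I would try to deduce the same for $F_q(m_{k+1,p}^{(i)})$ from
\[
F_q(m_{k,p}^{(i)})*F_q(m_{k,p+2}^{(i)}) = q^{\kappa}\,F_q(m_{k+1,p}^{(i)})*F_q(m_{k-1,p}^{(i)}) + q^{\zeta}\prod_{j\sim i}F_q(m_{k,p+1}^{(j)})^{-\sfc_{j,i}}.
\]
By Theorem~\ref{thm: F_q} both $F_q(m_{k+1,p}^{(i)})$ and $L_q(m_{k+1,p}^{(i)})$ are bar-invariant with the same unique dominant monomial, so it is enough to match the lower monomials, equivalently to show that the standard-basis expansion of $F_q(m_{k+1,p}^{(i)})$ obeys the Kazhdan--Lusztig positivity of~\eqref{eq: L_q}; this in turn would follow from knowing that products of elements of $\sfL_q$ decompose $\leN$-triangularly with coefficients in $\Z_{\ge0}[q^{\pm 1/2}]$. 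Alternatively, since every KR-polynomial is a quantum cluster variable of $\frakK_q(\g)$ (\cite{JLO1}), one could hope to invoke an extension of Qin's theorem \cite{Qin20} that cluster monomials are dual canonical.

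The main obstacle is precisely this general step. In simply-laced type Nakajima \cite{Nak04} establishes it via the geometry of graded quiver varieties (purity and positivity of the relevant perverse sheaves), and there is currently no such geometric model for $\frakK_q(\g)$ in non-simply-laced type, which is why the statement remains conjectural. The algebraic route sketched above requires a positivity property of the canonical basis $\sfL_q$ that is itself open in general, and the cluster-theoretic route requires a version of Qin's result valid for the infinite-rank, skew-symmetrizable quantum cluster structure of $\frakK_q(\g)$, well beyond the finite-rank heart subalgebras covered by Corollary~\ref{corx:cluster monomials are in canonical basis}. I would therefore expect a complete proof to need a genuinely new ingredient: either a \emph{virtual} geometric realization of $\frakK_q(\g)$, or a combinatorial substitute for the positivity input.
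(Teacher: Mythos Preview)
Your assessment is correct and aligns with the paper's treatment: this statement is a \emph{conjecture}, not a theorem, and the paper does not prove it in full. The paper establishes only the partial case in which the KR-monomial $m$ lies in $\calM_+^Q$ for some Dynkin quiver $Q$ (Corollary~\ref{cor: partial proof}), and your argument for that case---transporting $F_q(m)$ through $\Psi_Q$ to a normalized unipotent quantum minor, which belongs to $\tbfB^\up$, and then invoking Theorem~\ref{thmx: A}---is exactly the paper's proof. Your identification of the obstacle in the general case (no geometric model for $\frakK_q(\g)$ in non-simply-laced type, and the positivity/cluster ingredients being open beyond the heart subrings) is also accurate; the paper states explicitly that the simply-laced case is due to Nakajima~\cite{Nak04} and leaves the non-simply-laced case open.

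One minor clarification on your sketch: in the partial case the paper does not argue via a parallel recursion on both sides but rather uses directly the established identity~\eqref{eq: D to F}, $\Uppsi_Q(\tD(i;p,u)) = F_q(m^{(i)}[p,u-2])_{\le \xi}$, which is itself proved by induction comparing the determinantal identity~\eqref{eq: tD ips det} with the quantum folded $T$-system (Theorem~\ref{thm: quantum T-system via usf}) and using bar-invariance to pin down the powers of $q$. The conclusion $F_q(m)=L_q(m)$ then follows because $L_q(m)$ is determined by its maximal dominant monomial (Theorems~\ref{thm: F_q} and~\ref{thm: L_q}). So your outline is right in spirit; the paper just packages the ``matching of recursions'' inside the proof of Theorem~\ref{thm:main1} rather than restating it in the corollary.
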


\subsection{Truncation} Let $\xi$ be a height function on $\Dynkin$ and recall the convex subset $\lxi\hDynkin_0$ of $\hDynkin_0$. By Proposition~\ref{prop: convex subrings}, we have the subring
$\frakK_{q,\le \xi} \seteq \frakK_{q,\lxi\hDynkin_0}$. We denote by $\calX_{q,\le \xi}$ the quantum subtorus of $\calX_q$ generated by $\tX_{i,p}$ for $(i,p) \in \lxi\hDynkin_0$.

For an element $x \in \calX_q$, we denote by $x_{\le \xi}$ the element of $\calX_{q,\le \xi}$ obtained from $x$ by discarding all the monomials containing the factors $\tX_{i,p}^{\pm 1}$ with $(i,p) \in \hDynkin_0 \setminus \lxi\hDynkin_0$.
We define a $\Z[q^{\pm 1/2}]$-linear map $(\cdot)_{\le \xi}$ by
\begin{align*}
    (\cdot)_{\le \xi} :
    \xymatrix@R=0.5ex@C=5ex{
    \calX_q \ar@{->}[r] & \calX_{q,\le \xi}, \\
    x \ar@{|->}[r] & x_{\le \xi}.
    }
\end{align*}

\begin{proposition} $($\cite[Proposition 5.11]{FHOO},  \cite[Proposition 6.3]{JLO1}$)$ For a height function $\xi$ on $\Dynkin$, the  $\Z[q^{\pm 1/2}]$-linear map $(\cdot)_{\le \xi}$ induces \emph{the injective $\Z[q^{\pm 1/2}]$-algebra homomorphism}
$$  (\cdot)_{\le \xi} :  \frakK_{q,\le \xi} \hookrightarrow \calX_{q,\le \xi}.$$
\end{proposition}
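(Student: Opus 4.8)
The plan is to prove separately the two assertions contained in the statement: that $(\cdot)_{\le\xi}$ restricted to $\frakK_{q,\le\xi}$ is injective, and that this restriction is multiplicative. Throughout I view $(\cdot)_{\le\xi}$ as the $\Z[q^{\pm1/2}]$-linear projection of $\calX_q$ onto the subalgebra $\calX_{q,\le\xi}$ along the complementary span $\calX_q^{>\xi}$ of all monomials having a factor $\tX_{i,p}^{\pm1}$ with $(i,p)\in\hDynkin_0\setminus\lxi\hDynkin_0$; thus $(\cdot)_{\le\xi}$ is the identity on $\calX_{q,\le\xi}$, and a product of two monomials of $\calX_{q,\le\xi}$ again lies in $\calX_{q,\le\xi}$.

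For injectivity I would use the $\sfF_q$-basis of $\frakK_{q,\le\xi}$ furnished by Proposition~\ref{prop: convex subrings}, namely $\{F_q(m)\mid m\in\calM^{\lxi\hDynkin_0}_+\}$. For such $m$ the dominant monomial $\um$ lies in $\calX_{q,\le\xi}$ and so survives the truncation, while by Theorem~\ref{thm: F_q}(a) every other monomial of $F_q(m)$ is $\lN\um$; hence $(F_q(m))_{\le\xi}=\um+(\text{a }\Z[q^{\pm1/2}]\text{-combination of monomials }\lN\um)$. A routine leading-term argument for the partial order $\leN$ then shows $\{(F_q(m))_{\le\xi}\mid m\in\calM^{\lxi\hDynkin_0}_+\}$ is $\Z[q^{\pm1/2}]$-linearly independent---in a vanishing combination, choosing $m$ maximal for $\leN$ among those with nonzero coefficient forces the coefficient of $\um$ to vanish---and this gives the injectivity.

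For multiplicativity, note first that $\frakK_{q,\le\xi}$ is a subring of $\frakK_q$ by Proposition~\ref{prop: convex subrings}, so $x*y\in\frakK_{q,\le\xi}$ for $x,y\in\frakK_{q,\le\xi}$; the identity $(x*y)_{\le\xi}=x_{\le\xi}*y_{\le\xi}$ would follow at once from the \emph{support statement}: if $\tm$ is a monomial occurring in some element of $\frakK_{q,\le\xi}$ with $\tm\notin\calX_{q,\le\xi}$, then $\tm*\tm'\notin\calX_{q,\le\xi}$ for every monomial $\tm'$ occurring in an element of $\frakK_{q,\le\xi}$. (Indeed this forces every monomial of $x*y$ lying in $\calX_{q,\le\xi}$ to arise only from products of a monomial of $x_{\le\xi}$ with a monomial of $y_{\le\xi}$.) To prove the support statement I would reduce to the generators: by Proposition~\ref{prop: convex subrings}(a) and the definition of $E_q(m)$, the algebra $\frakK_{q,\le\xi}$ is generated over $\Z[q^{\pm1/2}]$ by $\{F_q(X_{i,p})\mid(i,p)\in\lxi\hDynkin_0\}$, so every monomial occurring in $\frakK_{q,\le\xi}$ occurs, up to a power of $q^{1/2}$, in some product $F_q(X_{i_1,p_1})*\cdots*F_q(X_{i_N,p_N})$ with all $(i_a,p_a)\in\lxi\hDynkin_0$. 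The support statement then follows from the key claim that \emph{every monomial $\tm\notin\calX_{q,\le\xi}$ occurring in $\frakK_{q,\le\xi}$ carries a negative exponent at some position $(j,s)\in\hDynkin_0\setminus\lxi\hDynkin_0$ at which no monomial occurring in $\frakK_{q,\le\xi}$ carries a positive exponent}: given such $(j,s)$ one has $u_{j,s}(\tm*\tm')=u_{j,s}(\tm)+u_{j,s}(\tm')<0$ for every $\tm'$ as above, whence $\tm*\tm'\notin\calX_{q,\le\xi}$.

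The hard part is this last claim, and it is here that the rigidity of the set $\lxi\hDynkin_0$ is essential. Since the height function satisfies $|\xi_i-\xi_j|=1$ whenever $i\sim j$, the complement $\hDynkin_0\setminus\lxi\hDynkin_0$ is closed under the arrows of the repetition quiver $\hDynkin$, so among the positions above the staircase used by a given monomial one may single out a forward-maximal one; at such a position the right-negativity of all non-highest monomials of each generator $F_q(X_{i,p})$---together with the range bound $\range(F_q(X_{i,p}))=[p,p+\sfh]$ and the description of the two extreme monomials of $F_q(X_{i,p})$ in Theorem~\ref{thm: F_q}\eqref{eq: range}---forces the exponent to be negative and precludes any positive exponent from arising, even after forming products of generators. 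This is the analogue in the present $\usfC(t)$-setting of \cite[Proposition~5.11]{FHOO} and \cite[Proposition~6.3]{JLO1}, and I expect those arguments to carry over once this bookkeeping is in place. (An alternative would be to route multiplicativity through the quantum folded $T$-system of Theorem~\ref{thm: quantum T-system via usf}, verifying that $(\cdot)_{\le\xi}$ carries each such relation among the KR-polynomials to a valid relation among their truncations; but this still rests on the same support analysis.) Combining the two parts then yields the desired injective $\Z[q^{\pm1/2}]$-algebra homomorphism $\frakK_{q,\le\xi}\hookrightarrow\calX_{q,\le\xi}$.
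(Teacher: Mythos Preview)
The paper does not prove this proposition itself; it is quoted verbatim from \cite[Proposition~5.11]{FHOO} and \cite[Proposition~6.3]{JLO1}, so there is no in-paper argument to compare against. Your injectivity argument via the $\sfF_q$-basis and a leading-term analysis with respect to $\leN$ is correct and is essentially the standard one. Your reduction of multiplicativity to a ``support statement'' about products of monomials occurring in generators $F_q(X_{i,p})$ is also the route taken in the cited references.

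The gap is in your ``key claim''. It is false as stated. Take $\g$ of type $A_2$ with $\xi_1=0$, $\xi_2=1$. The monomial $X_{2,1}X_{1,2}^{-1}$ of $F_q(X_{1,0})$ lies outside $\calX_{q,\le\xi}$, and the \emph{only} position above the staircase at which it has a nonzero exponent is $(1,2)$, with exponent $-1$. But the monomial $X_{1,2}X_{2,3}^{-1}$ of $F_q(X_{2,1})\in\frakK_{q,\le\xi}$ carries exponent $+1$ at $(1,2)$. So no position $(j,s)$ with the two properties you demand exists for this $\tm$. The support statement nevertheless holds here (the product $X_{2,1}X_{1,2}^{-1}\cdot X_{1,2}X_{2,3}^{-1}=X_{2,1}X_{2,3}^{-1}$ is still outside $\calX_{q,\le\xi}$), but not for the reason you give: the negative exponent at $(1,2)$ \emph{is} cancelled, and what survives is the negative exponent at the \emph{larger} level $(2,3)$ contributed by the second factor.

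The argument in the references avoids this by tracking the absolute right end of the range rather than a position that is merely ``forward-maximal above the staircase'' for a fixed monomial. Every non-dominant monomial of each $F_q(X_{i_a,p_a})$ is right-negative, while the dominant monomial $\underline{X_{i_a,p_a}}$ contributes a positive exponent only at $(i_a,p_a)\in\lxi\hDynkin_0$. Hence in any product of such monomials, at the maximal second coordinate $s_0$ only nonpositive exponents can occur at positions $(j,s_0)$ with $s_0>\xi_j$; one then checks that if some factor is outside $\calX_{q,\le\xi}$ the right end of the product lies strictly above the staircase with a genuinely negative exponent there. Your ``forward-maximal'' heuristic points in this direction, but the invariant that actually propagates under multiplication is the pair (right end of range, right-negativity), not the existence of a fixed universal ``bad'' position.
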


\begin{definition} \label{def: Q quantum-torus}
For each Dynkin quiver $Q=(\Dynkin,\xi)$, we denote by $\calX_{q,Q}$ the quantum subtorus of $\calX_q$ generated by $\tX_{i,p}$ for $(i,p) \in \Gamma_0^Q$.
\end{definition}

\begin{corollary} Let $Q=(\Dynkin,\xi)$ be a Dynkin quiver.
The $\Z[q^{\pm 1/2}]$-linear  map $(\cdot)_{\le \xi}$ induces the injective $\Z[q^{\pm 1/2}]$-algebra homomorphism
$$  \frakK_{q,Q} \hookrightarrow \calX_{q,Q}. $$
\end{corollary}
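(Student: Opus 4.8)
The plan is to deduce this corollary directly from the preceding proposition, which already provides an injective $\Z[q^{\pm 1/2}]$-algebra homomorphism $(\cdot)_{\le \xi} : \frakK_{q,\le \xi} \hookrightarrow \calX_{q,\le \xi}$, together with the structural description of $\Gamma^Q_0$ from Proposition~\ref{prop: Gamma Q 0}(a), namely $\Gamma^Q_0 = \{ (i,p) \in \hDynkin_0 \mid \xi_{i^*} - \sfh < p \le \xi_i \}$. First I would observe that $\Gamma^Q_0 \subset \lxi\hDynkin_0$, since every $(i,p) \in \Gamma^Q_0$ satisfies $p \le \xi_i$ by definition. Consequently $\frakK_{q,Q} = \frakK_{q,\Gamma_0^Q}$ is a subring of $\frakK_{q,\le\xi} = \frakK_{q,\lxi\hDynkin_0}$ (both being built from the respective bases $\{F_q(m)\}$ indexed by monomials supported on the two convex sets, via Proposition~\ref{prop: convex subrings}), and $\calX_{q,Q}$ is a quantum subtorus of $\calX_{q,\le\xi}$ (both generated by the $\tX_{i,p}$ over the respective index sets). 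Restricting the injective homomorphism $(\cdot)_{\le\xi}$ to the subring $\frakK_{q,Q}$ therefore yields an injective $\Z[q^{\pm 1/2}]$-algebra homomorphism $\frakK_{q,Q} \hookrightarrow \calX_{q,\le\xi}$.

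The only point that requires a small argument is that the image of this restriction actually lands inside the smaller quantum subtorus $\calX_{q,Q}$, not merely in $\calX_{q,\le\xi}$. For this I would use Theorem~\ref{thm: F_q}: for $m \in \calM_+^{\Gamma_0^Q}$, every monomial appearing in $F_q(m) - \um$ is strictly less than $\um$ in the Nakajima order $\leN$, so it is obtained from $\um$ by multiplying by inverses of the $B_{i,p}$'s. Since each $B_{i,p}$ with $(i,p) \in \Gamma^Q_0$ (equivalently with $p \le \xi_i$ in the relevant range) and its relevant translates stay within the window defining $\Gamma^Q_0$ after truncation—here one uses that $B_{i,p} = X_{i,p-1}X_{i,p+1}\prod_{j\sim i}X_{j,p}^{\sfc_{j,i}}$, so only the indices $p-1$ and $p$ matter—the truncation $(\cdot)_{\le\xi}$ discards exactly the factors $\tX_{j,s}^{\pm1}$ with $s > \xi_j$, and among the surviving factors those with $s \le \xi_{j^*} - \sfh$ do not occur in $F_q(m)$ for $m$ supported on $\Gamma^Q_0$ by a range estimate (Theorem~\ref{thm: F_q}(d) controls how far below $p$ the support of $F_q(X_{i,p})$ can reach, namely it stays in $(p - \sfh, p]$ up to the top term at $p+\sfh$ which is itself truncated away). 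Hence $F_q(m)_{\le\xi} \in \calX_{q,Q}$ for all generators $m$, and since $\calX_{q,Q}$ is a subalgebra and $(\cdot)_{\le\xi}$ is an algebra homomorphism, the whole image lies in $\calX_{q,Q}$.

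Assembling these observations: the composite $\frakK_{q,Q} \hookrightarrow \frakK_{q,\le\xi} \xrightarrow{(\cdot)_{\le\xi}} \calX_{q,\le\xi}$ is injective (as a restriction of an injective map) and factors through $\calX_{q,Q}$, giving the desired injective $\Z[q^{\pm 1/2}]$-algebra homomorphism $\frakK_{q,Q} \hookrightarrow \calX_{q,Q}$. The main obstacle—really the only nontrivial point—is verifying the containment of the image in $\calX_{q,Q}$ rather than just in $\calX_{q,\le\xi}$; this is where the precise shape of $\Gamma^Q_0$ from Proposition~\ref{prop: Gamma Q 0}(a) and the range control in Theorem~\ref{thm: F_q}(d) are essential, since the window defining $\Gamma^Q_0$ has width exactly $\sfh$ and the Nakajima-order correction terms must be checked not to escape through the lower edge $p = \xi_{i^*} - \sfh$. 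Everything else is a routine application of the already-established truncation proposition.
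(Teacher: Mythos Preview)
Your strategy coincides with the paper's two-sentence proof: restrict the injection $(\cdot)_{\le\xi}:\frakK_{q,\le\xi}\hookrightarrow\calX_{q,\le\xi}$ from the preceding proposition to $\frakK_{q,Q}$, and verify on the generators $F_q(X_{i,p})$, $(i,p)\in\Gamma^Q_0$, that the image lands in $\calX_{q,Q}$ by invoking Theorem~\ref{thm: F_q}\eqref{eq: range}.

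There is, however, a concrete error in your range analysis. The support of $F_q(X_{i,p})$ lies in $[p,\,p+\sfh]$, not $(p-\sfh,\,p]$: Theorem~\ref{thm: F_q}\eqref{eq: range} asserts that, apart from the dominant monomial $\underline{X_{i,p}}$ and the lowest monomial supported at $(i^*,p+\sfh)$, every factor $\tX_{j,u}^{\pm1}$ that occurs satisfies $p<u<p+\sfh$. So the support climbs \emph{upward} from $p$; there is no ``how far below $p$'' to control. Once the direction is corrected, the lower-edge check you rightly isolate goes through: the only factor at position $p$ is $\tX_{i,p}$ itself, and $(i,p)\in\Gamma^Q_0$ by hypothesis; the monomial supported at $(i^*,p+\sfh)$ is killed by the truncation since $p+\sfh>\xi_{i^*}$; and every remaining factor has $u>p>\xi_{i^*}-\sfh$. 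To conclude $u>\xi_{j^*}-\sfh$ for arbitrary $j$ one combines this with $|\xi_{i^*}-\xi_{j^*}|\le d(i^*,j^*)=d(i,j)$ and the standard refinement (implicit in the Nakajima-order structure behind Theorem~\ref{thm: F_q}\eqref{eq: range}) that $\tX_{j,u}^{\pm1}$ can occur in $F_q(X_{i,p})$ only when $u\ge p+d(i,j)$. With this correction your argument is complete and matches the paper's.
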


\begin{proof}
Since $ \frakK_{q,Q}$ is generated by $F_q(X_{i,p})$ for $(i,p) \in \Gamma^Q_0$, it is enough to show that
$(F_q(X_{i,p}))_{\le \xi}$ is contained in $\calX_{q,Q}$. Then the assertion follows from Theorem~\ref{thm: F_q}~\eqref{eq: range}.
\end{proof}

\begin{remark} \label{rmk: characterization}
Through the injective homomorphism $( \cdot)_{\le \xi}$, the basis $\sfL_{q,Q}=\{ L_q(m) \ | \ m \in \calM_+^Q \}$ of $\frakK_{q,Q}$
transforms to the basis
\begin{align} \label{eq: L_q^T}
\text{$\sfL_{q,Q}^T \seteq \{ L_q(m)_{\le \xi} \seteq (L_q(m))_{\le \xi} | \ m \in \calM_+^Q \}$ of $ \frakK_{q,Q}^T \seteq (\frakK_{q,Q})_{\le \xi} \subset  \calX_{q,Q}$,}
\end{align}
 which can be characterized as follows:
\bnum
\item $\overline{L_q(m)_{\le \xi}} = L_q(m)_{\le \xi}$ and
\item $L_q(m)_{\le \xi} - E_q(m)_{\le \xi} \in \displaystyle\sum_{m' \lN m \in \calM_+^Q} q\Z[q]E_q(m')_{\le \xi}$,
where $E_q(m')_{\le \xi} \seteq (E_q(m'))_{\le \xi}$.
\ee
\end{remark}

\section{Unipotent quantum coordinate algebra } \label{Sec: Aqn}

In this section, we review the theory of the unipotent quantum coordinate algebra, especially their elements called unipotent quantum minors, its quantum cluster algebra structure,
the dual-canonical/upper-global basis and dual PBW bases.

\subsection{Unipotent quantum coordinate algebra} In this subsection, we briefly recall the quantum unipotent coordinate algebra. We mainly refer to \cite{KKKO18,HO19} and sometimes skip details available in the references.

For $n \ge m \in \Z_{\ge 1}$ and $i \in I$, we set
$$q_i \seteq q^{d_i}, \ \ [n]_i \seteq \dfrac{q_i^n-q_i^{-n}}{q_i - q_i^{-1}}, \ \ [n]_i! =\prod_{k=1}^n [k]_i \ \text{ and  } \
\left[ \begin{matrix} n \\ m \end{matrix} \right]_i \seteq \dfrac{[n]_i!}{[m]_i![n-m]_i!}.$$

We denote by $\calU_q(\g)$ the quantum group associated to a finite
Cartan datum $(\sfC,\wl,\Pi,\wl^\vee,\Pi^\vee)$, which is the
associative  algebra over $\mathbb Q(q^{1/2})$ generated by $e_i,f_i$ $(i
\in I)$ and $q^{h}$ ($h\in\wl^\vee$). We set $\calU_q^+(\g)$ (resp. $\calU_q^-(\g)$) be
the subalgebra generated by $e_i$ (resp. $f_i$) for $i \in I$.  Note
that  $e_i$'s (resp. $f_i$'s) satisfy the quantum Serre relations
\begin{align}
\sum_{k=0}^{1-\sfc_{i,j}} (-1)^k \left[ \begin{matrix} 1-\sfc_{i,j} \\ k \end{matrix} \right]_i f_i^{1-\sfc_{i,j}-k} f_j f_i^{k} =
\sum_{k=0}^{1-\sfc_{i,j}} (-1)^k \left[ \begin{matrix} 1-\sfc_{i,j} \\ k \end{matrix} \right]_i e_i^{1-\sfc_{i,j}-k} e_j e_i^{k} =0,
\end{align}
and
$\calU_q(\g)$ admits the weight space decomposition:
$$  \calU_q(\g) = \soplus_{\be \in \rl} \calU_q(\g)_\be. $$

For any $i \in I$, there exist $\Q(q^{1/2})$-linear endomorphisms ${}_ie'$ and $e_ i'$ of $\calU_q^-(\g)$ uniquely determined by
\begin{equation}\label{eq: deri}
\begin{aligned}
& {}_ ie'(f_j)=\delta_{i,j}, && {}_ ie'(xy) = {}_ ie'(x) \; y+   q^{(\al_i,\be)}x \;  {}_ ie' (y), \\
& e_ i'(f_j)=\delta_{i,j}, && e_i'(xy) = q^{(\al_i,\eta)}e_i'(x) \; y+    x \; e'_i(y),
\end{aligned}
\end{equation}
where $x \in \calU_q^-(\g)_\be, \  y \in \calU_q^-(\g)_\eta$.
Then it is known \cite{K91} that there exists a unique non-degenerate symmetric $\Q(q^{1/2})$-bilinear form $( , )_K$ on $\calU_q^-(\g)$ such that
$$
(  \mathbf{1},\mathbf{1})_K=1, \quad (f_iu,v)_K = (u,{}_ie'v)_K \text{ for $i \in I$, $u,v \in \calU_q^-(\g)$}.
$$

We set $\bbA=\Z[q^{\pm 1/2}]$ and let
$\calU_{\bbA}^{+}(\g)$ be the $\bbA$-subalgebra of
$\calU_q^{+}(\g)$ generated by $e_i^{(n)}\seteq e_i^n/[n]_i!$
for $i \in I$ and $n\in\Z_{>0}$.

Let $\Delta_\n$ be the algebra homomorphism $\calU_q^+(\g) \to \calU_q^+(\g) \tens \calU_q^+(\g)$ given by  $ \Delta_\n(e_i) = e_i
\tens 1 + 1 \tens e_i$,
where the algebra structure on $\calU_q^+(\g)
\tens \calU_q^+(\g)$ is defined by
$$(x_1 \tens x_2) \cdot (y_1 \tens y_2) = q^{-(\wt(x_2),\wt(y_1))}(x_1y_1 \tens x_2y_2).$$

Set
$$ \calA_q(\n) = \soplus_{\beta \in  \rl^-} \calA_q(\n)_\beta \quad \text{ where } \calA_q(\n)_\beta \seteq \Hom_{\Q(q^{1/2})}(\calU^+_q(\g)_{-\beta}, \Q(q^{1/2})).$$
Then  $\calA_q(\n)$ is an algebra with the multiplication given by
$(\psi \cdot \theta)(x)= \theta(x_{(1)})\psi(x_{(2)})$, when
$\Delta_\n(x)=x_{(1)} \tens x_{(2)}$ in  Sweedler's notation.
Let us denote by $\calA_\bbA(\n)$ the $\bbA$-submodule
of $\calA_q(\n)$
 consisting of $ \uppsi \in \calA_q(\n)$ such that
$ \uppsi \left( \calU_q^{+}(\g)_{\bbA} \right) \subset\bbA$. Then
it is an $\bbA$-subalgebra of $\calA_q(\n)$.

It is known  \cite{GLS13, K91, LusztigBook, Kimura12} that
\begin{equation} \label{eq:An=Uq- and An=CN at q=1}
 \calA_{\bbA}(\n) \simeq \calU_{\bbA}^-(\g)  \quad \text{ and } \quad
\C \otimes_{\Z[q^{\pm 1/2}]} \calA_{\bbA}(\n) \simeq \C[N],
\end{equation}
where $\C[N]$ denotes the coordinate (commutative) algebra of the complex unipotent algebraic group $N$ whose Lie algebra is $\n$.  Thus we have a natural algebra (surjective) homomorphism $\ev_{q=1}: \calA_{\bbA}(\n) \to \C[N]$.
We call $\calA_{q}(\n)$ the \emph{unipotent quantum coordinate algebra}.

For a dominant weight $\la \in \wl^+$, let $V(\la)$ be the
irreducible highest weight $\calU_q(\g)$-module with highest weight
vector $u_\la$ of weight $\la$. Let $( \ , \ )_\la$ be the
non-degenerate symmetric bilinear form on $V(\la)$ such that
$(u_\la,u_\la)_\la=1$ and $(xu,v)_\la=(u,\upvarphi(x)v)_\la$ for $u,v
\in V(\la)$ and $x\in \calU_q(\g)$, where $\upvarphi$ is  the algebra
antiautomorphism on $\calU_q(\g)$  defined by $\upvarphi(e_i)=f_i$,
$\upvarphi(f_i)=e_i$  and $\upvarphi(q^h)=q^h$. For each $\mu, \zeta \in
\sfW_\g \la$, the \emph{unipotent quantum minor} $D(\mu, \zeta)$ is
an element in $\calA_\bbA(\n)$ given by
 $(D(\mu,\zeta),x)_{K}=(x u_\mu, u_\zeta)_\la$
for $x\in \calU_q^{+}(\g)_{\bbA}$, where $u_\mu$ and $u_{\zeta}$ are the
extremal weight vectors in $V(\la)$ of weight $\mu$ and $\zeta$,
respectively.
Note that
$\wt( D(\mu,\zeta)) = \mu -\zeta$ if $\mu-\zeta \in \rl^-$ and $D(\mu, \zeta)$ vanishes otherwise.

\begin{definition} We set
\begin{align} \label{eq: tD}
\tD(\mu,\zeta) \seteq q^{-(\mu-\zeta,\mu-\zeta)/4+(\mu-\zeta,\rho)/2} D(\mu,\zeta)
\end{align}
and call it a \emph{normalized quantum unipotent minor}.
\end{definition}

\begin{proposition}[\cite{BZ05,GLS13,KKKO18}] \hfill
\ben
\item For $\la,\mu \in \wl^+$ and $s,t,s',t' \in \weyl$ such that
\bnum
\item $\ell(ss')=\ell(s)+\ell(s')$ and $\ell(tt')=\ell(t)+\ell(t')$,
\item  $s's\la \preceq t'\la$ and $s'\mu \preceq t't\mu $,
\ee
we have $\theta \in \tfrac{1}{2}\Z$ such that
$$  \tD(s'\mu,t't\mu)  \tD(s's\la,t'\la) = q^{\theta}\tD(s's\la,t'\la) D(s'\mu,t't\mu).  $$
\item For $u,v \in \weyl$ and $i \in I$ satisfying $u < us_i$ and $v < vs_i$, we have $\tka,\tze \in \tfrac{1}{2}\Z$ such that
\begin{equation} \label{eq: determinatial id}
\begin{aligned}
& \tD(u\varpi_i,v\varpi_i)  \tD(us_i\varpi_i,vs_i\varpi_i)  = q^{\tka} \tD(us_i\varpi_i,v\varpi_i)\tD(u\varpi_i,vs_i\varpi_i)        + q^{\tze} \prod_{j \sim i}^{\to} \tD(u\varpi_j, v\varpi_j)^{-\sfc_{j,i}}.
\end{aligned}
\end{equation}
\ee
\end{proposition}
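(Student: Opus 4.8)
The plan is to reduce both assertions to known statements about the unnormalized unipotent quantum minors $D(\mu,\zeta)$, since the passage to $\tD(\mu,\zeta)=q^{-(\mu-\zeta,\mu-\zeta)/4+(\mu-\zeta,\rho)/2}D(\mu,\zeta)$ only inserts a prefactor depending on the $\rl$-degree, which is additive under multiplication and can therefore be absorbed into the unknown exponents $\theta,\tka,\tze$ at the end. For part~(1), the hypotheses $\ell(ss')=\ell(s)+\ell(s')$, $\ell(tt')=\ell(t)+\ell(t')$ together with $s's\la\preceq t'\la$ and $s'\mu\preceq t't\mu$ guarantee that both minors are nonzero and that there is a single reduced word of a large enough $w\in\weyl$ along which the four extremal weight vectors are reached; in the corresponding quantum unipotent subalgebra of $\calA_q(\n)$ (a quantum analogue of the coordinate ring of a unipotent cell) the quasi-commutation of such minors is exactly Berenstein--Zelevinsky's theorem on quasi-commuting quantum minors \cite{BZ05} (see also \cite{GLS13}): one gets $D(s'\mu,t't\mu)\,D(s's\la,t'\la)=q^{N}D(s's\la,t'\la)\,D(s'\mu,t't\mu)$ with $N$ determined by the symmetric pairing of the two weights together with a correction measuring the overlap of the associated chamber weights, and re-expressing through $\tD$ yields $\theta$. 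Alternatively, in the spirit of the rest of the paper, one may run this through the categorification of \cite{KKKO18}: $D(\mu,\zeta)$ is the class of a determinantial module over the quiver Hecke algebra, the hypotheses force the two such modules to form a commuting pair, hence their convolution products differ by a single grading shift, from which $\theta$ is read off.

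For part~(2), the relation \eqref{eq: determinatial id} is the quantum $T$-system attached to the pair of weights $\varpi_i$ and $s_i\varpi_i=\varpi_i-\al_i$, and I would prove it in two steps. First the base case $u=v=e$: since $\tD(\varpi_i,\varpi_i)=1$, the identity is just an expansion of the fundamental minor $\tD(s_i\varpi_i,s_i\varpi_i)$ in terms of $\tD(s_i\varpi_i,\varpi_i)$, $\tD(\varpi_i,s_i\varpi_i)$ and $\prod_{j\sim i}\tD(\varpi_j,\varpi_j)^{-\sfc_{j,i}}$, which follows from a rank-one computation on the $\al_i$-string through $u_{\varpi_i}$ inside $V(\varpi_i)$ --- the same $\calU_q(\mathfrak{sl}_2)$-bookkeeping underlying the quantum folded $T$-system \eqref{eq: T-system via C(t)}. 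Then I would propagate from $(e,e)$ to a general pair $(u,v)$ with $u<us_i$ and $v<vs_i$ by induction on $\ell(u)+\ell(v)$: each step, passing say from $(u,v)$ to $(us_j,v)$, is implemented by multiplying the identity by suitable ``large'' minors $\tD(u'\varpi_k,v'\varpi_k)$ and moving factors across with part~(1) (equivalently, by the compatibility of unipotent quantum minors with Lusztig's braid symmetries $T_j$). The conditions $u<us_i$, $v<vs_i$ are exactly what keeps every minor occurring in an intermediate step nonzero and every invocation of part~(1) legitimate; each step contributes a controlled $q$-correction, and these accumulate into $\tka$ and $\tze$.

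The main obstacle, in both parts, is not the existence of the relations but the precise bookkeeping of the exponents $\theta$, $\tka$, $\tze$: one must simultaneously track (i) the normalization prefactor above, (ii) the nonsymmetric twist $q^{-(\wt(x_2),\wt(y_1))}$ built into the multiplication of $\calA_q(\n)$ through $\Delta_\n$, and (iii) the corrections accumulated by repeatedly applying part~(1) during the propagation in part~(2). Since the proposition only asserts the existence of these half-integers, none of the closed forms is needed: it is enough to arrange the induction in part~(2) so that each intermediate pair of minors verifies the hypotheses of part~(1), and to use that all normalizations enter only through the $\rl$-grading, which is additive under multiplication.
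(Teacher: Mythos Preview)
The paper does not give its own proof of this proposition: it is quoted from the cited references \cite{BZ05,GLS13,KKKO18} and used as a black box. So there is nothing to compare your proposal against on the paper's side.

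Your treatment of part~(1) is fine in outline: the quasi-commutation of unipotent quantum minors under those length and dominance hypotheses is exactly what the cited references establish, and passing from $D$ to $\tD$ only shifts the exponent by a quantity depending additively on the $\rl$-degrees.

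Your argument for part~(2), however, has a real gap. The base case $u=v=e$ is vacuous: one has $\tD(\varpi_i,\varpi_i)=\tD(s_i\varpi_i,s_i\varpi_i)=\tD(\varpi_j,\varpi_j)=1$, while $\tD(\varpi_i,s_i\varpi_i)=0$ because $\varpi_i-s_i\varpi_i=\al_i\notin\rl^-$. The identity therefore collapses to $1=q^{\tze}$, which holds with $\tze=0$ but carries no content; there is no ``rank-one computation on the $\al_i$-string'' to perform and nothing nontrivial to propagate. An induction that starts from a tautology cannot manufacture the genuine three-term relation at a later stage unless the inductive step itself contains the whole argument, and you have not specified any mechanism (e.g.\ how Lusztig's $T_j$ acts on the minors appearing, or which ``large'' minors one multiplies by and why the resulting expression reorganises into the desired shape). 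In the references the identity is instead proved directly for general $u,v$ by computing inside $V(\varpi_i)$, using that extremal weight spaces are one-dimensional and analysing the $\mathfrak{sl}_2$-string between $u\varpi_i$ and $us_i\varpi_i$ (see \cite[Proposition~5.4]{GLS13}); alternatively \cite{KKKO18} obtains it from a short exact sequence of determinantal modules over the quiver Hecke algebra. Either route is available, but neither is an induction from $u=v=e$.
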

\noindent
We call~\eqref{eq: determinatial id} the \emph{determinantal identity} among unipotent quantum minors.

\subsection{Quantum cluster algebra structure}  In this subsection, we briefly recall the quantum cluster algebra structure of  $\calA_\bbA(\n)$ investigated in \cite{GLS13,GY17}.

Let $r \in \N \sqcup \{\infty\}$.
For a sequence of indices $\ii=(i_1,i_2,\ldots,i_r) \in I^r$, $1 \le k \le r$  and $j \in I$, we use the following notations:
\begin{equation}
\begin{aligned}
w^\ii_{\le k} &\seteq s_{i_1}\cdots s_{i_k}, \quad \la^\ii_k \seteq w^\ii_{\le k}\varpi_{i_k},  \quad w^\ii_{\le 0}=\mathrm{id}  ,\\
k_\ii^+(j)&\seteq \min(\{ k<u \le r \ | \ i_u=j \} \cup \{r+1\}), \\
k_\ii^-(j)&\seteq \max(\{ 1\le u < k \ | \ i_u=j \} \cup \{0\}),  \\
k_\ii^\pm &\seteq k_\ii^\pm(j) \ \  \text{ if } i_k =j.
\end{aligned}
\end{equation}
If there is no danger of confusion, we drop the scripts $_\ii$ or $^\ii$ for notational simplicity.

\smallskip

Let $\ii = (i_u)_{u \in \N}$ be a sequence of $I$ satisfying the following condition:
\begin{align} \label{eq: infinity condition}
\text{ for any $i \in I$, we have $|\{   u \in \N \ | \ i_u =i \}| = \infty.$}
\end{align}
For instance, when we take $\ii_\circ = (i_1,\ldots,i_\ell) \in I(w_\circ)$, we can  extend it to $\widetilde{\ii}_\circ \in I^\N$ satisfying
~\eqref{eq: infinity condition} as follows:
\begin{align} \label{eq: extension}
  i_{s+\ell} = i^*_{s}  \text{ for } s \in \N.
\end{align}
In particular, in this case, one can easily see that any successive subsequence of length $\le \ell$ of $\tii_\circ$ is reduced.

For $\ii = (i_u)_{u \in \N}$ satisfying~\eqref{eq: infinity condition}, define an $\N \times \N$-matrix $\tB_{\ii}=(b_{ab})_{a,b\in \N}$ and a symmetric $\N \times \N$-matrix $\La_{\ii}=(\la_{st})_{s,t\in \N}$ as follows:
\begin{align}
& b_{ab} = \bc
\pm 1 & \text{ if } b=a^\pm, \\
\sfc_{i_a,i_b}& \text{ if }  a<b<a^+<b^+, \\
-\sfc_{i_a,i_b}& \text{ if }  b<a<b^+<a^+, \\
0 & \text{ otherwise},
\ec \ \
\text{ and }  \ \
\la_{st}= (\varpi_{i_s}-w_{\le s}\varpi_{i_s}, \varpi_{i_t}+w_{\le t}\varpi_{i_t}  ) \ \  \text{ for } s<t.
\end{align}

For $n \in \N$, we set
$$  \sfK \seteq \{ k \in \Z \ | \ 1 \le k \le n\}, \quad \sfK_\fr = \{ k \in \sfK \ | \ k^+ >n \},   \quad \sfK_\ex \seteq \sfK \setminus \sfK_\fr,$$
and define $\tB^n_\ii \seteq (b_{u,v})_{u \in \sfK, v \in \sfK_\ex}$ and $\La^n_\ii \seteq (\la_{u,v})_{u ,v\in \sfK}$.
We understand $\tB^\infty_\ii=\tB_\ii$ and $\La^\infty_\ii=\La_{\ii}$ with $\sfK_{\fr}=\emptyset$.

\begin{definition}
Let $\sfK$ be a countable index set with a decomposition $\sfK=\sfK_\ex \sqcup \sfK_\fr$. The indices in $\sfK_\ex$ (resp,  $\sfK_\fr$) are called \emph{exchangeable} (resp. \emph{frozen}) indices.
A $\Z$-valued $\sfK \times \sfK_\ex$-matrix $\tB=(b_{i,j})_{i \in \sfK,j\in \sfK_\ex}$ is called an \emph{exchange matrix} if
\bna
\item for each $j \in \sfK_\ex$, there are finite many $i \in \sfK$ such that $b_{i,j} \ne 0$,
\item its $\sfK_\ex \times \sfK_\ex$-submatrix $B \seteq (b_{i,j})_{i,j \in \sfK_\ex}$ is \emph{skew-symmetrizable}; i.e., there exists a sequence $S =(t_i \ | \ i \in \sfK_\ex, \ t_i \in \Z_{\ge 1})$ such that $t_i b_{i,j}=t_j b_{j,i}$ for all $i,j \in \sfK_\ex$.
\ee
\end{definition}

\begin{proposition}[\cite{FHOO2}] For any $n \in \N \sqcup \{ \infty \}$, the matrix  $\tB^n_\ii$ is an exchange matrix and
$$    \sum_{k \in \sfK} b_{k,u}\La_{k,v}  = 2d_{i_u}\delta_{u,v}  \quad \text{ for } u \in \sfK_\ex, \  v \in \sfK.$$
In this case, the pair $(\La^n_\ii,\tB^n_\ii)$ is called compatible.
\end{proposition}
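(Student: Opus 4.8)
The statement bundles three claims about the pair $(\La^n_\ii,\tB^n_\ii)$: that every column of $\tB^n_\ii$ has finite support, that its principal $\sfK_\ex\times\sfK_\ex$ block is skew-symmetrizable, and that the pairing identity $\sum_{k\in\sfK}b_{k,u}\la_{k,v}=2d_{i_u}\delta_{u,v}$ holds for $u\in\sfK_\ex$, $v\in\sfK$. For simply-laced $\g$ all three are due to \cite{GLS13,GY17}; the plan for the general case (this is \cite{FHOO2}) is to reduce everything to the combinatorial definitions, carrying the symmetrizer $\sfD=\diag(d_i\mid i\in I)$ through each step.

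Column-finiteness is immediate from the four-case formula for $b_{ab}$: if $n<\infty$ then $\sfK$ is finite; if $n=\infty$ then $\sfK_\fr=\emptyset$ and every $u^{+}$ is finite, and $b_{k,u}\neq 0$ forces $k\in\{u^{-},u^{+}\}$, or $k<u$ (the case $k<u<k^{+}<u^{+}$), or $u<k<u^{+}$ (the case $u<k<u^{+}<k^{+}$), so only finitely many $k$ occur. For skew-symmetrizability I would take $t_k:=d_{i_k}$ for $k\in\sfK_\ex$ and check that $\tB^n_\ii$ is skew-symmetrized by $\sfD$ case by case: when $b=a^{\pm}$ one has $i_a=i_b$ and $b_{a,b}=-b_{b,a}$ (both $\pm1$), so the relation is trivial; in the two $\mp\sfc$ cases it reduces to $d_{i_a}\sfc_{i_a,i_b}=d_{i_b}\sfc_{i_b,i_a}$, which is exactly the symmetry of $\osfB=\sfD\sfC$.

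The pairing identity is the substantive point, and I would prove it by a bounded telescoping computation. Fix $u\in\sfK_\ex$. By the support analysis above, the column $b_{\bullet,u}$ is supported on $u^{-}$, on $u^{+}$, and, for each $j$ with $j\sim i_u$, on at most two occurrences of $j$ sandwiched around $u$ and $u^{+}$ (with coefficients $\pm\sfc_{j,i_u}$); hence $\sum_k b_{k,u}\la_{k,v}$ is a fixed finite alternating combination of the entries $\la_{k,v}$. Writing $\gamma_k:=w_{\le k}\varpi_{i_k}$ and substituting the defining formula $\la_{s,t}=(\varpi_{i_s}-\gamma_s,\,\varpi_{i_t}+\gamma_t)$ (for $s\le t$) together with the recursion $w_{\le k}\varpi_{i_k}=w_{\le k-1}(\varpi_{i_k}-\al_{i_k})$ and the elementary fact that $s_j$ fixes $\varpi_{i_u}$ unless $j=i_u$, one gets $\gamma_{u^{+}}-\gamma_u=-w_{\le u^{+}-1}\al_{i_u}$ and $\gamma_u-\gamma_{u^{-}}=-w_{\le u-1}\al_{i_u}$, while the $\sfc_{j,i_u}$-weighted differences supply precisely the terms needed so that, after substitution, the alternating sum collapses to an expression built from a single $w_{\le\bullet}\al_{i_u}$. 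Pairing that root against $\varpi_{i_v}+\gamma_v$ and using $(\al_{i_u},\varpi_{i_v})=d_{i_u}\delta_{i_u,i_v}$, $\langle h_{i_u},\varpi_{i_u}\rangle=1$ and the $\weyl$-invariance of $(\cdot,\cdot)$ then yields $(\al_{i_u},\al_{i_u})=2d_{i_u}$ when $v=u$ and $0$ otherwise.

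The main obstacle is the bookkeeping in this last step: one must split into the regimes $v<u^{-}$, $u^{-}\le v\le u^{+}$ and $v>u^{+}$, handle the endpoint values $v\in\{u^{-},u,u^{+}\}$ and the degenerate case $u^{-}=0$ separately, and — this is where non-simply-laced type genuinely enters — verify that the $\sfc_{j,i_u}$-weighted contributions reassemble into $w_{\le\bullet}\al_{i_u}$ via identities such as $s_{i_u}\al_j=\al_j-\sfc_{i_u,j}\al_{i_u}$, rather than relying on the cruder cancellation available when every $d_i=1$. Once the alternating sum is organized according to the occurrences of each $j\sim i_u$ adjacent to $u$, the remaining manipulations are routine, and at $\sfD=\mathrm{Id}$ they specialize to the arguments of \cite{GLS13,GY17}.
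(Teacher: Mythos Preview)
The paper does not give a proof of this proposition: it is simply quoted from \cite{FHOO2} (building on \cite{GLS13,GY17}) and used as input. There is therefore nothing in the present paper to compare your argument against.

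That said, your outline is the standard one and is correct in spirit. The column-finiteness and skew-symmetrizability checks are exactly right (with symmetrizer $t_k=d_{i_k}$ and the identity $d_{i_a}\sfc_{i_a,i_b}=d_{i_b}\sfc_{i_b,i_a}$). For the compatibility identity, the telescoping computation you describe is precisely how it is done in \cite{GLS13} (simply-laced) and \cite{GY17,FHOO2} (symmetrizable): one lists the finite support of the $u$-th column of $\tB_\ii$, substitutes $\la_{s,t}=(\varpi_{i_s}-w_{\le s}\varpi_{i_s},\,\varpi_{i_t}+w_{\le t}\varpi_{i_t})$, and uses $w_{\le k}\varpi_j=w_{\le k-1}\varpi_j$ for $j\neq i_k$ together with $s_{i_u}\varpi_{i_u}=\varpi_{i_u}-\al_{i_u}$ to collapse the alternating sum. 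Two small caveats: (i) the sequence $\ii$ in the paper's statement is not assumed reduced, only that each index recurs infinitely often, so your recursion should be phrased in terms of $w_{\le k}$ directly rather than via reduced-word combinatorics; (ii) your claim of ``at most two occurrences of $j$'' in the support of column $u$ is correct but deserves one line of justification (the conditions $k<u<k^{+}<u^{+}$ and $u<k<u^{+}<k^{+}$ each pin down a unique occurrence of $j$, namely the last before $u$ and the last before $u^{+}$, respectively). With those points made explicit, your sketch matches the cited proofs.
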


Recall the notation  $\be^{\ii_\circ}_{k} = w^{\ii_\circ}_{\le k-1}(\al_{i_k})$  for $\ii_\circ \in I(w_\circ)$ and $1 \le k \le \ell$.  For $ \be  \in \Phi_+$ with $\be^{\ii_\circ}_k = \be$ $(1 \le k \le \ell)$, we define
\begin{align*}
\be^+_{\ii_\circ} &\seteq \bc \be^{\ii_\circ}_{k_{\ii_\circ}^+} & \text{ if } k_{\ii_\circ}^+ \le \ell, \\ 0  & \text{ if } k_{\ii_\circ}^+= \ell+1, \ec \qquad
\be^-_{\ii_\circ} \seteq  \bc \be^{\ii_\circ}_{k_{\ii_\circ}^-}  & \text{ if } k_{\ii_\circ}^->0, \\ 0 & \text{ if } k_{\ii_\circ}^-=0,  \ec \qquad
\la^{\ii_\circ}_{\be} \seteq w^{\ii_\circ}_{\le k}\varpi_{i_k}.
\end{align*}
We also define   $\be^{(\pm 2)}_{\ii_\circ}  =  ( \be^\pm_{\ii_\circ} )^\pm_{\ii_\circ} $, $\be^{(\pm 3)}_{\ii_\circ}  = ( ( \be^\pm_{\ii_\circ} )^\pm_{\ii_\circ})^\pm_{\ii_\circ} $, $\dots$; that is, $\be^{(\pm k)}_{\ii_\circ}=
\hspace{-3ex}\underbrace{( \cdots (}_{\text{ $(k-1)$-braces}}\mkern-10mu
  \hspace{-2.2ex} \be^\pm_{\ii_\circ} )^\pm_{\ii_\circ} \cdots )^\pm_{\ii_\circ} $.
If there is no danger of confusion, we also drop the scripts $_{\ii_\circ}$ or $^{\ii_\circ}$ for notational simplicity.

\begin{remark} For $\ii_\circ \in I(w_\circ)$ and its extension $\ii$ via~\eqref{eq: extension}, let us set $(\La_{\ii_\circ},\tB_{\ii_\circ}) \seteq (\La^{\ell}_{\ii},\tB^{\ell}_{\ii})$.
As $\res^{[\ii_\circ]}(\be)$ $(\be \in \Phi_+)$ is well-defined, the assignment $\be\mapsto
\la_\be$ does depend only on $[\ii_\circ]$. Thus, for $\ii'_\circ\overset{c}{\sim}\ii_\circ$,
we can obtain  $(\La_{\ii'_\circ},\tB_{\ii'_\circ})$ by permuting the indices of $(\La_{\ii_\circ},\tB_{\ii_\circ})$. Hence we can define a compatible pair $(\La_{[\ii_\circ]},\tB_{[\ii_\circ]})$ as follows:
Set $\sfK=\Phi_+$, $\sfK_\fr = \{ \al \in \Phi_+ \ | \  \al^+ = 0 \}$ and   $\sfK_\ex =  \sfK \setminus  \sfK_\fr$. Define an $\sfK\times \sfK_{\ex}$-matrix $\tB_{[\ii_\circ]}=(b_{\al\be})_{\al \in \sfK,\be \in\sfK_\ex}$ and
a symmetric $\sfK\times \sfK$-matrix $\La_{[\ii_\circ]}=(\la_{\eta\theta})_{\eta,\theta \in \sfK}$ by setting
\begin{align*}
& b_{\al\be} = \bc
\pm 1 & \text{ if }\be=\al^\pm, \\
\sfc_{i_a,i_b}& \text{ if }  \al \prec_{[\ii_\circ]} \be  \prec_{[\ii_\circ]}  \al^+  \prec_{[\ii_\circ]}  \be^+ \text{ and } i_a \sim i_b, \\
-\sfc_{i_a,i_b}& \text{ if }  \be \prec_{[\ii_\circ]} \al \prec_{[\ii_\circ]} \be^+ \prec_{[\ii_\circ]} \al^+ \text{ and } i_a \sim i_b, \\
0 & \text{ otherwise},
\ec \quad \text{ where $\al=\be^{\ii_\circ}_a$ and $\be=\be^{\ii_\circ}_b$,}
\end{align*}
and
\begin{align*}
\Lambda_{\eta\theta}= (\varpi_{i}-\la_\eta, \varpi_{j}+\la_\theta  )
\quad \text{ for } \ \theta \not\preceq_{[\ii_\circ]} \eta \   \text{ with }  \ i=\res^{[\ii_\circ]}(\eta) \text{ and } j=\res^{[\ii_\circ]}(\theta)
\end{align*}
satisfying
$$    \sum_{\ga \in \sfK}  b_{\ga\al}\La_{\ga\be}=2d_\al\delta_{\al,\be} \quad \text{ for all $\al \in \sfK_\ex$ and $\be \in \sfK$.}$$
\end{remark}

We define the quantum torus $(\calT_{q,[\ii_\circ]},\star)$ as the $\Z[q^{\pm1/2}]$-algebra generated by the set of generators $\{ \tY^{\pm 1}_\be \ | \ \be \in \Phi_+\}$ satisfying the relations
\begin{align} \label{eq: Y torus1}
\tY_\be \star \tY_\be^{-1} = \tY_\be^{-1} \star \tY_\be =1 \quad \text{ and } \quad \tY_\al  \star \tY_\be = q^{\La_{\al\be}} \tY_\be \star \tY_\al.
\end{align}
We define $\wt(\tY_\al) \seteq -\sum_{k \in \Z_{\ge 0}}\al^{(-k)}$.
Note that there exists a $\Z$-algebra anti-involution on $\calT_{q,[\ii_\circ]}$ sending
\begin{align} \label{eq: bar-inv on Tq}
  q^{\pm 1/2}\longmapsto  q^{\mp 1/2} \quad \text{ and } \quad  \tY_\al \longmapsto \tY_\al \quad \text{ for all }\al \in \Phi_+
\end{align}
(cf.~\eqref{eq:bar-inv on Xq}).

Let $\scrA(\La_{[\ii_\circ]},\tB_{[\ii_\circ]})$ be the quantum cluster algebra associated with $(\La_{[\ii_\circ]},\tB_{[\ii_\circ]})$. It is a $\Z[q^{\pm 1/2}]$-subalgebra of $\calT_{q,[\ii_\circ]}$
generated by quantum cluster variables (we refer to~\cite[Appendix A]{FHOO2} for the quantum cluster algebra and related notions).

For $\al,\be \in \Phi_+$ with $\res^{[\ii_\circ]}(\al)=\res^{[\ii_\circ]}(\be)= i$, we write
$$
\tD_{[\ii_\circ]}(\al,\be) \seteq \tD(\la^{\ii_\circ}_\al,\la^{\ii_\circ}_\be), \quad \tD_{[\ii_\circ]}(\al,0) \seteq \tD(\la^{\ii_\circ}_\al,\varpi_i).
$$
We understand $\tD_{[\ii_\circ]}(\al,\be)=0$ unless $\al \prec_{[\ii_\circ]} \be$ and  $\res^{[\ii_\circ]}(\al) = \res^{[\ii_\circ]}(\be)$,
and set  $\tD_{[\ii_\circ]}(0,0)=1$.

\smallskip

The following theorem is proved in \cite{GLS13,KKKO18} for simply-laced types and in \cite{GY17,Qin20} for symmetrizable types.

\begin{theorem}[\cite{GLS13,GY17,KKKO18,Qin20}]\label{thm:UQCR isom to CA}
For a reduced sequence  $\ii_\circ$ of $w_\circ$, there exists an $\bbA$-algebra isomorphism
$${\rm CL}:  \scrA(\La_{[\ii_\circ]},\tB_{[\ii_\circ]}) \to   \calA_{\bbA}(\n)$$
given by
$$\tY_\al \longmapsto\tD_{[\ii_\circ]}(\al,0) \quad \text{ for  } \al\in\Phi_+.$$
\end{theorem}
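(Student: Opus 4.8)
The plan is to exhibit an explicit initial quantum seed of $\calA_\bbA(\n)$ whose cluster variables are the normalized unipotent quantum minors $\tD_{[\ii_\circ]}(\al,0)$, and then to obtain the isomorphism by proving the two inclusions $\scrA\subseteq\calA_\bbA(\n)$ and $\calA_\bbA(\n)\subseteq\scrA$, where $\scrA\seteq\scrA(\La_{[\ii_\circ]},\tB_{[\ii_\circ]})$; this follows the blueprint of Berenstein--Zelevinsky and Geiss--Leclerc--Schr\"oer. First I would fix $\ii_\circ=(i_1,\dots,i_\ell)\in I(w_\circ)$ representing $[\ii_\circ]$, so that $\tD_{[\ii_\circ]}(\be^{\ii_\circ}_k,0)=\tD\bl w^{\ii_\circ}_{\le k}\varpi_{i_k},\varpi_{i_k}\br$, and use the quasi-commutation relation for normalized quantum minors recalled above to compute the $q$-commutation factor of any pair $\tD_{[\ii_\circ]}(\al,0),\tD_{[\ii_\circ]}(\be,0)$ and match it with $q^{\La_{[\ii_\circ],\al\be}}$; then $\tY_\al\mapsto\tD_{[\ii_\circ]}(\al,0)$ extends to an $\bbA$-algebra embedding of the quantum torus $\calT_{q,[\ii_\circ]}$ into the skew field of fractions of $\calA_q(\n)$. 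Injectivity comes from algebraic independence of the $\tD_{[\ii_\circ]}(\al,0)$, checked at $q=1$ against the classical statement or, more intrinsically, from their unitriangularity against the dual PBW basis attached to $\ii_\circ$. Since $(\La_{[\ii_\circ]},\tB_{[\ii_\circ]})$ is compatible (the compatibility proposition recalled above), this genuinely defines an initial quantum seed living inside $\calA_q(\n)$.

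Next I would prove $\scrA\subseteq\calA_\bbA(\n)$ by showing that every cluster variable is a unipotent quantum minor. The one-step mutation at an exchangeable index $\al=\be^{\ii_\circ}_k$ (those with $\al^+_{\ii_\circ}\ne 0$) is governed \emph{exactly} by the determinantal identity~\eqref{eq: determinatial id}: with $i=i_k$, $u=w^{\ii_\circ}_{\le k-1}$ and $v=w^{\ii_\circ}_{\le k_\ii^+-1}$, the two terms on its right-hand side are, up to the explicit powers $q^{\tka},q^{\tze}$, precisely the two exchange monomials read off from the $\al$-column of $\tB_{[\ii_\circ]}$, once the signs of the entries $b_{\ga\al}$ are translated into the convex order $\prec_{[\ii_\circ]}$; hence $\mu_\al(\tY_\al)=\tD\bl us_i\varpi_i,vs_i\varpi_i\br\in\calA_\bbA(\n)$. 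To reach arbitrary seeds one iterates: commutation-equivalent reduced words give the same seed up to relabeling (the Remark above), while a braid move on reduced words of $w_\circ$ and the combinatorial reflection $r_i$ are realized by explicit mutation sequences, so all seeds arising from $I(w_\circ)$ are mutation equivalent and carry only minors as cluster variables; the mutations leaving this family are absorbed by the quantum Laurent phenomenon together with stability of $\calA_\bbA(\n)$ under the relevant localizations. This is where the full force of \cite{GLS13} in simply-laced type, and of \cite{GY17,Qin20} in general symmetrizable type, enters.

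For the reverse inclusion I would pass through the quantum upper cluster algebra $\overline{\scrA}\supseteq\scrA$. On one hand $\calA_\bbA(\n)\subseteq\overline{\scrA}$: this ``upper bound'' amounts to showing that every element of $\calA_\bbA(\n)$ is a Laurent polynomial in the initial cluster and in each of its one-step mutations, again extracted from the determinantal identities. On the other hand $\scrA=\overline{\scrA}$ for the mutation class in question, since it contains an acyclic seed --- for instance one attached to a reduced word adapted to a Dynkin quiver $Q$ --- and the coincidence of cluster and upper cluster algebras at an acyclic seed persists in the quantum, possibly non-skew-symmetric, setting. Combining, $\scrA\subseteq\calA_\bbA(\n)\subseteq\overline{\scrA}=\scrA$, so all three coincide; tracking the $q^{1/2}$-normalizations throughout then upgrades this to an $\bbA$-algebra isomorphism ${\rm CL}$ with ${\rm CL}(\tY_\al)=\tD_{[\ii_\circ]}(\al,0)$. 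I expect the main obstacle to be precisely this circle of inclusions --- simultaneously establishing the upper bound $\calA_\bbA(\n)\subseteq\overline{\scrA}$ and the acyclic equality $\scrA=\overline{\scrA}$, i.e.\ controlling $\calA_\bbA(\n)$ from \emph{below} by cluster variables --- together with the purely quantum bookkeeping of the cocycle powers $q^{\theta},q^{\tka},q^{\tze}$, which for non-skew-symmetric $\tB_{[\ii_\circ]}$ is markedly more delicate than the simply-laced case (covered by \cite{KKKO18}). An alternative, which additionally produces the compatibility of cluster monomials with the dual canonical basis that is exploited later in this paper, is to run Qin's theory of common triangular bases \cite{Qin20} and show that the dual canonical basis of $\calA_\bbA(\n)$ is the common triangular basis of $\scrA$; this yields the isomorphism and the basis compatibility at once.
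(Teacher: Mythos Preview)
The paper does not give its own proof of this theorem: it is stated as a result imported from the literature, with the sentence ``The following theorem is proved in \cite{GLS13,KKKO18} for simply-laced types and in \cite{GY17,Qin20} for symmetrizable types'' preceding it, and no argument afterward. So there is nothing in the paper to compare your proposal against; you are sketching a proof the paper deliberately omits.

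That said, your outline follows the broad architecture of \cite{GLS13} (the chain $\scrA\subseteq\calA_\bbA(\n)\subseteq\overline{\scrA}=\scrA$), and you correctly flag that the hard content lies in the equality $\scrA=\overline{\scrA}$ and in the quantum bookkeeping outside the skew-symmetric case. Two points deserve tightening. First, in your second paragraph the claim that the one-step mutation at an exchangeable index is ``governed exactly'' by the determinantal identity~\eqref{eq: determinatial id} with $u=w^{\ii_\circ}_{\le k-1}$ and $v=w^{\ii_\circ}_{\le k^+_{\ii}-1}$ needs more care: the mutated variable is a quantum minor of a specific non-flag form, and matching the two monomials on the right of~\eqref{eq: determinatial id} with the columns of $\tB_{[\ii_\circ]}$ requires verifying that the neighbours of $k$ in the exchange quiver are precisely the indices appearing in $\prod_{j\sim i}\tD(u\varpi_j,v\varpi_j)^{-\sfc_{j,i}}$ together with the $k^\pm$ terms. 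Second, and more seriously, even granting this for one-step mutations, your sentence ``the mutations leaving this family are absorbed by the quantum Laurent phenomenon together with stability of $\calA_\bbA(\n)$ under the relevant localizations'' does not establish $\scrA\subseteq\calA_\bbA(\n)$: the Laurent phenomenon places arbitrary cluster variables only in a localization of $\calA_\bbA(\n)$, not in $\calA_\bbA(\n)$ itself. In \cite{GY17} this inclusion is obtained through the structure theory of quantum nilpotent algebras (iterated Ore extensions), and in \cite{Qin20} through the common triangular basis machinery you mention as an alternative; neither reduces to the Laurent phenomenon plus determinantal identities alone.
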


\subsection{Dual PBW and dual-canonical/upper-global bases} Throughout this subsection, we fix $[\ii_\circ]$ of $w_\circ$.  For $\be \in \Phi_+$, we set
$$
F^{\up}_{[\ii_\circ]}(\al) \seteq D_{[\ii_\circ]}(\al,\al^-)  \quad \left(\text{resp. } \tF^{\up}_{[\ii_\circ]}(\al) \seteq \tD_{[\ii_\circ]}(\al,\al^-)  \right)
$$
and call $$\left\{ \left. \tF^{\up}_{[\ii_\circ]}(\al) \ \right| \  \al \in \Phi_+\right\} \quad
\left( \text{resp. } \left\{ \left. \tF^{\up}_{[\ii_\circ]}(\al) \ \right| \  \al \in \Phi_+\right\}\right)$$ \emph{the set of $($resp.~normalized$)$ dual PBW-vectors associated with $[\ii_\circ]$} (cf.~\cite[Proposition 7.4]{GLS13}).

For an exponent $\ue = (\ek_\be)_{\be \in \Phi_+} \in \Z_{\ge 0}^{\Phi_+}$, we set
$$
\tF^\up_{[\ii_\circ]} (\ue) \seteq q^{ -\sum_{  k<l }  \ek_{\be_k}\ek_{\be_l}(\be_k,\be_l)}  \tF^{\up}_{[\ii_\circ]}(\be_\ell)^{\ek_{\be_\ell}} \cdots \tF^{\up}_{[\ii_\circ]}(\be_2)^{\ek_{\be_2}}\tF^{\up}_{[\ii_\circ]}(\be_1)^{\ek_{\be_1}},
$$
where $\be_k \seteq \be_k^{\ii_\circ}$.

\begin{proposition} $($See, for instance, \cite[Chapter 40, 41]{LusztigBook}$.)$ \label{prop: PBW} The set  $\{ \tF^\up_{[\ii_\circ]}(\ue) \ | \ \ue \in \Z_{\ge 0}^{\Phi_+} \}$ forms a $\Z[q^{\pm 1/2}]$-basis of $\calA_{\bbA}(\n)$.
\end{proposition}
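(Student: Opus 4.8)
The plan is to reduce the statement to the classical Poincar\'e--Birkhoff--Witt theorem for $\calU_\bbA^-(\g)$ and transport it along the pairing $(\cdot,\cdot)_K$. Recall first Lusztig's PBW theorem (\cite{LusztigBook}, Ch.~40--41): associated with the reduced sequence $\ii_\circ$ of $w_\circ$ there are root vectors of $\calU_q^-(\g)$, built from the braid-group automorphisms, whose ordered divided-power monomials form an $\bbA$-basis of the integral form $\calU_\bbA^-(\g)$. Under the identifications \eqref{eq:An=Uq- and An=CN at q=1}, the form $(\cdot,\cdot)_K$ exhibits $\calA_\bbA(\n)$ as the dual $\bbA$-lattice of $\calU_\bbA^-(\g)$; hence the basis of $\calA_\bbA(\n)$ dual to the PBW basis --- the \emph{dual PBW basis} --- is automatically an $\bbA$-basis. (Alternatively one may argue via the dual-canonical/upper-global basis $\tbfB^\up$, which is known to be an $\bbA$-basis of $\calA_\bbA(\n)$, together with Lusztig's unitriangularity of the PBW basis against the canonical basis, dualized; this route is the one most compatible with the rest of the paper.)

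It then remains to identify the explicitly defined elements $\tF^\up_{[\ii_\circ]}(\ue)$ with invertible-$\bbA$-scalar multiples of the members of this dual basis, which breaks into two points. First, for the single root vectors: the normalized dual root vector of weight $\al\in\Phi_+$ is, up to a unit of $\bbA$, precisely the unipotent quantum minor $\tF^\up_{[\ii_\circ]}(\al)=\tD_{[\ii_\circ]}(\al,\al^-)$; in the symmetric case this is \cite[Proposition~7.4]{GLS13}, and it extends to all finite types (see also \cite{Kimura12}). Second, for the passage to monomials: the $q$-power $q^{-\sum_{k<l}\ek_{\be_k}\ek_{\be_l}(\be_k,\be_l)}$ built into $\tF^\up_{[\ii_\circ]}(\ue)$ is exactly the straightening factor coming from the $q$-commutation among the dual root vectors, so a short computation with $(\cdot,\cdot)_K$ shows that $\tF^\up_{[\ii_\circ]}(\ue)$ pairs to a unit with the divided-power PBW monomial indexed by $\ue$ and to $0$ with the others. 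Combining the two points identifies $\{\tF^\up_{[\ii_\circ]}(\ue)\mid \ue\in\Z_{\ge 0}^{\Phi_+}\}$ with the dual PBW basis up to units, whence it is an $\bbA$-basis of $\calA_\bbA(\n)$.

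The delicate point --- the one I expect to absorb essentially all the real work --- is the first ingredient of the second step: matching $\tD_{[\ii_\circ]}(\al,\al^-)$ with Lusztig's dual root vectors, and, within it, pinning down the exact power of $q$ in the normalization. This is the place where the structure of $\calA_q(\n)$ is genuinely used, e.g.\ through the determinantal identities \eqref{eq: determinatial id} applied to the consecutive pairs $(\al,\al^-)$ running along $\ii_\circ$ (which reproduce the recursion defining the PBW root vectors), or by transporting the known simply-laced statement of \cite{GLS13}. Everything else --- the PBW theorem and the duality of the integral forms --- is standard.
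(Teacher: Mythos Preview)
The paper does not prove this proposition at all: it is stated with a bare citation to \cite[Chapters~40--41]{LusztigBook} and treated as a known fact. Your outline is a correct reconstruction of how the result is extracted from that literature, and the key identification you flag as ``delicate'' --- that the normalized quantum minors $\tD_{[\ii_\circ]}(\al,\al^-)$ are the dual PBW root vectors --- is exactly what the paper cites \cite[Proposition~7.4]{GLS13} for when introducing $\tF^\up_{[\ii_\circ]}(\al)$. So your approach is the intended one; the paper simply does not spell it out.
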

The basis  $\{ \tF^\up_{[\ii_\circ]}(\ue) \ | \ \ue \in \Z_{\ge 0}^\ell \}$ is referred to as a \emph{normalized dual Poincar\'e--Birkhoff--Witt type basis}.

\begin{theorem} [\cite{BKM12}] \label{thm: minimal pair dual pbw}
For an $[\ii_\circ]$-minimal $[\ii_\circ]$-pair $\uup=\pair{\al,\be}$ for
$\ga = \al+\be \in \Phi_+ \setminus \Pi$ $($defined in Section \ref{subsec: stat}$)$, we have
\begin{align}
& F^\up_{[\ii_\circ]}(\al)F^\up_{[\ii_\circ]}(\be) - q^{-(\al,\be)}F^\up_{[\ii_\circ]}(\be)F^\up_{[\ii_\circ]}(\al)
= q^{-p_{\be,\al}}(1-q^{2(p_{\be,\al}-(\al,\be))} )F^\up_{[\ii_\circ]}(\ga)  \label{eq: BKMc1} \\
& \iff \tF^\up_{[\ii_\circ]}(\al)\tF^\up_{[\ii_\circ]}(\be) - q^{-(\al,\be)}\tF^\up_{[\ii_\circ]}(\be)\tF^\up_{[\ii_\circ]}(\al)
= q^{-p_{\be,\al}+(\al,\be)/2 }(1-q^{2(p_{\be,\al}-(\al,\be))} )  \tF^\up_{[\ii_\circ]}(\ga).  \label{eq: BKMc2}
\end{align}
In particular, when $p_{\be,\al}=0$, we have
\begin{align}\label{eq: BKMcp}
\dfrac{ q^{-(\al,\be)/2}\tF^\up_{[\ii_\circ]}(\be)\tF^\up_{[\ii_\circ]}(\al) -q^{(\al,\be)/2 }\tF^\up_{[\ii_\circ]}(\al)\tF^\up_{[\ii_\circ]}(\be)}{(q^{-(\al,\be)}-q^{(\al,\be)} )}
=   \tF^\up_{[\ii_\circ]}(\ga).
\end{align}
\end{theorem}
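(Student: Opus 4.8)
The plan is to reduce the identity to a statement about Lusztig's PBW root vectors in $\calU^-_q(\g)$ and then to a rank-two computation. First I would pass from $\calA_\bbA(\n)$ to $\calU^-_q(\g)$ through the identification in~\eqref{eq:An=Uq- and An=CN at q=1} and the nondegenerate pairing $(\cdot\,,\cdot)_K$. Under this identification the dual PBW vector $F^\up_{[\ii_\circ]}(\al)=D_{[\ii_\circ]}(\al,\al^-)$ corresponds, up to an explicit monomial in $q$ depending only on $(\al,\al)$, to Lusztig's root vector $F_\al\in\calU^-_q(\g)_{-\al}$ attached to the convex order $\le_{\ii_\circ}$; this is standard (e.g.\ \cite[Proposition 7.4]{GLS13}, \cite{Kimura12}). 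Hence it suffices to establish the corresponding identity of the shape ``a $q$-commutator of $F_\al$ and $F_\be$ equals a scalar multiple of $F_\ga$'' for Lusztig's root vectors attached to a minimal pair, with the scalar to be identified, since passing between the normalizations $\tF$ and $F$ and between the two orderings of the product is a matter of bookkeeping with the scalars in~\eqref{eq: tD}.

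Next I would invoke the Levendorskii--Soibelman straightening formula (see \cite[Chapters 40, 41]{LusztigBook}): for $\al<_{\ii_\circ}\be$ in a convex order the $q$-commutator $F_\be F_\al-q^{-(\al,\be)}F_\al F_\be$ is a $\Z[q^{\pm1/2}]$-linear combination of decreasing monomials $F_{\ga_m}^{c_m}\cdots F_{\ga_1}^{c_1}$ in root vectors with $\al<_{\ii_\circ}\ga_1<_{\ii_\circ}\cdots<_{\ii_\circ}\ga_m<_{\ii_\circ}\be$ and $\sum_i c_i\ga_i=\al+\be=\ga$. The hypothesis that $\pair{\al,\be}$ is an $[\ii_\circ]$-minimal pair, unwound through Definition~\ref{def: bi-orders} and the realization of the convex partial order by the combinatorics of $\Gamma^Q$, forces the only root strictly between $\al$ and $\be$ that can occur in this sum to be $\ga$ itself, and with exponent $1$; thus the sum collapses to a single term, a scalar multiple of $F_\ga$.

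To pin down the scalar I would move to a reduced word $\ii'_\circ\in[\ii_\circ]$ in which $\al,\ga,\be$ occupy three consecutive positions --- such a word exists precisely because the pair is minimal (the relevant combinatorial lemma is in \cite{BKM12}; cf.\ also \cite{OS19a}) --- restrict attention to the rank-two root subsystem spanned by the two simple roots involved, and invoke the explicit rank-two PBW relations of types $A_1\times A_1$, $A_2$, $B_2$ and $G_2$. There the $q$-commutator of the two outer root vectors produces exactly the scalar $q^{-p_{\be,\al}}(1-q^{2(p_{\be,\al}-(\al,\be))})$ times the middle root vector, where $p_{\be,\al}=\max\{p\in\Z\mid\be-p\al\in\Phi\}$ is the rank-two datum. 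Converting back through the normalization scalars in~\eqref{eq: tD} --- with weights $-\al,-\be,-\ga$ and $\ga=\al+\be$, which contributes the prefactor $q^{(\al,\be)/2}$ --- turns \eqref{eq: BKMc1} into \eqref{eq: BKMc2}; and putting $p_{\be,\al}=0$ and rearranging the resulting identity yields \eqref{eq: BKMcp}.

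The main obstacle I expect is the combinatorial step: relating the abstract minimality condition for $\prec^\ttb_{[\ii_\circ]}$ to the existence of a reduced word placing $\al,\ga,\be$ consecutively, and then keeping track of all the $q$-power normalizations between $D(\mu,\zeta)$, $F^\up_{[\ii_\circ]}$, $\tF^\up_{[\ii_\circ]}$ and Lusztig's conventions without sign errors. An alternative route that sidesteps the explicit reduced word is to compute both sides directly against PBW monomials under $(\cdot\,,\cdot)_K$, using the $q$-derivations ${}_ie'$, $e'_i$ and the coproduct $\Delta_\n$ with $\Delta_\n(F_\ga)=F_\ga\otimes 1+\cdots+1\otimes F_\ga$; this still reduces the determination of the scalar to a rank-two check, but through the comultiplication rather than through straightening.
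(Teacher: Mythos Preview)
The paper does not supply its own proof of this theorem: it is stated with a citation to \cite{BKM12} and used as a black box. So there is no ``paper's proof'' to compare against; your sketch is essentially a reconstruction of the argument in \cite{BKM12}, and it is broadly correct. The Levendorskii--Soibelman straightening law together with the minimality hypothesis does force the $q$-commutator to be a scalar multiple of $F_\ga$, and the scalar is then pinned down by a rank-two computation; the passage between $F^\up$, $\tF^\up$ and Lusztig's root vectors is indeed just the normalization in~\eqref{eq: tD}.

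One point to be careful about: your claim that one can always find $\ii'_\circ\in[\ii_\circ]$ with $\al,\ga,\be$ in three \emph{consecutive} positions is not quite what is needed, and in non-simply-laced types is not literally true in general (there may be several roots strictly between $\al$ and $\be$ in the convex order, all summing with multiplicities to $\ga$ but with only $\ga$ itself surviving the weight constraint). What \cite{BKM12} actually does is reduce to the rank-two parabolic subalgebra generated by the two simple roots at the relevant positions and compute there; the consecutiveness is a feature of the rank-two situation, not something imposed on the ambient reduced word. Your ``alternative route'' via the pairing and $\Delta_\n$ also works and lands in the same rank-two check. Either way the obstacle you flag --- matching the abstract $\prec^\ttb_{[\ii_\circ]}$-minimality to the concrete rank-two setup --- is exactly where the work lies, and \cite{BKM12} handles it via their Lemma on minimal pairs.
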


  For a positive root $\al$, recall $d_\al = (\al,\al)/2 \in \N$.
Let  $\al,\be$ be positive roots such that $\ga=\al+\be \in \Phi_+$. Then we have
\begin{align} \label{eq: palbe}
p_{\be,\al} = \bc
2 & \text{ if } d_\ga=3 \text{ and } d_{\al}= d_{\be} =1,\\
1 & \text{ if } d_\ga=2 \text{ and } d_{\al}= d_{\be} =1,\\
1 & \text{ if $\g$ is of type $G_2$ and }  d_\al=d_{\be}= d_{\ga} =1,\\
0 & \text{otherwise.}
\ec
\end{align}

Note that there exists the $\Z$-algebra anti-involution, denoted also by $\overline{( \cdot )}$, on $\calA_{\bbA}(\n)$ defined by
$$  q^{\pm 1/2}\longmapsto  q^{\mp 1/2} \quad \text{ and } \quad  \tF^{\up}_{[\ii_\circ]}(\al) \longmapsto \tF^{\up}_{[\ii_\circ]}(\al) \quad \text{ for all }\al \in \Phi_+,$$
which satisfies (see  \cite[(8.3)]{HO19} and \cite[\S 7.4]{FHOO})
\begin{align}\label{eq: bar-compatibility}
   \overline{( \cdot )} \circ {\rm CL}  = {\rm CL} \circ \overline{( \cdot )}.
\end{align}

Lusztig \cite{L90,L91} and Kashiwara \cite{K91} have constructed a specific $\Z[q^{\pm 1/2}]$-basis  $\bfB^\up$  of $\calA_{\bbA}(\n)$, which is called the
\emph{dual-canonical/upper-global basis}.
In this paper, we consider the normalized one $\tbfB^\up$ of $\bfB^\up$.
Note that $\tD(\mu,\zeta) \in \tbfB^\up$  for $\mu \preceq \zeta$.

\begin{theorem}[{\cite[Theorem 4.29]{Kimura12}, see also \cite[Proposition 4.1.1]{KKK2}}]  \label{thm: dual can} For each $[\ii_\circ]$ of $w_\circ$, we have
$$
\tbfB^\up = \{ \tG_{[\ii_\circ]}(\ue) \ | \  \ue \in \Z_{\ge 0}^{\Phi_+} \},
$$
where $\tG_{[\ii_\circ]}(\ue)$ is the unique element of $\calA_{\bbA}(\n)$ satisfying the following properties for each $\ue \in \Z_{\ge 0}^{\Phi_+}$:
\begin{eqnarray} &&
\parbox{85ex}{
\bna
\item $\overline{\tG_{[\ii_\circ]}(\ue)} = \tG_{[\ii_\circ]}(\ue)$,
\item \label{it: tGc}$\tG_{[\ii_\circ]}(\ue) - \tF^\up_{[\ii_\circ]}(\ue) \in \sum_{\ue' \widetilde{<}_{[\ii_\circ]} \ue}  q\Z[q] \tF^\up_{[\ii_\circ]}(\ue')$.
\ee
}\label{eq: dual canonical}
\end{eqnarray}
Here $\ue'=(\ek'_{\al})_{\al \in \Phi_+} \widetilde{<}_{[\ii_\circ]} \ue=(\ek_{\al})_{\al \in \Phi_+}$ means that there exists $\be \in \Phi^+$ such that $\ek'_{\be}< \ek_{\be}$ and $\ek'_{\al}=\ek_{\al}$ for all $\be \not\preceq_{[\ii_\circ]} \al$.
\end{theorem}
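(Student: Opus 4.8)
The plan is to deduce the statement from Lusztig's uniqueness lemma of Kazhdan--Lusztig type, applied to the free $\bbA$-module $\calA_{\bbA}(\n)$ ($\bbA=\Z[q^{\pm1/2}]$) equipped with the normalized dual PBW basis $\{\tF^\up_{[\ii_\circ]}(\ue)\}_{\ue\in\Z_{\ge0}^{\Phi_+}}$ of Proposition~\ref{prop: PBW}, the $\Z$-algebra anti-involution $\overline{(\cdot)}$, and the partial order $\widetilde{<}_{[\ii_\circ]}$ on exponents. The whole argument rests on one key lemma, which I would establish first: $\overline{(\cdot)}$ acts \emph{unitriangularly} on the dual PBW basis, i.e.
\[
\overline{\tF^\up_{[\ii_\circ]}(\ue)}=\tF^\up_{[\ii_\circ]}(\ue)+\!\!\sum_{\ue'\,\widetilde{<}_{[\ii_\circ]}\,\ue}\!\!c_{\ue',\ue}\,\tF^\up_{[\ii_\circ]}(\ue'),\qquad c_{\ue',\ue}\in\bbA.
\]
To prove this I would use that $\overline{(\cdot)}$ is an anti-involution fixing every single-root vector $\tF^\up_{[\ii_\circ]}(\al)=\tD_{[\ii_\circ]}(\al,\al^-)$ (a normalized unipotent quantum minor, hence an element of $\tbfB^\up$ and bar-invariant), so applying $\overline{(\cdot)}$ to the defining product of $\tF^\up_{[\ii_\circ]}(\ue)$ merely reverses the order of the factors and negates the overall $q$-power; I would then straighten the reversed product back into the prescribed decreasing order using the Levendorskii--Soibelman relations, which by convexity of $\prec_{[\ii_\circ]}$ (realized, when $[\ii_\circ]=[Q]$, by the AR-quiver $\Gamma^Q$) follow by iterating the minimal-pair relations of Theorem~\ref{thm: minimal pair dual pbw} and introduce only dual PBW monomials supported on roots lying strictly between the two being swapped. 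The normalization factor $q^{-\sum_{k<l}\ek_{\be_k}\ek_{\be_l}(\be_k,\be_l)}$ in the definition of $\tF^\up_{[\ii_\circ]}(\ue)$ is exactly what forces the leading coefficient to equal $1$, with the remaining terms strictly $\widetilde{<}_{[\ii_\circ]}$-smaller.

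Granting the key lemma, \textbf{existence} of $\tG_{[\ii_\circ]}(\ue)$ is the standard recursive bar-invariant completion: since for fixed $\wt(\ue)$ the set of exponents of that weight is finite, $\widetilde{<}_{[\ii_\circ]}$ admits no infinite descending chain there, and one processes exponents in $\widetilde{<}_{[\ii_\circ]}$-increasing order, at each stage adding to $\tF^\up_{[\ii_\circ]}(\ue)$ a $q\Z[q]$-combination of the previously built $\tG_{[\ii_\circ]}(\ue')$, $\ue'\widetilde{<}_{[\ii_\circ]}\ue$, to enforce bar-invariance --- solvable thanks to unitriangularity. Alternatively, matching the cited references \cite{Kimura12,KKK2}, I would simply invoke the existence of the dual-canonical/upper-global basis $\tbfB^\up$ of $\calA_{\bbA}(\n)$ of Lusztig \cite{L90,L91} and Kashiwara \cite{K91}, whose elements are bar-invariant by construction and indexed by the crystal $B(\infty)$; property (b) for them is the transpose, through the nondegenerate pairing $(\cdot\,,\cdot)_K$ and the identification \eqref{eq:An=Uq- and An=CN at q=1}, of Lusztig's theorem that on $\calU^-_q(\g)$ the canonical basis and the PBW basis attached to any reduced word of $w_\circ$ differ by a matrix unitriangular with off-diagonal entries in $q\Z[q]$ \cite[Ch.~40--41]{LusztigBook}.

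For \textbf{uniqueness}: if $x,x'$ both satisfy (a) and (b) for the same $\ue$, then $x-x'=\sum_{\ue'\,\widetilde{<}_{[\ii_\circ]}\,\ue}a_{\ue'}\tF^\up_{[\ii_\circ]}(\ue')$ with all $a_{\ue'}\in q\Z[q]$ and $\overline{x-x'}=x-x'$; choosing $\ue'$ maximal with $a_{\ue'}\neq0$ and comparing the coefficients of $\tF^\up_{[\ii_\circ]}(\ue')$ on both sides of $\overline{x-x'}=x-x'$ (the key lemma guarantees that bars of strictly lower terms contribute nothing in that degree) yields $\overline{a_{\ue'}}=a_{\ue'}$, impossible for a nonzero element of $q\Z[q]$; hence $x=x'$. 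Since $\tbfB^\up$ is characterized on the $\calU^-_q(\g)$ side by these very two properties, it coincides with $\{\tG_{[\ii_\circ]}(\ue):\ue\in\Z_{\ge0}^{\Phi_+}\}$.

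The \textbf{main obstacle} is the key lemma: the unitriangularity of $\overline{(\cdot)}$ on the dual PBW basis, equivalently the statement that Levendorskii--Soibelman straightening of a product of single-root dual PBW vectors, reordered to the convex order of $[\ii_\circ]$, produces only $\widetilde{<}_{[\ii_\circ]}$-lower monomials with leading coefficient $1$. This is the one point where the combinatorics of the reduced word genuinely intervene and where the precise normalization is pinned down; in a self-contained treatment I would prove it by induction on $\het(\wt(\ue))$ via Theorem~\ref{thm: minimal pair dual pbw} together with the weight constraint $\wt(\ue')=\wt(\ue)$, but as the theorem is quoted from \cite{Kimura12,KKK2} it is enough to record that it reduces, through $(\cdot\,,\cdot)_K$ and \eqref{eq:An=Uq- and An=CN at q=1}, to the foundational results of Lusztig and Kashiwara on $\calU^-_q(\g)$.
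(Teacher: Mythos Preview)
The paper does not prove this theorem: it is stated as a citation from \cite[Theorem~4.29]{Kimura12} (see also \cite[Proposition~4.1.1]{KKK2}) and used as a black box, so there is no proof in the paper to compare against. Your outline is the standard Kazhdan--Lusztig type argument underlying those references---unitriangularity of the bar involution on the normalized dual PBW basis, then recursive bar-completion for existence and a maximality argument for uniqueness---and is essentially correct as a sketch; in particular you are right that the crux is the Levendorskii--Soibelman straightening lemma and that the normalization in $\tF^\up_{[\ii_\circ]}(\ue)$ is what forces the leading coefficient to be $1$.
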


\section{HLO isomorphisms} \label{Sec: Iso}
In this section, we shall prove that there exists an algebra isomorphism from $\calA_\bbA(\n)$ to $ \frakK_{q,Q}$
sending $\tbfB^\up$ to $\sfL_{q,Q}$ for a Dynkin quiver $Q$ of type $\g$. This isomorphism is sometimes referred to as the \emph{HLO isomorphism} since it is established by Hernandez--Leclerc--Oya in \cite{HL15,HO19} (see also \cite{FHOO,FHOO2}).

\smallskip

Recall the quantum torus $\calX_{q,Q}$ associated with a Dynkin quiver $Q$ in Definition~\ref{def: Q quantum-torus}.

\begin{theorem} $($\cite[Proposition 3.8]{HL15}, \cite[Theorem 6.14]{KO22}$)$ \label{thm: tori iso} For each Dynkin quiver $Q=(\Dynkin,\xi)$, there exists an isomorphism between quantum tori
\begin{align} \label{eq: Uppsi}
\begin{split}
\Uppsi_Q :
\xymatrix@R=1ex@C=9ex{
    \calT_{q,[Q]} \ar@{->}[r] & \calX_{q,Q} \\
    \tY_{\al} \ar@{|->}[r] & \underline{m^{(i)}[p,\xi_i]}
}
\end{split}
\end{align}
for each $\al \in \Phi_+$ with $\phi_Q(i,p)=(\al,0)$.
\end{theorem}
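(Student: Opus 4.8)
The plan is to build $\Uppsi_Q$ from the universal property of the quantum torus $\calT_{q,[Q]}$ and then to prove bijectivity by degenerating at $q=1$. Throughout, fix $\ii_\circ\in[Q]$, and for $\al\in\Phi_+$ put $(i_\al,p_\al)\seteq\phi_Q^{-1}(\al,0)\in\Gamma_0^Q$, so that $m_\al\seteq m^{(i_\al)}[p_\al,\xi_{i_\al}]=X_{i_\al,p_\al}X_{i_\al,p_\al+2}\cdots X_{i_\al,\xi_{i_\al}}$ is a KR-monomial and $\underline{m_\al}$ is a commutative — hence invertible — element of $\calX_{q,Q}$. Since $\calT_{q,[Q]}$ is presented by the generators $\tY_\al^{\pm1}$ and the relations \eqref{eq: Y torus1}, to obtain a $\Z[q^{\pm1/2}]$-algebra homomorphism $\Uppsi_Q$ with $\Uppsi_Q(\tY_\al)=\underline{m_\al}$ it suffices to verify
\[
\underline{m_\al}*\underline{m_\be}\;=\;q^{\Lambda_{\al\be}}\,\underline{m_\be}*\underline{m_\al}\qquad(\al,\be\in\Phi_+).
\]
Unwinding the definition of $*$, this is the single numerical identity $\sum_{r,s\ge0}\ucalN(i_\al,p_\al+2r;\,i_\be,p_\be+2s)=\Lambda_{\al\be}$, the sum running over the factors of $m_\al$ and $m_\be$ (the central $q$-power in $\underline{m_\al}$ does not affect $\ucalN$).

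This identity is the heart of the argument. Each factor $X_{i_\al,p_\al+2r}$ of $m_\al$ lies in $\Gamma_0^Q=\phi_Q^{-1}(\Phi_+\times\{0\})$, so its $Q$-coordinate has the form $(\tau_Q^{-r}\al,0)$ with $\tau_Q^{-r}\al\in\Phi_+$; hence Theorem~\ref{thm: N and wt} rewrites the left-hand side as a signed sum $\sum_{r,s}(-1)^{\delta(p_\al+2r\ge p_\be+2s)}(\tau_Q^{-r}\al,\tau_Q^{-s}\be)$. On the other hand $\Lambda_{\al\be}=(\varpi_{i_\al}-\la^{\ii_\circ}_\al,\ \varpi_{i_\be}+\la^{\ii_\circ}_\be)$ for $\be\not\preceq_{[Q]}\al$ (antisymmetrically extended otherwise), and the standard identification of $\varpi_i-\la^{\ii_\circ}_\bullet$ with $\wt_Q(\underline{m_\bullet})=\sum_{r\ge0}\tau_Q^{-r}(\bullet)$ reduces the required equality to a finite identity in $\rl$. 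That identity follows by matching the sign pattern $\delta(p_\al+2r\ge p_\be+2s)$ against the convex order $\prec_{[Q]}$ realised by $\Gamma^Q$; alternatively, and more efficiently, one gets it by telescoping the $\tusfb$-series in the definition of $\ucalN(i,p;j,s)$, the arithmetic progressions in $p$ and $s$ cancelling all but the boundary terms. This is precisely the computation of \cite[Proposition 3.8]{HL15} and \cite[Theorem 6.14]{KO22}, which I would carry out in the $\tusfb$-telescoping form to keep it uniform with the non-simply-laced conventions of Section~\ref{subsec: Quantum virtual Grothendieck rings}.

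For bijectivity, specialise $q^{1/2}\mapsto1$: $\calT_{q,[Q]}$ becomes $\Z[Y_\al^{\pm1}:\al\in\Phi_+]$ and $\calX_{q,Q}$ becomes $\Z[X_{j,s}^{\pm1}:(j,s)\in\Gamma_0^Q]$, both Laurent rings of rank $\ell=|\Phi_+|$ since $\phi_Q\colon\Gamma_0^Q\isoto\Phi_+\times\{0\}$ is a bijection, and $\Uppsi_Q|_{q=1}$ sends $Y_\al\mapsto m_\al=X_{i_\al,p_\al}\prod_{r\ge1}X_{i_\al,p_\al+2r}$. In the basis $\{X_{j,s}\}_{(j,s)\in\Gamma_0^Q}$ the exponent matrix $\bigl(u_{j,s}(m_\al)\bigr)$ is unitriangular: the column indexed by $\al$ has a $1$ in row $\phi_Q^{-1}(\al,0)$, and all its other nonzero entries sit in rows $(i_\al,p_\al+2r)$, $r\ge1$, which are strictly larger than $\phi_Q^{-1}(\al,0)$ for any total order on $\Gamma_0^Q$ refining the horizontal arrows of $\Gamma^Q$. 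Hence this matrix lies in $\mathrm{GL}_\ell(\Z)$, so $\Uppsi_Q|_{q=1}$ is an isomorphism; and since $\Uppsi_Q$ is an algebra homomorphism between $\Z[q^{\pm1/2}]$-free modules with a common exponent indexing set whose reduction modulo $(q^{1/2}-1)$ is bijective, $\Uppsi_Q$ itself is an isomorphism (equivalently, one inverts the exponent matrix over $\Z$ and attaches the forced $q$-powers to write the inverse explicitly).

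The only step that is not formal is the $q$-exponent matching of the second paragraph; the obstacle there is purely bookkeeping — reconciling the $\tusfb$-expansion underlying $\ucalN$ with the weight pairing underlying $\Lambda_{[\ii_\circ]}$ — rather than anything conceptual, and it is insensitive to whether $\g$ is simply-laced.
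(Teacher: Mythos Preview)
The paper does not supply its own proof of this statement; it is quoted from \cite[Proposition~3.8]{HL15} and \cite[Theorem~6.14]{KO22}. Your outline is the argument carried out in those references: verify the single $q$-commutation identity $\ucalN(\underline{m_\al},\underline{m_\be})=\Lambda_{\al\be}$ (either via Theorem~\ref{thm: N and wt} as you sketch, or---as in \cite{KO22} and as you also mention---by telescoping the $\tusfb$-series in the definition of $\ucalN$), and then observe that $\Uppsi_Q$ is a monomial map whose integer exponent matrix is unitriangular in any compatible reading of $\Gamma^Q$, hence invertible.

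One wording caveat on the bijectivity step: the inference ``$\Uppsi_Q|_{q=1}$ bijective $\Rightarrow$ $\Uppsi_Q$ bijective'' is not a valid general deduction for $\Z[q^{\pm1/2}]$-linear maps between free modules. What actually works here is precisely your parenthetical alternative: because $\Uppsi_Q$ sends each Laurent monomial in the $\tY_\al$ to a central power of $q^{1/2}$ times a Laurent monomial in the $\tX_{j,s}$, the exponent matrix lies in $\mathrm{GL}_\ell(\Z)$ independently of $q$, so one inverts it over $\Z$ and the forced $q$-powers come along automatically. I would state the argument that way and drop the ``reduction modulo $(q^{1/2}-1)$'' phrasing.
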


Recall Remark \ref{rmk: characterization} and that
$$\calA_\bbA(\n) \subset \calT_{q,[Q]} \quad \text{ and } \quad \frakK_{q,Q} \simeq \frakK_{q,Q}^T \subset  \calX_{q,Q}.$$

\smallskip

The theorem below for simply-laced $\g$ is proved in \cite{HL15} (see also \cite{FHOO}) and can be proved in our case through an argument similar to that of
\cite[Theorem 8.6]{FHOO}. Here we repeat the proof in order to show how to use the quantum folded $T$-system.

\begin{theorem} \label{thm:main1}
The isomorphism $\Uppsi_Q$ in~\eqref{eq: Uppsi} induces an isomorphism between $\calA_\bbA(\n) $ and $\frakK_{q,Q}^T$, which sends the  $\Z[q^{\pm 1/2}]$-basis
$\tbfB^\up $ of $\calA_{\bbA}(\n)$ to the $\Z[q^{\pm 1/2}]$-basis $\sfL_{q,Q}^T$ of $\frakK_{q,Q}^T$. More precisely, for each $\ue \in \seq{\ek_\al}_{\al \in \Phi_+} \in \Z_{\ge 0}^{\Phi^+}$, we have
$$ \Uppsi_Q\left( \tG_{[Q]}(\ue) \right)  = L_q(m(\ue))_{\le \xi}, $$
where
$m(\ue) \in \calM_+^Q$ is given by
$$
u_{i,p}(m(\ue)) = \ek_\al \quad \text{ when } \phi_Q(i,p)=(\al,0) \text{ for each $(i,p) \in \Gamma_0^Q$.}
$$
\end{theorem}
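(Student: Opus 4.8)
The plan is to build the isomorphism $\calA_\bbA(\n) \isoto \frakK_{q,Q}^T$ by transporting the quantum cluster algebra structure on $\calA_\bbA(\n)$ through the torus isomorphism $\Uppsi_Q$ of Theorem~\ref{thm: tori iso}, and then matching canonical bases. First I would compose $\Uppsi_Q$ with the isomorphism ${\rm CL}: \scrA(\La_{[Q]},\tB_{[Q]}) \isoto \calA_\bbA(\n)$ from Theorem~\ref{thm:UQCR isom to CA}. Since $\scrA(\La_{[Q]},\tB_{[Q]})$ is by definition the $\Z[q^{\pm1/2}]$-subalgebra of $\calT_{q,[Q]}$ generated by the quantum cluster variables obtained by iterated mutation from the initial seed $(\{\tD_{[Q]}(\al,0)\}, \tB_{[Q]})$, it suffices to show that the image $\Uppsi_Q(\scrA(\La_{[Q]},\tB_{[Q]}))$ coincides with $\frakK_{q,Q}^T$. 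For this I would check that $\Uppsi_Q$ carries the initial quantum seed of $\calA_\bbA(\n)$ to a quantum seed whose cluster variables are exactly the truncated KR-polynomials $F_q(m^{(i)}[p,\xi_i])_{\le\xi}$; indeed, under $\phi_Q(i,p)=(\al,0)$, the generator $\tD_{[Q]}(\al,0)$ maps to $\underline{m^{(i)}[p,\xi_i]}$, which is the leading (dominant) monomial of $F_q(m^{(i)}[p,\xi_i])$, and after truncation by $(\cdot)_{\le\xi}$ the full KR-polynomial stays inside $\calX_{q,Q}$ by Theorem~\ref{thm: F_q}~\eqref{eq: range}.

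The key step is then to verify that the exchange relations of the two cluster structures coincide. On the $\calA_\bbA(\n)$ side, mutation along a vertex produces the determinantal identity \eqref{eq: determinatial id} among unipotent quantum minors; on the $\frakK_{q,Q}^T$ side, the corresponding relation is the quantum folded $T$-system \eqref{eq: T-system via C(t)} among KR-polynomials, truncated by $(\cdot)_{\le\xi}$. I would match these two families of relations termwise using the combinatorics of the AR-quiver $\Gamma^Q$: the $Q$-coordinate $\phi_Q$ identifies the exchange matrix $\tB_{[Q]}$ (built from the convex order $\prec_{[Q]}$) with the mutation pattern governing the folded $T$-system, and the compatibility of $(\La_{[Q]},\tB_{[Q]})$ together with Theorem~\ref{thm: N and wt} (which expresses the commutation exponents $\ucalN$ via the root pairing $(\al,\be)$) guarantees that $\Uppsi_Q$ intertwines the $q$-commutation forms $\La_{[Q]}$ and the one on $\calX_{q,Q}$. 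The half-integer powers $\kappa,\zeta$ in \eqref{eq: T-system via C(t)} and $\tka,\tze$ in \eqref{eq: determinatial id} will match once the normalizations (the $q$-power prefactors defining $\tD$ in \eqref{eq: tD} and the self-duality normalization of $F_q(m)$) are lined up; this is a bookkeeping check via weights. By Qin's and Gross--Hacking--Keel--Kontsevich-type uniqueness of quantum cluster algebras with a given compatible pair (or directly, by the fact that mutations are determined by the seed), these matchings force $\Uppsi_Q$ to restrict to an algebra isomorphism $\calA_\bbA(\n) \isoto \frakK_{q,Q}^T$.

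For the basis statement, I would invoke the characterization of $\sfL_{q,Q}^T$ in Remark~\ref{rmk: characterization} and of $\tbfB^\up$ in Theorem~\ref{thm: dual can}. Both are uniquely pinned down by (a) bar-invariance and (b) unitriangularity with $q\Z[q]$-coefficients against a PBW-type / standard basis, with respect to compatible partial orders. The bar involutions match: the anti-involution $\overline{(\cdot)}$ on $\calA_\bbA(\n)$ corresponds under ${\rm CL}$ to that on $\calT_{q,[Q]}$ by \eqref{eq: bar-compatibility}, and $\Uppsi_Q$ is bar-equivariant because \eqref{eq: bar-inv on Tq} matches \eqref{eq:bar-inv on Xq} on the nose (both fix the cluster/KR generators up to the prescribed $q_i$-twist). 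For the triangularity, one shows $\Uppsi_Q$ sends the normalized dual PBW vector $\tF^\up_{[Q]}(\ue)$ to the truncated standard basis element $E_q(m(\ue))_{\le\xi}$: this reduces to the case of a single root via the defining product formula for $\tF^\up_{[Q]}(\ue)$ and the definition of $E_q(m)$ as an ordered product of the $F_q(X_{i,p})$, where the $q$-power in the product matches $q^{-\sum_{k<l}\ek_{\be_k}\ek_{\be_l}(\be_k,\be_l)}$ again by Theorem~\ref{thm: N and wt}; and for single roots, Theorem~\ref{thm: minimal pair dual pbw} together with the quantum folded $T$-system identifies $\tF^\up_{[Q]}(\al)$ with $F_q(m^{(i)}[p^-,p])_{\le\xi}$ by induction on $\het(\al)$ along $\Gamma^Q$, using that $[\ii_\circ]$-minimal pairs correspond to the fundamental exchange relations in $\Gamma^Q$. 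Finally the Nakajima order $\leN$ on $\calM_+^Q$ matches the order $\widetilde{<}_{[Q]}$ on exponents under $\ue\mapsto m(\ue)$ (this is the AR-quiver translation of the convex order), so the defining unitriangular systems of $\tG_{[Q]}(\ue)$ and of $L_q(m(\ue))_{\le\xi}$ are transported to each other, forcing $\Uppsi_Q(\tG_{[Q]}(\ue)) = L_q(m(\ue))_{\le\xi}$.

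The main obstacle I expect is the precise matching of the exchange relations together with their scalar prefactors: the determinantal identity \eqref{eq: determinatial id} and the quantum folded $T$-system \eqref{eq: T-system via C(t)} are, a priori, relations in different quantum tori with different normalization conventions, and one must verify that $\Uppsi_Q$ carries one exactly to the other, including the half-integer $q$-powers and the product $\prod_{j\sim i}(\cdots)^{-\sfc_{j,i}}$. This requires a careful computation of $\wt_Q$ and of the pairing $\ucalN$ via Theorem~\ref{thm: N and wt} and Corollary~\ref{cor: B=0}, checking that the $Q$-coordinate bijection $\phi_Q$ transforms the combinatorics of $[\ii_\circ]$-minimal pairs / mutation directions into the combinatorics of KR-monomials $m^{(i)}[p,\xi_i]$. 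Once this compatibility is in hand, the remaining identifications of bases follow formally from the respective uniqueness characterizations, essentially as in \cite[Theorem 8.6]{FHOO}.
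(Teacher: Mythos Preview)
Your plan is essentially the paper's: match the determinantal identity \eqref{eq: determinatial id} with the folded $T$-system \eqref{eq: T-system via C(t)} through $\Uppsi_Q$ to obtain the algebra isomorphism, then transfer canonical bases via bar-invariance and unitriangularity against the PBW/standard bases. The paper's execution is more direct in two places, and knowing them will save you work.

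First, the ``main obstacle'' you flag (matching the half-integer prefactors $\tka,\tze$ with $\kappa,\zeta$) evaporates. The paper passes to the skew-field of fractions $\bbF(\calT_{q,[Q]})\isoto\bbF(\calX_{q,Q})$, writes $\tD(i;p,u)$ as a bar-invariant quotient using \eqref{eq: tD ips det}, applies the extended $\widetilde\Uppsi_Q$ (which commutes with $\overline{(\cdot)}$), and observes that the $T$-system expresses $F_q(m^{(i)}[p,u-2])_{\le\xi}$ as a bar-invariant element of exactly the same fractional shape with the same inductively-known ingredients. Since bar-invariance pins down the prefactors uniquely, equality follows with no weight bookkeeping at all.

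Second, the cluster-uniqueness framing and the minimal-pair route via Theorem~\ref{thm: minimal pair dual pbw} are unnecessary, and the latter does not fit well: the relation \eqref{eq: BKMc2} is a three-term identity among dual PBW vectors, structurally different from the $T$-system among KR-polynomials, so it is not a direct substitute. The paper instead proves $\Uppsi_Q(\tD(i;p,u)) = F_q(m^{(i)}[p,u-2])_{\le\xi}$ for all triples $(i;p,u)\in\calS_Q$ by induction on a suitable total order; the base case $u=\xi_i+2$ is the definition of $\Uppsi_Q$, and the special case $u=p+2$ immediately yields $\Uppsi_Q(\tF^\up_{[Q]}(\al))=F_q(X_{i,p})_{\le\xi}$, which is what you need to push PBW monomials to standard monomials. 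The identity $\Uppsi_Q(\tF^\up_{[Q]}(\ue))=E_q(m(\ue))_{\le\xi}$ (no $q$-shift) is then again settled by a short bar-invariance argument rather than a weight computation, and the basis statement follows exactly as you outline.
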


\begin{proof}
Let
$$
\calS_Q \seteq \{ (i; p,u) \in I \times \Z^2 \ | \  (i,p), (i,u) \in \hDynkin_0, \ \xi_{i^*}- \sfh < p \le u \le \xi_i+2 \}
$$
and choose a total ordering $\prec$ on $\calS_Q$ satisfying
$$
(i;p,u)  \prec (i';p',u') \quad \text{ if either {\rm (i)} }  u>u', \text{ or  {\rm (ii)} } u=u' \text{ and } p>p'.
$$

For each $(i;p,u) \in \calS_Q$, we set
\begin{align} \label{eq:quantum minors to Q}
\tD(i;p,u) \seteq \bc
\tD_{[Q]}(\al,\be) & \text{ if } u \le \xi_i, \\
\tD_{[Q]}(\al,0) & \text{ if } u = \xi_i+2, \\
1& \text{ if } p=u,
\ec
\end{align}
where $\phi_Q(i,p)=(\al,0)$ and  $\phi_Q(i,u)=(\be,0)$.
Note that ~\eqref{eq: determinatial id} can be expressed as
\begin{align}\label{eq: tD ips det}
\tD(i;p,u)\tD(i;p^+,u^+)= q^{\tka} \tD(i;p,u^+)\tD(i;p^+,u) + q^{\tze} \prod_{j \sim i} \tD(j; p^+(j),u^+(j))^{-\sfc_{j,i}},
\end{align}
where $t^\pm = t \pm 2$ and $t^+(j) = \min\{ t' \in \Z \ | \ (j,t') \in \hDynkin_0, \; t <t' \}$ for $t \in \Z$.

To show $\Uppsi_Q(\calA_\bbA(\n)) \simeq \frakK_{q,Q}^T$, it is enough to show that
\begin{align} \label{eq: D to F}
\Uppsi_Q(\tD(i;p,u)) = F_q(m^{(i)}[p,u-2])_{\le \xi}
\end{align}
for each $(i;p,u) \in \calS_Q$, by Proposition~\ref{prop: PBW}. Let us apply an induction with respect to $\prec$ on $\calS_Q$. When $u = \xi_i+2$, we have
\begin{align*}
\Uppsi_Q(\tD(i;p,u)) = \Uppsi_Q(\tD_{[Q]}(\al,0)) & =  \Uppsi_Q(\tY_\al)
 = \underline{m^{(i)}[p,\xi_i]}  = F_q(m^{(i)}[p,\xi_i])_{\le \xi}.
\end{align*}
Since the case $p=u$ is trivial, we consider the case  when $p<u \le \xi_i$.

Denoting by $\bbF(\calT_{q,[Q]})$ (resp. $\bbF(\calX_{q,Q})$) the skew-field of fractions of $\calT_{q,[Q]}$ (resp. $\calX_{q,Q}$), one can extend the $\Z$-algebra anti-involution $\overline{( \cdot )}$ of $\calT_{q,[Q]}$ (resp. $\calX_{q,Q}$) to the
$\Q$-algebra anti-involution on $\bbF(\calT_{q,[Q]})$ (resp. $\bbF(\calX_{q,Q})$). Then we have $\Q$-algebra isomorphism $\widetilde{\Uppsi}_Q:  \bbF(\calT_{q,[Q]}) \to \bbF(\calX_{q,Q})$ satisfying
$$
\widetilde{\Uppsi}_Q \circ \overline{( \cdot )} = \overline{( \cdot )} \circ \widetilde{\Uppsi}_Q.
$$
From~\eqref{eq: tD ips det}, $\tD(i;p,u)$ is an element
in $\bbF(\calT_{q,[Q]})$ that is $\overline{( \cdot )}$-invariant and of the form
\begin{align} \label{eq: main step 1}
\dfrac{  q^{\tka} \tD(i;p,u^+)\tD(i;p^+,u) + q^{\tze} \prod_{j \sim i} \tD(j; p^+(j),u^+(j))^{-\sfc_{j,i}}}{ \tD(i;p^+,u^+) }.
\end{align}

By applying the induction hypothesis, we see that
\begin{align*}
& \widetilde{\Uppsi}_Q(\tD(i;p,u)) \\
& \ \ = \dfrac{\left(  q^{\tka} F_q(m^{(i)}[p,u])_{\le \xi}* F_q(m^{(i)}[p+2,u-2])_{\le \xi} + q^{\tze} \prod_{j \sim i} F_q(m^{(j)}[p+1,u+1])_{\le \xi}^{-\sfc_{j,i}}\right)}{F_q(m^{(i)}[p+2,u])_{\le \xi}}
\end{align*}
is a $\overline{( \cdot )}$-invariant element. On the other hand, Theorem~\ref{thm: quantum T-system via usf} tells us that
$F_q(m^{(i)}[p,u-2])_{\le \xi}$ is the $\overline{( \cdot )}$-invariant element of the form
\begin{align*}
\dfrac{\left(  q^{\kappa} F_q(m^{(i)}[p,u])_{\le \xi}* F_q(m^{(i)}[p+2,u-2])_{\le \xi} + q^{\zeta} \prod_{j \sim i} F_q(m^{(j)}[p+1,u+1])_{\le \xi}^{-\sfc_{j,i}}\right)}{F_q(m^{(i)}[p+2,u])_{\le \xi}},
\end{align*}
obtained from applying $( \cdot)_{\le \xi}$ on~\eqref{eq: T-system via C(t)}.
Since the $\overline{( \cdot )}$-invariance determines the values of $\kappa$ and $\xi$ uniquely,
the first assertion follows.

Now let us consider the second assertion. Since $\Uppsi_Q$ commutes with $\overline{(\cdot)}$, the element $\Uppsi(\tG_{[Q]}(\ue))$ is $\overline{(\cdot)}$-invariant. By Theorem~\ref{thm: L_q} and Theorem~\ref{thm: dual can} (see also Remark~\ref{rmk: characterization}),
it is enough to show that
$$  \Uppsi_Q( \tF^\up_{[Q]}(\ue) ) = E_q(m(\ue))_{\le \xi}$$
for every $\cc \in (\Z_{\ge 0})^\ell$. Note that we have already shown that
$$
 \Uppsi_Q( \tF^\up_{[Q]}(\al) ) = \Uppsi_Q(\tD_{[Q]}(\al,\al^-)) = F_q(X_{i,p})_{\le \xi}
$$
as the special case of the first assertion, where $\phi_Q(i,p)=(\al,0)$. Since $\Uppsi_Q$ is a $\Z[q^{\pm1/2}]$-algebra homomorphism, we have
$$
\Uppsi_Q( \tF^\up_{[Q]}(\ue) ) =q^{a(\ue)} E_q(m(\ue))_{\le \xi} \quad \text{ for some } a(\ue) \in \tfrac{1}{2}\Z.
$$
To complete a proof of the second assertion, we need to show $a(\ue)=0$. Since $L_q(m(\ue))$ and $\tG_{[Q]}(\ue)$ are $\overline{(\cdot)}$-invariant, the equations~\eqref{it: Lq(m)} in~\eqref{eq: L_q} and~\eqref{it: tGc} in~\eqref{eq: dual canonical}
can be rephrased in terms of $E_q(m(\ue))$ and $\tF^\up_{[Q]}(\ue)$ as follows:
\begin{align*}
\overline{E_q(m(\ue))}   \in E_q(m(\ue)) + \sum_{m' \lN m(\ue)} \Z[q^{\pm 1}] E_q(m')
 \ \ \text{and} \ \
\overline{\tF^\up_{[Q]}(\ue)}  \in \tF^\up_{[Q]}(\ue) + \sum_{\ue' \widetilde{<}_{[Q]} \ue} \Z[q^{\pm 1}] \tF^\up_{[Q]}(\ue').
\end{align*}
Therefore, we have
\begin{align*}
\overline{\Uppsi_Q(  \tF^\up_{[Q]}(\ue) )} & =  \Uppsi_Q\left(\,\overline{ \tF^\up_{[Q]}(\ue)}   \,\right) = q^{a(\ue)} E_q(m(\ue))_{\le \xi} + \sum_{m' \ne m(\ue)} \Z[q^{\pm 1/2}] E_q(m')_{\le \xi}, \\
\overline{\Uppsi_Q(  \tF^\up_{[Q]}(\ue)    )} & =   \overline{q^{a(\cc)} E_q(m(\ue))_{\le \xi} }     = q^{-a(\ue)} E_q(m(\ue))_{\le \xi} + \sum_{m' \ne  m(\ue)} \Z[q^{\pm 1/2}] E_q(m')_{\le \xi}.
\end{align*}
Thus $a(\ue)=0$ and the second assertion also follows.
\end{proof}

\begin{corollary} \label{cor: HLO iso}
$($HLO isomorphism$)$ For each Dynkin quiver $Q=(\Dynkin,\xi)$, there exists a $\Z$-algebra isomorphism
$$  \Psi_Q: \calA_{\bbA}(\n)  \longrightarrow \frakK_{q,Q} $$
such that $( \cdot)_{\le \xi} \circ \Psi_Q = \Uppsi_Q$.  Furthermore, this isomorphism sends $\tbfB^\up$ to $\sfL_{q,Q}$.
\end{corollary}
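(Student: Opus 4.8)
The plan is to derive the corollary formally from Theorem~\ref{thm:main1} by ``inverting the truncation homomorphism''; essentially no new content is needed, so the proof will be a short diagram chase.

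First I would record that the $\Z[q^{\pm 1/2}]$-linear map $(\cdot)_{\le\xi}$ restricts to an \emph{injective} $\Z[q^{\pm 1/2}]$-algebra homomorphism $\frakK_{q,Q}\hookrightarrow\calX_{q,Q}$ (this is the corollary preceding Definition~\ref{def: Q quantum-torus}, which uses Theorem~\ref{thm: F_q}~\eqref{eq: range}), whose image is $\frakK_{q,Q}^T=(\frakK_{q,Q})_{\le\xi}$ by definition. Hence $(\cdot)_{\le\xi}$ is in fact a $\Z[q^{\pm 1/2}]$-algebra \emph{isomorphism} $\frakK_{q,Q}\isoto\frakK_{q,Q}^T$; moreover, since it sends each $L_q(m)$ to $L_q(m)_{\le\xi}$, it carries the basis $\sfL_{q,Q}=\{L_q(m)\mid m\in\calM_+^Q\}$ bijectively onto $\sfL_{q,Q}^T=\{L_q(m)_{\le\xi}\mid m\in\calM_+^Q\}$ (cf.~Remark~\ref{rmk: characterization}).

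Next, Theorem~\ref{thm:main1} states that $\Uppsi_Q$ restricts to a $\Z[q^{\pm 1/2}]$-algebra isomorphism $\calA_{\bbA}(\n)\isoto\frakK_{q,Q}^T$ with $\Uppsi_Q(\tG_{[Q]}(\ue))=L_q(m(\ue))_{\le\xi}$ for all $\ue$, so that $\Uppsi_Q(\tbfB^\up)=\sfL_{q,Q}^T$. I would then define
$$
\Psi_Q \seteq \bigl((\cdot)_{\le\xi}|_{\frakK_{q,Q}}\bigr)^{-1}\circ\bigl(\Uppsi_Q|_{\calA_{\bbA}(\n)}\bigr)\colon \calA_{\bbA}(\n)\longrightarrow\frakK_{q,Q}.
$$
As a composite of two ring isomorphisms it is a ring isomorphism (in particular a $\Z$-algebra isomorphism, and in fact $\Z[q^{\pm 1/2}]$-linear), and $(\cdot)_{\le\xi}\circ\Psi_Q=\Uppsi_Q$ holds by construction. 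Chasing the two bijections of bases from the previous paragraphs through the composite yields $\Psi_Q(\tbfB^\up)=\sfL_{q,Q}$.

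Since every substantive assertion here has already been established inside Theorem~\ref{thm:main1} (via the quantum folded $T$-system and the $\overline{(\cdot)}$-invariance characterizations of $\sfL_q$ and $\tbfB^\up$), I do not expect a real obstacle: the argument is purely formal. The only points that genuinely deserve attention are that $\Uppsi_Q(\calA_{\bbA}(\n))$ equals $\frakK_{q,Q}^T$ \emph{exactly} (part of the statement of Theorem~\ref{thm:main1}) and that $m(\ue)$ is correctly identified with the dominant monomial in $\calM_+^Q$ attached to $\ue$ through the $Q$-coordinate $\phi_Q$. Finally, because both $(\cdot)_{\le\xi}$ and $\Uppsi_Q$ intertwine the bar involutions $\overline{(\cdot)}$, so does $\Psi_Q$; this is what is needed downstream, and it also explains why the weaker phrasing ``$\Z$-algebra isomorphism'' in the statement is harmless.
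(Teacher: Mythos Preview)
Your proposal is correct and matches the paper's approach exactly: the corollary is stated without proof as an immediate consequence of Theorem~\ref{thm:main1}, obtained by composing $\Uppsi_Q|_{\calA_\bbA(\n)}$ with the inverse of the truncation isomorphism $(\cdot)_{\le\xi}\colon\frakK_{q,Q}\isoto\frakK_{q,Q}^T$. Your extra remarks on bar-compatibility and the identification of $m(\ue)$ are accurate and harmless elaborations.
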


Now we can give an affirmative answer to Conjecture~\ref{conj: JLO1 sencond} when $m \in \calM_+^Q$.
\begin{corollary} \label{cor: partial proof}
For a KR-monomial $m^{(i)}_{k,p}$ with $m^{(i)}_{k,p} \in \calM_+^Q$ for some Dynkin quiver $Q$, we have
$$  F_q(m^{(i)}_{k,p}) = L_q(m^{(i)}_{k,p}).$$
\end{corollary}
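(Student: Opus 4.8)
The plan is to extract the statement from the HLO isomorphism $\Psi_Q$ of Theorem~\ref{thm:main1}/Corollary~\ref{cor: HLO iso}, together with the fact—built into the proof of Theorem~\ref{thm:main1}—that $\Psi_Q$ matches normalized quantum unipotent minors with (truncated) KR-polynomials. Concretely, given a KR-monomial $m=m^{(i)}_{k,p}\in\calM_+^Q$ for a Dynkin quiver $Q=(\Dynkin,\xi)$, I would first set $u:=p+2k$, so that $m=m^{(i)}[p,u-2]$. Since $m\in\calM_+^Q$, every factor $X_{i,p'}$ of $m$ satisfies $(i,p')\in\Gamma_0^Q$, so Proposition~\ref{prop: Gamma Q 0} gives $\xi_{i^*}-\sfh<p$ and $p+2k-2\le\xi_i$; consequently $(i;p,u)$ belongs to the set $\calS_Q$ used in the proof of Theorem~\ref{thm:main1}. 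Thus $m$ is exactly one of the monomials occurring in equation~\eqref{eq: D to F}.

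Next I would invoke \eqref{eq: D to F}: $\Uppsi_Q(\tD(i;p,u))=F_q(m)_{\le\xi}$. Because $\um$ involves only vertices of $\Gamma_0^Q\subset\lxi\hDynkin_0$, it survives the truncation $(\cdot)_{\le\xi}$, so $F_q(m)_{\le\xi}\neq 0$; since $\Uppsi_Q$ is an isomorphism of quantum tori, $\tD(i;p,u)\neq 0$. By \eqref{eq:quantum minors to Q} the element $\tD(i;p,u)$ is (up to the trivial case $p=u$) a nonzero normalized quantum unipotent minor $\tD_{[Q]}(\al,\be)$ or $\tD_{[Q]}(\al,0)$, hence lies in the dual-canonical basis $\tbfB^\up$. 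By Corollary~\ref{cor: HLO iso} we have $(\cdot)_{\le\xi}\circ\Psi_Q=\Uppsi_Q$, so $(\Psi_Q(\tD(i;p,u)))_{\le\xi}=F_q(m)_{\le\xi}$. As both $\Psi_Q(\tD(i;p,u))$ and $F_q(m)$ lie in $\frakK_{q,Q}$ (the latter by Proposition~\ref{prop: convex subrings}, since $m\in\calM_+^Q$) and $(\cdot)_{\le\xi}$ is injective on $\frakK_{q,Q}$, I conclude $\Psi_Q(\tD(i;p,u))=F_q(m)$. Since $\Psi_Q$ carries $\tbfB^\up$ onto $\sfL_{q,Q}$ (Corollary~\ref{cor: HLO iso}), this shows $F_q(m)\in\sfL_{q,Q}$, i.e. $F_q(m)=L_q(m')$ for some $m'\in\calM_+^Q$.

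Finally I would pin down $m'=m$ by a leading-monomial comparison: the dominant monomial $\um$ occurs in $F_q(m)=L_q(m')$, so $\um\leN\underline{m'}$ by the unitriangularity in Theorem~\ref{thm: L_q}; while $\underline{m'}$ occurs in $L_q(m')=F_q(m)$, so $\underline{m'}\leN\um$ by Theorem~\ref{thm: F_q}; antisymmetry of the Nakajima order then forces $m'=m$, whence $F_q(m)=L_q(m)$. I do not anticipate a genuine conceptual obstacle: the hard analytic input—the quantum folded $T$-system, the determinantal identities, and the bar-invariance argument identifying $\Uppsi_Q(\tD(i;p,u))$ with $F_q(m)_{\le\xi}$—is already carried out in the proof of Theorem~\ref{thm:main1}. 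The only points requiring care are the truncation bookkeeping (that $\um$ is not destroyed by $(\cdot)_{\le\xi}$ and that $(\cdot)_{\le\xi}$ is injective on $\frakK_{q,Q}$) and the elementary check that a KR-monomial lying in $\calM_+^Q$ has precisely the shape covered by \eqref{eq: D to F}.
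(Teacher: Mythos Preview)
Your proposal is correct and follows essentially the same approach as the paper's proof: both invoke \eqref{eq: D to F} to recognize $F_q(m^{(i)}_{k,p})$ as the image of a normalized quantum minor (hence of an element of $\tbfB^\up$) under $\Psi_Q$, conclude via Corollary~\ref{cor: HLO iso} that $F_q(m^{(i)}_{k,p})=L_q(m')$ for some $m'$, and then identify $m'=m^{(i)}_{k,p}$ by comparing maximal monomials with respect to $\leN$ using Theorems~\ref{thm: F_q} and~\ref{thm: L_q}. Your version simply spells out more carefully the truncation bookkeeping and the verification that $(i;p,u)\in\calS_Q$, which the paper leaves implicit.
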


\begin{proof}
By~\eqref{eq: D to F}, $ F_q(m^{(i)}_{k,p}) $ corresponds to a normalized unipotent quantum minor, so it is contained in $\tbfB^\up$. Thus it follows from Corollary~\ref{cor: HLO iso} that $ F_q(m^{(i)}_{k,p}) = L_q(m)$ for some $m \in \calM_+$.
By Theorem \ref{thm: F_q} and Theorem \ref{thm: L_q}, $L_q(m)$ should have $\um^{(i)}_{k,p}$ as its maximal $\calX_q$-monomial in $L_q(m)$ with respect to $\leN$. Hence we conclude that $m=m^{(i)}_{k,p}$, which implies our assertion.
\end{proof}

The following corollary can be obtained from combining HLO isomorphism in Corollary~\ref{cor: HLO iso} and the main result of Qin \cite[Theorem 1.2.2]{Qin20} (see also~\cite{McNa21}), which proves that every cluster monomial of $\calA_\bbA(\n)$ is contained in $\tbfB^\up$.
Notice that this corollary also recovers Corollary~\ref{cor: partial proof}.

\begin{corollary} \label{thm: cluster monomials are in canonical basis}
Every cluster monomial of $\frakK_{q,Q}$ is contained in the canonical basis $\sfL_{q,Q}$.
\end{corollary}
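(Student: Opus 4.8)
The plan is to deduce this from Qin's classification of quantum cluster monomials together with the HLO isomorphism, with essentially no new computation. First I would fix a reduced sequence $\ii_\circ \in [Q]$ and recall, via Theorem~\ref{thm:UQCR isom to CA}, the resulting identification of $\calA_\bbA(\n)$ with the quantum cluster algebra $\scrA(\La_{[Q]},\tB_{[Q]})$, whose initial quantum cluster is $\{\tD_{[Q]}(\al,0) \mid \al \in \Phi_+\}$. Composing ${\rm CL}$ with the HLO isomorphism $\Psi_Q$ of Corollary~\ref{cor: HLO iso} then gives an algebra isomorphism $\Psi_Q \circ {\rm CL} : \scrA(\La_{[Q]},\tB_{[Q]}) \isoto \frakK_{q,Q}$, and by construction the cluster monomials of $\frakK_{q,Q}$ are exactly the $\Psi_Q$-images of the cluster monomials of $\calA_\bbA(\n)$.

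Next I would invoke the main theorem of Qin, \cite[Theorem~1.2.2]{Qin20} (see also \cite{McNa21} in the symmetric case), which asserts that every cluster monomial of $\calA_\bbA(\n)$ lies in the normalized dual-canonical/upper-global basis $\tbfB^\up$. Since the quantum cluster algebra $\calA_\bbA(\n)$ is covered by Qin's framework (it is of skew-symmetrizable type and injective-reachable), his result applies in exactly the generality we need; moreover the set of cluster monomials of $\calA_\bbA(\n)$ is intrinsic to the cluster structure and does not depend on the chosen seed, so it is harmless to work with the seed attached to $[Q]$. Putting the two ingredients together: given a cluster monomial $z$ of $\frakK_{q,Q}$, the element $\Psi_Q^{-1}(z)$ is a cluster monomial of $\calA_\bbA(\n)$, hence $\Psi_Q^{-1}(z) \in \tbfB^\up$ by Qin's theorem; applying $\Psi_Q$ and using $\Psi_Q(\tbfB^\up) = \sfL_{q,Q}$ from Corollary~\ref{cor: HLO iso} gives $z \in \sfL_{q,Q}$. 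In particular this re-proves Corollary~\ref{cor: partial proof}, because a KR-monomial belonging to $\calM_+^Q$ corresponds under $\Psi_Q$ to a normalized unipotent quantum minor, which is one of the (initial) cluster variables.

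The only genuinely delicate point — and the step I would spend the most care on — is the first paragraph: one must verify that $\Psi_Q$ (equivalently $\Psi_Q \circ {\rm CL}$) really is an isomorphism of quantum cluster algebras, i.e.\ that it sends an initial seed onto an initial seed, so that cluster monomials are carried to cluster monomials. This is ensured by the way the quantum cluster structure on $\frakK_{q,Q}$ is transported along the HLO isomorphism, and it is consistent with the proof of Theorem~\ref{thm:main1}, where the exchange relation \eqref{eq: tD ips det} for unipotent quantum minors is matched, under $\Psi_Q$, with the quantum folded $T$-system \eqref{eq: T-system via C(t)}, together with the compatibility of $\Psi_Q$ with the bar-involutions \eqref{eq: bar-compatibility}; beyond pinning down this initial data no mutation-by-mutation verification is needed.
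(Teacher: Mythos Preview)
Your proposal is correct and follows essentially the same approach as the paper: combine the HLO isomorphism $\Psi_Q$ of Corollary~\ref{cor: HLO iso}, which sends $\tbfB^\up$ to $\sfL_{q,Q}$, with Qin's theorem \cite[Theorem~1.2.2]{Qin20} that every cluster monomial of $\calA_\bbA(\n)$ lies in $\tbfB^\up$. Your additional care about checking that $\Psi_Q$ intertwines the two quantum cluster structures (by matching initial seeds and exchange relations) is a reasonable point to make explicit, though the paper leaves it implicit.
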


\begin{example} \label{ex:for main1 in C3}
   Let us illustrate Theorem \ref{thm:main1} in type $C_3$.
   We continue Example \ref{ex: C3 AR}, where the Dynkin quiver is $$ Q=  \xymatrix@R=0.5ex@C=6ex{    *{\circled{2}}<3pt> \ar@{->}[r]_<{1 \ \ } & *{\circled{2}}<3pt> \ar@{->}[r]_<{2 \ \ } &*{\circled{4}}<3pt>
\ar@{-}[l]^<{\ \ 3   }  }, \qquad \xi_1=3,\,\, \xi_2=2,\,\, \xi_3=1. $$
    The Coxeter element $\tau_Q$ adapted to $Q$ is given by $\tau_Q=s_1s_2s_3$, so the corresponding reduced expression $\ii_\circ$ of $w_\circ$ is $\ii_\circ=(1,2,3,1,2,3,1,2,3)$.
    The positive roots $\beta_k = \beta^{\ii_\circ}_k$ with respect to $\ii_\circ$ are given by
    \begin{align} \label{eq:beta k in C3}
        \raisebox{3.2em}{ \scalebox{0.7}{\xymatrix@!C=2ex@R=2ex{
        (i\setminus p) & -3&  -2 & -1 & 0 & 1 & 2  & 3\\
        1&&& \be_7 \ar@{->}[dr]&& \be_4 \ar@{->}[dr] && \be_1   \\
        2&& \be_8 \ar@{=>}[dr]\ar@{->}[ur] && \be_5 \ar@{=>}[dr] \ar@{->}[ur]  && \be_2 \ar@{->}[ur] \\
        3& \be_9  \ar@{->}[ur] && \be_6 \ar@{->}[ur]  && \be_3 \ar@{->}[ur]  }}}
        \quad = \quad
        \raisebox{3.2em}{ \scalebox{0.7}{\xymatrix@!C=2ex@R=2ex{
        (i\setminus p) & -3&  -2 & -1 & 0 & 1 & 2  & 3\\
        1&&& \al_1+\al_2+\al_3\ar@{->}[dr]&& \al_2 \ar@{->}[dr] && \al_1   \\
        2&& \al_2+\al_3 \ar@{=>}[dr]\ar@{->}[ur] && \al_1+2\al_2+\al_3 \ar@{=>}[dr] \ar@{->}[ur]  && \al_1+\al_2 \ar@{->}[ur] \\
        3& \al_3  \ar@{->}[ur] && 2\al_2+\al_3 \ar@{->}[ur]  && 2\al_1+2\al_2+\al_3 \ar@{->}[ur]  }}}
    \end{align}
    Under \eqref{eq: Uppsi}, each $\tD_{[\ii_0]}(\al,0)$ corresponds to $\underline{m^{(i)}[p,\xi_i]}$ with $\phi_Q(i,p)=(\al,0)$ as follows:
    \begin{align} \label{eq:nUQM to KRmono in C3}
        \raisebox{3.2em}{ \scalebox{0.7}{\xymatrix@!C=2ex@R=2ex{
        (i\setminus p) & -3&  -2 & -1 & 0 & 1 & 2  & 3\\
        1&&& \tD(\be_7,0) \ar@{->}[dr]&& \tD(\be_4,0) \ar@{->}[dr] && \tD(\be_1,0)   \\
        2&& \tD(\be_8,0) \ar@{=>}[dr]\ar@{->}[ur] && \tD(\be_5,0) \ar@{=>}[dr] \ar@{->}[ur]  && \tD(\be_2,0) \ar@{->}[ur] \\
        3& \tD(\be_9,0)  \ar@{->}[ur] && \tD(\be_6,0) \ar@{->}[ur]  && \tD(\be_3,0) \ar@{->}[ur]  }}}
        \quad \overset{\Uppsi_Q}{\longmapsto} \quad
        \raisebox{3.2em}{ \scalebox{0.7}{\xymatrix@!C=2ex@R=2ex{
        (i\setminus p) & -3&  -2 & -1 & 0 & 1 & 2  & 3\\
        1&&& \underline{m^{(1)}[-1,3]} \ar@{->}[dr]&& \underline{m^{(1)}[1,3]} \ar@{->}[dr] && \underline{m^{(1)}[3,3]}   \\
        2&& \underline{m^{(2)}[-2,2]} \ar@{=>}[dr]\ar@{->}[ur] && \underline{m^{(2)}[0,2]} \ar@{=>}[dr] \ar@{->}[ur]  && \underline{m^{(2)}[2,2]} \ar@{->}[ur] \\
        3& \underline{m^{(3)}[-3,1]} \ar@{->}[ur] && \underline{m^{(3)}[-1,1]} \ar@{->}[ur]  && \underline{m^{(3)}[1,1]} \ar@{->}[ur]  }}}
    \end{align}
    Here $\tD(\be_i,0)$ $(1 \le i \le 3)$ are the normalized dual PBW-vectors with weights $-\be_i$, while
    $\tD(\be_i,0)$ $(4 \le i \le 9)$ are not.

On the other hand, for $\be_k \in \Phi_+$ with $\res^{[\ii_\circ]}(\be_k)=  i$, we have
    \smallskip
    \begin{enumerate}[(1)]
        \item $i=1:$ $
            \la_{\be_1}^{\ii_\circ} = \varpi_1 - \al_1, \ \
            \la_{\be_4}^{\ii_\circ} = \varpi_1 - \al_1 - \al_2, \ \
            \la_{\be_7}^{\ii_\circ} = \varpi_1 - 2\al_1 - 2\al_2 - \al_3,
        $

        \item $i=2:$ $
            \la_{\be_2}^{\ii_\circ} = \varpi_2 - \al_1 - \al_2, \ \
            \la_{\be_5}^{\ii_\circ} = \varpi_2 - 2\al_1 - 3\al_2 - \al_3, \ \
            \la_{\be_8}^{\ii_\circ} = \varpi_2 - 2\al_1 - 4\al_2 - 2\al_3,
        $

        \item $i=3: $
        $
            \la_{\be_3}^{\ii_\circ} = \varpi_3 - 2\al_1 - 2\al_2 - \al_3, \ \
            \la_{\be_6}^{\ii_\circ} = \varpi_3 - 2\al_1 - 4\al_2 - 2\al_3, \ \
            \la_{\be_9}^{\ii_\circ} = \varpi_3 - 2\al_1 - 4\al_2 - 3\al_3,
        $
    \end{enumerate}
    \smallskip
    \noindent
    and each $\tD(i;p,p+2)$ is the normalized dual PBW-vector (see the convention in \eqref{eq:quantum minors to Q}):
    \begin{align*}
        \raisebox{3.2em}{ \scalebox{0.7}{\xymatrix@!C=2ex@R=2ex{
        (i\setminus p) & -3&  -2 & -1 & 0 & 1 & 2  & 3\\
        1\,\,\,&&& \tD(1;-1,1) \ar@{->}[dr]&& \tD(1;1,3) \ar@{->}[dr] && \tD(1;3,5)   \\
        2\,\,\,&& \tD(2;-2,0) \ar@{=>}[dr]\ar@{->}[ur] && \tD(2;0,2) \ar@{=>}[dr] \ar@{->}[ur]  && \tD(2;2,4) \ar@{->}[ur] \\
        3\,\,\,& \tD(3;-3,-1)  \ar@{->}[ur] && \tD(3;-1,1) \ar@{->}[ur]  && \tD(3;1,3) \ar@{->}[ur]  }}}
        \quad = \quad
        \raisebox{3.2em}{ \scalebox{0.7}{\xymatrix@!C=2ex@R=2ex{
        (i\setminus p) & -3&  -2 & -1 & 0 & 1 & 2  & 3\\
        1&&& \tF_{[\ii_\circ]}^{\rm up}(\be_7) \ar@{->}[dr]&&  \tF_{[\ii_\circ]}^{\rm up}(\be_4) \ar@{->}[dr] && \tF_{[\ii_\circ]}^{\rm up}(\be_1)   \\
        2&& \tF_{[\ii_\circ]}^{\rm up}(\be_8) \ar@{=>}[dr]\ar@{->}[ur] && \tF_{[\ii_\circ]}^{\rm up}(\be_5) \ar@{=>}[dr] \ar@{->}[ur]  && \tF_{[\ii_\circ]}^{\rm up}(\be_2) \ar@{->}[ur] \\
        3& \tF_{[\ii_\circ]}^{\rm up}(\be_9)  \ar@{->}[ur] && \tF_{[\ii_\circ]}^{\rm up}(\be_6) \ar@{->}[ur]  && \tF_{[\ii_\circ]}^{\rm up}(\be_3) \ar@{->}[ur]  }}}
    \end{align*}
    In fact, \eqref{eq: D to F} asserts that each generator $F_q(X_{i,p})_{\le \xi}$ of $\frakK_{q,Q}^T$ for $(i,p) \in \Gamma_0^Q$ corresponds to the normalized dual PBW-vector $\tF_{[\ii_\circ]}^{\rm up}(\al)$ with $\phi_Q(i,p)=(\al,0)$ for $\al \in \Phi_+$ under \eqref{eq:nUQM to KRmono in C3}, which implies that $\Uppsi_Q(\calA_\bbA(\n)) \simeq \frakK_{q,Q}^T$ because $\calA_\bbA(\n)$ is generated by the (normalized) dual PBW-vectors.
    \medskip

    Now we explain the computations behind the proof of \eqref{eq: D to F}.
    First, if $u=\xi_i+2$, then $\tD(i;p,u) = \tD_{[Q]}(\al,0)$ for $\al \in \Phi_+$ satisfying $\phi_Q(i,p)=(\al,0)$, so $\Uppsi_Q(\tD(i;p,u)) = F_q(m^{(i)}[p,\xi_i])_{\le \xi}$ by \eqref{eq:nUQM to KRmono in C3}.
    Second, let us determine $\Uppsi_Q(\tD(1;1,3))$, $\Uppsi_Q(\tD(2;0,2))$, and $\Uppsi_Q(\tD(3;-1,1))$.
    By \eqref{eq: tD ips det}, we have
    \begin{align} \label{eq:det rel of D113 and D135 in C3}
        \tD(1;1,3) \tD(1;3,5)
        =
        q^{\tka_1} \tD(1;1,5) + q^{\tze_1} \tD(2;2,4),
    \end{align}
    where $\tD(1;1,5) \mapsto \underline{m^{(1)}[1,3]}$, $\tD(2;2,4) \mapsto \underline{m^{(2)}[2,2]}$, and $\tD(1;3,5) \mapsto \underline{m^{(1)}[3,3]}$ under \eqref{eq:nUQM to KRmono in C3}.
     We remark that the half-integers $\tka_1$ and $\tze_1$ could be computed explicitly by \cite[Proposition 5.4]{GLS13}.
    This yields
    \begin{equation} \label{eq:image of tD113 in C3}
        \Uppsi_Q(\tD(1;1,3)) = \frac{q^{\tka_1}\underline{X_{1,1}X_{1,3}} + q^{\tze_1} \underline{X_{2,2}} }{\underline{X_{1,3}}} = q^{\frac{1}{2}}\tX_{1,1} + q^{\frac{1}{2}} \tX_{2,2}*\tX_{1,3}^{-1} =
        F_q(X_{1,1})_{\le \xi}.
    \end{equation}
    where $\tka_1 = -\frac{1}{2}$ and $\tze_1 = \frac{1}{2}$ (cf.~\cite[Appendix A]{JLO1} for the formula of $F_q(X_{1,1})$).
    Note that $\tka_1$ and $\tze_1$ are not only determined by \eqref{eq:det rel of D113 and D135 in C3}, but also by the bar-invariance of $\tD(1;1,3)$ and $F_q(X_{1,1})_{\le \xi}$.
    A similar computation gives
    \begin{align}
        \tD(2;0,2)\tD(2;2,4) &= q^{\tka_2} \tD(2;0,4) + q^{\tze_2} \tD(1;1,3) \tD(3;1,3), \label{eq:det rel of D202 and D224 in C3} \\
        \tD(3;-1,1)\tD(3;1,3) &= q^{\tka_3} \tD(3;-1,3) + q^{\tze_3} \tD(2;0,2)^2, \label{eq:det rel of D3m11 and D313 in C3}
    \end{align}
    where $\tD(2;2,4) \mapsto \underline{m^{(2)}[2,2]}$, $\tD(2;0,4) \mapsto \underline{m^{(2)}[0,2]}$, $\tD(3;1,3) \mapsto \underline{m^{(3)}[1,1]}$, $\tD(3;-1,3) \mapsto \underline{m^{(3)}[-1,1]}$, $\tka_2 = \frac{1}{2},\, \tze_2=\frac{3}{2}$, and $\tka_3 = 0,\, \tze_3 = 2$.
    Note that \eqref{eq:det rel of D113 and D135 in C3}, \eqref{eq:det rel of D202 and D224 in C3}, and \eqref{eq:det rel of D3m11 and D313 in C3} correspond to the quantum folded $T$-system \eqref{eq: T-system via C(t)} by $\Uppsi_Q$.
    By \eqref{eq:det rel of D113 and D135 in C3}, \eqref{eq:image of tD113 in C3}, \eqref{eq:det rel of D202 and D224 in C3}, and \eqref{eq:det rel of D3m11 and D313 in C3},  we obtain
    \begin{align}
        & \Uppsi_Q(\tD(2;0,2)) = q^{\frac{1}{2}} \tX_{2,0} + q^{\frac{5}{2}} \tX_{1,1} * \tX_{3,1} * \tX_{2,2}^{-1} + q^{\frac{3}{2}} \tX_{3,1}*\tX_{1,3}^{-1} = F_q(X_{2,0})_{\le \xi}, \label{eq:image of tD202 in C3} \\
        \begin{split} \label{eq:image of tD3m11 in C3}
        & \Uppsi_Q(\tD(3;-1,1)) = q\tX_{3,-1} + q^2 \tX_{2,0}^2 * \tX_{3,1}^{-1} + (1+q^2) \tX_{2,1}*\tX_{1,2}*\tX_{2,3}^{-1} + (q^{-1} + q) \tX_{2,0}*\tX_{1,3}^{-1} \\
        & \qquad \qquad \qquad \quad + q^5 \tX_{1,1}^2 * \tX_{3,1} * \tX_{2,2}^{-1} + (q+q^3) \tX_{1,1}*\tX_{3,1}*\tX_{2,2}^{-1}*\tX_{1,3}^{-1} + q^2 \tX_{3,1}*\tX_{1,3}^{-2} \\
        & \qquad \qquad \qquad \quad = F_q(X_{3,-1})_{\le \xi}.
        \end{split}
    \end{align}
    Here \eqref{eq:image of tD113 in C3} (resp.~\eqref{eq:image of tD202 in C3}) is used to compute \eqref{eq:image of tD202 in C3} (resp.~\eqref{eq:image of tD3m11 in C3}).
    Finally, one can see that
    \begin{align*}
        \Uppsi_Q(\tD(1;-1,1)) = F_q(X_{1,-1})_{\le \xi}, \quad
        \Uppsi_Q(\tD(2;-2,0)) = F_q(X_{2,-2})_{\le \xi}, \quad
        \Uppsi_Q(\tD(3;-3,-1)) = F_q(X_{3,-3})_{\le \xi}
    \end{align*}
    by the same procedure as in the second case.
    We leave the details to the reader, and
    refer to \cite[Appendix A]{JLO1} for the explicit formulas of $F_q(X_{1,p})$, $F_q(X_{2,p})$, and $F_q(X_{3,p})$ for $p \in \Z$.
\end{example}

\section{Categorification and multiplicative relations} \label{Sec: quiver Hecke}
 The aim of this section is to investigate the multiplicative relations for pairs of $L_q(X_{i,p})$'s, which is a preparation to prove the remaining main results.
Our main ingredient is the categorification of $\calA_\bbA(\n)$ by graded $R$-modules \cite{KL1,KL2,R08,KKKO18}, where $R$ is the quiver Hecke algebra.
Combined with Theorem \ref{thm:main1}, this yields the relationship \eqref{eq:important diagram} between the Grothendieck ring of graded $R$-modules and the heart subring $\frakK_{q,Q}$ of the quantum virtual Grothendieck ring $\frakK_q$.
In particular, it allows us to utilize the crucial invariant (Definition \ref{def:invariants}) closely related to the simplicity for the convolution product of certain simple $R$-modules \cite{KKKO15,KKOP21}, which is determined by the inverse of $\usfB(t)$ \cite{Fuj22a, KO22A}.
When \eqref{eq:important diagram} is taken into account,
these results enable us to obtain the multiplicative relations for certain pairs of $L_q(X_{i,p})$'s.

\subsection{Quiver Hecke algebras and quantum virtual Grothendieck rings}
Let $\bfk$ be a field. For $i,j \in I$, we choose polynomials $\calQ_{i,j}(u,v) \in \bfk[u,v]$ such that  $\calQ_{i,j}(u,v)=\calQ_{j,i}(v,u)$, which is of the form
$$
\calQ_{i,j}(u,v) = \delta(i \ne j)
\displaystyle\sum_{pd_i+qd_j=-(\al_i,\al_j)} t_{i,j;p,q}u^pv^p \quad \text{ where }  t_{i,j;-\sfc_{i,j},0} \in \bfk^\times.$$

For $\be \in \rl^+$ with $\het(\be)=m$, we set
$$ I^\be \seteq \Bigl\{ \eta =( \eta_1,\ldots, \eta_m) \in I^m \ \bigm| \ \sum_{k=1}^m \al_{ \eta_k} =\be   \Bigr\}.$$
The \emph{quiver Hecke algebra} $R(\be)$ associated with $\cm$ and $(\calQ_{i,j}(u,v))_{i,j \in I}$ is the $\Z$-graded $\bfk$-algebra generated by
$$  \{ e( \eta) \ | \  \eta \in I^\be \}, \ \{ x_k \ | \ 1 \le  k \le m \}, \ \{ \tau_l \ | \ 1 \le l <m \} $$
subject to certain relations (see \cite{KL1,KL2,R08}). Here $1 = \sum_{\eta \in I^\be}e(\eta)$, the elements
$e( \eta) $'s are idempotents and
the $\Z$-grading of $R(\be)$ is defined by
$$  \deg(e( \eta))=0, \quad  \deg(x_ke( \eta))=(\al_{ \eta_k},\al_{ \eta_k}), \quad \deg(\tau_le( \eta))=-(\al_{ \eta_l},\al_{ \eta_{l+1}}).$$

We denote by $R(\beta)\gmod$ the category of  finite-dimensional graded $R(\beta)$-modules with degree-preserving homomorphisms. For $M,N \in R(\beta)\gmod$, we denote by $\Hom_{R(\beta)}(M,N)$ the space of degree-preserving module homomorphisms.
We set  $R\gmod \seteq \bigoplus_{\beta \in \rl^+} R(\beta)\gmod$.
The trivial $R(0)$-module of degree 0 is denoted by $\mathbf{1}$.
We define the grading shift functor $q$
by $(q M)_k = M_{k-1}$ for a $ \Z$-graded module $M = \bigoplus_{k \in \Z} M_k $.

For $M, N \in R(\beta)\gmod$, we define
\[
\HOM_{R(\beta)}( M,N ) \seteq \bigoplus_{k \in \Z} \Hom_{R(\beta)}(q^{k}M, N).
\]
For $M \in R(\beta)\gmod$, we set $M^\star \seteq  \HOM_{R(\be)}(M, R(\be))$ with the $R(\beta)$-action given by
$$
(r \cdot f) (u) \seteq  f(\psi(r)u), \quad \text{for  $f\in M^\star$, $r \in R(\beta)$ and $u\in M$,}
$$
where $\psi$ is the antiautomorphism of $R(\beta)$ which fixes the generators.
We say that $M$ is \emph{self-dual} if $M \simeq M^\star$. For $M \in R(\beta)\Mod$, we set $\wt(M) \seteq -\be$.

For $\be,\be' \in \rl^+$, we set
$
e(\beta, \beta') \seteq \sum_{\eta \in I^\beta, \eta' \in I^{\beta'}} e(\eta, \eta'),
$
where $e(\eta, \eta')$ is the idempotent corresponding to the concatenation
$\eta\ast\eta'$ of
$\eta$ and $\eta'$.
It induces an injective ring homomorphism
$$R(\beta)\otimes R(\beta')\to e(\beta,\beta')R(\beta+\beta')e(\beta,\beta')$$
defined by
\begin{align*}
& e(\eta)\otimes e(\eta')\mapsto e(\eta,\eta'),  \quad x_ke(\beta)\otimes 1\mapsto x_ke(\beta,\beta'), \quad
1\otimes x_ke(\beta')\mapsto x_{k+\het(\beta)}e(\beta,\beta'), \\
& \tau_ke(\beta)\otimes 1\mapsto \tau_ke(\beta,\beta') \quad \text{ and }\quad
1\otimes \tau_ke(\beta')\mapsto \tau_{k+\het(\beta)}e(\beta,\beta').
\end{align*}
For $M \in R(\beta)\Mod$ and $N \in R(\beta')\Mod$, we set
$$
M \conv N \seteq R(\beta+\beta') e(\beta, \beta') \otimes_{R(\beta) \otimes R(\beta')} (M \otimes N).
$$

We say that simple $R$-modules $M$ and $N$ \emph{strongly commute} if $M \conv N$ is simple.  A simple $R$-module
$L$ is said to be \emph{real} if $L$ strongly commutes with itself. Note that if $M$ and $N$ strongly commute, then $M\conv N \simeq q^{k}N \conv M$ for some $k \in \Z$.
We denote by $K(R\gmod)$ the Grothendieck ring of $R\gmod$ induced by $\conv$. Note that
$K(R\gmod)$ has a natural $\Z[q^{\pm 1}]$-algebra structure given by grading shifts.

\begin{theorem}[\cite{KL1,KL2,R08}]\label{thm:KLR categorification}
There exists a $\Z[q^{\pm 1/2}]$-algebra isomorphism
$$  \Upomega :    \calK_{\bbA}(R\gmod) \seteq  \Z[q^{\pm 1/2}] \otimes_{\Z[q^{\pm 1}]} K(R\gmod) \isoto \calA_{\bbA}(\n) $$
which preserves weights.
\end{theorem}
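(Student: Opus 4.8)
The plan is to reprove the categorification theorem of Khovanov--Lauda and Rouquier, following their strategy. The first step is to equip $\bigoplus_{\be\in\rl^+}K(R(\be)\gmod)$ with the structure of a twisted bialgebra over $\Z[q^{\pm1/2}]$: the product is induced by the convolution $\conv$, using the embeddings $R(\be)\otimes R(\be')\hookrightarrow e(\be,\be')R(\be+\be')e(\be,\be')$ recalled above, and the coproduct is induced by the corresponding restriction functors, with the twist governed by the bilinear form $(\cdot,\cdot)$ on $\rl$. Since $R\gmod$ is graded by $\rl^+$ and $\conv$ is additive on weights, any algebra map built from this structure automatically preserves weights, so the "weight-preserving" clause is free once the algebra isomorphism is in place.

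The map itself I would construct by first passing to the projective (dual) picture and then transporting through the Cartan pairing. For $i\in I$ the algebra $R(\al_i)$ is a polynomial ring $\bfk[x_1]$, and $R(n\al_i)$ is the nilHecke algebra; I would define an $\bbA$-algebra homomorphism $\calU_{\bbA}^-(\g)\to K(R\proj)$ by sending $f_i^{(n)}$ to the class of the indecomposable projective attached to $n\al_i$, and then dualize using the perfect $\Z[q^{\pm1/2}]$-valued pairing between $K(R\proj)$ and $K(R\gmod)$ to obtain $\Upomega\colon \calA_\bbA(\n)\isoto \calK_\bbA(R\gmod)^{-1}$ (combined with the identification $\calA_\bbA(\n)\simeq\calU_{\bbA}^-(\g)$ of~\eqref{eq:An=Uq- and An=CN at q=1}). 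Two things must be checked. First, well-definedness: the classes of the projectives $R(\al_i)$ must satisfy the quantum Serre relations, which amounts to exhibiting, for $\be=(1-\sfc_{i,j})\al_i+\al_j$, an explicit exact complex among the iterated convolutions $R(\al_i)^{\conv s}\conv R(\al_j)\conv R(\al_i)^{\conv(1-\sfc_{i,j}-s)}$; this is precisely where the defining polynomials $\calQ_{i,j}$ and the diagrammatic relations of $R(\be)$ enter. Second, surjectivity: this holds because every projective $R(\be)$-module is a summand of a module induced from $R(\al_{i_1})\otimes\cdots\otimes R(\al_{i_m})$, equivalently because every simple $R(\be)$-module occurs as the head of a convolution product of copies of the $L(i)$.

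Finally, to upgrade the surjection to an isomorphism I would compare graded ranks weight space by weight space. Using a faithful polynomial representation of $R(\be)$ one computes the graded dimension of $R(\be)$, hence (via the Cartan pairing between projectives and simples) the graded rank of $K(R(\be)\gmod)$, and one checks that it equals the rank of $\calA_\bbA(\n)_{-\be}$ given by the shuffle-algebra or PBW description of $\calA_q(\n)$. In the idiom of the present paper one can instead produce a $\Z[q^{\pm1/2}]$-basis of $K(R\gmod)$ out of cuspidal modules associated with a convex order and match it, unitriangularly, against the dual PBW basis $\{\tF^\up_{[\ii_\circ]}(\ue)\}$ of Proposition~\ref{prop: PBW}. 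The main obstacle is the well-definedness step: verifying the categorified quantum Serre relation requires a genuine, somewhat delicate computation with the relations of the quiver Hecke algebra (constructing the Serre complex and proving its exactness), whereas the rank/triangularity argument in the last step is comparatively routine once the generators and relations are under control.
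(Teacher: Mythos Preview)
The paper does not give its own proof of this statement: it is quoted from the literature with citations to \cite{KL1,KL2,R08} and used as a black box. So there is nothing to compare against in the paper itself.

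That said, your outline is a faithful reconstruction of the Khovanov--Lauda/Rouquier argument: induction/restriction give the twisted bialgebra structure, the divided powers $f_i^{(n)}$ are realized via the nilHecke algebra, well-definedness reduces to the categorified quantum Serre relations (the Serre complex), surjectivity comes from the fact that every simple is a head of an induction product of one-dimensional $L(i)$'s, and injectivity follows from a graded-rank/character count or, equivalently, from matching a PBW-type basis. Two small points. First, the expression ``$\calK_\bbA(R\gmod)^{-1}$'' is presumably a slip; you mean you have built the inverse map, or the dual map, and then invert. Note the direction in the statement is $\Upomega:\calK_\bbA(R\gmod)\to\calA_\bbA(\n)$. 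Second, the paper's $\calA_\bbA(\n)$ is defined as the restricted dual of $\calU_q^+(\g)$, identified with $\calU_\bbA^-(\g)$ via \eqref{eq:An=Uq- and An=CN at q=1}; your passage through the Cartan pairing between $K(R\text{-}\mathrm{proj})$ and $K(R\gmod)$ lines up with this, but be explicit that the pairing is perfect over $\Z[q^{\pm1}]$ so that dualizing loses nothing. The genuinely nontrivial step, as you note, is exactness of the Serre complex; everything else is routine once that is in hand.
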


\begin{proposition}[{\cite[Proposition 4.1]{KKOP18}}]\label{prop:categorification of quantum minors}
For $\varpi \in \wl^+$ and $\mu,\zeta \in \weyl \varpi$ with $\mu \preceq \zeta$, there exists a self-dual real simple $R( \zeta-\mu)$-module $\sfM(\mu,\zeta)$ such that
$$\Upomega ([\sfM(\mu,\zeta)]) = D(\mu,\zeta).$$
Here, $[\sfM(\mu,\zeta)]$ denotes the isomorphism class of $\sfM(\mu,\zeta)$.
\end{proposition}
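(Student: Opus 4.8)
The plan is to derive the statement from the quiver Hecke categorification of $\calA_\bbA(\n)$ (Theorem~\ref{thm:KLR categorification}) together with the known fact that normalized unipotent quantum minors belong to the dual-canonical basis; the only substantial point will be the \emph{reality} of $\sfM(\mu,\zeta)$. Accordingly, the work splits into a formal reduction and the reality argument.

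First I would invoke the refinement of Theorem~\ref{thm:KLR categorification}: the isomorphism $\Upomega$ carries the set of isomorphism classes of self-dual simple objects of $R\gmod$ bijectively onto the normalized dual-canonical/upper-global basis $\tbfB^\up$ of $\calA_\bbA(\n)$. This is the classical categorification of $\tbfB^\up$ by quiver Hecke algebras \cite{KL1,KL2,R08,KKKO18}; it uses that $\Upomega$ preserves weights (as in Theorem~\ref{thm:KLR categorification}) and, as is standard, intertwines the bar involution on $\calA_\bbA(\n)$ with the contravariant duality $M\mapsto M^\star$ on $R\gmod$ (compare the characterization of $\tbfB^\up$ in Theorem~\ref{thm: dual can} with the self-duality $M\simeq M^\star$). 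Since $\tD(\mu,\zeta)\in\tbfB^\up$ whenever $\mu\preceq\zeta$ (recalled just before Theorem~\ref{thm: dual can}; cf.~\cite{GLS13,Kimura12}), there is a unique-up-to-isomorphism self-dual simple module, which we name $\sfM(\mu,\zeta)$, with $\Upomega\bigl([\sfM(\mu,\zeta)]\bigr)=\tD(\mu,\zeta)$; after the normalization~\eqref{eq: tD} (a power of $q$, hence a mere grading shift) this is the same as $\Upomega\bigl([\sfM(\mu,\zeta)]\bigr)=D(\mu,\zeta)$. Finally, as $\Upomega$ preserves weights, $\wt\bigl(\tD(\mu,\zeta)\bigr)=\mu-\zeta$, and $\wt(M)=-\be$ for $M\in R(\be)\gmod$, the module $\sfM(\mu,\zeta)$ automatically lies in $R(\zeta-\mu)\gmod$. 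This settles existence, uniqueness, simplicity, self-duality and the weight, leaving only the reality of $\sfM(\mu,\zeta)$.

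To prove that $\sfM(\mu,\zeta)\conv\sfM(\mu,\zeta)$ is simple, I would transport the question back through $\Upomega$: in the categorification dictionary, $\sfM(\mu,\zeta)$ is real precisely when the self-dual normalization of $\tD(\mu,\zeta)^2$ is again a single element of $\tbfB^\up$, and then automatically $\sfM(\mu,\zeta)^{\conv n}$ is simple for every $n\ge1$. The cleanest route to this is the quantum cluster algebra structure of $\calA_\bbA(\n)$ (Theorem~\ref{thm:UQCR isom to CA}): the quantum minors $D(\mu,\zeta)$ are, up to a power of $q$, cluster monomials of $\calA_\bbA(\n)$ --- for $\mu=u\varpi_i$, $\zeta=v\varpi_i$ they occur as cluster variables in suitable seeds \cite{GLS13}, and for a general dominant $\varpi$ one writes $D(\mu,\zeta)$ as a $q$-commuting product of such --- so every power is again a cluster monomial, and by Qin's theorem \cite{Qin20} (cf.~\cite{McNa21}) every cluster monomial of $\calA_\bbA(\n)$ lies in $\tbfB^\up$, which gives the claim. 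Alternatively one can argue by induction along the determinantal identities~\eqref{eq: determinatial id}, starting from the one-dimensional modules $\sfM(s_i\varpi_i,\varpi_i)$ (visibly real) and propagating reality from the ``outer'' minors in~\eqref{eq: determinatial id} to the ``middle'' one by means of the criterion for simplicity of a convolution product of real simple modules in terms of the $R$-matrix and the $\Lambda$-invariant \cite{KKKO15,KKOP21}. I expect this reality step to be the main obstacle: identifying $\sfM(\mu,\zeta)$ is formal bookkeeping about $\Upomega$, whereas controlling $\tD(\mu,\zeta)^2$ inside the dual-canonical basis requires either Qin's global theorem on quantum cluster monomials or a delicate inductive control of leading terms and $\Lambda$-invariants through the determinantal identities.
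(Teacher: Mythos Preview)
The paper does not prove this proposition; it simply cites it from \cite[Proposition~4.1]{KKOP18}. So the comparison is between your sketch and the original argument in \cite{KKOP18}.

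Your main line of attack has a genuine gap. You invoke as a ``refinement'' of Theorem~\ref{thm:KLR categorification} the statement that $\Upomega$ sends the self-dual simple $R$-modules bijectively onto $\tbfB^\up$. This is the Varagnolo--Vasserot theorem in simply-laced type, but it is \emph{not known} for general symmetrizable Cartan data (and is in fact false for some choices of the polynomials $\calQ_{i,j}$). Hence, in the non-simply-laced cases that this paper is primarily concerned with, you cannot deduce the existence of a self-dual simple $\sfM(\mu,\zeta)$ from the mere fact that $\tD(\mu,\zeta)\in\tbfB^\up$. The same gap undermines your reality argument via Qin: even granting that the normalized square of $\tD(\mu,\zeta)$ is a cluster monomial and hence lies in $\tbfB^\up$, you would still need ``elements of $\tbfB^\up$ correspond to simples'' to conclude that $\sfM(\mu,\zeta)\conv\sfM(\mu,\zeta)$ is simple. (There is also a minor slip: you cannot have $\sfM(\mu,\zeta)$ self-dual with $\Upomega([\sfM(\mu,\zeta)])$ equal to \emph{both} $\tD(\mu,\zeta)$ and $D(\mu,\zeta)$, since these differ by a nontrivial power of $q$; the self-dual module corresponds to the unnormalized $D(\mu,\zeta)$, as one sees from the formula in the proof of Theorem~\ref{thm:important relationship}.)

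The actual proof in \cite{KKOP18} bypasses the open ``simples $=$ canonical basis'' question entirely: the determinantal modules are built directly, by an inductive construction along a reduced expression (successive heads of convolutions with one-dimensional simples $L(i)$), and simplicity, self-duality and reality are verified from the construction using the head/socle and $R$-matrix machinery of \cite{KKKO15,KKOP21}. Your ``alternative'' sketch via the determinantal identities~\eqref{eq: determinatial id} and the $\Lambda$-invariants is much closer in spirit to what is actually done, and is the route you should pursue; the Qin/cluster-monomial shortcut is not available here.
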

We call $[\sfM(\mu,\zeta)]$ the \emph{determinantal module} associated to $D(\mu,\zeta)$.
For $[\ii_\circ]$ of $w_\circ$ and $\al \in \Phi_+$, we write $S_{[\ii_\circ]}(\al)$ the determinantal $R(\al)$-module such that
$$  \Upomega ([S_{[\ii_\circ]}(\al)]) = D_{[\ii_\circ]}(\al,\al^-).$$
We call $S_{[\ii_\circ]}(\al)$  the \emph{cuspidal module} associated with $[\ii_\circ]$ and $\al$. Note that $\wt(S_{[\ii_\circ]}(\al)) =-\al$.
For a Dynkin quiver $Q=(\Dynkin,\xi)$ and $(i,p) \in \Gamma^Q_0$, we set
$$   S_Q(i;p) \seteq S_{[Q]}(\be)  \quad \text{ where } \phi_Q(i,p) = (\be,0).  $$

\begin{theorem} \label{thm:important relationship}
    For each Dynkin quiver $Q=(\Dynkin,\xi)$,
    we have the $\Z[q^{\pm 1/2}]$-algebra isomorphism $\Upxi_Q$ given by
    \begin{equation*}
        \Upxi_Q :
        \xymatrix@R=0.5ex@C=6ex{
        \frakK_{q,Q} \ar@{->}[r] & \calK_{\bbA}(R\gmod) \\
        L_q^Q(\be,0) \ar@{|->}[r] & q^{(\be,\be)/4 + (\be ,\rho)/2 }[S_Q(i;p)]
        }
    \end{equation*}
    where $(i,p) \in \Gamma^Q_0$ with $\phi_Q(i,p)=(\be,0)$.
\end{theorem}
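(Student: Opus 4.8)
The plan is to obtain $\Upxi_Q$ by transporting the isomorphisms already at our disposal. We have the HLO isomorphism $\Psi_Q\colon\calA_{\bbA}(\n)\xrightarrow{\;\sim\;}\frakK_{q,Q}$ of Corollary~\ref{cor: HLO iso} and the quiver Hecke categorification isomorphism $\Upomega\colon\calK_{\bbA}(R\gmod)\xrightarrow{\;\sim\;}\calA_{\bbA}(\n)$ of Theorem~\ref{thm:KLR categorification}; composing, $\Upomega^{-1}\circ\Psi_Q^{-1}$ is a $\Z[q^{\pm1/2}]$-algebra isomorphism $\frakK_{q,Q}\xrightarrow{\;\sim\;}\calK_{\bbA}(R\gmod)$. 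To match the normalization demanded by the statement I would not take this composite itself, but its conjugate by the bar involutions: I would \emph{define}
$$\Upxi_Q(x)\seteq\overline{\bigl(\Upomega^{-1}\circ\Psi_Q^{-1}\bigr)\bigl(\overline{x}\bigr)}\qquad(x\in\frakK_{q,Q}),$$
where $\overline{(\,\cdot\,)}$ is the bar involution of $\frakK_q$ from \eqref{eq:bar-inv on Xq} (which preserves $\frakK_{q,Q}$ since it fixes every $F_q(m)$) on the source and the duality-induced bar involution $[M]\mapsto[M^\star]$ on $\calK_{\bbA}(R\gmod)$ on the target. Since each $\overline{(\,\cdot\,)}$ is a $\Z$-algebra anti-involution sending $q^{\pm1/2}\mapsto q^{\mp1/2}$, conjugating the algebra homomorphism $\Upomega^{-1}\circ\Psi_Q^{-1}$ by them on source and target again yields a bijective $\Z[q^{\pm1/2}]$-algebra homomorphism; thus $\Upxi_Q$ is a $\Z[q^{\pm1/2}]$-algebra isomorphism.

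Next I would compute $\Upxi_Q$ on the generators $L_q^Q(\be,0)=F_q(X_{i,p})$, for $(i,p)\in\Gamma^Q_0$ with $\phi_Q(i,p)=(\be,0)$, which generate $\frakK_{q,Q}$ as a $\Z[q^{\pm1/2}]$-algebra by Proposition~\ref{prop: convex subrings}. Three facts combine: first, from the proof of Theorem~\ref{thm:main1} (cf.~\eqref{eq: D to F} and the identity $\Uppsi_Q(\tF^\up_{[Q]}(\be))=F_q(X_{i,p})_{\le\xi}$ established there, together with the injectivity of $(\,\cdot\,)_{\le\xi}$) one gets $\Psi_Q^{-1}\bigl(F_q(X_{i,p})\bigr)=\tF^\up_{[Q]}(\be)=\tD_{[Q]}(\be,\be^-)$; second, by the very definition of the cuspidal module $S_Q(i;p)=S_{[Q]}(\be)$ one has $\Upomega\bigl([S_Q(i;p)]\bigr)=D_{[Q]}(\be,\be^-)$; third, the normalization \eqref{eq: tD}, applied with $\mu-\zeta=\wt\bigl(D_{[Q]}(\be,\be^-)\bigr)=-\be$ (the weight of the cuspidal module, as $\Upomega$ preserves weights), gives $D_{[Q]}(\be,\be^-)=q^{(\be,\be)/4+(\be,\rho)/2}\,\tD_{[Q]}(\be,\be^-)$. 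Hence $\Upomega^{-1}\circ\Psi_Q^{-1}$ sends $L_q^Q(\be,0)$ to $q^{-(\be,\be)/4-(\be,\rho)/2}[S_Q(i;p)]$; since $L_q^Q(\be,0)$ is bar-invariant (Theorem~\ref{thm: L_q}) and $[S_Q(i;p)]$ is bar-invariant because $S_Q(i;p)$ is self-dual (Proposition~\ref{prop:categorification of quantum minors}), applying the outer bar flips the $q$-exponent and yields exactly $\Upxi_Q\bigl(L_q^Q(\be,0)\bigr)=q^{(\be,\be)/4+(\be,\rho)/2}[S_Q(i;p)]$, as asserted.

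Finally, because $\Upxi_Q$ is a $\Z[q^{\pm1/2}]$-algebra isomorphism and $\frakK_{q,Q}$ is generated over $\Z[q^{\pm1/2}]$ by the $F_q(X_{i,p})=L_q^Q(\be,0)$, the computation on generators determines $\Upxi_Q$ completely and proves the theorem. I expect the only delicate part to be this normalization bookkeeping: one must carefully track the half-integral powers of $q$, recognizing that the factor $q^{(\be,\be)/4+(\be,\rho)/2}$ in the statement is precisely the $\tD$-versus-$D$ discrepancy in \eqref{eq: tD}, and that passing to the bar-conjugate (rather than using the bare composite $\Upomega^{-1}\circ\Psi_Q^{-1}$, which produces the opposite sign $q^{-(\be,\be)/4-(\be,\rho)/2}$) is what both keeps $\Upxi_Q$ an algebra --- not an anti-algebra --- isomorphism and installs the stated normalization. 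Everything else is formal once Corollary~\ref{cor: HLO iso} and Theorem~\ref{thm:KLR categorification} are granted.
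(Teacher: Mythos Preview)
Your argument is correct and, in fact, more careful about the $q$-normalization than the paper itself. The paper simply sets $\Upxi_Q\seteq\Upomega^{-1}\circ\Psi_Q^{-1}$ (written $\Omega\circ\Psi_Q^{-1}$ in the diagram, with a silent reversal of the direction of $\Upomega$) and asserts this sends $L_q^Q(\be,0)$ to $q^{(\be,\be)/4+(\be,\rho)/2}[S_Q(i;p)]$. But as you correctly compute, the bare composite actually yields the exponent $-(\be,\be)/4-(\be,\rho)/2$: by Proposition~\ref{prop:categorification of quantum minors} we have $\Upomega([S_{[Q]}(\be)])=D_{[Q]}(\be,\be^-)$ (the \emph{un}-normalized minor), whereas $\Psi_Q^{-1}(L_q^Q(\be,0))=\tD_{[Q]}(\be,\be^-)$ is the normalized one, and \eqref{eq: tD} with $\mu-\zeta=-\be$ gives $D=q^{(\be,\be)/4+(\be,\rho)/2}\tD$. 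So the paper's displayed identity $\tF^{\up}_{[Q]}(\be)=\Omega^{-1}(q^{(\be,\be)/4+(\be,\rho)/2}[S_{[Q]}(\be)])$ carries a sign slip.

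Your bar-conjugation device does repair this and is a legitimate way to realize the stated normalization: conjugating an algebra homomorphism by anti-involutions on source and target is again an algebra homomorphism, and on the bar-invariant elements $L_q^Q(\be,0)$ and $[S_Q(i;p)]$ the outer bar just flips the $q$-power. The only ingredient you invoke that is not explicitly in the paper is that $[M]\mapsto[M^\star]$ is a $\Z$-algebra anti-involution on $\calK_\bbA(R\gmod)$ with $q^{1/2}\mapsto q^{-1/2}$; this amounts to $(M\conv N)^\star\simeq N^\star\conv M^\star$, which is standard for quiver Hecke algebras but worth a citation. That said, every subsequent use of this theorem in the paper only needs the existence of a $\Z[q^{\pm1/2}]$-algebra isomorphism sending $L_q^Q(\be,0)$ to $[S_Q(i;p)]$ up to a power of $q$, so the more economical fix is simply to take the bare composite and flip the sign in the statement.
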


\begin{proof}
It follows from Theorem \ref{thm:main1}, Corollary \ref{cor: HLO iso}, and Theorem \ref{thm:KLR categorification} that
we have the $\Z[q^{\pm 1/2}]$-algebra isomorphism $\Upxi_Q$ given as follows:
\begin{equation} \label{eq:important diagram}
\begin{split}
 \raisebox{4.7em}{ \xymatrix@R=3ex@C=6ex{   & \calA_\bbA(\n)\ar[dr]^{\Upomega}\ar[dl]_{\Psi_Q}   \\ \qquad \qquad  \frakK_{q,Q}  \ar[rr]_{ \Upxi_Q \seteq  \Omega \circ \Psi_Q^{-1}} && \calK_{\bbA}(R\gmod)  \\
L_q(X_{i,p}) = L_q^Q(\be,0) \ar@{|->}[rr] && q^{(\be,\be)/4 + (\be,\rho)/2 }[S_Q(i;p)],
}}
\end{split}
\end{equation}
for $(i,p) \in \Gamma^Q_0$ with $\phi_Q(i,p)=(\be,0)$.
Note that the last correspondence follows from
$$ \Psi_Q^{-1}(L_q^Q(\be,0)) = \tF^{\up}_{[Q]}(\be) =  \Omega^{-1}(q^{(\be,\be)/4 +  (\be,\rho)/2}[S_{[Q]}(\be)]) $$
due to Theorem \ref{thm:main1} and Proposition \ref{prop:categorification of quantum minors}.
\end{proof}

 \subsection{Affreal modules, invariants, and multiplicative relations}
For $\be \in \rl^+$ with $\het(\be)=m$ and $i \in I$, let
\begin{align} \label{eq: pibe}
\mathfrak{p}_{i,\be} = \sum_{ \eta\,  \in I^\be} \left( \prod_{a \in [1,m]; \  \eta_a  = i} x_a\right)e(\eta) \in Z(R(\be)),
\end{align}
where $Z(R(\be))$ denotes the center of $R(\be)$.

\begin{definition}
Let $M$ be an $R(\be)$-module. We say that $M$ \emph{admits an affinization} if  there exists $R(\be)$-module $\hsfM$
satisfying the followings: There exists an endomorphism $z_{\hsfM}$ of degree $t_{\hsfM} \in \Z_{>0}$ such that $\hsfM/z_{\hsfM}\hsfM \simeq M$ and
\bnum
\item $\hsfM$ is a finitely generated free modules over the polynomial ring $\bfk[z_{\hsfM}]$,
\item $\mathfrak{p}_{i,\be}  \hsfM \ne 0$ for all $i \in I$.
\ee
We say that a simple $R(\be)$-module $M$ is \emph{affreal} if $M$ is real and admits an affinization.
\end{definition}

 \begin{remark}
It is proved in~\cite[Proposition 2.5]{KP18} that every affinization is \emph{essentially even}; that means $t_{\hsfM} \in 2\Z_{>0}$. Thus we assume that every affinization has an even degree.
\end{remark}

\begin{theorem} [{\cite[Theorem 3.26]{KKOP21}}] \label{thm:affinization of det}
For $\varpi \in\wl^+$ and $\mu,\zeta \in \weyl\varpi$ with $\mu\preceq \zeta$, the determinantal module $\sfM(\mu,\zeta)$
admits an affinization $\hsfM(\mu,\zeta)$. When $\varpi=\varpi_i$, the module $\sfM(\mu,\zeta)$ admits an affinization of degree $(\al_i,\al_i)=  2d_i$.
\end{theorem}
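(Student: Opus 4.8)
This is~\cite[Theorem~3.26]{KKOP21}; we sketch the strategy. The plan is to reduce the existence of an affinization to the case of a fundamental weight $\varpi=\varpi_i$ and then to construct the affinization in that case by a recursion built on the categorified determinantal identity and on the compatibility of $R$-matrices with affinizations. The base case is explicit: for a simple root $\al_j$ we have $R(\al_j)=\bfk[x_1]$ and $\sfM(s_j\varpi_j,\varpi_j)\simeq L(j)=R(\al_j)/R(\al_j)x_1$, and $\hsfM:=R(\al_j)$ with $z_{\hsfM}:=x_1$ of degree $(\al_j,\al_j)=2d_j$ is an affinization — it is free of rank one over $\bfk[z_{\hsfM}]$, $\hsfM/z_{\hsfM}\hsfM\simeq L(j)$, and $\mathfrak{p}_{j,\al_j}=x_1$ acts by multiplication by $z_{\hsfM}\neq0$, while $\mathfrak{p}_{k,\al_j}=1$ for $k\neq j$.

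For the reduction, write $\varpi=\sum_j m_j\varpi_j$. By the multiplicativity of unipotent quantum minors one has, after a grading shift, a factorization $\sfM(\mu,\zeta)\simeq\sfM(\mu_1,\zeta_1)\conv\cdots\conv\sfM(\mu_N,\zeta_N)$ into determinantal modules attached to fundamental weights, with the factors pairwise strongly commuting and the product simple. I would then use the general principle that a convolution of pairwise strongly commuting affreal modules is affreal: its affinization is built from the convolution of the affinizations of the factors (reducing to the two-fold case, taking the image of the composite $R$-matrix, and inducing $z$ from a factor parameter), with freeness over $\bfk[z]$ and the nonvanishing of each $\mathfrak{p}_{k,\zeta-\mu}$ descending from the factors through the embedding $R(\be)\otimes R(\be')\hookrightarrow e(\be,\be')R(\be+\be')e(\be,\be')$ together with exactness. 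This reduces the first assertion to the fundamental case.

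For $\varpi=\varpi_i$, one constructs $\hsfM(\mu,\zeta)$ by a recursion on the elements $u,v\in\weyl$ with $\mu=u\varpi_i$, $\zeta=v\varpi_i$ (equivalently on their lengths in $\weyl/\weyl_i$), starting from the trivial module at $u=v$ and from $L(i)$ at the first nontrivial step. At each stage the categorified form of the determinantal identity~\eqref{eq: determinatial id} exhibits the new determinantal module as a head (or subquotient) of a convolution involving previously constructed ones — including the modules attached to the neighbouring weights $\varpi_j$, $j\sim i$, which are themselves produced, in parallel, by the same recursion — and one transports the affinization along this relation via the composite $R$-matrix between the affinizations of the building blocks, specializing the deformation parameters compatibly. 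The stated degree $2d_i$ results from the fact that the deformation used at every stage is the one inherited from a $\bfk[z]$-deformation of the fundamental representation $V(\varpi_i)$, whose spectral parameter carries degree $(\al_i,\al_i)=2d_i$; that this degree is even is also forced by~\cite[Proposition~2.5]{KP18}.

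The main obstacle is the control of the recursion in the fundamental case: because the $T$-system~\eqref{eq: determinatial id} interleaves terms of both larger and smaller ``size'', the recursion must be organized carefully so that every new affinization is genuinely obtained from already-constructed ones; and at each step one must show that the image of the composite $R$-matrix inside the convolution of the affinizations is torsion-free over $\bfk[z]$ with $z\neq0$ — equivalently that the relevant renormalized $R$-matrix does not degenerate — so that $\hsfM/z\,\hsfM$ recovers the determinantal module itself and not a proper quotient, while $\mathfrak{p}_{k,-}$ remains nonzero and the degree of $z$ stays equal to $2d_i$. These are precisely the technical heart of~\cite[\S3]{KKOP21}, where the structure theory of affinizations and of renormalized $R$-matrices is brought to bear.
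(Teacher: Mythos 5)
The paper does not prove this statement; as the bracketed citation shows, it is imported verbatim from \cite[Theorem 3.26]{KKOP21}, and no argument is reproduced here. So there is no in-paper proof for your sketch to be measured against; what you have written is an attempt to reconstruct the argument of the cited reference.

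Your base case and reduction step are sound: $R(\al_j)=\bfk[x_1]$ with $z=x_1$ does affinize the simple $R(\al_j)$-module in degree $2d_j$, the determinantal module for $\varpi=\sum_j m_j\varpi_j$ factors (up to grading shift) into a strongly commuting convolution of determinantal modules for fundamental weights, and the fact that a convolution of pairwise strongly commuting affreal modules is again affreal is a genuine lemma of the theory rather than a leap. Where the sketch becomes speculative is the fundamental-weight recursion. The determinantal identity~\eqref{eq: determinatial id} is a relation among \emph{products} of quantum minors, so its categorified form is a short exact sequence whose terms are convolutions of several determinantal modules; there is no general mechanism for extracting an affinization of one constituent from affinizations of the others, because an affinization is a free $\bfk[z]$-module lift and free lifts do not pass to subobjects or quotients. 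The construction in \cite{KKOP21} is more direct: $\sfM(u\varpi_i,v\varpi_i)$ is realized as the simple head (equivalently socle) of a suitable ordered convolution of root-type cuspidal modules, each of which is affreal by \cite{KP18}; the affinization is then the image of the composite renormalized $R$-matrix between the affinized sequence and its reversal, with a single $\bfk[z]$-parameter carried through, and the degree $2d_i$ is read off from that one parameter. You correctly identify the hard point --- nondegeneracy of that image modulo $z$ and torsion-freeness over $\bfk[z]$ --- but the $T$-system recursion you propose as the organizing scheme is not the route the reference takes, and as stated it has the gap noted above.
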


\begin{proposition}
[{\cite{KKKO18,KKOP21}}]\label{prop: l=r}
Let $M$ and $N$ be simple modules such that one of them is affreal.
Then there exists a unique $R$-module homomorphism $\Rr_{M,N} \in \HOM_R(M,N)$ satisfying
$$\HOM_R(M \conv N,N \conv M)=\bfk\, \Rr_{M,N}.$$
\end{proposition}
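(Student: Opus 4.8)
\textbf{Proof proposal for Proposition~\ref{prop: l=r}.}

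The plan is to follow the standard argument for the existence and uniqueness of the (normalized) R-matrix between simple modules, adapting it to the graded setting where one of the two factors is affreal. First I would fix the affreal module, say $N$ (the case where $M$ is affreal is symmetric), together with an affinization $\widehat{N}$ carrying the degree-$t_{\widehat N}$ endomorphism $z_{\widehat N}$. The key is to construct a nonzero homomorphism $\widehat{N}\conv M \to M\conv \widehat{N}$ using the intertwiners $\tau$ of the quiver Hecke algebra, built from the braiding operators between the strands of $\widehat N$ and those of $M$; this gives a map that becomes an isomorphism after inverting $z_{\widehat N}$. One then normalizes: divide by the largest possible power of $z_{\widehat N}$ so that the resulting map $\Rr_{\widehat N, M}\colon \widehat N \conv M \to M\conv \widehat N$ does not vanish identically modulo $z_{\widehat N}$. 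Specializing $z_{\widehat N}=0$ yields a nonzero $\Rr_{M,N}\in \HOM_R(M\conv N, N\conv M)$; here one uses condition (ii) in the definition of affinization, namely $\mathfrak p_{i,\be}\widehat N\neq 0$ for all $i$, to guarantee that this specialization is not the zero map. This establishes existence and that $\dim \HOM_R(M\conv N,N\conv M)\ge 1$.

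For uniqueness I would argue that $\HOM_R(M\conv N, N\conv M)$ is one-dimensional. Since $M$ and $N$ are both simple, $M\conv N$ and $N\conv M$ have simple socles and simple heads (a consequence of the affreal hypothesis: the head of $M\conv N$ is simple and coincides with the socle of $N\conv M$ up to grading shift, via the image of $\Rr_{M,N}$). Concretely, let $S$ denote this common simple module $\operatorname{im}(\Rr_{M,N})$. Any nonzero $f\in \HOM_R(M\conv N,N\conv M)$ has image containing the head of $M\conv N$, hence containing $S$, and image contained in the socle of $N\conv M$, hence contained in $S$; so $\operatorname{im}(f)=S$ for every nonzero $f$. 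Then $f$ and $\Rr_{M,N}$ differ by an endomorphism of $M\conv N$ (factoring the surjections onto $S$) composed with one of $N\conv M$; since the head of $M\conv N$ and socle of $N\conv M$ are simple, $\End_R(M\conv N)$ acts on the one-dimensional space $\Hom_R(M\conv N, S)$ by a scalar, and likewise on the other side, forcing $f$ to be a scalar multiple of $\Rr_{M,N}$. This gives $\HOM_R(M\conv N, N\conv M)=\bfk\,\Rr_{M,N}$.

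The main obstacle — and the part that genuinely uses the affreal hypothesis rather than mere simplicity — is showing that the specialized map $\Rr_{M,N}$ is nonzero and that the head/socle of the convolution products are simple. Both rely on the fact that an affinization exists and behaves well: one needs that $\widehat N\conv M$ and $M\conv \widehat N$ are free over $\bfk[z_{\widehat N}]$ of the same rank (so that $\Rr_{\widehat N,M}$, after normalization, has a nonzero reduction), and one invokes the structural results on R-matrices for affreal modules from \cite{KKKO15,KKOP21}. I would cite Proposition~\ref{prop:categorification of quantum minors} and Theorem~\ref{thm:affinization of det} only insofar as they supply the relevant affreal modules in applications; the proposition itself is the abstract statement whose proof is the R-matrix construction sketched above, so in the write-up I would simply reference \cite{KKKO18,KKOP21} for the detailed verification of these two points and present the existence/uniqueness bookkeeping explicitly.
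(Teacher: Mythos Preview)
The paper does not give its own proof of this proposition; it is simply quoted from \cite{KKKO18,KKOP21} and used as a black box. Your sketch is a faithful outline of the standard argument in those references (construct the intertwiner through an affinization, normalize by the top power of $z$, specialize, then deduce one-dimensionality of the Hom space from the simple head/socle structure), so there is nothing in the paper to compare it against.

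One small remark on logical order: your uniqueness step invokes the simplicity of the head of $M\conv N$ and the socle of $N\conv M$, which in this paper is recorded separately as the subsequent Proposition~\ref{prop: simple head}. In the original sources the head/socle simplicity is itself proved \emph{via} the R-matrix just constructed, so your ordering (existence of $\Rr_{M,N}$ first, then head/socle simple, then uniqueness) is the correct logical flow even though the present paper lists the two propositions the other way round.
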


\begin{definition} \label{def:invariants}
Let $M$ and $N$ be simple modules such that one of them is affreal.
We define
\begin{align*}
\La(M,N) &\seteq  \deg (\Rr_{M,N}) , \\
\tLa(M,N) &\seteq   \frac{1}{2} \big( \La(M,N) + (\wt(M), \wt(N)) \big) , \\
\de(M,N) &\seteq  \frac{1}{2} \big( \La(M,N) + \La(N,M)\big).
\end{align*}
\end{definition}

For $M,N \in R\gmod$, let us
denote by $M \hconv N$  (resp.~$M \sconv N$) the \emph{head}  (resp.~\emph{socle}) of $M \conv N$.

\begin{proposition}
[{\cite[Theorem 3.2]{KKKO15}, \cite[Proposition 2.10]{KP18}, \cite[Proposition 3.13, Lemma 3.17]{KKOP21}}] \label{prop: simple head}
Let $M$ and $N$ be simple $R$-modules such that one of them is affreal.
Then we have
\bnum
\item   $M \conv N$ has simple socle and simple head,
\item ${\rm Im}(\Rr_{M,N})$ is isomorphic to $M \hconv N$ and $N\sconv M$,
\item $M \hconv N$ and $M \sconv N$ appear once in the composition series of $M \conv N$, respectively.
\ee
\end{proposition}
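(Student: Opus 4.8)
The plan is to follow the strategy of \cite{KKKO15}, later streamlined in \cite{KP18,KKOP21}, in which the affinization is used to manufacture well-behaved $R$-matrices. First I would reduce to the case that $M$ is affreal: if instead $N$ is affreal then so is $N^\star$, and since $(M\conv N)^\star \simeq q^{\,a}\,N^\star\conv M^\star$ for a suitable $a\in\Z$ while $\star$ is a contravariant exact self-equivalence interchanging heads with socles and preserving composition length, the assertions for $(M,N)$ follow from those for $(N^\star,M^\star)$. So assume $M$ is affreal with affinization $\hsfM$ and $z\seteq z_{\hsfM}$ of positive (even) degree. From the intertwiner operators one builds the affine $R$-matrix $\hsfM\conv N\to N\conv\hsfM$; it is $\bfk[z]$-linear and nonzero, and since $\hsfM\conv N$ is $\bfk[z]$-free (as $\hsfM$ is free over $\bfk[z]$) one may divide by the largest power of $z$ contained in its image to obtain the renormalized morphism $\Rnorm_{\hsfM,N}$. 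Its specialization $\Rr_{M,N}\seteq \Rnorm_{\hsfM,N}|_{z=0}\colon M\conv N\to N\conv M$ is nonzero and, by Proposition~\ref{prop: l=r}, spans $\HOM_R(M\conv N,N\conv M)$; its degree is $\La(M,N)$. The same construction with $\hsfM$ on the other side produces a nonzero $\Rr_{N,M}\colon N\conv M\to M\conv N$ spanning $\HOM_R(N\conv M,M\conv N)$.

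The technical heart is the $R$-matrix composition identity: the composite of the two affine $R$-matrices $\hsfM\conv N\to N\conv\hsfM\to\hsfM\conv N$ equals $z^{\,e}$ times an $R[z]$-linear \emph{automorphism} of $\hsfM\conv N$ for some $e\ge 0$ — here the affinization axioms (freeness over $\bfk[z]$ and $\frakp_{i,\be}\hsfM\ne 0$), together with Theorem~\ref{thm:affinization of det}, are exactly what upgrades the surviving endomorphism from nonzero to invertible. Specializing at $z=0$: if $e=0$ then $\Rr_{N,M}\circ\Rr_{M,N}$ is an isomorphism, $M$ and $N$ strongly commute, and everything is trivial; if $e>0$ then $\Rr_{N,M}\circ\Rr_{M,N}=0$ and $\Rr_{M,N}\circ\Rr_{N,M}=0$. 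In this substantive case I would show that $\Ker(\Rr_{M,N})$ is the unique maximal submodule of $M\conv N$, so that $\head(M\conv N)\simeq\mathrm{Im}(\Rr_{M,N})$ is simple. Indeed, any simple quotient $M\conv N\twoheadrightarrow X$ pulls back to a surjection $\hsfM\conv N\twoheadrightarrow X$ on which $z$ acts by $0$; composing with the affine $R$-matrix $N\conv\hsfM\to\hsfM\conv N$ and controlling the relevant $\Hom$-spaces by Frobenius reciprocity together with Proposition~\ref{prop: l=r}, one forces this surjection to factor through $\Rr_{M,N}$, whence $X\simeq\mathrm{Im}(\Rr_{M,N})$ and $\dim\Hom_R(M\conv N,X)=1$. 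This gives (i) for $M\conv N$; applying $\star$, or running the same argument with the two tensor factors exchanged, gives simple head for $N\conv M$ and simple socle for both $M\conv N$ and $N\conv M$. Since $\mathrm{Im}(\Rr_{M,N})$ is at once a quotient of $M\conv N$ and a submodule of $N\conv M$, it is identified with $M\hconv N$ and with $N\sconv M$, which is (ii). For the multiplicity statement (iii), one refines the same analysis: the copy of $S\seteq M\hconv N$ sitting as the head of $M\conv N$ is the only one, because $M\conv N$ is cut out of $\hsfM\conv N$ by the single central parameter $z$, so the $S$-isotypic content of $M\conv N$ cannot exceed what is visible ``in the $z$-direction''; this is the content of \cite[Theorem 3.2]{KKKO15}, and I would import the count there verbatim.

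The step I expect to be the main obstacle is the $R$-matrix composition identity and the accompanying ``every simple quotient of $M\conv N$ factors through $\Rr_{M,N}$'': both hinge delicately on the genericity encoded in the affinization (freeness over $\bfk[z]$ and nonvanishing of $\frakp_{i,\be}$ on $\hsfM$), without which one only obtains that the composite is $z^{e}$ times a nonzero — but possibly non-invertible — endomorphism, which is insufficient. This is precisely what \cite[Theorem 3.2]{KKKO15}, \cite[Proposition 2.10]{KP18} and \cite[Proposition 3.13, Lemma 3.17]{KKOP21} establish through a careful study of the intertwiner operators $\tau_k$ and the action of $\frakp_{i,\be}$, and I would follow their treatment rather than attempt an independent argument.
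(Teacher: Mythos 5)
The paper records this proposition purely as a citation to \cite{KKKO15,KP18,KKOP21} and supplies no argument of its own, so there is no in-paper proof against which to compare. Your sketch is a faithful reconstruction of the strategy of those references --- reduction to the affreal factor via $\star$-duality, the affine $R$-matrix on $\hsfM\conv N$ with renormalization by a power of $z$, the dichotomy according to whether that power is zero, uniqueness of the maximal submodule of $M\conv N$, and a multiplicity-one count --- with the technical crux deferred, as you acknowledge, to the cited works, which is exactly what the paper does implicitly by citing.

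One wording caution: the composite $\hsfM\conv N\to N\conv\hsfM\to\hsfM\conv N$ is first shown to equal $f(z)\cdot\mathrm{id}$ for a nonzero $f\in\bfk[z]$ (using that the generic fiber $(\hsfM\conv N)\otimes_{\bfk[z]}\bfk(z)$ is absolutely simple, which is where $\frakp_{i,\beta}\hsfM\ne 0$ enters). Writing $f(z)=z^{e}u(z)$ with $u(0)\ne 0$ does not by itself make $u(z)\cdot\mathrm{id}$ an $R[z]$-linear \emph{automorphism} of $\hsfM\conv N$ unless one separately argues that $u$ is constant, so phrasing the composition identity as ``$z^{e}$ times an automorphism'' overstates what falls out of the renormalization. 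Since the downstream argument only uses whether $f(0)$ vanishes, this does not damage the proof, but it is worth keeping the weaker and correct formulation in mind when importing \cite[Theorem 3.2]{KKKO15} and \cite[Proposition 2.10]{KP18}.
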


 Note that Proposition \ref{prop:categorification of quantum minors} and Theorem \ref{thm:affinization of det} tell us that $\left\{\,S_Q(i;p) \, \left| \, (i,p) \in \Gamma^Q_0\, \right.\right\}$ is a family of affreal modules.
Hence the results below become the main ingredients for our purpose from the viewpoint of Theorem \ref{thm:important relationship}.

\begin{theorem}[\cite{KKKO15,KKOP21}]\label{thm: commute1}
Let $M$ and $N$ be simple modules such that one of them is affreal. Then the following three conditions are equivalent:
\bnum
\item $\de(M,N)=0$,
\item \label{it:com rel} $M \conv N \simeq q^{-\La(M,N)} N \conv M$,
\item $M \conv N$ is simple.
\ee
In particular, we have $\de(M,M)=0$ whenever $M$ is affreal.
\end{theorem}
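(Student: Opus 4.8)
\textbf{Proof proposal for Theorem~\ref{thm: commute1}.}

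The plan is to reduce the cycle of three equivalences to a string of implications, using the $R$-matrix machinery from Proposition~\ref{prop: l=r} and Proposition~\ref{prop: simple head} together with the degree bookkeeping built into Definition~\ref{def:invariants}. First I would observe that both $\La(M,N)$ and $\La(N,M)$ are defined (one of $M,N$ being affreal), and the composite $\Rr_{N,M}\circ\Rr_{M,N}\in\END_R(M\conv N)$ has degree $\La(M,N)+\La(N,M)=2\,\de(M,N)$; since $M\conv N$ need not be simple this composite a priori is a polynomial in the "intertwiner" $z$, but by the standard normalization (as in \cite{KKKO15}) its leading behavior is governed precisely by $\de(M,N)$. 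Concretely, one knows $\Rr_{N,M}\circ\Rr_{M,N}$ is, up to a unit, multiplication by $z^{\de(M,N)}$ acting through the affinization; this is the key computational input and I would cite it from \cite[Theorem 3.2]{KKKO15} rather than re-derive it.

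For (i)$\Rightarrow$(ii): if $\de(M,N)=0$ then $\Rr_{N,M}\circ\Rr_{M,N}$ is a nonzero scalar, so $\Rr_{M,N}\colon M\conv N\to q^{-\La(M,N)}\,N\conv M$ is an isomorphism (its inverse being a scalar multiple of $q^{-\La(M,N)}\Rr_{N,M}$), which is exactly the commutation relation in \eqref{it:com rel}. For (ii)$\Rightarrow$(iii): if $M\conv N\simeq q^{-\La(M,N)}N\conv M$ as $R$-modules, then by Proposition~\ref{prop: simple head} both $M\conv N$ and $N\conv M$ have simple head and simple socle; an isomorphism between them forces $\head(M\conv N)\simeq\soc(M\conv N)$ (matching heads on the left with socles on the right, or comparing with the self-dual normalizations), and a finite-length module whose head and socle coincide and each appears once in the composition series must be simple — here I would invoke Proposition~\ref{prop: simple head}(iii) to pin down the multiplicities. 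For (iii)$\Rightarrow$(i): if $M\conv N$ is simple, then $\Rr_{M,N}$, being a nonzero endomorphism-up-to-twist of a simple module, is an isomorphism, hence so is $\Rr_{N,M}$, hence the composite $\Rr_{N,M}\circ\Rr_{M,N}$ is an isomorphism of degree $2\,\de(M,N)$; an isomorphism of a graded simple module to a shift of itself can only have degree $0$, so $\de(M,N)=0$. The final sentence, $\de(M,M)=0$ for $M$ affreal, then follows because a real simple module satisfies $M\conv M$ simple by definition of "real", so condition (iii) holds with $N=M$.

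The main obstacle I anticipate is not the logical skeleton above but making precise the claim that $\Rr_{N,M}\circ\Rr_{M,N}$ "is" $z^{\de(M,N)}$ up to a unit — i.e.\ controlling the $R$-matrix normalization and the interaction with the affinization variable $z_{\hsfM}$ when only one of the two modules is assumed affreal. One must argue that $\Rr_{M,N}$ is injective (this is where affreal-ness of one factor is used, via the nonvanishing of the $\frakp_{i,\be}$ acting on the affinization, Proposition~\ref{prop: l=r} and the construction in \cite{KKOP21}), and that the degree of the composite genuinely records $\de(M,N)$ rather than some larger quantity. Since all of this is available in \cite{KKKO15, KP18, KKOP21} — indeed the statement is essentially quoted there — I would keep the proof short, citing \cite[Theorem 3.2]{KKKO15} and \cite[Lemma 3.17]{KKOP21} for the degree computation and using Proposition~\ref{prop: simple head} for the head/socle multiplicity facts, and spell out only the three short implications (i)$\Rightarrow$(ii)$\Rightarrow$(iii)$\Rightarrow$(i).
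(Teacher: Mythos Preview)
The paper does not give a proof of this theorem at all: it is stated with the citation \cite{KKKO15,KKOP21} and used as a black box (see the text immediately following Theorem~\ref{thm: commute2} and Corollary~\ref{cor: commute1}). So there is no ``paper's own proof'' to compare against; your proposal is a sketch of the argument from the cited references themselves.

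That said, your outline is essentially the correct one and matches the strategy in \cite{KKKO15,KKOP21}. A few points of care if you want to make it self-contained rather than citing: the step (ii)$\Rightarrow$(iii) as you wrote it is slightly loose --- an isomorphism $M\conv N\simeq q^{-\La(M,N)}N\conv M$ does not by itself identify the head with the socle; you should instead argue that $\Rr_{M,N}$ is then an isomorphism (since $\HOM$ is one-dimensional by Proposition~\ref{prop: l=r}, any isomorphism is a scalar multiple of $\Rr_{M,N}$), hence its image, which is the simple head of $M\conv N$ by Proposition~\ref{prop: simple head}, is all of $N\conv M$. Also, your key input ``$\Rr_{N,M}\circ\Rr_{M,N}$ equals $z^{\de(M,N)}$ up to a unit'' is not quite the right formulation in the affreal generality of \cite{KKOP21}: what one actually uses is that $\de(M,N)\ge 0$ always (this is \cite[Lemma~3.17]{KKOP21}) and that the composite is a nonzero scalar iff both $R$-matrices are isomorphisms. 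With those adjustments your cycle (i)$\Rightarrow$(ii)$\Rightarrow$(iii)$\Rightarrow$(i) goes through. For the purposes of this paper, however, simply citing the references as the authors do is the intended approach.
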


Let us recall \eqref{eq:essential part of Laurent expansion of tQCM} and \eqref{eq:def of tde ij k}.
Then the following theorem was proved in \cite{Fuj22a} for simply-laced types, and it is generalized recently in \cite{KO22A}.

\begin{theorem}[\cite{Fuj22a,KO22A}]\label{thm: commute2}
Let $Q=(\Dynkin,\xi)$ be a Dynkin quiver. Then we have
$$    \de(S_Q(i;p),S_Q(j;s))  =   \tde_{i,j}[|p-s|]
\quad \text{ for any } (i,p), (j,s) \in \Gamma^Q_0.$$
where $\tde_{i,j}(t)$ is defined in \eqref{eq:essential part of Laurent expansion of tQCM}.
In particular, we have
$$\de(S_{[Q]}(\al_i),S_{[Q]}(\al_j)) =\max(d_i,d_j)\delta(i\sim j). $$
\end{theorem}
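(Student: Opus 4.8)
\textbf{Proof plan for Theorem~\ref{thm: commute2}.}
The statement has two parts: the general formula $\de(S_Q(i;p),S_Q(j;s)) = \tde_{i,j}[|p-s|]$ for $(i,p),(j,s) \in \Gamma^Q_0$, and the specialization at simple roots. I would follow the now-standard strategy used for simply-laced types in \cite{Fuj22a} and its generalization in \cite{KO22A}, rather than attempting a purely combinatorial attack. The key transport mechanism is the isomorphism $\Upxi_Q$ of Theorem~\ref{thm:important relationship}, which sends $L_q^Q(\be,0) = L_q(X_{i,p})$ to a shift of $[S_Q(i;p)]$. Under $\Upxi_Q$, the $q$-commutation coefficient $\ucalN(i,p;j,s)$ of the quantum torus $\calX_q$ — which by Theorem~\ref{thm: N and wt} equals $\pm(\al,\be)$ in $Q$-coordinates — matches, up to the known weight correction, the invariant $\La(\cdot,\cdot)$ governing how the cuspidal modules $q$-commute (Proposition~\ref{prop: l=r}, Theorem~\ref{thm: commute1}\,\eqref{it:com rel}). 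Thus the first step is to express $\La(S_Q(i;p),S_Q(j;s))$ in terms of $\ucalN$ and hence of the inverse $t$-quantized Cartan matrix entries $\tusfb_{i,j}(u)$; symmetrizing $\La(M,N)$ and $\La(N,M)$ then produces $\de$, and the weight terms $(\wt M,\wt N)$ cancel in the symmetrization, leaving precisely a combination of $\tusfb_{i,j}$'s.

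The second step is a careful bookkeeping of which $\tusfb_{i,j}(u)$ appear. From the definition of $\ucalN(i,p;j,s)$ as $\tusfb_{i,j}(p-s-1) - \tusfb_{i,j}(s-p-1) - \tusfb_{i,j}(p-s+1) + \tusfb_{i,j}(s-p+1)$ together with the antisymmetry $\tusfb_{i,j}(-u) = -\tusfb_{i,j}(u)$ (a consequence of Lemma~\ref{lem: non-zero b}, since $\tusfb_{i,j}(2\sfh - u) = -\tusfb_{i,j}(u)$ combined with periodicity), the expression for $\de$ should telescope into $2\tusfb_{i,j}(|p-s|-1) = 2\tde_{i,j}[|p-s|]$ on the ``raw'' level, and then the factor of $2$ is absorbed by the $\frac{1}{2}$ in the definition of $\de$. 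One must be attentive to the sign $\delta(p \ge s)$ in Theorem~\ref{thm: N and wt} and to the fact that $\La(M,N)$ and $\La(N,M)$ involve the coordinates in opposite orders; I expect these two effects to combine so that the final answer depends only on $|p-s|$, consistent with the claimed formula. For the specialization: taking $\al = \al_i$, $\be = \al_j$ forces $p = \xi_i$, $s = \xi_j$ (the positions of simple roots in $\Gamma^Q$, by Proposition~\ref{prop: Gamma Q 0}), so $|p-s| = d(i,j)$ when $i \sim j$ and then $\tde_{i,j}[d(i,j)+1] = \tusfb_{i,j}(d(i,j)) $ — wait, more precisely $\tde_{i,j}[|p-s|] = \tusfb_{i,j}(|p-s|-1)$, and Lemma~\ref{lem: non-zero b}(2) gives $\tusfb_{i,j}(d(i,j)+1) = \max(d_i,d_j)$; one checks $|p-s| = d(i,j)+1 = 2$ in the case $i \sim j$ since $|\xi_i - \xi_j| = 1$ gives $d(i,j) = 1$. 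When $i \not\sim i$ or $\sfc_{i,j} = 0$ the relevant $\tusfb$ vanishes by Lemma~\ref{lem: non-zero b}(1), giving $\de = 0$, and $\de(S_{[Q]}(\al_i),S_{[Q]}(\al_i)) = 0$ is immediate from affreality (Theorem~\ref{thm: commute1}, last sentence).

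The main obstacle, I expect, is \emph{not} the telescoping identity itself but justifying that $\La(S_Q(i;p),S_Q(j;s))$ is genuinely computed by $\ucalN$ — i.e., that the $q$-power relating $[S_Q(i;p)] \conv [S_Q(j;s)]$ to $[S_Q(j;s)] \conv [S_Q(i;p)]$ in $\calK_\bbA(R\gmod)$, transported via $\Upxi_Q$, coincides with the quantum-torus commutation in $\calX_q$ up to the explicit weight shift built into $\Upxi_Q$. This requires knowing that the head $S_Q(i;p) \hconv S_Q(j;s)$ is again (the module categorifying) a dual PBW monomial or at least that the leading term of the product in $\frakK_{q,Q}$ is the expected commutative monomial $\underline{m^{(i)}[\cdots]\cdot m^{(j)}[\cdots]}$ — which follows from Theorem~\ref{thm: F_q}(a) and the triangularity of the bases. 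An alternative, and perhaps cleaner, route for this step is to cite directly \cite[Theorem~4.12]{KO22A} (or the analogous statement in \cite{Fuj22a}) which already packages the computation of $\de$ between cuspidal modules in terms of $\tusfB$; in that case the proof reduces to matching conventions (the normalization of $\tde_{i,j}[k]$ in~\eqref{eq:def of tde ij k} versus the indexing in loc.\ cit.) and verifying the simply-laced-to-general reduction is compatible with the $\Gamma^Q$-coordinate labeling used here. I would present the argument in this second, citation-based form, with the telescoping check relegated to a short verification, and flag the convention-matching as the one genuinely delicate point.
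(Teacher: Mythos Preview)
The paper does \emph{not} prove this theorem: it is quoted from \cite{Fuj22a,KO22A} without argument (see the sentence immediately preceding the statement). So the ``paper's own proof'' is simply a citation, and your fallback suggestion---cite the relevant result in \cite{KO22A} and match conventions---is exactly what the paper does.

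Your primary proposed route, however, has a real gap. You want to read $\La(S_Q(i;p),S_Q(j;s))$ off from the quantum-torus exponent $\ucalN(i,p;j,s)$ via the isomorphism $\Upxi_Q$. This works only when $\de=0$: in that case $M\conv N\simeq q^{-\La}N\conv M$ by Theorem~\ref{thm: commute1}, the isomorphism becomes an identity in $\calK_\bbA(R\gmod)$, and transporting through $\Upxi_Q$ and comparing dominant monomials recovers $\La=-\ucalN$ (this is precisely Corollary~\ref{cor: commute1}, which in the paper's logic is a \emph{consequence} of Theorem~\ref{thm: commute2}, not an input to it). When $\de\ne 0$ there is no such $q$-commutation in the Grothendieck ring, and $\La$---the degree of the R-matrix $\Rr_{M,N}$---is genuinely finer than any ring-theoretic datum that $\Upxi_Q$ transports. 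Knowing ``the leading term of the product'' does not help: the coefficient of the head $[M\hconv N]$ in the canonical-basis expansion of $[M][N]$ involves $\tLa$, but identifying that head and its shift already presupposes control of $\La$. The actual proofs in \cite{Fuj22a,KO22A} compute $\de$ by different means (geometry of graded quiver varieties in the simply-laced case; direct analysis of R-matrices for determinantal modules in general), and there is no shortcut through the present paper's toolkit.

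A smaller error in your ``In particular'' paragraph: the simple root $\al_i$ does \emph{not} sit at $(i,\xi_i)$ in $\Gamma^Q$ unless $i$ is a source of $Q$; what sits at $(i,\xi_i)$ is $\gamma_i^Q=(1-\tau_Q)\varpi_i$. Since for $i\sim j$ one cannot make both $i$ and $j$ sources, your position computation breaks down exactly in the interesting case. The clean way to get the specialization is to note that $S_{[Q]}(\al_i)$ is the one-dimensional simple $R(\al_i)$-module $L(i)$ for every $Q$, and then $\de(L(i),L(j))=-(\al_i,\al_j)=\max(d_i,d_j)\delta(i\sim j)$ is an elementary R-matrix computation independent of the general formula.
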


\begin{proposition}[\cite{FHOO,KO22A}]\label{prop: dual zero}
Let $a \ne b \in I$ and $Q=(\Dynkin,\xi)$ be a Dynkin quiver with $\phi_Q(i,p)=(\al_a,0)$ and $\phi_Q(j,s)=(\al_b,0)$. Then we have
$$  \tde_{i,j^*}[\sfh-|p-s|] = 0.  $$
\end{proposition}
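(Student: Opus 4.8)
The statement to be proved is $\tde_{i,j^*}[\sfh-|p-s|]=0$ whenever $\phi_Q(i,p)=(\al_a,0)$ and $\phi_Q(j,s)=(\al_b,0)$ for distinct $a\ne b$. Recalling the notation $\tde_{i,j^*}[k]=\tusfb_{i,j^*}(k-1)$, the claim amounts to $\tusfb_{i,j^*}(\sfh-|p-s|-1)=0$. The plan is to reduce this to the vanishing criterion in Lemma~\ref{lem: non-zero b}\eqref{it: non-zero b}, which says $\tusfb_{i,j^*}(u)=0$ as soon as either $u\le d(i,j^*)$ or $u\equiv_2 d(i,j^*)$. Thus I would prove the proposition by establishing two things about the integer $u:=\sfh-|p-s|-1$: first a \emph{parity} statement, namely $u\equiv_2 d(i,j^*)$, and second — should the parity argument alone not close the boundary cases — a \emph{size} statement controlling $|p-s|$ from above so that $u\le d(i,j^*)$ exactly when it must.

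First I would set up the combinatorics. Since $(i,p),(j,s)\in\hDynkin_0$, the parity function $\ep$ gives $p\equiv_2\ep_i$ and $s\equiv_2\ep_j$, so $|p-s|\equiv_2\ep_i-\ep_j\equiv_2 d(i,j)$ (the last congruence because $\ep$ alternates along edges, so $\ep_i-\ep_j\equiv_2 d(i,j)$). Also $\sfh$ and the Coxeter number interact with the involution $*$: one has $d(i,j^*)\equiv_2 d(i,j)+\sfh$ in the relevant normalization — this is the kind of identity one extracts from Proposition~\ref{prop: Gamma Q 0}(b) together with the periodicity relations in Lemma~\ref{lem: non-zero b}(iii)–(iv), which relate $\tusfb_{i,j}(\cdot)$ shifted by $\sfh$ to $\tusfb_{i,j^*}(\cdot)$. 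Combining these, $u=\sfh-|p-s|-1\equiv_2 \sfh-d(i,j)-1$, and I would check that this is congruent mod $2$ to $d(i,j^*)$; if so, Lemma~\ref{lem: non-zero b}\eqref{it: non-zero b} immediately forces $\tusfb_{i,j^*}(u)=0$. This is the clean route and I expect it to be the main mechanism.

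The subtlety — and what I expect to be the main obstacle — is the boundary/size case: the parity criterion fails to apply only when $u$ and $d(i,j^*)$ have the \emph{opposite} parity, in which case I must instead show $u\le d(i,j^*)$, i.e.\ $|p-s|\ge \sfh-1-d(i,j^*)$. Equivalently one needs a lower bound on the ``spread'' $|p-s|$ between the two coordinates carrying $\al_a$ and $\al_b$ at height $0$ in the AR-quiver $\Gamma^Q$. Here I would use Proposition~\ref{prop: Gamma Q 0}(a): $\Gamma^Q_0=\{(i,p)\in\hDynkin_0\mid \xi_{i^*}-\sfh<p\le\xi_i\}$, so both $p$ and $s$ lie in a window of length $\sfh$; moreover $\al_a,\al_b$ being \emph{simple} roots forces $(i,p)$ and $(j,s)$ to be extremal vertices of $\Gamma^Q$ — a simple root $\al_a$ sits at $(a,\xi_a)$ by the definition of the $Q$-coordinate, i.e.\ on the ``top'' boundary of the window. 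So $p=\xi_i$ and $s=\xi_j$ with $i=a$, $j=b$, and then $|p-s|=|\xi_a-\xi_b|$, which I can bound below using the height-function condition $|\xi_k-\xi_l|=1$ for $k\sim l$ and chaining along a path in $\Dynkin$: $|\xi_a-\xi_b|$ and $d(a,b)$ have the same parity and $|\xi_a-\xi_b|\le d(a,b)$, while the complementary quantity $\sfh-|\xi_a-\xi_b|$ relates to a path ``the other way around''. Feeding this back, in the opposite-parity case one gets $|p-s|=|\xi_a-\xi_b|$ large enough that $u=\sfh-|p-s|-1\le d(i,j^*)$, and Lemma~\ref{lem: non-zero b}\eqref{it: non-zero b} again gives the vanishing.

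Finally I would assemble the two cases: by the parity computation of the second paragraph either $u\equiv_2 d(i,j^*)$ (done), or else the size argument of the third paragraph applies and $u\le d(i,j^*)$ (done). In either case $\tusfb_{i,j^*}(u)=0$, hence $\tde_{i,j^*}[\sfh-|p-s|]=0$, which is the assertion. I expect the only genuinely delicate point to be pinning down the mod-$2$ relationship between $d(i,j^*)$, $d(i,j)$ and $\sfh$; once that is in hand, everything else is bookkeeping with Lemma~\ref{lem: non-zero b} and Proposition~\ref{prop: Gamma Q 0}. An alternative, possibly cleaner, route worth keeping in reserve: translate the claim via Theorem~\ref{thm: commute2} into the statement $\de(S_Q(i;p),S_Q(j;s)^{*\,\text{dual-shift}})=0$ about cuspidal modules and deduce the vanishing from the representation-theoretic side (head/socle considerations of Proposition~\ref{prop: simple head} plus Theorem~\ref{thm: commute1}), which may sidestep the parity bookkeeping entirely — but I would only fall back on this if the direct Lemma~\ref{lem: non-zero b} computation proves recalcitrant.
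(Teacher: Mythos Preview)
Your main strategy---reducing to the vanishing criterion of Lemma~\ref{lem: non-zero b}\eqref{it: non-zero b}---cannot succeed, because that lemma gives only a \emph{sufficient} condition for $\tusfb_{i,j^*}(u)=0$, and there are instances of the proposition where neither clause of the lemma applies. Concretely, take $\g=C_3$ with the Dynkin quiver $Q$ of Example~\ref{ex: C3 AR} (so $\sfh=6$ and ${}^*=\id$). Then $\phi_Q(1,3)=(\al_1,0)$ and $\phi_Q(1,1)=(\al_2,0)$, so $(i,p)=(1,3)$, $(j,s)=(1,1)$, $|p-s|=2$ and $u=\sfh-|p-s|-1=3$. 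Here $d(i,j^*)=d(1,1)=0$: the parity clause fails ($3\not\equiv_2 0$) and the size clause fails ($3>0$), yet $\tusfb_{1,1}(3)=0$ by the explicit formula $\tde_{1,1}(t)=t+t^5$ in Proposition~\ref{prop:formulas for inverses of tQCM}. So the vanishing in this instance of the proposition is \emph{not} detected by Lemma~\ref{lem: non-zero b}\eqref{it: non-zero b}, and your dichotomy ``parity or size'' does not cover all cases.

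Your size argument also rests on a false premise: you assert that ``a simple root $\al_a$ sits at $(a,\xi_a)$'', i.e.\ that $\phi_Q(i,p)=(\al_a,0)$ forces $i=a$ and $p=\xi_a$. This is wrong---by definition $\phi_Q(i,\xi_i)=(\gamma_i^Q,0)$ with $\gamma_i^Q=(1-\tau_Q)\varpi_i$, which equals $\al_i$ only when $i$ is a source of $Q$. In the same $C_3$ example, $\al_2$ sits at $(1,1)$, so $j=1\ne 2=b$ and $s=1\ne\xi_1=3$. Hence the bound $|p-s|=|\xi_a-\xi_b|$ you derive is unjustified. The paper itself does not give a self-contained argument here; it defers to \cite[Corollary 8.9]{KO22A} (cf.\ \cite[Lemma 9.12]{FHOO}), where the proof uses finer control on the positions $\phi_Q^{-1}(\al_a,0)$ than what the height-function combinatorics alone provides. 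Your proposed fallback via Theorem~\ref{thm: commute2} and the representation-theoretic invariants is closer in spirit to how the cited references proceed, and is the direction to pursue; the purely combinatorial route through Lemma~\ref{lem: non-zero b}\eqref{it: non-zero b} is a dead end.
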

\begin{proof}
The proof can be found in \cite[Corollary 8.9]{KO22A} (cf.~\cite[Lemma 9.12]{FHOO}).
\end{proof}

 Now we are ready to present the multiplicative relations for pairs of $L_q(X_{i,p})$'s.

\begin{corollary} \label{cor: commute1}
For $(i,p), (j,s) \in \hDynkin_0$, assume that there exists a Dynkin quiver $Q$ such that $(i,p), (j,s) \in \Gamma_0^Q$ and  $\tde_{i,j}[|p-s|]=0$.
Then we have
$$   L_q( X_{i,p})* L_q(X_{j,s})   = q^{\ucalN(i,p;j,s)}  L_q(X_{j,s}) *L_q(X_{i,p})$$
and hence $\La(S_Q(i;p), S_Q(j;s))= -\ucalN(i,p;j,s).$
\end{corollary}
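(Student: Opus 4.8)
The plan is to transport the vanishing hypothesis $\tde_{i,j}[|p-s|]=0$ through the categorification isomorphism $\Upxi_Q$ of Theorem~\ref{thm:important relationship} and invoke Theorem~\ref{thm: commute1}. First I would set $\be,\ga\in\Phi_+$ to be the roots with $\phi_Q(i,p)=(\be,0)$ and $\phi_Q(j,s)=(\ga,0)$, so that $L_q(X_{i,p})=L_q^Q(\be,0)$ and $L_q(X_{j,s})=L_q^Q(\ga,0)$, and recall from \eqref{eq:important diagram} that $\Upxi_Q(L_q^Q(\be,0))=q^{(\be,\be)/4+(\be,\rho)/2}[S_Q(i;p)]$ with the analogous formula for $\ga$. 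By Proposition~\ref{prop:categorification of quantum minors} and Theorem~\ref{thm:affinization of det}, the modules $S_Q(i;p)$ and $S_Q(j;s)$ are affreal, so Theorem~\ref{thm: commute2} gives $\de(S_Q(i;p),S_Q(j;s))=\tde_{i,j}[|p-s|]=0$. Then Theorem~\ref{thm: commute1} yields $S_Q(i;p)\conv S_Q(j;s)\simeq q^{-\La(S_Q(i;p),S_Q(j;s))}S_Q(j;s)\conv S_Q(i;p)$ in $\calK_\bbA(R\gmod)$.

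Next I would pull this commutation relation back along $\Upxi_Q$. Since $\Upxi_Q$ is a $\Z[q^{\pm1/2}]$-algebra isomorphism, applying $\Upxi_Q^{-1}$ and cancelling the scalar prefactors $q^{(\be,\be)/4+(\be,\rho)/2}$ and $q^{(\ga,\ga)/4+(\ga,\rho)/2}$ (which commute past everything) gives
$$L_q(X_{i,p})*L_q(X_{j,s}) = q^{-\La(S_Q(i;p),S_Q(j;s))} L_q(X_{j,s})*L_q(X_{i,p}).$$
It remains to identify the exponent $-\La(S_Q(i;p),S_Q(j;s))$ with $\ucalN(i,p;j,s)$. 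Here the cleanest route is to note that both sides of the desired identity are already known to be proportional: since $\tX_{i,p}*\tX_{j,s}=q^{\ucalN(i,p;j,s)}\tX_{j,s}*\tX_{i,p}$ in $\calX_q$ and the leading ($\leN$-maximal) monomial of $L_q(X_{i,p})$ is $\underline{X_{i,p}}$ with leading monomial $\underline{X_{j,s}}$ for $L_q(X_{j,s})$ (Theorem~\ref{thm: L_q}), comparing leading terms in the equation $L_q(X_{i,p})*L_q(X_{j,s}) = q^{-\La}L_q(X_{j,s})*L_q(X_{i,p})$ forces $-\La(S_Q(i;p),S_Q(j;s))=\ucalN(i,p;j,s)$. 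This simultaneously proves the displayed commutation relation with the correct power of $q$ and the identity $\La(S_Q(i;p),S_Q(j;s))=-\ucalN(i,p;j,s)$.

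\textbf{Main obstacle.} The genuinely substantive inputs — Theorem~\ref{thm: commute2} (the formula $\de(S_Q(i;p),S_Q(j;s))=\tde_{i,j}[|p-s|]$, due to \cite{Fuj22a,KO22A}) and the existence of the isomorphism $\Upxi_Q$ — are quoted, so the remaining work is essentially bookkeeping. The one point requiring a little care is the leading-term comparison: I must make sure that $\underline{X_{i,p}}$ and $\underline{X_{j,s}}$ are genuinely $\leN$-maximal and that their product picks up exactly $\ucalN(i,p;j,s)$ with no spurious sign (there is no sign here because $p-s$ can have either parity but the relation in Definition~\ref{def:quantum torus Xqg} is a clean power of $q$), and that no lower-order monomial on one side can conspire to match a leading monomial on the other. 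Since all monomials of $L_q(X_{i,p})-\underline{X_{i,p}}$ are strictly $\lN\underline{X_{i,p}}$ and $*$ is compatible with $\leN$ in the evident way, this comparison is unambiguous, so I do not anticipate a real difficulty.
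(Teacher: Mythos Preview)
Your proposal is correct and follows essentially the same approach as the paper: use Theorem~\ref{thm: commute2} to get $\de=0$, Theorem~\ref{thm: commute1} to obtain $q$-commutation on the quiver Hecke side, transport back via the isomorphism $\Upxi_Q$ of Theorem~\ref{thm:important relationship}, and then identify the exponent by comparing leading (dominant) monomials. The paper phrases the last step as ``an element of $\frakK_q$ is determined by its dominant monomials'' (citing \cite{JLO1}), which is the same idea as your $\leN$-maximal monomial comparison.
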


\begin{proof}
By Theorem~\ref{thm: commute1} and Theorem~\ref{thm: commute2},
$S_{Q}(i;p) \conv S_{Q}(j;s) \simeq q^k S_{Q}(j;s) \conv S_{Q}(j;s)$ for some $k \in \Z$.
By Theorem \ref{thm:important relationship}, we have
$$   L_q(X_{i,p})* L_q(X_{j,s})   = q^{k}  L_q(X_{j,s}) *L_q(X_{i,p}).$$
Since an element of $\frakK_q$ (and so $\frakK_{q,Q}$) is determined by its dominant monomials (cf.~\cite[Lemma 5.20 and Lemma 5.40]{JLO1}),
the integer $k$ must be $\ucalN(i,p;j,s)$ by comparing the dominant monomials on both sides (recall Definition \ref{def:quantum torus Xqg}),
as we desired.
Note that the last statement follows directly from Theorem \ref{thm:important relationship} and Theorem \ref{thm: commute1} (\ref{it:com rel}).
\end{proof}

\begin{corollary} \label{cor: order works well}
Let $Q=(\Dynkin,\xi)$ be a Dynkin quiver. Let $(i,p),(j,s) \in \hDynkin_0$ such that $\xi_i \ge p \ge s > \xi_j$. Then we have
$$   L_q(X_{i,p}) * L_q(X_{j,s})  =   L_q(X_{j,s}) *  L_q(X_{i,p})$$
\end{corollary}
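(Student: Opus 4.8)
The plan is to reduce the claimed commutation relation to Corollary~\ref{cor: commute1} by showing that the hypothesis $\tde_{i,j}[|p-s|]=0$ is automatically satisfied in the present situation, and that the resulting power of $q$ collapses to zero. First I would observe that, since $Q=(\Dynkin,\xi)$ is a Dynkin quiver and $\xi_i \ge p$, $\xi_j < s$, both $(i,p)$ and $(j,s)$ lie in $\Gamma^Q_0$; indeed Proposition~\ref{prop: Gamma Q 0} characterizes $\Gamma^Q_0$ as $\{(k,r)\in\hDynkin_0 \mid \xi_{k^*}-\sfh < r \le \xi_k\}$, and the inequalities $\xi_i\ge p$ and $\xi_j\ge s$ (the latter because $s>\xi_j$ is false — wait, here $s>\xi_j$, so I must instead place $(j,s)$ using the reflected picture). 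So the more careful route is: apply a sequence of reflections $s_{k_1},s_{k_2},\dots$ at sources to pass from $Q$ to a quiver $Q'$ in whose AR-quiver both vertices sit in $\Gamma^{Q'}_0$; by \eqref{eq: Qd properties} and Proposition~\ref{prop: Gamma Q 0} such a $Q'$ exists whenever $p\ge s$, and the condition $\xi_i \ge p \ge s > \xi_j$ encodes exactly that $(i,p)$ is "early" and $(j,s)$ is "late" relative to $Q$, i.e.\ the segment of $\hDynkin_0$ between them of length $p-s < \sfh$ fits inside a single fundamental domain $\Gamma^{Q'}_0$.

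Next, the key arithmetic input: I want $\tde_{i,j}[\,p-s\,]=\tusfb_{i,j}(p-s-1)=0$. Here I would use Lemma~\ref{lem: non-zero b}\eqref{it: non-zero b}, which says $\tusfb_{i,j}(u)=0$ whenever $u\le d(i,j)$. The hypothesis $\xi_i \ge p \ge s > \xi_j$ together with the defining property $|\xi_k-\xi_l|=1$ for $k\sim l$ of a height function forces $p-s$ to be small in a way controlled by the graph distance $d(i,j)$: walking along a path in $\Dynkin$ from $i$ to $j$, the height function changes by $\pm 1$ at each step, so $\xi_i-\xi_j \le d(i,j)$, and since $p\le\xi_i$, $s>\xi_j$ we get $p-s < \xi_i-\xi_j+ (\text{parity correction}) \le d(i,j)+1$; combined with the parity constraint $p-s-1 \equiv_2 d(i,j)$ (from $(i,p),(j,s)\in\hDynkin_0$ and $d(i,j)\equiv_2 p-s$, cf.\ Lemma~\ref{lem: non-zero b}\eqref{it: non-zero b} again) this pins $\tusfb_{i,j}(p-s-1)=0$. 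I would spell this inequality out carefully, as it is the technical heart; the parity bookkeeping between $\ep_i,\ep_j$ and $d(i,j)$ must be tracked.

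Granting $\tde_{i,j}[|p-s|]=0$, Corollary~\ref{cor: commute1} applies verbatim (after passing to $Q'$, which does not change $\frakK_q$ or the elements $L_q(X_{i,p})$ since these are intrinsic to $\calX_q$) and yields
\[
L_q(X_{i,p})*L_q(X_{j,s}) = q^{\ucalN(i,p;j,s)} L_q(X_{j,s})*L_q(X_{i,p}).
\]
It then remains to show $\ucalN(i,p;j,s)=0$. For this I would invoke Theorem~\ref{thm: N and wt}: writing $\phi_{Q'}(i,p)=(\al,k)$ and $\phi_{Q'}(j,s)=(\be,l)$, we have $\ucalN(i,p;j,s)=(-1)^{k+l+\delta(p\ge s)}\delta((i,p)\ne(j,s))(\al,\be)$. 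Since both vertices lie in $\Gamma^{Q'}_0$ we have $k=l=0$; and $\tde_{i,j}[|p-s|]=0$ is precisely the statement that $\de(S_{Q'}(i;p),S_{Q'}(j;s))=0$ by Theorem~\ref{thm: commute2}, which via $\La+\text{(weight pairing)}$ bookkeeping (Theorem~\ref{thm: commute1}\eqref{it:com rel} and Definition~\ref{def:invariants}) forces $(\al,\be)=0$ as well — indeed $\de=\tfrac12(\La+\bar\La)$ and if $(\al,\be)\ne 0$ the two cuspidals would not strongly commute. Hence $\ucalN(i,p;j,s)=0$ and the two elements commute on the nose, as desired.

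\textbf{Main obstacle.} The step I expect to be the real work is the inequality argument showing $p-s \le d(i,j)$ (equivalently $\tusfb_{i,j}(p-s-1)=0$): it requires combining the AR-quiver combinatorics of Proposition~\ref{prop: Gamma Q 0} and \eqref{eq: Qd properties} with the height-function condition $|\xi_k-\xi_l|=1$ and the parity function $\ep$, and one must be careful that after the reflections taking $Q$ to $Q'$ the relevant positions still satisfy the needed inequalities. The collapse $\ucalN(i,p;j,s)=0$ is then essentially formal given Theorem~\ref{thm: N and wt} and Theorem~\ref{thm: commute2}, but checking that $(\al,\be)=0$ (rather than merely that the $q$-power vanishes by the sign $(-1)^{k+l}$) is worth stating precisely.
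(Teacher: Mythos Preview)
Your overall strategy matches the paper's: pick an auxiliary Dynkin quiver $Q'$ with $(i,p),(j,s)\in\Gamma^{Q'}_0$, verify $\tde_{i,j}[|p-s|]=0$ via Lemma~\ref{lem: non-zero b}\eqref{it: non-zero b}, and then apply Corollary~\ref{cor: commute1}. The inequality step is also essentially what the paper does: writing $p=\xi_i-2a$ and $s=\xi_j+2b$ with $a\ge 0$, $b\ge 1$, one has $p-s-1=\xi_i-\xi_j-1-2(a+b)\le d(i,j)-1$, since $|\xi_i-\xi_j|\le d(i,j)$. (Your parity remark ``$p-s-1\equiv_2 d(i,j)$'' is backwards---in fact $p-s\equiv_2 d(i,j)$---but this is harmless because the inequality alone suffices.)

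The genuine gap is in your argument for $\ucalN(i,p;j,s)=0$. You invoke Theorem~\ref{thm: N and wt} to write $\ucalN=-(\al,\be)$ with $\al,\be$ the $Q'$-roots, and then claim that $\de(S_{Q'}(i;p),S_{Q'}(j;s))=0$ forces $(\al,\be)=0$. This implication is false: strong commutation of cuspidals says nothing about the inner product of their weights. For a trivial counterexample take $\al=\be$; then $\de=0$ (affreal modules strongly commute with themselves) while $(\al,\al)>0$. More structurally, from Corollary~\ref{cor: commute1} one gets $\La(M,N)=-\ucalN(i,p;j,s)$ and $\La(N,M)=-\ucalN(j,s;i,p)=\ucalN(i,p;j,s)$, so $\de=\tfrac12(\La(M,N)+\La(N,M))=0$ is automatic and tautological; it carries no information about $(\al,\be)$.

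The paper avoids this detour entirely. It computes $\ucalN(i,p;j,s)$ directly from its definition as a signed sum of four values $\tusfb_{i,j}(p-s\pm1)$, $\tusfb_{i,j}(s-p\pm1)$, and checks that each term vanishes by the same inequality $|\xi_i-\xi_j|\le d(i,j)$ and Lemma~\ref{lem: non-zero b}\eqref{it: non-zero b} (the terms with $s-p$ are $\le 1\le d(i,j)$ once one handles the $p=s$ case via parity of $d(i,j)$; the $p-s\pm 1$ terms are bounded by $\xi_i-\xi_j-1\le d(i,j)-1$). This is both simpler and correct; you should replace your Theorem~\ref{thm: N and wt} argument with this direct computation.
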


\begin{proof}
Take a Dynkin quiver $Q'$ such that $(i,p),(j,s) \in \Gamma^{Q'}_0$.  Take $a \in \Z_{\ge 0}$ and $b \in \Z_{\ge 1}$
such that $p = \xi_i-2a$ and $s = \xi_j+2b$.
Then we have
$$  \tde_{i,j}[p-s]  =  \tusfb_{i,j}(p-s-1) = \tusfb_{i,j}(\xi_i-\xi_j-1-2(a+b)) .$$
Since $\xi_i-\xi_j-1 < d(i,j)$,  Lemma~\ref{lem: non-zero b}~\eqref{it: non-zero b} tells us  that $\tde_{i,j}(p-s) =0$.
By Corollary \ref{cor: commute1}, we have
$$L_q(X_{i,p}) * L_q(X_{j,s})  = q^{\ucalN(i,p;j,s)}  L_q(X_{j,s}) *  L_q(X_{i,p})  $$
Now we claim that $\ucalN(i,p;j,s)=0$. Note that
\begin{align}
\ucalN(i,p;j,s)
& = \tusfb_{i,j}(\xi_i-\xi_j-2a-2b-1)  +  \tusfb_{i,j}(\xi_i-\xi_j -2a -2b+1). \label{eq: two term vanish}
\end{align}
Since the two terms in~\eqref{eq: two term vanish} vanish by Lemma~\ref{lem: non-zero b}~\eqref{it: non-zero b}, the claim holds.
\end{proof}

\begin{corollary} \label{cor: dual commute}
Let $a \ne b \in I$ and $Q=(\Dynkin,\xi)$ be a Dynkin quiver with $\phi_Q(i,p)=(\al_a,0)$ and $\phi_Q(j,s)=(\al_b,0)$. Then we have
\begin{equation}\label{eq: dual commute}
 L_q(X_{i,p})* \frakD_q^{\pm 1} (L_q(X_{j,s})) = q^{\mp \ucalN(i,p;j,s)}
 \frakD^{\pm 1} (L_q(X_{j,s})) * L_q(X_{i,p}),
\end{equation}
where $\frakD_q$ is in \eqref{eq:dual map Dq} and we understand $\frakD_q^{+1} = \frakD_q$.
\end{corollary}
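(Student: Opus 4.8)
The plan is to deduce Corollary~\ref{cor: dual commute} from the already-established commutation relation in Corollary~\ref{cor: commute1} together with the vanishing statement of Proposition~\ref{prop: dual zero}, transported through the dual map $\frakD_q$. First I would fix the sign and set up notation: by definition $\frakD_q(L_q(X_{j,s})) = L_q(X_{j^*,s+\sfh})$ (using Lemma~\ref{lem: D_q} together with $\frakD_q(\tX_{j,s}) = \tX_{j^*,s+\sfh}$), and similarly $\frakD_q^{-1}(L_q(X_{j,s})) = L_q(X_{j^*,s-\sfh})$. So proving \eqref{eq: dual commute} amounts to establishing two $q$-commutation relations in $\calX_q$: one between $L_q(X_{i,p})$ and $L_q(X_{j^*,s+\sfh})$, and one between $L_q(X_{i,p})$ and $L_q(X_{j^*,s-\sfh})$, each with the prescribed power of $q$.

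The key step is to check the hypothesis of Corollary~\ref{cor: commute1} for the pair $(i,p)$ and $(j^*, s\pm\sfh)$. By assumption $\phi_Q(i,p) = (\al_a,0)$ and $\phi_Q(j,s) = (\al_b,0)$, so by Proposition~\ref{prop: Gamma Q 0}(b) we have $\phi_Q(j^*, s+\sfh) = (\al_b, -1)$ and $\phi_Q(j^*, s-\sfh) = (\al_b, 1)$; in particular both $(j^*, s+\sfh)$ and $(j^*, s-\sfh)$ lie in an appropriate $\Gamma_0^{Q'}$ for a suitably shifted Dynkin quiver $Q'$ (equivalently, one applies the parameter shift $\scrS_r$ of \eqref{eq: shift of spectral parameters}, which intertwines $F_q$ and $L_q$, to reduce to a quiver whose AR-quiver contains both vertices). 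Then I must verify $\tde_{i,j^*}[|p - (s\pm\sfh)|] = 0$. Since $p$ and $s$ both lie in $\Gamma_0^Q$, one has $|p-s| < \sfh$, so $|p - (s \mp \sfh)| = \sfh - |p-s|$ for the appropriate choice of sign bookkeeping, and $\tde_{i,j^*}[\sfh - |p-s|] = \tusfb_{i,j^*}(\sfh - |p-s| - 1) = 0$ is exactly the content of Proposition~\ref{prop: dual zero}. Feeding this into Corollary~\ref{cor: commute1} gives
$$L_q(X_{i,p}) * L_q(X_{j^*, s\pm\sfh}) = q^{\ucalN(i,p; j^*, s\pm\sfh)} L_q(X_{j^*, s\pm\sfh}) * L_q(X_{i,p}).$$

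It then remains to identify the exponent $\ucalN(i,p;j^*,s\pm\sfh)$ with $\mp\,\ucalN(i,p;j,s)$. This I would do via Theorem~\ref{thm: N and wt}: writing $\phi_Q(i,p) = (\al_a,0)$, $\phi_Q(j,s) = (\al_b,0)$, $\phi_Q(j^*, s+\sfh) = (\al_b,-1)$, $\phi_Q(j^*, s-\sfh) = (\al_b,1)$, the formula $\ucalN(i,p;j,s) = (-1)^{k+l+\delta(p \ge s)}\delta((i,p)\ne(j,s))(\al,\be)$ shows that passing from $(j,s)$ to $(j^*, s\pm\sfh)$ changes the exponent $l$ of the second coordinate by $\mp 1$ and changes $\delta(p\ge s)$ in a controlled way (since $s + \sfh > p$ and $s - \sfh < p$ whenever $p,s \in \Gamma_0^Q$); a short parity check shows the net effect is a sign flip, giving $\ucalN(i,p;j^*,s\pm\sfh) = \mp\,\ucalN(i,p;j,s)$. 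Applying $\frakD_q^{\pm1}$ (an algebra automorphism fixing $q$) to rewrite $L_q(X_{j^*,s\pm\sfh}) = \frakD_q^{\pm1}(L_q(X_{j,s}))$ then yields \eqref{eq: dual commute}. The main obstacle I anticipate is not conceptual but bookkeeping: correctly tracking the interplay between the $\pm$ in $\frakD_q^{\pm1}$, the case $p \ge s$ versus $p < s$ in the $\ucalN$ formula, and the sign in Theorem~\ref{thm: N and wt}, so that the two cases of the corollary come out with exactly the claimed exponents $\mp\,\ucalN(i,p;j,s)$; I would handle this by treating the $+$ and $-$ cases separately and, if needed, invoking the symmetry $\ucalN(i,p;j,s) = -\ucalN(j,s;i,p)$ to normalize.
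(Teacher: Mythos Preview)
Your overall strategy matches the paper's: rewrite $\frakD_q^{\pm1}(L_q(X_{j,s}))$ as $L_q(X_{j^*,s\pm\sfh})$, invoke the vanishing of Proposition~\ref{prop: dual zero} to feed into Corollary~\ref{cor: commute1}, and then read off the exponent via Theorem~\ref{thm: N and wt}. However, there is a genuine gap in your case analysis.

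Your identity $|p-(s\pm\sfh)|=\sfh-|p-s|$ only holds in two of the four sign/ordering combinations, namely $\frakD_q$ with $s\le p$, and $\frakD_q^{-1}$ with $s\ge p$. In the remaining two cases (e.g.\ $\frakD_q$ with $s>p$) one has instead $|p-(s+\sfh)|=\sfh+|p-s|>\sfh$, and here your argument breaks down on both fronts. First, Proposition~\ref{prop: dual zero} gives vanishing at $\sfh-|p-s|$, not at $\sfh+|p-s|$. Second, and more fundamentally, when the gap exceeds $\sfh$ there is in general \emph{no} Dynkin quiver $Q'$ whose AR-quiver $\Gamma_0^{Q'}$ contains both $(i,p)$ and $(j^*,s+\sfh)$ (for instance in type $A_2$ with $\sfh=3$, the pair $(1,1)$ and $(2,-4)$ cannot lie in any common $\Gamma_0^{Q'}$), so the hypothesis of Corollary~\ref{cor: commute1} is not available. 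Your parenthetical about the parameter shift $\scrS_r$ does not help here: $\scrS_r$ translates both points by the same amount and cannot bring them into a common AR-quiver.

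The paper closes exactly this gap by first disposing of the case $|s\pm\sfh-p|>\sfh$ via Proposition~\ref{prop: far enough}, which gives the required $q$-commutation directly from the disjointness of ranges, and only then reduces to the two ``near'' cases that you handle. You should insert this split at the start; after that, your argument for the near cases (and the exponent identification via Theorem~\ref{thm: N and wt} using $\phi_Q(j^*,s\pm\sfh)=(\al_b,\mp1)$) goes through as in the paper.
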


\begin{proof}
By Lemma~\ref{lem: D_q}, $\frakD_q^{\pm 1} (L_q(X_{j,s})) = L_q(X_{j^*,s\pm\sfh})$. If $|s\pm \sfh-p | > \sfh$, our assertion follows from Proposition~\ref{prop: far enough}. Thus it suffices to prove the assertion for
$$ \text{{\rm (i)} $s \le p$ and $\frakD_q$}, \quad \text{ or } \quad \text{{\rm (ii)} $s \ge p$ and $\frakD_q^{-1}$. }$$

\noindent
In case {\rm (i)}, the right hand side of~\eqref{eq: dual commute}  becomes
$ L_q(X_{i,p})*  L_q(X_{j^*,s+\sfh}).$
 By taking Dynkin quiver $Q'$ such that
$(i,p),(j^*,s+\sfh) \in \Gamma_0^{Q'}$,
it follows from Proposition~\ref{prop: dual zero}, Theorem~\ref{thm: commute1} and Theorem~\ref{thm: commute2} that
$S_{Q'}(i,p) \conv S_{Q'}(j^*,s+\sfh) \simeq q^k S_{Q'}(j^*,s+\sfh) \conv S_{Q'}(i,p) $ for some $k \in \Z$.
By Theorem \ref{thm:important relationship} with respect to $Q'$, we have
$$ L_q(X_{i,p})*  L_q(X_{j^*,s+\sfh}) \simeq   q^{\ucalN(i,p;j^*,\sfh)} L_q(X_{j^*,s+\sfh}) * L_q(X_{i,p}).$$
Since $\phi_{Q}(j^*,s+\sfh) =(\al_b,-1)$, the assertion for {\rm (i)} follows from Theorem~\ref{thm: N and wt}.  Case {\rm (ii)} is also proved in a similar way.
\end{proof}

\begin{example}
    We continue Example \ref{ex:for main1 in C3}.
    Let us recall that
    \begin{align*}
        \raisebox{3.2em}{ \scalebox{0.7}{\xymatrix@!C=2ex@R=2ex{
        (i\setminus p) & -3&  -2 & -1 & 0 & 1 & 2  & 3\\
        1\,\,\,&&& L_q(X_{1,-1}) \ar@{->}[dr]&& L_q(X_{1,1}) \ar@{->}[dr] && L_q(X_{1,3})   \\
        2\,\,\,&& L_q(X_{2,-2}) \ar@{=>}[dr]\ar@{->}[ur] && L_q(X_{2,0}) \ar@{=>}[dr] \ar@{->}[ur]  && L_q(X_{2,2}) \ar@{->}[ur] \\
        3\,\,\,& L_q(X_{3,-3})  \ar@{->}[ur] && L_q(X_{3,-1}) \ar@{->}[ur]  && L_q(X_{3,1}) \ar@{->}[ur]  }}}
        \quad \overset{\Upxi_Q}{\longmapsto} \quad
        \raisebox{3.2em}{ \scalebox{0.7}{\xymatrix@!C=2ex@R=2ex{
        (i\setminus p) & -3&  -2 & -1 & 0 & 1 & 2  & 3\\
        1&&& [\tS(1;-1)] \ar@{->}[dr]&&  [\tS(1;1)] \ar@{->}[dr] && [\tS(1;3)]   \\
        2&& [\tS(2;-2)] \ar@{=>}[dr]\ar@{->}[ur] && [\tS(2;1)] \ar@{=>}[dr] \ar@{->}[ur]  && [\tS(2;2)] \ar@{->}[ur] \\
        3& [\tS(3;-3)]  \ar@{->}[ur] && [\tS(3;-1)] \ar@{->}[ur]  && [\tS(3;1)] \ar@{->}[ur]  }}}
    \end{align*}
    where $\tS(i;p) = q^{(\be,\be)/4 + (\be,\rho)/2 } S_Q(i;p)$ with $\phi_Q(i,p) = (\be,0)$. As shown already, this relationship enables us to obtain the multiplicative relations of $L_q(X_{i,p})$'s from those of $[\tS(i;p)]$'s.
    By Proposition \ref{prop:formulas for inverses of tQCMs}, we have
    \begin{align} \label{eq:essential part of inv qQCM in C3}
    \begin{split}
        & \tde_{1,1}(t) = t + t^5, \quad
        \tde_{1,2}(t) = t^2 + t^4, \quad
        \tde_{1,3}(t) = 2t^3, \\
        & \tde_{2,2}(t) = t + 2t^3 + t^5, \quad
        \tde_{2,3}(t) = 2t^2 + 2t^4, \quad
        \tde_{3,3}(t) = 2t + 2t^3 + 2t^5.
    \end{split}
    \end{align}

Note that the following computations can also be deduced directly from the explicit formulas of $L_q(X_{i,p})$'s in \cite[(A.1)]{JLO1}.
    \bnum
    \item $(i,p) = (1,3)$ and $(j,s) = (2,2)$. By \eqref{eq:essential part of inv qQCM in C3}, we have
    $\tde_{1,2}[1] = 0$ and $\ucalN(1,3;2,2) = -1$, so it follows from Corollary \ref{cor: commute1} that
        \begin{equation*}
            L_q(X_{1,3})*L_q(X_{2,2}) = q^{-1}L_q(X_{2,2})*L_q(X_{1,3}),
        \end{equation*}
    where the products on both sides have unique dominant monomials $\tX_{1,3}*\tX_{2,2}$ and $q\tX_{1,3}*\tX_{2,2}$, respectively.
    \smallskip

    \item $(i,p) = (2,0)$ and $(j,s) = (3,-1)$. By \eqref{eq:essential part of inv qQCM in C3}, we have $\tde_{2,3}[1] = 0$ and $\ucalN(2,0;3,-1) = -2$. Thus we have
    \begin{equation*}
        L_q(X_{2,0})*L_q(X_{3,-1}) = q^{-2} L_q(X_{3,-1})*L_q(X_{2,0}),
    \end{equation*}
    where the products on both sides have unique dominant monomials $q^{-\frac{1}{2}}\tX_{3,-1}*\tX_{2,0}$ and $q^{\frac{3}{2}}\tX_{3,-1}*\tX_{2,0}$, respectively.
    \smallskip

    \item $(i,p) = (1,3)$ and $(j,s) = (3,-3)$. By \eqref{eq:essential part of inv qQCM in C3}, we have $\tde_{1,3}[6] = 0$, so it follows from Corollary \ref{cor: commute1} that
    \begin{equation*}
        L_q(X_{1,3})*L_q(X_{3,-3}) = L_q(X_{3,-3})*L_q(X_{1,3}).
    \end{equation*}
    Note that $\ucalN(1,3;3,-3) = 0$.
    \ee
    \smallskip
    \noindent
    We remark that $L_q(X_{i,p})$ and $L_q(X_{j,s})$ do not $q$-commute in the case of  $\tde_{i,j}[|p-s|] \neq 0$.
    For example, for $(i,p) = (2,2)$ and $(j,s) = (2,-2)$, we have  $\tde_{2,2}[3] = 2 \neq 0$.
    On the other hand, $L_q(X_{2,2})*L_q(X_{2,-2})$ (resp.~$L_q(X_{2,-2})*L_q(X_{2,2})$) has dominant monomials
    \begin{align*}
        q^2 \tX_{2,-2}*\tX_{2,2}, \quad
        q\tX_{1,-1}*\tX_{1,1}, \quad
        (q^{-1}+q) \tX_{2,0}, \\
        \text{(resp.~$q \tX_{2,-2}*\tX_{2,2}, \quad q^2\tX_{1,-1}*\tX_{1,1}, \quad (1+q^2) \tX_{2,0}$).}
    \end{align*}
    Hence $L_q(X_{2,2})*L_q(X_{2,-2}) \neq q^k L_q(X_{2,-2})*L_q(X_{2,2})$ for any $k \in \Z$.
\end{example}

\section{Presentations and automorphisms} \label{Sec: Presentation}
In this section, we introduce automorphisms $\sigma_{i,Q}^{\pm 1}$ of $\frakK_q$ with respect to a Dynkin quiver $Q$ with a source $i$.
For this purpose, we establish {\rm BCFG}-analogues of \cite[Section 10]{FHOO} by means of our previous results.
The analogues concern the presentation and automorphisms of $\frakK_q$.
In Section \ref{Sec: Braid}, we will show that such automorphisms induce a braid group action on $\frakK_q$.
\medskip

By~\eqref{eq: heart}, the commutative monoid $\calM_+$ can be understood as a product of $\frakD^k \calM_+^Q$ $(k \in \Z)$ for each Dynkin quiver $Q =(\Dynkin,\xi)$.
Thus for each $m \in \calM_+$, ~\eqref{eq: heart} tells us that $m$ has a unique factorization
\begin{align}\label{eq: decomposition m}
 m =  \prod_{k \in \Z} \frakD^k (m_k)
\end{align}
such that $m_k \in \calM^Q_+$, where $m_k=1$ for all but finitely many $k$.
Using Corollary~\ref{cor: order works well}, the proposition below holds in the same way as \cite[Proposition 10.1]{FHOO}.

\begin{proposition}
For $m \in \calM_+$, the element $E_q(m)$ can be written as
$$
E_q(m) = q^{\kappa(m,Q)} \prod^{\to}_{k \in \Z} \frakD_q^k(E_q(m_k)) \in \frakK_q,
$$
where
$$
\kappa(m,Q) \seteq -\dfrac{1}{2} \sum_{k<l} (-1)^{k+l} (\wt_Q(m_k),\wt_Q(m_l)).
$$
\end{proposition}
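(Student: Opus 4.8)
The plan is to realize $E_q(m)$ as a reordering of the product $\prod^{\to}_{k\in\Z}\frakD_q^k(E_q(m_k))$ using the commutativity relations of Corollary~\ref{cor: order works well}, and then to determine the resulting power of $q$ by comparing the (unique, dominant) leading monomial on both sides. Recall from~\eqref{eq: heart} that $\hDynkin_0=\bigsqcup_{k\in\Z}\frakD^k(\Gamma_0^Q)$ and from~\eqref{eq: decomposition m} that $m=\prod_k\frakD^k(m_k)$ with $m_k\in\calM_+^Q$, so $u_{i,p}(m)=u_{i,p}(\frakD^k(m_k))$ for the unique $k$ with $(i,p)\in\frakD^k(\Gamma_0^Q)$. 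Since $\frakD_q$ is a $\Z[q^{\pm 1/2}]$-algebra automorphism of $\calX_q$ preserving $\frakK_q$ with $\frakD_q(F_q(X_{i,p}))=F_q(X_{i^*,p+\sfh})$ (see~\eqref{eq:dual map Dq}), and $F_q(X_{i,p})=L_q(X_{i,p})$ by Theorem~\ref{thm: L_q}, both $E_q(m)$ and $\prod^{\to}_{k\in\Z}\frakD_q^k(E_q(m_k))$ are, up to an overall power of $q$, the product of the same factors $\{F_q(X_{i,p})^{u_{i,p}(m)}\}_{(i,p)\in\hDynkin_0}$: in $E_q(m)$ multiplied in increasing order of $p$, and in $\prod^{\to}_k\frakD_q^k(E_q(m_k))$ multiplied block-by-block (increasing $k$) and, within a block, again in increasing order of $p$. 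In particular the right-hand side lies in $\frakK_q$.

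The first step is to check that the two orderings differ only by transpositions of commuting factors. They can disagree only on pairs $F_q(X_{i,p})$, $F_q(X_{j,s})$ with $(i,p)\in\frakD^k(\Gamma_0^Q)$, $(j,s)\in\frakD^l(\Gamma_0^Q)$, $k<l$ and $p\ge s$; factors with $p=s$ commute (noted below Theorem~\ref{thm: F_q}), so only $p>s$ is at issue. Applying the automorphism $\frakD_q^{-k}$ — which only relabels indices and fixes $q$ — one reduces to $(i,p)\in\Gamma_0^Q$ and $(j,s)\in\frakD^{l-k}(\Gamma_0^Q)$ with $l-k\ge 1$; then Proposition~\ref{prop: Gamma Q 0} gives $p\le\xi_i$ while membership in $\frakD^{l-k}(\Gamma_0^Q)$ gives $s>\xi_j$, so $\xi_i\ge p\ge s>\xi_j$ and Corollary~\ref{cor: order works well} yields $F_q(X_{i,p})*F_q(X_{j,s})=F_q(X_{j,s})*F_q(X_{i,p})$. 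Hence $E_q(m)=q^{c}\prod^{\to}_{k\in\Z}\frakD_q^k(E_q(m_k))$ for a unique $c\in\tfrac{1}{2}\Z$.

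To evaluate $c$, compare the coefficient of $\um$, which is $1$ in $E_q(m)$ by definition. Since $\frakD_q$ preserves $\leN$ (as $\frakD$ permutes the $B_{i,p}$) and sends commutative monomials to commutative monomials, each $\frakD_q^k(E_q(m_k))$ has $\underline{\frakD^k(m_k)}$ as its unique $\leN$-maximal monomial, occurring with coefficient $1$; because $\leN$ is compatible with multiplication and $\prod_k\frakD^k(m_k)=m$, the coefficient of $\um$ in $\prod^{\to}_k\frakD_q^k(E_q(m_k))$ comes only from selecting $\underline{\frakD^k(m_k)}$ in each factor, and $\st^{\to}_{k\in\Z}\underline{\frakD^k(m_k)}=q^{\frac{1}{2}\sum_{k<l}\ucalN(\frakD^k(m_k),\frakD^l(m_l))}\,\um$ by bilinearity of $\ucalN$. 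Thus $c=-\tfrac{1}{2}\sum_{k<l}\ucalN(\frakD^k(m_k),\frakD^l(m_l))$.

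Finally one identifies this with $\kappa(m,Q)$ by proving $\ucalN(\frakD^k(m_k),\frakD^l(m_l))=(-1)^{k+l}(\wt_Q(m_k),\wt_Q(m_l))$ for $k<l$. Expanding $\ucalN$ bilinearly and applying Theorem~\ref{thm: N and wt} together with the fact that $\phi_Q(i,p)$ has level $-k$ for $(i,p)\in\frakD^k(\Gamma_0^Q)$ (Proposition~\ref{prop: Gamma Q 0}), each summand is controlled by $\ucalN(i,p;j,s)$ for $(i,p)\in\frakD^k(\Gamma_0^Q)$, $(j,s)\in\frakD^l(\Gamma_0^Q)$: this vanishes when $p=s$ (which forces $\ucalN(i,p;j,p)=0$, noted after Definition~\ref{def:quantum torus Xqg}), and when $p>s$ it vanishes by the same normalization-plus-Corollary~\ref{cor: order works well} argument as above — using that $\ucalN$ is $\frakD_q$-invariant, immediate from $\frakD_q$ being an automorphism — which also forces $(\pi_Q(i,p),\pi_Q(j,s))=0$ there; the surviving $p<s$ terms give $\ucalN(i,p;j,s)=(\pi_Q(i,p),\pi_Q(j,s))$, and summing against $u_{i,p}(\frakD^k(m_k))u_{j,s}(\frakD^l(m_l))$ — noting that the $p\ge s$ contributions to $(\wt_Q(\cdot),\wt_Q(\cdot))$ likewise vanish and that $\wt_Q\circ\frakD^k=(-1)^k\wt_Q$ — yields the asserted formula, hence $c=\kappa(m,Q)$. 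The main obstacle is precisely this last identification (and its analogue in the second step): both reduce to the vanishing of $\ucalN(i,p;j,s)$ for $(i,p),(j,s)$ in distinct blocks with $p\ge s$, so the real content is packaging that vanishing via Corollary~\ref{cor: order works well} and Proposition~\ref{prop: Gamma Q 0}; the remaining normalization bookkeeping is routine, as in \cite[Proposition 10.1]{FHOO}.
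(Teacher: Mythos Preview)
Your proposal is correct and follows essentially the same approach as the paper, which simply says that the result is proved ``in the same way as \cite[Proposition~10.1]{FHOO}'' using Corollary~\ref{cor: order works well}; you have faithfully unwound that reference. The only minor point is expository: when you assert that the $p\ge s$ contributions to the weight pairing ``likewise vanish'', this is not a separate argument but rather a consequence of combining the already-established vanishing $\ucalN(i,p;j,s)=0$ with the formula $\ucalN(i,p;j,s)=(-1)^{\delta(p\ge s)}(\pi_Q(i,p),\pi_Q(j,s))$ from Theorem~\ref{thm: N and wt}, which directly forces $(\pi_Q(i,p),\pi_Q(j,s))=0$.
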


\subsection{The presentation}
In this subsection, we exhibit a presentation of the \emph{localized quantum virtual Grothendieck ring} defined by
$$ \bbK_{q} \seteq \Q(q^{1/2}) \otimes_{\Z[q^{\pm 1/2}]} \frakK_{q}.$$
Note that we can extend $\overline{( \cdot )}$ and $\frakD_q$ on $\bbK_{q}$.
For a Dynkin quiver $Q=(\Dynkin,\xi)$, we also set
$$\bbK_{q,Q} \seteq \Q(q^{1/2}) \otimes_{\Z[q^{\pm 1/2}]} \frakK_{q,Q}.$$
\smallskip

From now on, let us fix a Dynkin quiver $Q=(\Dynkin,\xi)$ and set, for each $(i,k) \in I \times \Z$,
\begin{align} \label{eq:sfxQik}
\sfx^Q_{i,k} \seteq  L_q(X_{\iota,p}) \qquad \text{ where\,\,\, $\phi_Q(\iota,p)=(\al_i,k)$. }
\end{align}
Note that $\frakD_q^l(\sfx^Q_{i,k}) = \sfx^Q_{i,k+l}$ for $l \in \Z$.
From \eqref{eq:An=Uq- and An=CN at q=1} and Corollary~\ref{cor: HLO iso}, we have an algebra isomorphism
\begin{align} \label{eq:tPsiQ}
\begin{split}
    \widetilde{\Psi}_Q:
    \xymatrix@!C=7ex@R=0.3ex{
       \calA_q(\n) \ar@{->}[r] & \bbK_{q,Q},\\
        f_i \ar@{|->}[r] & \sfx^Q_{i,0}.
    }
\end{split}
\end{align}
This implies that the quantum Serre relations holds for $i \ne j \in I$ and $m \in \Z$:
\begin{equation} \label{eq: q-serre}
\sum_{s=0}^{1-\sfc_{i,j}} (-1)^s \left[ \begin{matrix} 1-\sfc_{i,j} \\ s  \end{matrix}\right]_{q_i} (\sfx^Q_{i,m})^{1-\sfc_{i,j}-s}  \sfx^Q_{j,m} (\sfx^Q_{i,m})^s =0.
\end{equation}
Furthermore, it follows from Proposition~\ref{prop: quantum Boson1}, Proposition~\ref{prop: far enough}, and Corollary~\ref{cor: dual commute} that the quantum Boson relations also hold:
\begin{align}
\sfx^Q_{i,k}\sfx^Q_{j,k+1} & = q^{-(\al_i,\al_j)} \sfx^Q_{j,k+1}\sfx^Q_{i,k} + (1-q^{-(\al_i,\al_i)})\delta_{i,j}, \label{eq: q-Boson1}\\
\sfx^Q_{i,k}\sfx^Q_{j,l} &= q^{(-1)^{k+l}(\al_i,\al_j)} \sfx^Q_{j,l}\sfx^Q_{i,k}, \label{eq: q-Boson2}
\end{align}
for $i,j \in I$ and $k,l \in \Z$ with $l>k+1$.
\smallskip

Let us denote by $\hA_{q}(\n)$ the $\Q(q^{1/2})$-algebra generated by $\{ \sfy_{i,k} \ | \ (i,k) \in I \times \Z\}$ subject to the following relations:
\begin{align}
&\sum_{s=0}^{1-\sfc_{i,j}} (-1)^s \left[ \begin{matrix} 1-\sfc_{i,j} \\ s  \end{matrix}\right]_{q_i} \sfy_{i,k}^{1-\sfc_{i,j}-s}  \sfy_{j,k} \sfy_{i,k}^s =0,  \label{eq: q-serre y} \\
&\sfy_{i,k}\sfy_{j,k+1}  = q^{-(\al_i,\al_j)} \sfy_{j,k+1}\sfy_{i,k} + (1-q^{-(\al_i,\al_i)})\delta_{i,j}, \label{eq: q-Boson1 y}\\
&\sfy_{i,k}\sfy_{j,l} = q^{(-1)^{k+l}(\al_i,\al_j)}\sfy_{j,l}\sfy_{i,k}, \label{eq: q-Boson2 y}
\end{align}
for $i,j \in I$ and $k,l \in \Z$ with $l>k+1$.

\begin{theorem} \label{thm: presentation}
For each Dynkin quiver $Q=(\Dynkin,\xi)$, there exists an
algebra isomorphism
$$
\tUptheta_Q :  \hA_{q}(\n) \to \bbK_{q}
$$
such that $\tUptheta_Q\left(\sfy_{i,k}\right) = \sfx^Q_{i,k}$ for $(i,k) \in I \times \Z$.
\end{theorem}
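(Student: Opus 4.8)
The strategy is to build the map $\tUptheta_Q$ out of pieces that have already been established and to verify that it is well-defined, surjective, and injective. First I would define a $\Q(q^{1/2})$-algebra homomorphism $\tUptheta_Q \colon \hA_q(\n) \to \bbK_q$ on generators by $\sfy_{i,k} \mapsto \sfx^Q_{i,k}$; to see this is well-defined I must check that the defining relations \eqref{eq: q-serre y}, \eqref{eq: q-Boson1 y}, \eqref{eq: q-Boson2 y} are satisfied by the images $\sfx^Q_{i,k}$ in $\bbK_q$. But those are exactly \eqref{eq: q-serre}, \eqref{eq: q-Boson1}, \eqref{eq: q-Boson2}, which were derived just above the theorem statement from the isomorphism $\widetilde{\Psi}_Q$ of \eqref{eq:tPsiQ} together with Proposition~\ref{prop: quantum Boson1}, Proposition~\ref{prop: far enough}, and Corollary~\ref{cor: dual commute}. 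So well-definedness is immediate.

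\textbf{Surjectivity.} Next I would show $\tUptheta_Q$ is surjective. By Theorem~\ref{thm: F_q}(c), $\frakK_q$ is generated as a $\Z[q^{\pm1/2}]$-algebra by $\{F_q(X_{\iota,p}) \mid (\iota,p)\in\hDynkin_0\}$, hence $\bbK_q$ is generated by these over $\Q(q^{1/2})$. By Theorem~\ref{thm: L_q}(c), $F_q(X_{\iota,p}) = L_q(X_{\iota,p})$ for all $(\iota,p)\in\hDynkin_0$. Now for any $(\iota,p)\in\hDynkin_0$ there is (by the $Q$-coordinate bijection $\phi_Q\colon \hDynkin_0 \to \Phi_+\times\Z$) a unique pair $(\al,k)\in\Phi_+\times\Z$ with $\phi_Q(\iota,p)=(\al,k)$; writing $\al = \be^{\ii_\circ}_a$ for a reduced sequence $\ii_\circ$ adapted to $Q$, the element $L_q(X_{\iota,p}) = L_q^Q(\al,k)$ corresponds under $\Psi_Q$ to the normalized dual PBW vector $\tF^\up_{[Q]}(\al)$ shifted by $\frakD_q^{\,k}$. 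Since $\calA_q(\n)$ is generated by the $f_i$ (equivalently by the simple-root dual PBW vectors $\tF^\up_{[Q]}(\al_i)$ together with the relations among dual PBW vectors, cf.\ Theorem~\ref{thm: minimal pair dual pbw} and \eqref{eq: BKMcp}), and $\frakD_q^{\,l}(\sfx^Q_{i,k}) = \sfx^Q_{i,k+l}$, every $L_q^Q(\al,k)$ lies in the subalgebra of $\bbK_q$ generated by $\{\sfx^Q_{i,k} \mid (i,k)\in I\times\Z\}$. Hence the image of $\tUptheta_Q$ contains all $L_q(X_{\iota,p})$, $(\iota,p)\in\hDynkin_0$, and therefore equals $\bbK_q$.

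\textbf{Injectivity.} This is the step I expect to require the most care. One clean route: compare $\hA_q(\n)$ and $\bbK_q$ as $\Q(q^{1/2})$-vector spaces via a basis/PBW-type argument. Using the Boson relations \eqref{eq: q-Boson1 y}, \eqref{eq: q-Boson2 y} one can put any word in the $\sfy_{i,k}$ into a normal form ordered by the parameter $k$ (and, within a fixed $k$, by the dual-PBW ordering governed by the quantum Serre relations \eqref{eq: q-serre y}), producing a spanning set of $\hA_q(\n)$ indexed by $\prod_{k\in\Z}(\text{dominant monomials in }\calM^Q_+)$, i.e.\ by $\calM_+$ via the factorization \eqref{eq: decomposition m}. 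On the other side, $\{E_q(m) \mid m\in\calM_+\}$ (or $\sfL_q$) is a genuine basis of $\frakK_q$, hence of $\bbK_q$, and the proposition preceding this subsection expresses each $E_q(m)$ as $q^{\kappa(m,Q)}\prod^\to_{k}\frakD_q^k(E_q(m_k))$, which under $\tUptheta_Q$ is exactly the image of a normal-form monomial. Thus $\tUptheta_Q$ carries a spanning set of $\hA_q(\n)$ onto a basis of $\bbK_q$, forcing the spanning set to be a basis and $\tUptheta_Q$ to be an isomorphism. Alternatively, and more economically, one can invoke the already-established isomorphism $\widetilde{\Psi}_Q\colon\calA_q(\n)\isoto\bbK_{q,Q}$ together with the fact that $\bbK_q$ is the ``boson extension'' of $\bbK_{q,Q}$ by the commuting automorphisms $\frakD_q^{\,k}$ subject precisely to the relations \eqref{eq: q-Boson1}, \eqref{eq: q-Boson2} (which is the content of the surjectivity argument applied to the heart decomposition), and check that $\hA_q(\n)$ has the matching universal property; then injectivity follows by a dimension count in each graded/weight component, which is finite-dimensional and where $\widetilde{\Psi}_Q$ already gives equality. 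The main obstacle in either approach is the bookkeeping needed to guarantee the normal form of words in $\hA_q(\n)$ is unambiguous — i.e.\ that the Boson and quantum Serre relations suffice to reduce every word and that no further relations are hidden — which is where the comparison with the honest basis $\{E_q(m)\}$ of $\bbK_q$ does the real work.
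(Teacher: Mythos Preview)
Your proposal is correct and follows essentially the same strategy as the paper's proof, which simply refers to \cite[Theorem 10.4]{FHOO} (cf.\ \cite[Theorem 7.3]{HL15}): well-definedness from the relations \eqref{eq: q-serre}--\eqref{eq: q-Boson2} already established, surjectivity from the generating set $\{L_q(X_{\iota,p})\}$, and injectivity by showing that the ordered-by-$k$ PBW-type spanning set of $\hA_q(\n)$ maps onto the basis $\{E_q(m)\}_{m\in\calM_+}$ of $\bbK_q$ via the factorization $E_q(m)=q^{\kappa(m,Q)}\prod^{\to}_{k}\frakD_q^k(E_q(m_k))$ from the proposition preceding \S7.1 together with $\Psi_Q(\tF^{\up}_{[Q]}(\ue))=E_q(m(\ue))$ from the proof of Theorem~\ref{thm:main1}. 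Your second ``universal property/dimension count'' alternative is vaguer and would need more work to make precise; the first route is the one that actually matches the referenced argument.
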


\begin{proof}
The proof is almost identical with that of \cite[Theroem 10.4]{FHOO} (cf.~\cite[Theorem 7.3]{HL15}), where the above result is proved for simply-laced types.
\end{proof}

\subsection{Automorphisms of $\frakK_q$} For a Dynkin diagram $\Dynkin$, let $Q=(\Dynkin,\xi)$ and  $Q'=(\Dynkin,\xi')$ be Dynkin quivers whose underlying graphs
are the same as $\Dynkin$. Then we have an automorphism
$\tUptheta(Q',Q)$ of $ \bbK_{q}$ given by
\begin{align} \label{eq:tUptheta}
\tUptheta(Q',Q) \seteq   \tUptheta_{Q'} \circ \tUptheta_Q^{-1},
\end{align}
which is compatible with $\overline{( \cdot )}$ and $\frakD_q$.
Using the isomorphisms $\tPsi_Q$'s in \eqref{eq:tPsiQ} preserving $\tbfB^\up$, we define a bijection
$\psi_0: \calM_+^Q\to \calM_+^{Q'}$
such that
$$
\tPsi_{Q',Q}(L_q(m)) \seteq \tPsi_{Q'} \circ \tPsi_{Q}^{-1}(L_q(m)) = L_q(\psi_0(m))
$$
for $m \in \calM_+^Q$,
where $\tPsi_{Q',Q}$ coincides with the restriction of $\tUptheta(Q',Q)$ to $\bbK_{q,Q}$.
The bijection $\psi_0$ can be extended naturally to $\calM_+$ by the decomposition \eqref{eq: decomposition m} of $m \in \calM_+$, that is,
$$\psi(m)  \seteq \prod_{k \in \Z} \frakD^k( \psi_0(m_k)) \qquad \text{ for all } m \in \calM_+.$$
In particular, when
\begin{itemize}
    \item[(a)] $Q'=s_iQ$ for a Dynkin quiver $Q$ with a source $i$, and
    \item[(b)] $\phi_Q(k,p)=(\al_j,m)$,
\end{itemize}
we have
$$
\tUptheta(s_iQ,Q)(L_q(X_{k,p})) = \tUptheta(s_iQ,Q)(L^Q_q(\al_j,m)) \overset{\star}{=} L^{s_iQ}_q(\al_j,m)= L^{Q}_q(s_i(\al_j),m),
$$
where $\overset{\star}{=}$ holds due to Theorem \ref{thm: presentation} (recall \eqref{eq:LqQ al m}).

\begin{theorem} \label{thm: indeed frakKq}
The automorphism $\tUptheta(Q',Q)$ sends the canonical basis $\sfL_q$ to itself. Hence it induces an automorphism
on $\frakK_q$.
\end{theorem}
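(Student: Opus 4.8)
The plan is to show that the automorphism $\tUptheta(Q',Q)$ of $\bbK_q$ restricts to a $\Z[q^{\pm 1/2}]$-module automorphism of $\frakK_q$ sending $\sfL_q$ bijectively onto itself. Since an arbitrary pair $(Q',Q)$ of Dynkin quivers with the same underlying diagram is connected by a sequence of reflections at sources (together with the global parameter shifts $\scrS_r$, which obviously preserve $\sfL_q$ by the displayed property $\scrS_r(F_q(X_{i,p}))=F_q(X_{i,p+r})$ combined with the characterization of $\sfL_q$), it suffices to treat the case $Q'=s_iQ$ for a source $i$ of $Q$. So the first step is to reduce to this case and fix such a pair.

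Next I would verify that $\tUptheta(s_iQ,Q)$ restricts to $\frakK_q$ as a $\Z[q^{\pm1/2}]$-algebra automorphism. For this, observe that $\frakK_q$ is generated over $\Z[q^{\pm1/2}]$ by $\{F_q(X_{k,p})=L_q(X_{k,p})\mid (k,p)\in\hDynkin_0\}$, and each such generator sits inside $\frakD_q^l(\frakK_{q,Q})$ for appropriate $l$ by \eqref{eq: heart}. Using that $\tUptheta(s_iQ,Q)$ commutes with $\frakD_q$ and that its restriction to $\bbK_{q,Q}$ equals $\tPsi_{Q',Q}=\tPsi_{s_iQ}\circ\tPsi_Q^{-1}$, which sends $L_q(m)$ to $L_q(\psi_0(m))$ for $m\in\calM_+^Q$ (by Corollary~\ref{cor: HLO iso}, since $\tPsi_Q$ intertwines $\tbfB^\up$ with $\sfL_{q,Q}$), we get $\tUptheta(s_iQ,Q)(\frakK_{q,Q})=\frakK_{q,s_iQ}\subset\frakK_q$. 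Combining this with the $\frakD_q$-compatibility shows $\tUptheta(s_iQ,Q)(\frakK_q)\subseteq\frakK_q$; applying the same reasoning to $\tUptheta(Q,s_iQ)$ gives the reverse inclusion, so $\frakK_q$ is stable.

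Then comes the main point: on $\frakK_q$, the map $\tUptheta(s_iQ,Q)$ preserves $\sfL_q$. The strategy is to show it preserves the three structures that characterize $\sfL_q$ via Theorem~\ref{thm: L_q}: the bar involution $\overline{(\cdot)}$, the Nakajima partial order $\leN$ on dominant monomials (via the induced bijection $\psi$ on $\calM_+$), and the standard basis $\sfE_q$. The bar-compatibility is already recorded ($\tUptheta(Q',Q)$ commutes with $\overline{(\cdot)}$). For the generators on the heart we know explicitly $\tUptheta(s_iQ,Q)(L_q(X_{k,p}))=L_q^Q(s_i(\al_j),m)$ when $\phi_Q(k,p)=(\al_j,m)$, so $\tUptheta(s_iQ,Q)$ maps heart canonical basis elements to heart canonical basis elements; using $\frakD_q$-equivariance and the multiplicativity of $\sfE_q$ in terms of the commuting families $\{F_q(X_{i,p})\}$, together with the commutation relations of Corollary~\ref{cor: order works well} and Corollary~\ref{cor: dual commute} that control the products $\prod_k^{\to}\frakD_q^k(E_q(m_k))$, one shows $\tUptheta(s_iQ,Q)(E_q(m))=q^{c}E_q(\psi(m))$ for some half-integer $c$, forced to be $0$ by bar-invariance of both sides. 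Then the unitriangularity $L_q(m)=E_q(m)+\sum_{m'\lN m}Q_{m,m'}(q)E_q(m')$ together with $\psi$ being an order isomorphism for $\leN$ (which follows because $\psi_0$ is induced on $\calM_+^Q$ by an isomorphism of quantum tori respecting the bases, and the order on $\calM_+$ respects the decomposition \eqref{eq: decomposition m}) forces $\tUptheta(s_iQ,Q)(L_q(m))$ to satisfy the two defining conditions \eqref{it: Lq(m)} of $L_q(\psi(m))$, hence equals $L_q(\psi(m))$ by uniqueness.

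I expect the main obstacle to be the bookkeeping in the last step --- precisely, verifying that $\psi\colon\calM_+\to\calM_+$ is an isomorphism for the Nakajima order $\leN$ and that $\tUptheta(s_iQ,Q)$ carries $E_q(m)$ to $E_q(\psi(m))$ up to a power of $q$. The subtlety is that $\psi$ is defined heart-block by heart-block via \eqref{eq: decomposition m}, so one must check that the reflection $\psi_0$ on $\calM_+^Q$ (coming from the combinatorics of the AR-quivers $\Gamma^Q$ and $\Gamma^{s_iQ}$ in \eqref{eq: Qd properties}) interacts correctly with $\frakD$ and with the dominance order across different blocks; the relevant commutation relations are exactly those isolated in Corollaries~\ref{cor: commute1}, \ref{cor: order works well}, and \ref{cor: dual commute}, which is why they were proved. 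Once these are in hand, the argument runs in parallel to \cite[Section~10]{FHOO}, and the passage from the single-reflection case to arbitrary $(Q',Q)$ is routine.
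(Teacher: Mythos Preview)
Your proposal is correct and follows essentially the same approach as the paper, which simply defers to \cite[Theorem~10.6]{FHOO} and says the argument there goes through verbatim once Theorem~\ref{thm: L_q} is in hand. The ingredients you identify---compatibility of $\tUptheta(Q',Q)$ with $\overline{(\cdot)}$ and $\frakD_q$, the heart-wise definition of $\psi$ via $\psi_0$, the transport of $E_q(m)$ to $E_q(\psi(m))$, and the uniqueness characterization of $L_q(m)$---are exactly the ones used in \cite{FHOO}; your preliminary reduction to the case $Q'=s_iQ$ is harmless but not actually needed, since the \cite{FHOO} argument applies directly to an arbitrary pair $(Q',Q)$.
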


\begin{proof}
By using Theorem~\ref{thm: L_q},
one may prove our assertion following the proof of \cite[Theroem 10.6]{FHOO}, where the above is proved for simply-laced types.
\end{proof}

\subsection{Description of the automorphisms}
Now let us focus on the case when $Q' = s_iQ$ for a source $i$ of $Q$ and write
$$  \sigma_{i,Q} \seteq \tUptheta(s_iQ,Q)|_{\frakK_q}  \quad \text{ and } \quad  \sigma_{i,Q}^{-1} \seteq  \tUptheta(Q,s_iQ)|_{\frakK_q} .$$
To describe $\sigma_{i,Q}$ (resp. $\sigma^{-1}_{i,Q}$), it suffices to consider the images of $\sfx^Q_{j,m}$ under $\sigma_{i,Q}$ (resp. $\sfx^{s_iQ}_{j,m}$ under $\sigma^{-1}_{i,Q}$).

\begin{proposition} \label{prop: ext of Lustig} Let $Q$ be a Dynkin quiver with a source $i$, and hence $s_iQ$ is a Dynkin quiver with a sink $i$.
Then the automorphisms $\sigma_{i,Q}^{\pm 1}$ can be described as follows$:$
\begin{equation} \label{eq:brd1p}
\begin{aligned}
\sigma_{i,Q}(\sfx^Q_{j,m}) =
  \bc
\sfx^Q_{j,m+\delta_{i,j}}  &  \text{if} \  \sfc_{i,j} \ge 0, \\[1ex]
\dfrac{ \left( \displaystyle \sum_{k=0}^{- \sfc_{i,j}} (-1)^{k} q_i^{ - \sfc_{i,j}/2 -k  } (\sfx^Q_{i,m})^{(k)}\sfx^Q_{j,m}  (\sfx^Q_{i,m})^{(- \sfc_{i,j}-k)}  \right)}{ (q_i -q_i^{-1})^{- \sfc_{i,j}}  }  &\text{if} \  \sfc_{i,j} < 0,  \ec
 \end{aligned}
\end{equation}
and
\begin{equation}  \label{eq:invbrd1p}
\begin{aligned}
\sigma_{i,Q}^{-1}(\sfx^{s_iQ}_{j,m}) =
  \bc
\sfx^{s_iQ}_{j,m-\delta_{i,j}}  &  \text{if} \  \sfc_{i,j} \ge 0, \\[1ex]
\dfrac{ \left( \displaystyle \sum_{k=0}^{-\sfc_{i,j}} (-1)^{k} q_i^{ -\sfc_{i,j}/2 -k  } (\sfx^{s_iQ}_{i,m})^{(-\sfc_{i,j}-k)} \sfx^{s_iQ}_{j,m}  (\sfx^{s_iQ}_{i,m})^{(k)}  \right)}{ (q_i -q_i^{-1})^{-\sfc_{i,j} }  }  &\text{if} \  \sfc_{i,j} < 0,
\ec
 \end{aligned}
\end{equation}
where
$$
(\sfx^{Q'}_{i,m})^{(k)}  \seteq \dfrac{(\sfx^{Q'}_{i,m})^{k} }{[k]_{i}!} \quad \text{ for any Dynkin quiver $Q'$}.
$$
\end{proposition}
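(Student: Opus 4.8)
The key idea is to avoid the direct verification of formulas \eqref{eq:brd1p} and \eqref{eq:invbrd1p}, and instead to identify $\sfx^Q_{i,0}$ with the image of a dual PBW vector under the isomorphism $\widetilde{\Psi}_Q$ in \eqref{eq:tPsiQ} and then transport the problem to the unipotent quantum coordinate algebra $\calA_q(\n)$, where the corresponding formulas are classical. Recall that $\sigma_{i,Q} = \tUptheta(s_iQ,Q)|_{\frakK_q} = \tUptheta_{s_iQ}\circ\tUptheta_Q^{-1}$, and that for a source $i$ of $Q$ and $\phi_Q(k,p)=(\al_j,m)$ we have already recorded $\sigma_{i,Q}(L_q(X_{k,p})) = L^{Q}_q(s_i(\al_j),m)$. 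The first step is therefore to pin down, for each $j\in I$ and $m\in\Z$, which element $\sfx^Q_{j,m} = L_q^Q(\al_j,m)$ maps to under $\sigma_{i,Q}$: by \eqref{eq: si ri} and the reflection rule \eqref{eq: Qd properties}, if $\sfc_{i,j}\ge 0$ (i.e. $i\not\sim j$, or $i=j$), then $s_i(\al_j) = \al_j$ when $i\ne j$ and $s_i(\al_i) = -\al_i$, and the location shift in $\Gamma^{s_iQ}$ versus $\Gamma^Q$ produces exactly the index shift $m\mapsto m+\delta_{i,j}$; this handles the first line of \eqref{eq:brd1p} directly from \eqref{eq: j to j} and Proposition~\ref{prop: Gamma Q 0}(b). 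For the case $\sfc_{i,j}<0$, one has $s_i(\al_j)=\al_j - \sfc_{i,j}\al_i = \al_j + (-\sfc_{i,j})\al_i$, which is a positive root of height $1-\sfc_{i,j}$, and the claim is that the right-hand side of \eqref{eq:brd1p} is precisely $L^Q_q$ of the KR-monomial attached to this root via $\phi_Q$.

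The second step is to recognize the right-hand side of \eqref{eq:brd1p} as the image under $\widetilde{\Psi}_Q$ of the classical Lusztig--Saito braid operator applied to $f_j$ in $\calA_q(\n)$. Indeed, under $\widetilde{\Psi}_Q$ we have $\sfx^Q_{i,0} = \widetilde{\Psi}_Q(f_i)$ and $\sfx^Q_{j,0} = \widetilde{\Psi}_Q(f_j)$, and for $m>0$ the element $\sfx^Q_{j,m}$ corresponds (via the $m$-fold dual, i.e.\ $\frakD_q^m$, together with Lemma~\ref{lem: D_q}) to $\widetilde{\Psi}_Q$ of a dual PBW vector. The expression $\sum_{k=0}^{-\sfc_{i,j}}(-1)^k q_i^{-\sfc_{i,j}/2-k}(\sfx^Q_{i,m})^{(k)}\sfx^Q_{j,m}(\sfx^Q_{i,m})^{(-\sfc_{i,j}-k)}$ divided by $(q_i-q_i^{-1})^{-\sfc_{i,j}}$ is exactly the formula for $T_i''(f_j)$ (in Kashiwara's dual/normalized convention, matching \eqref{eq: BKMcp} when $-\sfc_{i,j}=1$) on the subalgebra $\calU_q^-(\g)\simeq\calA_q(\n)$; compare \cite{L902, Saito94, Kimura12}. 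So one must check that $\tUptheta_{s_iQ}^{-1}\circ\sigma_{i,Q}\circ\tUptheta_Q$, transported to $\calA_q(\n)$ via $\widetilde{\Psi}_Q$ and $\widetilde{\Psi}_{s_iQ}$, coincides with this Lusztig operator. This follows from the fact, recorded in the excerpt, that $\sigma_{i,Q}$ sends $L^Q_q(\al_j,0)$ to $L^Q_q(s_i\al_j,0)$ combined with the characterization of cuspidal/dual PBW vectors: the dual PBW vector for $s_i\al_j$ with respect to the reflected reduced word $r_i\ii_\circ$ is obtained from that for $\al_j$ with respect to $\ii_\circ$ precisely by the Lusztig operator, which is \cite[Proposition 7.4]{GLS13} or the defining relation \eqref{eq: BKMc1}--\eqref{eq: BKMcp} in the rank-two subsystem spanned by $i,j$. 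One must also verify that $\sigma_{i,Q}$ commutes with $\frakD_q$ (this is noted after \eqref{eq:tUptheta}), so the formula for general $m$ follows from the $m=0$ case by applying $\frakD_q^m$, using $\frakD_q^m(\sfx^Q_{j,0})=\sfx^Q_{j,m}$ and Lemma~\ref{lem: D_q}. The formula \eqref{eq:invbrd1p} for $\sigma_{i,Q}^{-1}$ is obtained symmetrically, noting that $i$ is a \emph{sink} of $s_iQ$ so that $\sigma_{i,Q}^{-1} = \tUptheta(Q,s_iQ)|_{\frakK_q}$ corresponds to $T_i'$, the inverse Lusztig operator, whose dual-PBW expression has the $\sfx_{i,m}$ factors in the opposite order.

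The third step is to deduce the braid relations. Having identified each $\TT_i = \tUptheta(s_iQ, Q)$ (for the specific pair $(Q, s_iQ)$) with the classical braid operator on $\calA_q(\n)$ transported through the isomorphisms $\widetilde{\Psi}_Q$, and having established in Theorem~\ref{thm: indeed frakKq} that these are well-defined automorphisms of $\frakK_q$ (hence of $\hA_q(\n)$ via $\tUptheta_Q$), the braid relations $\TT_i\TT_j\TT_i = \TT_j\TT_i\TT_j$ (when $h_{i,j}=3$), $\TT_i\TT_j\TT_i\TT_j = \TT_j\TT_i\TT_j\TT_i$ (when $h_{i,j}=4$), and so on, reduce to the corresponding braid relations for Lusztig's operators on $\calU_q(\g)$, which are classical \cite{L902, Saito94}. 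More precisely: one picks, for a pair $i\ne j$, a Dynkin quiver $Q$ such that both the word realizing $\TT_i\TT_j\TT_i\cdots$ and the word realizing $\TT_j\TT_i\TT_j\cdots$ are adapted sequences of combinatorial reflections starting from $Q$ and ending at the same quiver (this is possible because the two sides of the braid relation correspond to two reduced words of the same Weyl group element, and by the combinatorics of AR-quivers $\Gamma^Q$ and the commutation class identity \eqref{eq: si ri}, both sequences of reflections land on the same final quiver); then the composites $\TT_i\TT_j\TT_i\cdots = \tUptheta(Q_{\mathrm{fin}}, Q)$ and $\TT_j\TT_i\TT_j\cdots = \tUptheta(Q_{\mathrm{fin}}, Q)$ agree by the cocycle identity $\tUptheta(Q_3,Q_2)\circ\tUptheta(Q_2,Q_1) = \tUptheta(Q_3,Q_1)$, which is immediate from the definition \eqref{eq:tUptheta}. \textbf{The main obstacle} I anticipate is the bookkeeping in this last step: carefully matching the spectral-parameter shifts $m\mapsto m\pm\delta_{i,j}$ and the height-function changes $\xi\mapsto s_i\xi$ under iterated reflections, so that the two composite sequences of reflections genuinely terminate at the \emph{same} Dynkin quiver (not merely quivers differing by a global shift of $\xi$, though by the remark after \eqref{eq: si ri} such a shift is harmless), and verifying that the isomorphisms $\tUptheta_Q$ intertwine correctly with $\frakD_q$ throughout. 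The rank-two verification that the transported operators really satisfy the relevant finite braid relation --- in types $A_1\times A_1$, $A_2$, $B_2$, $G_2$ --- is where one genuinely uses the non-simply-laced structure constants $p_{\be,\al}$ from \eqref{eq: palbe} and the relations \eqref{eq: BKMc1}--\eqref{eq: BKMcp}; but since these braid relations are already known for $\calU_q(\g)$ by Lusztig and Saito, and our operators are identified with Lusztig's via $\widetilde{\Psi}_Q$, this reduces to an appeal to the literature rather than a fresh computation.
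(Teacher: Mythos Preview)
For the proposition itself, your steps 1--2 match the paper's proof: both reduce to $m=0$ via compatibility of $\sigma_{i,Q}$ with $\frakD_q$, handle $\sfc_{i,j}\ge 0$ by the reflection rule \eqref{eq: Qd properties}, and for $\sfc_{i,j}<0$ express $\sigma_{i,Q}(\sfx^Q_{j,0}) = L^Q_q(s_i\al_j,0) = \Psi_Q\bigl(\tF^\up_{[Q]}(s_i\al_j)\bigr)$ via iterated application of Theorem~\ref{thm: minimal pair dual pbw} in the rank-two root subsystem. One caveat: the claim that the resulting expression is ``exactly'' Lusztig's $T''_{i,-1}(f_j)$ is slightly off---as the paper records in Remark~\ref{rmk: up ro constant}, they differ by a nonzero scalar, so one cannot simply quote the Lusztig--Saito formula; the precise constant is produced by iterating \eqref{eq: BKMc1}--\eqref{eq: BKMcp}, which the paper carries out case by case for $\sfc_{i,j}=-1,-2,-3$ (the last in Appendix~\ref{app: G}). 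Your mention of ``the reflected reduced word $r_i\ii_\circ$'' is also a red herring: what is needed is the dual PBW vector $\tF^\up_{[Q]}(s_i\al_j)$ for the \emph{original} class $[Q]$, written in the generators $f_i,f_j$.

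Your step 3 goes beyond the stated proposition (it is Theorem~\ref{thm: braid main}), and the cocycle argument there has a genuine gap. Granting Proposition~\ref{prop: ext of Lustig}, one does obtain $\TT_i = \tUptheta_Q^{-1}\circ\tUptheta_{s_iQ}$ for any $Q$ with source $i$, and this telescopes to $\tUptheta_Q\circ\TT_i\TT_j\TT_i\cdots = \tUptheta_{\cdots s_js_iQ}$ along the adapted word $(i,j,i,\ldots)$. But the other side $\TT_j\TT_i\TT_j\cdots$ requires a base quiver with source $j$; since $i\sim j$, no single quiver has both as sources, so the two composites are not the same $\tUptheta(Q_{\mathrm{fin}},Q)$ and the two ``final quivers'' obtained by the two reflection sequences are genuinely different (the height functions do not agree). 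The appeal to Lusztig's braid relations on $\calU_q(\g)$ does not close this gap either: Lusztig's $T_i$ does not preserve $\calU_q^-(\g)$ (e.g.\ $T_i(f_i)=-K_ie_i$), while $\TT_i$ sends $\sfy_{i,m}\mapsto\sfy_{i,m+1}$ into a different $\Z$-graded piece of $\hA_q(\n)$, so there is no algebra isomorphism through which to transport the relation wholesale. The paper instead proves each braid relation by direct rank-two computation (Propositions~\ref{prop: commutation}--\ref{prop: G2 barid}), again driven by \eqref{eq: BKMc1}--\eqref{eq: BKMcp} and the passage between $\Gamma^Q$ and $\Gamma^{s_iQ}$; the agreement with Lusztig's operators up to scalar is only recorded afterward (Proposition~\ref{prop: stable and up to constant}), as a corollary of the braid relations rather than an input to them.
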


Before we see the proof, let us recall the $\Q(q^{1/2})$-algebra automorphisms $T''_{i,-1}, T'_{i,1} : \calU_q(\g) \to\calU_q(\g) $ which are inverse to each other (see \cite[Section 37.1.3]{LusztigBook} and also~\cite{Saito94}):
\begin{align*}
&T''_{i,-1}(q^h)= q^{s_i(h)},\qquad\qquad\qquad \   \qquad\qquad \ T'_{i,1}(q^h) = q^{s_i(h)}, \\
&  T''_{i,-1}(e_i)= -f_i K_i^{-1},  \  T''_{i,-1}(f_i)= -K_i e_i, \quad   T'_{i,1}(e_i)= - K_i f_i, \ T'_{i,1}(f_i)= -e_iK_i^{-1}, \\
& T''_{i,-1}(e_j) = \sum_{r+s = -\sfc_{i,j}} (-1)^r q_i^r e_i^{(s)}e_j e_i^{(r)}, \quad  \quad T'_{i,1}(e_j) = \sum_{r+s = -\sfc_{i,j}} (-1)^r q_i^r e_i^{(r)}e_j e_i^{(s)}     \ \  \quad \text{ for } i \ne j, \\
& T''_{i,-1}(f_j) = \sum_{r+s = -\sfc_{i,j}} (-1)^r q_i^{-r} f_i^{(r)}f_j f_i^{(s)}, \quad T'_{i,1}(f_j) = \sum_{r+s = -\sfc_{i,j}} (-1)^r q_i^{-r} f_i^{(s)}f_j f_i^{(r)}   \quad \text{ for } i \ne j.
\end{align*}

\begin{remark} \label{rmk: up ro constant}
Note that when $i \ne j$, the action
$\sigma_{i,Q}(\sfx^Q_{j,m})$ (resp. $\sigma^{-1}_{i,Q}(\sfx^{s_iQ}_{j,m})$) coincides with the $T''_{i,-1}$-operation (resp. $T'_{i,1}$-operation) on $f_j$ $(i \ne j)$ (\cite{L902,Saito94,LusztigBook}) up to a constant in $\Q[[q^{\pm 1/2}]] \setminus \{ 0 \}$.
Note also that
the formulas~\eqref{eq:brd1p} and~\eqref{eq:invbrd1p} for {\rm ADE}-cases and their proofs are announced in~\cite{KKOP21A} where a different approach is taken.
From now on, we write
$$T_i \seteq T^{''}_{i,-1} \quad \text{ and } \quad T_i^{-1} \seteq T^{'}_{i,1}$$ for notational simplicity.
\end{remark}

\begin{proof} [Proof of {\rm Proposition~\ref{prop: ext of Lustig}}]
By Theorem \ref{thm: presentation} and \eqref{eq:tUptheta}, we have
\begin{align} \label{eq:expression of tUptheta sfxjmQ}
    \tUptheta(s_iQ,Q)(\sfx_{j,m}^Q)
    =
    \tUptheta_{s_iQ} \circ \tUptheta_Q^{-1} ( \sfx_{j,m}^Q )
    =
    \tUptheta_{s_iQ} ( \sfy_{j,m} )
    =
    \sfx_{j,m}^{s_iQ},
\end{align}
which is equivalent to
\begin{align} \label{eq:another expression}
    \tUptheta(s_iQ,Q)(L_q^Q(\al_j,m)) = L_q^{s_iQ}(\al_j,m) = L_q^{Q}(s_i(\al_j),m).
\end{align}
(cf.~\eqref{eq:LqQ al m} and \eqref{eq:sfxQik}).
In what follows, we shall investigate how $\sfx_{j,m}^{s_iQ}$ can written in terms of $\sfx_{j,n}^{Q}$ for $n \in \Z$, and obtain \eqref{eq:brd1p} and \eqref{eq:invbrd1p}.
Since $\tUptheta(s_iQ,Q)$ is compatible with $\frakD_q$, it is enough to consider the case of $\sigma_{i,Q}( \sfx_{j,0}^Q) = \sfx_{j,0}^{s_iQ}$ for any $j \in I$.

In the case of \eqref{eq:brd1p},
the $i$-th simple root $\al_i$ is a minimal element of $\Phi_+$ with respect to $\prec_{[Q]}$.
The assertion for $i=j$ follows from the rule \eqref{eq: Qd properties}~\eqref{it: al_i poisition}.
Thus let us consider three cases when $i \ne j$ as follows:
\smallskip

\noindent
{\it Case 1}. Assume $\sfc_{i,j}=-1$. In this case $\pair{\al_i,\al_j}$ is a $[Q]$-minimal $[Q]$-pair for $\al_i+\al_j$ and $p_{\al_j,\al_i}=0$.
From the viewpoint of Corollary \ref{cor: HLO iso}, we interpret the commutation relation \eqref{eq: BKMcp} as follows:
\begin{align*}
&\dfrac{ q_i^{-\sfc_{i,j}/2}L_q^Q(\al_j,0)*L_q^Q(\al_i,0) -q_i^{\sfc_{i,j}/2 }L_q^Q(\al_i,0)*L_q^Q(\al_j,0)}{(q_i^{-\sfc_{i,j}}-q_i^{\sfc_{i,j}} )}
=   L_q^Q(\al_i+\al_j,0) = L_q^{s_iQ}(\al_j,0)
\end{align*}
By \eqref{eq:expression of tUptheta sfxjmQ} and \eqref{eq:another expression}, we conclude that $\sigma_{i,Q}(\sfx^Q_{j,0}) =   \sfx^{s_iQ}_{j,0}$ is written as \eqref{eq:brd1p} in this case.
\smallskip

\noindent
{\it Case 2}. Assume $\sfc_{i,j}=-2$. Note that,  when $\g$ is of type $B_n$ (resp. $C_n$),   $\pair{\al_n,\al_{n-1}+\al_{n}}$  (resp. $\pair{\al_{n-1},\al_{n-1}+\al_{n}}$) is a $[Q]$-minimal [Q]-pair,
since it is the only pair for $\al_{n-1}+2\al_n$ (resp. $2\al_{n-1}+\al_n$).
When $\g$ is of type $F_4$, one can check that  $\pair{\al_3,\al_{2}+\al_{3}}$ is also  a $[Q]$-minimal [Q]-pair for any Dynkin quiver with source $3$.   Since the proofs are similar, we assume that $\g=B_n$. In this case, we have
$$p_{\al_{n-1}+\al_{n},\al_n}=1 \qquad \text{ and } \qquad   (\al_{n-1}+\al_{n},\al_n) = 0.$$
From the viewpoint of Corollary \ref{cor: HLO iso},
the commutation relation \eqref{eq: BKMc2} tells us that
$$
\dfrac{L_q^Q(\al_n+\al_{n-1},0)*L_q^Q(\al_n,0) - L_q^Q(\al_n,0)*L_q^Q(\al_n+\al_{n-1},0)}{q-q^{-1}} =L_q^Q(2\al_n+\al_{n-1},0)=L_q^{s_nQ}(\al_{n-1},0).
$$
Since
$$
L_q^Q(\al_n+\al_{n-1},0) = \dfrac{ qL_q^Q(\al_{n-1},0)*L_q^Q(\al_n,0) -q^{-1}L_q^Q(\al_{n-1},0)*L_q^Q(\al_j,0)}{(q^{2}-q^{-2} )},
$$
we have
\begin{align*}
& L_q^{s_nQ}(\al_{n-1}) =  L_q^Q(2\al_n+\al_{n-1}) = \\
&\dfrac{qL_q^Q(\al_{n-1})*L_q^Q(\al_n)^{*2} - (q+q^{-1})L_q^Q(\al_n)*L_q^Q(\al_{n-1})*L_q^Q(\al_n) +q^{-1}L_q^Q(\al_n)^{*2}*L_q^Q(\al_{n-1})  }{(q-q^{-1})(q^2-q^{-2})},
\end{align*}
where $L_q^{s_nQ}(\be) \seteq L_q^{s_nQ}(\be,0)$.
Hence $\sigma_{n,Q}(\sfx^Q_{n-1,0}) =   \sfx^{s_nQ}_{n-1,0}$ gives \eqref{eq:brd1p} in this case.
\smallskip

\noindent
{\it Case 3}. The case $\sfc_{i,j}=-3$ is taken care of in Appendix~\ref{app: G}.
\medskip

In the case of \eqref{eq:invbrd1p}, the $i$-simple root $\al_i$  is a maximal element of $\Phi_+$ with respect to $\prec_{[s_iQ]}$. Then we apply the similar arguments for showing \eqref{eq:brd1p} to \eqref{eq:invbrd1p}, which implies that
\[
\sigma_{i,Q}^{-1}(\sfx^{s_iQ}_{j,0}) =  L_q^{s_iQ}(s_i(\al_j))  =  \sfx^{Q}_{j,0}.
\]
Thus $\sigma_{i,Q}^{-1}$ gives a description of $\tUptheta(Q,s_iQ)$, which is the inverse of $\sigma_{i,Q}$.
\end{proof}

\begin{corollary}
For $i \in I$ and $\be \in \Phi_+$ with $i \not\in {\rm supp}(\be)$, we have
$$\sigma_{i,Q}\big( L_q^Q(\be,m) \big)= L_q^Q(s_i(\be),m) \quad \text{ and } \quad \sigma_{i,Q}^{-1}\big(  L_q^{s_iQ}(\be,m) \big)= L_q^{s_iQ}(s_i(\be),m),$$
where $Q$ is a Dynkin quiver with source $i$.
\end{corollary}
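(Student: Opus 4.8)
The plan is to reduce to $m=0$ and then argue by induction on $\het(\be)$. Since $\tUptheta(s_iQ,Q)$ is compatible with $\frakD_q$ and, by Proposition~\ref{prop: Gamma Q 0}, $\frakD_q$ shifts the second $Q$-coordinate of a generator $L_q(X_{i,p})$ by $-1$, we have $\frakD_q^{-m}\bigl(L_q^Q(\be,0)\bigr)=L_q^Q(\be,m)$ for all $m\in\Z$, so it suffices to prove
$$\sigma_{i,Q}\bigl(L_q^Q(\be,0)\bigr)=L_q^Q(s_i(\be),0)\qquad\text{for every }\be\in\Phi_+\text{ with }i\notin{\rm supp}(\be).$$
Granting this, applying $\sigma_{i,Q}^{-1}$ and invoking the reflection rule~\eqref{eq: Qd properties}, which gives $L_q^Q(s_i(\be),m)=L_q^{s_iQ}(\be,m)$ and $L_q^{s_iQ}(s_i(\be),m)=L_q^Q(\be,m)$ whenever $\be\neq\al_i$, yields the asserted formula for $\sigma_{i,Q}^{-1}$ as well.

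For the base case $\be=\al_j$ with $j\neq i$, Proposition~\ref{prop: ext of Lustig} gives $\sigma_{i,Q}(\sfx^Q_{j,0})=\sfx^{s_iQ}_{j,0}$, and the reflection rule~\eqref{eq: Qd properties} identifies $\sfx^{s_iQ}_{j,0}=L_q^{s_iQ}(\al_j,0)$ with $L_q^Q(s_i(\al_j),0)$. For the inductive step, let $\be\in\Phi_+\setminus\Pi$ with $i\notin{\rm supp}(\be)$ and choose a $[Q]$-minimal $[Q]$-pair $\pair{\ga_1,\ga_2}$ with $\ga_1+\ga_2=\be$ (one exists by the last paragraph of Section~\ref{subsec: stat}); since $\ga_1,\ga_2\in\Phi_+$ and the $\al_i$-coefficient of $\be$ vanishes, automatically $i\notin{\rm supp}(\ga_1)\cup{\rm supp}(\ga_2)$. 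Transporting the relation~\eqref{eq: BKMc2} along the HLO isomorphism $\Psi_Q$ of Corollary~\ref{cor: HLO iso} (which, by~\eqref{eq: D to F}, sends $\tF^\up_{[Q]}(\eta)$ to $L_q^Q(\eta,0)$ for every $\eta\in\Phi_+$) expresses $L_q^Q(\be,0)$ as $C^{-1}$ times the $q$-commutator $L_q^Q(\ga_1,0)*L_q^Q(\ga_2,0)-q^{-(\ga_1,\ga_2)}L_q^Q(\ga_2,0)*L_q^Q(\ga_1,0)$, where $C=q^{-p_{\ga_2,\ga_1}+(\ga_1,\ga_2)/2}\bigl(1-q^{2(p_{\ga_2,\ga_1}-(\ga_1,\ga_2))}\bigr)$ is nonzero by~\eqref{eq: palbe}. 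Applying the algebra automorphism $\sigma_{i,Q}$ together with the induction hypothesis (valid since $\het(\ga_a)<\het(\be)$) then gives $\sigma_{i,Q}(L_q^Q(\be,0))=C^{-1}\bigl(L_q^Q(s_i\ga_1,0)*L_q^Q(s_i\ga_2,0)-q^{-(\ga_1,\ga_2)}L_q^Q(s_i\ga_2,0)*L_q^Q(s_i\ga_1,0)\bigr)$.

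It then remains to recognize the right-hand side as $L_q^Q(s_i\be,0)$, where $s_i\be=s_i\ga_1+s_i\ga_2$. As $s_i$ is an isometry permuting $\Phi$, we have $(s_i\ga_1,s_i\ga_2)=(\ga_1,\ga_2)$ and $p_{s_i\ga_2,s_i\ga_1}=p_{\ga_2,\ga_1}$, so the structure constant in~\eqref{eq: BKMc2} for $s_i\be$ is again $C$; hence, \emph{provided} $\pair{s_i\ga_1,s_i\ga_2}$ is itself a $[Q]$-minimal $[Q]$-pair for $s_i\be$, reapplying~\eqref{eq: BKMc2} with respect to $[Q]$ and $\Psi_Q$ identifies the right-hand side with $L_q^Q(s_i\be,0)$ and closes the induction. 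To establish this combinatorial fact I would use the reflection description~\eqref{eq: Qd properties}: $\Gamma^{s_iQ}$ is obtained from $\Gamma^Q$ by relabelling each vertex $\eta\in\Phi_+\setminus\{\al_i\}$ by $s_i\eta$ and moving $\al_i$ to the other end — where it is a source of the quiver $\Gamma^{s_iQ}$ exactly as it is a sink of $\Gamma^Q$, because $i$ is a source of $Q$ — so that $\prec_{[Q]}$ and $\prec_{[s_iQ]}$, restricted to $\Phi_+\setminus\{\al_i\}$, are conjugate via $s_i$; it follows that $\pair{s_i\ga_1,s_i\ga_2}$ is at least a $[s_iQ]$-minimal $[s_iQ]$-pair for $s_i\be$, after which minimality must be transferred from the class $[s_iQ]$ back to $[Q]$ by a direct verification of the defining condition, using that the two orders differ only in the placement of $\al_i$. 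I expect this transfer — in particular controlling the minimality condition when the reflected root $s_i\be$, or a root lying strictly between $s_i\ga_1$ and $s_i\ga_2$, acquires $\al_i$ in its support — to be the main obstacle, requiring a case analysis along the AR-quiver in the spirit of the proof of Proposition~\ref{prop: ext of Lustig} and the combinatorics of~\cite{OS19a}.
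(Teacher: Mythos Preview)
Your strategy matches the paper's: both reduce to $m=0$ and use induction on height via Theorem~\ref{thm: minimal pair dual pbw}, with Proposition~\ref{prop: ext of Lustig} supplying the base case. The paper's proof is a single sentence citing these two results, so you have in fact fleshed out what the paper leaves implicit.

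There is, however, a circularity in the way you handle the ``main obstacle.'' The poset isomorphism you describe --- relabelling $\eta\mapsto s_i\eta$ on $\Phi_+\setminus\{\al_i\}$ --- sends the datum ``$\pair{\ga_1,\ga_2}$ is $[Q]$-minimal for $\be$'' to ``$\pair{s_i\ga_1,s_i\ga_2}$ is $[s_iQ]$-minimal for $s_i\be$,'' which is merely a relabelling of your hypothesis, not progress toward ``$\pair{s_i\ga_1,s_i\ga_2}$ is $[Q]$-minimal for $s_i\be$.'' The ``transfer back to $[Q]$'' you propose would, via the same isomorphism applied in reverse, land you right where you started.

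A cleaner route avoids this detour. After the induction hypothesis you have
\[
\sigma_{i,Q}\bigl(L_q^Q(\be,0)\bigr)=C^{-1}\bigl(L_q^Q(s_i\ga_1,0)*L_q^Q(s_i\ga_2,0)-q^{-(\ga_1,\ga_2)}L_q^Q(s_i\ga_2,0)*L_q^Q(s_i\ga_1,0)\bigr).
\]
Now use the reflection rule once more to rewrite each factor as $L_q^Q(s_i\ga_a,0)=L_q^{s_iQ}(\ga_a,0)=\Psi_{s_iQ}\bigl(\tF^{\up}_{[s_iQ]}(\ga_a)\bigr)$. The right-hand side is then $\Psi_{s_iQ}$ applied to the corresponding $q$-commutator of $\tF^{\up}_{[s_iQ]}(\ga_1)$ and $\tF^{\up}_{[s_iQ]}(\ga_2)$; to identify this with $\Psi_{s_iQ}\bigl(\tF^{\up}_{[s_iQ]}(\be)\bigr)=L_q^{s_iQ}(\be,0)=L_q^Q(s_i\be,0)$ via Theorem~\ref{thm: minimal pair dual pbw} applied to $[s_iQ]$, the genuinely needed combinatorial fact is that $\pair{\ga_1,\ga_2}$ is a $[s_iQ]$-minimal $[s_iQ]$-pair for $\be$ --- \emph{the same pair and the same root}, just for the reflected commutation class. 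This is equivalent (again via the poset isomorphism) to what you formulated, but it makes clear what must be checked: since $\be\in\Phi_{(i)}^+=\{\eta\in\Phi_+:i\notin\mathrm{supp}(\eta)\}$, every exponent of weight $\be$ is supported on $\Phi_{(i)}^+$, so the minimality condition depends only on the restriction of $\prec_{[Q]}$ (resp.\ $\prec_{[s_iQ]}$) to $\Phi_{(i)}^+$. The argument is then completed by showing that these two restricted partial orders coincide --- which is plausible because $Q$ and $s_iQ$ agree on the sub-Dynkin-diagram $I\setminus\{i\}$ --- rather than by your proposed case analysis along the full AR-quiver. The paper does not spell out this step either.
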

\begin{proof}
    Recall that ${\rm supp}(\be) = \{  i \in I \ | \  k_i \ne 0 \}$ for $\be = \sum_{i \in I} k_i\al_i \in \rl^+$.
    Our assertion follows directly from Theorem~\ref{thm: minimal pair dual pbw} and Proposition~\ref{prop: ext of Lustig} (see {\it Case 1}--{\it Case 3} in the proof of Proposition~\ref{prop: ext of Lustig}).
\end{proof}

\section{Braid group action} \label{Sec: Braid}
For each $\Dynkin$, the braid group $B(\Dynkin)$ (or \emph{Artin--Tits} group) is generated by  $T^{\pm}_i$ $(i \in \Dynkin_0)$ satisfying
\begin{align} \label{eq:braid relations}
\begin{split}
\underbrace{T^{\pm}_iT^{\pm}_j \cdots}_{\text{ $h_{i,j}$-times}} & =\underbrace{T^{\pm}_jT^{\pm}_i \cdots}_{\text{ $h_{i,j}$-times}} \qquad \text{for all $i \ne j \in \Dynkin_0$,}
\end{split}
\end{align}
where $h_{i,j}$ is defined in \eqref{eq:def of hij}.
In this section, we shall prove that the automorphisms $\sigma_{i,Q}^{\pm 1}$ induce a braid group action on $\hA_{q}(\n)$ and hence on $\frakK_q$ by Theorem~\ref{thm: indeed frakKq}.

\subsection{Braid group symmetry on $\hA_{q}(\n)$}
By Theorem \ref{thm: presentation} and Proposition~\ref{prop: ext of Lustig}, we define automorphisms $\TT_i^{\pm1}$ on $\hA_{q}(\n)$
by the following:
\begin{equation} \label{eq:brd1pY}
\begin{aligned}
\TT_{i}(\sfy_{j,m}) =
  \bc
\sfy_{j,m+\delta_{i,j}}  &  \text{if} \  \sfc_{i,j} \ge 0, \\[1ex]
\dfrac{ \left( \displaystyle \sum_{k=0}^{- \sfc_{i,j}} (-1)^{k} q_i^{ - \sfc_{i,j}/2 -k  } \sfy_{i,m}^{(k)}\sfy_{j,m}  \sfy_{i,m}^{(- \sfc_{i,j}-k)}  \right)}{ (q_i -q_i^{-1})^{- \sfc_{i,j}}  }  &\text{if} \  \sfc_{i,j} < 0,  \ec
 \end{aligned}
\end{equation}
and
\begin{equation}  \label{eq:invbrd1pY}
\begin{aligned}
\TT_{i}^{-1}(\sfy_{j,m}) =
  \bc
\sfy_{j,m-\delta_{i,j}}  &  \text{if} \  \sfc_{i,j} \ge 0, \\[1ex]
\dfrac{ \left( \displaystyle \sum_{k=0}^{-\sfc_{i,j}} (-1)^{k} q_i^{ -\sfc_{i,j}/2 -k  } \sfy_{i,m}^{(-\sfc_{i,j}-k)} \sfy_{j,m}  \sfy_{i,m}^{(k)}  \right)}{ (q_i -q_i^{-1})^{-\sfc_{i,j} }  }  &\text{if} \  \sfc_{i,j} < 0,
\ec
 \end{aligned}
\end{equation}
where
$
\sfy_{i,m}^{(k)}  \seteq \dfrac{\sfy_{i,m}^{k} }{[k]_{i}!}.
$
Now we are in a position to state our final main result of this paper.

\begin{theorem} \label{thm: braid main}
The automorphisms $\{ \TT^{\pm 1}_i \ | \ i \in I\}$ give a braid group $B(\Dynkin)$ action on $\hA_{q}(\n)$ and hence on $\frakK_q$.
\end{theorem}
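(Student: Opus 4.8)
The plan is to deduce the braid relations for $\{\TT_i^{\pm 1}\}$ from the geometry of reflection functors on Dynkin quivers, rather than from the explicit formulas~\eqref{eq:brd1pY} and~\eqref{eq:invbrd1pY}. First I would transport everything through the isomorphism $\tUptheta_Q \colon \hA_q(\n)\isoto\bbK_q$ of Theorem~\ref{thm: presentation}: since $\TT_i = \tUptheta_Q^{-1}\circ\sigma_{i,Q}\circ\tUptheta_{?}$, and $\sigma_{i,Q}=\tUptheta(s_iQ,Q)|_{\frakK_q}=\tUptheta_{s_iQ}\circ\tUptheta_Q^{-1}|_{\frakK_q}$, the composition $\TT_{j_h}\cdots\TT_{j_1}$ corresponds on $\frakK_q$ to $\tUptheta\bigl(s_{j_h}\cdots s_{j_1}Q,\,Q\bigr)$ whenever the sequence $(j_1,\dots,j_h)$ is such that each $j_k$ is a source of $s_{j_{k-1}}\cdots s_{j_1}Q$. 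The key combinatorial fact is that for $i\ne j$ with $h_{i,j}$ as in~\eqref{eq:def of hij}, both alternating words $\underbrace{s_is_j\cdots}_{h_{i,j}}$ and $\underbrace{s_js_i\cdots}_{h_{i,j}}$ are adapted sequences taking a Dynkin quiver $Q$ (chosen with $i,j$ adjacent sources in the appropriate orientation) to the \emph{same} Dynkin quiver $Q'$, because they represent the same Weyl group element and, on the level of orientations, reversing all arrows on the $\{i,j\}$-subdiagram and back. Hence $\tUptheta(Q',Q)$ computed along the two words is literally the same automorphism, giving $\underbrace{\TT_i\TT_j\cdots}_{h_{i,j}}=\underbrace{\TT_j\TT_i\cdots}_{h_{i,j}}$ on $\bbK_q$, and by Theorem~\ref{thm: indeed frakKq} this restricts to $\frakK_q$ and preserves $\sfL_q$.

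The steps I would carry out, in order, are: (1) record that $\sigma_{i,Q}=\tUptheta(s_iQ,Q)|_{\frakK_q}$ is defined for any $Q$ with source $i$, and that $\tUptheta(Q'',Q')\circ\tUptheta(Q',Q)=\tUptheta(Q'',Q)$ and $\tUptheta(Q,Q)=\id$ directly from~\eqref{eq:tUptheta}; (2) observe via~\eqref{eq: si ri} that $[s_iQ]=r_i[Q]$, so that a sequence $(j_1,\dots,j_h)$ with $j_k$ a source of $s_{j_{k-1}}\cdots s_{j_1}Q$ at each step is exactly a sequence adapted to $Q$, and the terminal quiver depends only on the Weyl element $s_{j_h}\cdots s_{j_1}$ together with a global height shift; (3) for fixed $i\ne j$, exhibit a Dynkin quiver $Q$ relative to which both length-$h_{i,j}$ alternating sequences starting with $i$ and with $j$ are adapted — here I would use the standard fact that within the $\{i,j\}$-rank-$2$ sub-Dynkin-diagram one may choose the orientation so that the two alternating reduced words of the longest element $\underbrace{s_is_j\cdots}_{h_{i,j}}=\underbrace{s_js_i\cdots}_{h_{i,j}}$ are both adapted, while all other vertices are untouched; (4) conclude $\tUptheta(s_{j_h}\cdots s_{j_1}Q,Q)$ is independent of which of the two words is used, since the intermediate quivers differ but the composite $\tUptheta$ telescopes to the same map; (5) translate back through $\tUptheta_Q$ to get the braid relation on $\hA_q(\n)$, and invoke Theorem~\ref{thm: indeed frakKq} to pass to $\frakK_q$; (6) handle $\TT_i^{-1}$ symmetrically using sinks and $\tUptheta(Q,s_iQ)$, and note the relations for inverses follow formally once those for the $\TT_i$ are known, since $B(\Dynkin)$ is generated by the $T_i$ alone with the $T_i^{-1}$ their inverses.

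The main obstacle I anticipate is step (3)–(4): one must be careful that $\sigma_{i,Q}$ is only \emph{defined} when $i$ is a source of $Q$, so a naive composition $\TT_{j_k}\cdots\TT_{j_1}$ corresponds to $\sigma_{j_k, s_{j_{k-1}}\cdots s_{j_1}Q}\circ\cdots\circ\sigma_{j_1,Q}$ only when the adaptedness condition holds at every stage, and for a generic starting quiver $Q$ this fails. The resolution is that Theorem~\ref{thm: presentation} gives $\tUptheta_Q$ for \emph{every} $Q$, so $\TT_i=\tUptheta_{Q'}\circ\tUptheta_Q^{-1}$ makes sense as a map $\bbK_q\to\bbK_q$ regardless, and one checks the formulas~\eqref{eq:brd1pY}–\eqref{eq:invbrd1pY} are independent of the auxiliary $Q$ (this is essentially Proposition~\ref{prop: ext of Lustig} together with the observation that both sides only involve $\sfy_{j,m}$ and $\sfy_{i,m}$ at a single $m$, whose images under $\tUptheta_Q$ for different $Q$ differ by a consistent $\frakD_q$-shift). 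Granting that independence, the telescoping in step (4) goes through: one picks, for the verification of a single braid relation, one convenient quiver $Q$ for which \emph{both} alternating sequences happen to be adapted, applies the telescoping there, and then uses $Q$-independence to propagate. I would also double-check the $G_2$ case ($h_{i,j}=6$, $\sfc_{i,j}=-3$) where the rank-$2$ combinatorics and the formula from Appendix~\ref{app: G} are heaviest, but the structural argument is uniform across all $h_{i,j}\in\{2,3,4,6\}$.
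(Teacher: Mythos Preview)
Your telescoping strategy breaks at step~(3). When $i\sim j$ (the only nontrivial case), the height condition $|\xi_i-\xi_j|=1$ forces exactly one of $i,j$ to be a source of $Q$, so there is \emph{no} Dynkin quiver to which both alternating sequences $(i,j,i,\ldots)$ and $(j,i,j,\ldots)$ are adapted. Your proposed resolution via $Q$-independence of $\TT_i$ does not close this gap: that independence only says $\TT_i=\tUptheta_{Q}^{-1}\circ\tUptheta_{s_iQ}$ for every $Q$ with source $i$, but gives no expression for $\TT_i$ through a quiver where $i$ is a sink. Hence you can telescope $\TT_i\TT_j\TT_i\cdots=\tUptheta_Q^{-1}\circ\tUptheta_{Q'}$ starting from a $Q$ with source $i$, and separately $\TT_j\TT_i\TT_j\cdots=\tUptheta_{\tilde Q}^{-1}\circ\tUptheta_{\tilde Q'}$ starting from a \emph{different} $\tilde Q$ with source $j$, but you then have no mechanism to identify $\tUptheta_Q^{-1}\circ\tUptheta_{Q'}$ with $\tUptheta_{\tilde Q}^{-1}\circ\tUptheta_{\tilde Q'}$: that would require $\tUptheta(\tilde Q,Q)=\tUptheta(\tilde Q',Q')$ as automorphisms of $\bbK_q$, which is not available a priori.

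The paper's proof (Propositions~\ref{prop: commutation}--\ref{prop: G2 barid}) uses your telescoping idea only partially. For generators $\sfy_{k,m}$ with $k\in\{i,j\}$ it establishes ``half-braid'' identities such as $\TT_i\TT_j(\sfy_{i,m})=\sfy_{j,m}$ in~\eqref{eq: ij(i,m)=(j,m)} or $\TT_{n-1}\TT_n\TT_{n-1}(\sfy_{n-1,m})=\TT_n^{-1}(\sfy_{n-1,m})$ in~\eqref{eq: 4move n-1 0}, and these \emph{are} obtained by passing to $L_q^Q$ and $L_q^{s_iQ}$ and invoking the dual-PBW relations~\eqref{eq: BKMc2}--\eqref{eq: BKMcp}, much as you envisage. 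But for $\sfy_{k,m}$ with $k\notin\{i,j\}$ and $k\sim i$ (or $k\sim j$) there is no such shortcut: the paper reduces to a rank-three situation ($A_3$, $C_3$, or $B_3$) and verifies the relation by a direct computation in the $\sfy$'s, repeatedly applying the quantum Serre relations~\eqref{eq: q-serre y}. This explicit step is exactly what your plan is missing, and there is no evident way to replace it by a purely quiver-combinatorial argument.
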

\begin{proof}
We give the proof of Theorem \ref{thm: braid main} in Section \ref{subsec:proof of final main result}.
\end{proof}
\noindent
 The rest of this subsection will be devoted to present consequences of Theorem \ref{thm: braid main} and its proof.
 \medskip

For $m \in \Z$, we set
$$\hA_{q}(\n)_m \seteq\lan \sfy_{i,m} \ | \ i \in \Dynkin_0 \ran \subset \hA_{q}(\n).$$
Note that $\hA_{q}(\n)_m$ is isomorphic to $\calU_q^-(\g)$ via the $\Q(q^{1/2})$-algebra isomorphism $\varsigma_m$ such that  $\varsigma_m(\sfy_{i,m})=f_i$.
From the proofs of the propositions in Section \ref{subsec:proof of final main result}, we obtain the following corollary:

\begin{corollary}  \label{cor: EiEjspan}
For $i,j \in I$ and $0 \le p < h_{i,j}$, we have
$$
\underbrace{\cdots \TT_{i}\TT_{j}}_{\text{$p$-times}}(\sfy_{i}), \ \underbrace{\cdots \TT_{j}\TT_{i}}_{\text{$p$-times}}(\sfy_{j}) \in   \lan \sfy_{i,m},\sfy_{j,m}  \ran \subseteq
 \hA_{q}(\n)_m.
$$
\end{corollary}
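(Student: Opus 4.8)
The plan is to read Corollary~\ref{cor: EiEjspan} off from the explicit rank-two formulas produced in the course of proving Theorem~\ref{thm: braid main} in Section~\ref{subsec:proof of final main result}; once those formulas are in hand, almost nothing remains for the corollary itself, so the real effort lies inside that section.

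The first point is to isolate the \emph{rank-two slices} of $\hA_q(\n)$. Fix $i\ne j$ and let $\g_{i,j}$ be the rank-two semisimple Lie algebra with Cartan matrix $(\sfc_{a,b})_{a,b\in\{i,j\}}$, so that $h_{i,j}=|\Phi_+(\g_{i,j})|$. For a fixed second index $m$, the presentation~\eqref{eq:presen} of $\hA_q(\n)$ shows that the only relations linking $\sfy_{i,m}$ and $\sfy_{j,m}$ are the quantum Serre relations~\eqref{eq: q-serre y}, since the boson relations~\eqref{eq: q-Boson1 y}--\eqref{eq: q-Boson2 y} never involve two generators with the same second index. Hence the restriction of $\varsigma_m$ identifies $\lan\sfy_{i,m},\sfy_{j,m}\ran$ with the subalgebra $\calU_q^-(\g_{i,j})\subseteq\calU_q^-(\g)$ generated by $f_i,f_j$. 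Since $\frakD_q^{\,l}$ carries $\lan\sfy_{i,m},\sfy_{j,m}\ran$ isomorphically onto $\lan\sfy_{i,m+l},\sfy_{j,m+l}\ran$, it is enough to show, for each $0\le p<h_{i,j}$, that the product in question lies in $\lan\sfy_{i,m},\sfy_{j,m}\ran$ for \emph{some} $m$.

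The conceptual reason this holds is the following. Transporting through a Dynkin quiver $Q$ with source $i$ and the isomorphism $\tUptheta_Q$ of Theorem~\ref{thm: presentation}, together with Theorems~\ref{thm:main1} and~\ref{thm:important relationship}, the generators $\sfy_{i,0},\sfy_{j,0}$ correspond to the dual PBW vectors $\tF^{\up}_{[Q]}(\al_i),\tF^{\up}_{[Q]}(\al_j)$ (equivalently to the cuspidal modules $S_Q(i;p),S_Q(j;s)$), and $\TT_i^{\pm1}$ corresponds to $\sigma_{i,Q}^{\pm1}$. By Proposition~\ref{prop: ext of Lustig} and Remark~\ref{rmk: up ro constant}, on the generators indexed by $\{i,j\}$ the $\TT$'s reproduce Lusztig's operators $T_i=T''_{i,-1}$ up to nonzero scalars, except on the self-application $\TT_i(\sfy_{i,m})=\sfy_{i,m+1}$, where the $\frakD_q$-dual structure of $\hA_q(\n)$ intervenes. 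Consequently an alternating word of length $p<h_{i,j}$ applied to $\sfy_i$ (resp.\ $\sfy_j$) runs once through the dual PBW root vectors of $\Phi_+(\g_{i,j})$, in the convex order attached to a reduced word $(i,j,i,j,\dots)$ of the longest element of the rank-two Weyl group; these root vectors all lie in $\calU_q^-(\g_{i,j})$ by the classical rank-two computation (see Theorem~\ref{thm: minimal pair dual pbw} and its iterates, and~\cite{L902,Saito94,LusztigBook}). The bound $p<h_{i,j}$ is precisely the condition preventing the word from completing a full turn of the rank-two root system -- the step at which $\sigma_{i,Q}$ sends a simple root to its $\frakD_q$-shift (cf.~\eqref{eq: Qd properties}) and the second index of the output jumps.

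Concretely, Section~\ref{subsec:proof of final main result} records, separately for $h_{i,j}\in\{2,3,4,6\}$, closed formulas for the relevant $\underbrace{\cdots\TT_i\TT_j}_p(\sfy_i)$ and $\underbrace{\cdots\TT_j\TT_i}_p(\sfy_j)$ as $\Q(q^{1/2})$-linear combinations of ordered monomials in a single pair $\sfy_{i,m},\sfy_{j,m}$: the case $h_{i,j}=2$ is trivial, the cases $h_{i,j}=3,4$ are Cases~1 and~2 in the proof of Proposition~\ref{prop: ext of Lustig} (rank-two instances of the commutation relations of Theorem~\ref{thm: minimal pair dual pbw}, with the scalars $p_{\be,\al}$ of~\eqref{eq: palbe}), and $h_{i,j}=6$ is carried out in Appendix~\ref{app: G}. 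Since no generator other than $\sfy_{i,m}$ and $\sfy_{j,m}$ appears in those formulas, the corollary follows. The genuine obstacle is deriving and verifying the formulas -- above all in the $G_2$ case, where all six rank-two positive-root vectors and the exceptional values of $p_{\be,\al}$ must be controlled at once; granted those, the corollary is immediate.
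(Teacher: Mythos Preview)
Your approach is essentially the paper's: the corollary is read off directly from the explicit rank-two computations carried out while proving Theorem~\ref{thm: braid main}. However, your specific citations are misplaced. Cases~1 and~2 of Proposition~\ref{prop: ext of Lustig} and Appendix~\ref{app: G} only establish the $p=1$ step, i.e., the formula for a single $\TT_i(\sfy_{j,m})$. The formulas you need for $p\ge 2$ are derived \emph{inside} the proofs of Propositions~\ref{prop: 3 move in simply-laced}, \ref{prop: 4-move dobly-laced}, and~\ref{prop: G2 barid}: for $h_{i,j}=3$ the identity $\TT_i\TT_j(\sfy_{i,m})=\sfy_{j,m}$ is~\eqref{eq: ij(i,m)=(j,m)}; for $h_{i,j}=4$ the key intermediate identities are~\eqref{eq: 4move n-1 0} and the computation of $\TT_n\TT_{n-1}(\sfy_{n,m+1})$ in Case~2 of Proposition~\ref{prop: 4-move dobly-laced}; for $h_{i,j}=6$ the explicit expression for $\TT_2\TT_1(\sfy_2)$ is~\eqref{eq:T2T1y2} and the further iterates appear in the proof of Proposition~\ref{prop: G2 barid}. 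With these corrected references your argument is complete and coincides with the paper's one-line justification. Your conceptual paragraph relating $\TT_i$ to Lusztig's $T_i$ via Remark~\ref{rmk: up ro constant} is correct context but not needed for the corollary itself.
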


 For $w \in \weyl$ with $(i_1, \dots, i_r) \in I(w)$, set
\begin{align*}
    \TT^{\pm 1}_w \seteq \TT^{\pm 1}_{i_1} \dots \TT^{\pm 1}_{i_r}.
\end{align*}
Note that $\TT^{\pm 1}_w$ is well-defined by Theorem \ref{thm: braid main}.

\begin{proposition} \label{prop: stable and up to constant}
For $w \in \weyl$ and $\ell(ws_{i})=\ell(w)+1$. Then we have the following:
\ben
\item \label{it: stable} $  \TT_{w}( \sfy_{i,m} ) \in \hA_{q}(\n)_m.$
\item \label{it: up to constant} There exists $\bfc \in \Q[[q^{1/2}]] \setminus \{0\}$ such that
$\varsigma_m(\TT^{\pm}_{w}( \sfy_{i,m} )) = \bfc T^{\pm}_w(f_i).$
\ee
\end{proposition}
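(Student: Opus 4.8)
\textbf{Proof proposal for Proposition~\ref{prop: stable and up to constant}.}

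The plan is to prove both statements simultaneously by induction on $\ell(w)$, using the description of $\TT_i^{\pm1}$ in~\eqref{eq:brd1pY}--\eqref{eq:invbrd1pY} and the parallel description of $T_i^{\pm1}$ on $\calU_q^-(\g)$ recalled in Remark~\ref{rmk: up ro constant}. The base case $\ell(w)=0$ is trivial since $\varsigma_m(\sfy_{i,m})=f_i$. For the inductive step, write $w=s_j w'$ with $\ell(w)=\ell(w')+1$ and $\ell(w's_i)=\ell(w')+1$; since also $\ell(w s_i)=\ell(w)+1$, the element $\alpha\seteq w'(\alpha_i)$ is a positive root with $s_j\alpha\in\Phi_+$, and because $j$ is the first letter of a reduced word for $w=s_jw'$ we have $j\notin\mathrm{supp}(\alpha)$ is \emph{not} automatic — rather, the relevant fact is that $w'(\alpha_i)\succ 0$ and $s_j w'(\alpha_i)\succ 0$, so $T_{w'}(f_i)$ is (up to a nonzero scalar in $\Q[[q^{1/2}]]$) the dual PBW root vector $F^{\mathrm{up}}(\alpha)$ for a convex order beginning with $j$, whence applying $T_j$ keeps us inside $\calU_q^-(\g)$. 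Concretely, I would invoke that $\TT_{w'}(\sfy_{i,m})\in\hA_q(\n)_m$ by induction, transport it via $\varsigma_m$ to $\calU_q^-(\g)$ where by the inductive form of (2) it equals $\bfc\, T_{w'}(f_i)$, note that $T_{w'}(f_i)\in\calU_q^-(\g)_{-\alpha}$ lies in the negative part (this is Lusztig's fact that $T_w(f_i)\in\calU_q^-(\g)$ whenever $\ell(ws_i)=\ell(w)+1$, \cite[Prop.~40.1.3]{LusztigBook}), and then that $\TT_j$ acts on elements of $\hA_q(\n)_m$ exactly as $T_j$ acts on $\calU_q^-(\g)$ under $\varsigma_m$. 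This last point is the crux: I must check that the operator $\TT_j$ defined by~\eqref{eq:brd1pY} restricts to $\hA_q(\n)_m$ and that $\varsigma_m\circ\TT_j = T_j\circ\varsigma_m$ on $\hA_q(\n)_m$. But $\hA_q(\n)_m=\langle \sfy_{i,m}\mid i\in\Dynkin_0\rangle\cong\calU_q^-(\g)$ via $\varsigma_m$, and the formula~\eqref{eq:brd1pY} for $\TT_j(\sfy_{k,m})$ with $k\ne j$ is literally Lusztig's formula for $T_j(f_k)$ up to the explicit nonzero constant $(q_j-q_j^{-1})^{\sfc_{j,k}}$ visible in the denominator, while for $k=j$ one needs $\TT_j(\sfy_{j,m})$ — but $\sfy_{j,m}$ never occurs as the argument when we iterate correctly because $\ell(w's_i)=\ell(w')+1$ forces $i\ne$ the relevant index at each stage; more precisely, $\TT_{w'}(\sfy_{i,m})$ is a polynomial in $\{\sfy_{k,m}:k\in\Dynkin_0\}$ lying in $\calU_q^-(\g)_{-\alpha}$ with $\alpha\ne\alpha_j$ since $s_j\alpha\succ0$, so applying~\eqref{eq:brd1pY} term by term (which is legitimate as $\TT_j$ is an algebra automorphism) produces an element of $\hA_q(\n)_m$ whose $\varsigma_m$-image is $T_j$ of the corresponding element of $\calU_q^-(\g)$, up to a product of the nonzero constants collected along the way.

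For statement~\eqref{it: stable}, the above shows $\TT_w(\sfy_{i,m})\in\hA_q(\n)_m$; for statement~\eqref{it: up to constant}, the constant $\bfc$ is the product over the inductive steps of the explicit scalars $(q_j-q_j^{-1})^{\sfc_{j,k}}$ (and their analogues in the $\sfc_{j,k}=-3$ case handled in Appendix~\ref{app: G}), each of which lies in $\Q[[q^{1/2}]]\setminus\{0\}$; since $\Q[[q^{1/2}]]\setminus\{0\}$ is closed under multiplication, $\bfc\in\Q[[q^{1/2}]]\setminus\{0\}$. The well-definedness of $\TT_w^{\pm1}$ independent of the reduced word is exactly Theorem~\ref{thm: braid main}, so there is nothing to check there.

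The main obstacle I anticipate is the bookkeeping in the inductive step: one must verify carefully that at each stage the argument fed to $\TT_j$ is an element of $\hA_q(\n)_m$ whose weight avoids $-\alpha_j$ (equivalently, that $\TT_j$ is applied in the ``positive'' direction, never inverting), so that the comparison $\varsigma_m\circ\TT_j=T_j\circ\varsigma_m$ via Lusztig's symmetry is valid and no $\sfy_{j,m}\mapsto$ (negative of $K$-twisted) term appears — such a term would leave $\calU_q^-(\g)$. This is controlled precisely by the hypothesis $\ell(ws_i)=\ell(w)+1$, which propagates down the reduced word: writing $w=s_{j_1}\cdots s_{j_r}$ with $\ell(w)=r$ and setting $w^{(k)}=s_{j_{k+1}}\cdots s_{j_r}$, the condition $\ell(w s_i)=\ell(w)+1$ gives $\ell(w^{(k)}s_i)=\ell(w^{(k)})+1$ for all $k$ (as $w^{(k)}$ is a right factor of $w$ and $w(\alpha_i)\succ0$ implies $w^{(k)}(\alpha_i)\succ0$), hence the root $w^{(k)}(\alpha_i)$ stays positive and distinct from $\alpha_{j_k}$ at every step, which is exactly what is needed. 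The remaining verifications — that $\TT_j$ is an algebra automorphism of $\hA_q(\n)$ (Theorem~\ref{thm: braid main}, or directly from Proposition~\ref{prop: ext of Lustig}), that $\varsigma_m$ is an isomorphism onto $\hA_q(\n)_m$ — are already available, so the proof reduces to assembling these facts.
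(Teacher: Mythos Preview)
There is a genuine gap in your inductive step. You write $w=s_jw'$ and want to apply $\TT_j$ to $x\seteq\TT_{w'}(\sfy_{i,m})\in\hA_q(\n)_m$. But $\TT_j$ does \emph{not} preserve $\hA_q(\n)_m$: on the generator $\sfy_{j,m}$ it gives $\TT_j(\sfy_{j,m})=\sfy_{j,m+1}\notin\hA_q(\n)_m$. The element $x$ is a polynomial in \emph{all} the $\sfy_{k,m}$, and in general some of its monomials contain $\sfy_{j,m}$ as a factor (e.g.\ already $\TT_k(\sfy_{i,m})$ for $k\sim i$ involves $\sfy_{k,m}$). Your observation that the weight $\alpha=w'(\al_i)$ satisfies $\alpha\ne\al_j$ only says the \emph{total} weight is not $-\al_j$; it does not prevent $\sfy_{j,m}$ from occurring in individual monomials. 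Consequently, applying~\eqref{eq:brd1pY} term by term produces, a priori, an element involving $\sfy_{j,m+1}$, and you give no reason why these contributions cancel. The same issue wrecks part~\eqref{it: up to constant}: your constant-tracking argument only works on monomials free of $\sfy_{j,m}$, and there is no single constant $\bfc$ once $\sfy_{j,m}$-factors are present (since $\TT_j(\sfy_{j,m})$ and $T_j(f_j)=-K_je_j$ are not comparable via $\varsigma_m$).

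The paper avoids this by a different decomposition: instead of peeling one reflection from the left, it writes $w=w'y$ where $y$ is a rank-two word in $s_i,s_j$ with $\ell(w's_i)=\ell(w's_j)=\ell(w')+1$, and invokes Corollary~\ref{cor: EiEjspan} (extracted from the explicit braid-relation computations in Propositions~\ref{prop: 3 move in simply-laced}--\ref{prop: G2 barid}) to get $\TT_y(\sfy_{i,m})\in\langle\sfy_{i,m},\sfy_{j,m}\rangle\subset\hA_q(\n)_m$. Then the inductive hypothesis for $w'$ applies directly to the \emph{generators} $\sfy_{i,m},\sfy_{j,m}$, and since $\TT_{w'}$ is an algebra map and $\hA_q(\n)_m$ is a subalgebra, the image stays in $\hA_q(\n)_m$. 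For part~\eqref{it: up to constant} the rank-two check $\varsigma_m(\TT_y(\sfy_{i,m}))=\bfc\,T_y(f_i)$ is done explicitly in those same propositions. Your approach could perhaps be salvaged by proving that $\TT_j$ restricted to $\varsigma_m^{-1}(\ker e_j')\subset\hA_q(\n)_m$ lands in $\hA_q(\n)_m$ and intertwines with $T_j$ up to a weight-dependent scalar (morally, $\sfy_{j,m+1}$ acts on $\hA_q(\n)_m$ like $e_j'$ via the boson relation~\eqref{eq: q-Boson1 y}), but this is a substantial extra argument you have not supplied.
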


\begin{proof}
By the standard argument, there exists $j \in I$ such that $\ell(ws_j)=\ell(w)-1$ and a unique $w' \in \weyl$ such that
$$\ell(w's_j)=\ell(w')+1,\quad \ell(w's_i)=\ell(w')+1 \quad \text{ and } \quad w = w'y$$
whether either
\bna
\item $y=\underbrace{s_is_j \cdots s_j}_{\text{$(h_{i,j}> p \ge 2)$-factors}}$ and $\ell(w)=\ell(w')+p$, or
\item $y=\underbrace{s_js_i \cdots s_j}_{\text{$(h_{i,j}> p \ge 1)$-factors}}$ and $\ell(w)=\ell(w')+p$.
\ee

With this observation, the first assertion now follows from an induction argument on $\ell(w)$ and Corollary~\ref{cor: EiEjspan} (see \cite[40.1.2]{LusztigBook}).
Next, Remark~\ref{rmk: up ro constant} implies that the second assertion holds for $w$ with $\ell(w)=1$. In Propositions~\ref{prop: 3 move in simply-laced},~\ref{prop: 4-move dobly-laced}
and,~\ref{prop: G2 barid}, we will see that
$$
\varsigma_m(\TT^{\pm}_{y}( \sfy_{i,m} )) = \bfc T^{\pm}_y(f_i).
$$
Then the second assertion also follows from an induction argument on the length and~\eqref{it: stable}.
\end{proof}

\begin{corollary} \label{cor: roots}\hfill
\bna
\item \label{it: a-red}
For a reduced expression $\uw=s_{i_1}\cdots s_{i_{r-1}}s_{i_r}$ of $w$, if $s_{i_1}\cdots s_{i_{r-1}}(\al_{i_r})=\al_j$, then
$$  \TT^\pm_{i_1}\cdots \TT^\pm_{i_{r-1}}( \sfy_{i_r,m} ) = \sfy_{j,m}.$$
\item \label{it: b-longest} For $w_\circ \in \weyl$, we have
$$
\TT^\pm_{w_\circ}(\sfy_{i,m}) =\sfy_{i^*,m\pm1} \in \hA_{q}(\n)_{m\pm1}.
$$
\ee
\end{corollary}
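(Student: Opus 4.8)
The plan is to deduce both statements from Proposition~\ref{prop: stable and up to constant}, which transports the $\TT^{\pm}$-action on $\hA_{q}(\n)_m$ to Lusztig's braid symmetries on $\calU^-_q(\g)$, and then to use the rigidity of the canonical basis $\sfL_q$ to remove the undetermined scalar that this transport produces. For (a), I would write $w'=s_{i_1}\cdots s_{i_{r-1}}$, so that $\ell(w's_{i_r})=\ell(w')+1$ because $s_{i_1}\cdots s_{i_r}$ is reduced. By Proposition~\ref{prop: stable and up to constant}(1), $\TT^{\pm}_{w'}(\sfy_{i_r,m})\in\hA_{q}(\n)_m$, and by part (2) of that proposition there is $\bfc\in\Q[[q^{1/2}]]\setminus\{0\}$ with $\varsigma_m\bigl(\TT^{\pm}_{w'}(\sfy_{i_r,m})\bigr)=\bfc\,T^{\pm}_{w'}(f_{i_r})$. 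Now $w'(\al_{i_r})=\al_j$ together with $\ell(w's_{i_r})=\ell(w')+1$ forces $T^{\pm}_{w'}(f_{i_r})=f_{w'(\al_{i_r})}=f_j$, by the standard fact that Lusztig's braid operators carry a Chevalley generator to the Chevalley generator attached to the image root whenever that image is simple (\cite{LusztigBook,Saito94}; cf.~\cite[40.1.2]{LusztigBook}, already invoked in Proposition~\ref{prop: stable and up to constant}). Since $\varsigma_m(\sfy_{j,m})=f_j$ and $\varsigma_m$ is injective, this gives $\TT^{\pm}_{w'}(\sfy_{i_r,m})=\bfc\,\sfy_{j,m}$, so it remains to check $\bfc=1$. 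Fixing a Dynkin quiver $Q$ and conjugating by $\tUptheta_Q$, each $\tUptheta_Q\circ\TT^{\pm}_{i_k}\circ\tUptheta_Q^{-1}$ equals $\tUptheta(Q,Q_k)\circ\sigma^{\pm}_{i_k,Q_k}\circ\tUptheta(Q_k,Q)$ for any Dynkin quiver $Q_k$ with source $i_k$, hence preserves $\sfL_q$ by Theorem~\ref{thm: indeed frakKq}; therefore $\tUptheta_Q\bigl(\TT^{\pm}_{w'}(\sfy_{i_r,m})\bigr)\in\sfL_q$, while it also equals $\bfc\,\tUptheta_Q(\sfy_{j,m})=\bfc\,L^Q_q(\al_j,m)$ with $L^Q_q(\al_j,m)\in\sfL_q$. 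As no two distinct members of the $\Q(q^{1/2})$-basis $\sfL_q$ of $\bbK_q$ are proportional, $\bfc=1$ and $\TT^{\pm}_{w'}(\sfy_{i_r,m})=\sfy_{j,m}$, which is (a).

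For (b), I would use that every $i\in I$ is a right descent of $w_\circ$ to choose a reduced word $w_\circ=s_{i_1}\cdots s_{i_N}$ with $i_N=i$, and set $w'=s_{i_1}\cdots s_{i_{N-1}}=w_\circ s_i$; then $\ell(w's_i)=\ell(w_\circ)=\ell(w')+1$ and $w'(\al_i)=w_\circ s_i(\al_i)=w_\circ(-\al_i)=\al_{i^*}$. By the $\sfc_{i,i}=2$ case of \eqref{eq:brd1pY}, $\TT^{+}_{w_\circ}(\sfy_{i,m})=\TT^{+}_{w'}\bigl(\TT^{+}_i(\sfy_{i,m})\bigr)=\TT^{+}_{w'}(\sfy_{i,m+1})$, and part (a), applied to the reduced word $(i_1,\dots,i_{N-1},i)$ with parameter $m+1$ and using $w'(\al_i)=\al_{i^*}$, gives $\TT^{+}_{w_\circ}(\sfy_{i,m})=\sfy_{i^*,m+1}\in\hA_q(\n)_{m+1}$. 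For the $\TT^{-}$-assertion I would note that, by Theorem~\ref{thm: braid main}, $\TT^{-}_{w_\circ}$ is well defined, and being computed along a reduced word of $w_\circ=w_\circ^{-1}$ it equals $(\TT^{+}_{w_\circ})^{-1}$; applying $(\TT^{+}_{w_\circ})^{-1}$ to the identity $\TT^{+}_{w_\circ}(\sfy_{i^*,m-1})=\sfy_{i,m}$ (the instance of the previous formula with $i\mapsto i^*$, $m\mapsto m-1$) then yields $\TT^{-}_{w_\circ}(\sfy_{i,m})=\sfy_{i^*,m-1}\in\hA_q(\n)_{m-1}$.

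The main obstacle is exactly the passage from ``$\TT^{\pm}_{w'}(\sfy_{i_r,m})=\bfc\,\sfy_{j,m}$ for some scalar $\bfc$'' to $\bfc=1$: Proposition~\ref{prop: stable and up to constant} only identifies the two sides up to an element of $\Q[[q^{1/2}]]^{\times}$, and pinning it down relies on the fact that all the automorphisms $\tUptheta(Q',Q)$ and $\sigma^{\pm}_{i,Q}$ fix the canonical basis $\sfL_q$ setwise, which itself rests on Theorem~\ref{thm: indeed frakKq}. Everything else -- the reduced-word bookkeeping, the reduction to $\calU^-_q(\g)$, and the use of the explicit formulas \eqref{eq:brd1pY}--\eqref{eq:invbrd1pY} -- is routine.
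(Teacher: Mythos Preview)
Your proof is correct and, in one respect, more explicit than the paper's. For part~\eqref{it: a-red}, the paper takes a shorter route: it observes that assigning weight $(-1)^m\al_i$ to $\sfy_{i,m}$ makes $\hA_q(\n)$ into a $\rl$-graded algebra, that each $\TT_i$ sends weight-$\be$ elements to weight-$s_i(\be)$ elements, and that (by Proposition~\ref{prop: stable and up to constant}\eqref{it: stable}) the result lies in $\hA_q(\n)_m$, whose $(-1)^m\al_j$-graded piece is one-dimensional and spanned by $\sfy_{j,m}$. This grading argument yields proportionality directly, without transporting to $\calU_q^-(\g)$ or invoking the Lusztig--Saito identity $T^{\pm}_{w'}(f_{i_r})=f_j$. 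On the other hand, the paper does not spell out why the proportionality constant equals~$1$; your use of the canonical basis $\sfL_q$ via Theorem~\ref{thm: indeed frakKq} is a clean way to pin that scalar down, and the same device works equally well when appended to the paper's grading argument. For part~\eqref{it: b-longest}, your reduction to \eqref{it: a-red} via $\TT_i^{\pm}(\sfy_{i,m})=\sfy_{i,m\pm1}$ is identical to the paper's; the paper treats only the $\TT^+$-case and leaves $\TT^-$ as analogous, whereas you derive it from $\TT^-_{w_\circ}=(\TT^+_{w_\circ})^{-1}$.
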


\begin{proof} In this proof, we only consider $\TT_i$-case, since the proof for  $\TT^{-}_i$-case is similar.

\smallskip

\noindent
\eqref{it: a-red} Assigning $(-1)^{m}\al_i$ for the weight of $\sfy_{i,m}$, the algebra $\hA_{q}(\n)$ is a $\Phi$-graded algebra. With this $\Phi$-grading, the automorphism $\TT_i$ sends a $\be$-homogeneous element to a $s_i(\be)$-homogeneous element. Since $(-1)^{m}\al_j$-graded space in $ \hA_{q}(\n)_m$ is $1$-dimensional and spanned by $\sfy_{j,m}$, our assertion follows.

\smallskip

\noindent
\eqref{it: b-longest} Note that there exists $\ii_\circ = (i_1,\ldots, i_\ell)\in I(w_\circ)$ such that $i_\ell=i$. Then we have $s_{i_1} \cdots s_{i_{\ell-1}}(\al_i)=\al_{i^*}$ and hence
$$  \TT_{i_1} \cdots \TT_{i_{\ell-1}}(\sfy_{i,m}) = \sfy_{i^*,m}  $$
by~\eqref{it: a-red}. Thus
$$  \TT_{i_1} \cdots \TT_{i_{\ell-1}}\TT_{i_\ell}(\sfy_{i,m})=\TT_{i_1} \cdots \TT_{i_{\ell-1}}(\sfy_{i,m+1}) =\sfy_{i^*,m+1},$$
as desired.
\end{proof}

Note that, for $\ii_\circ =(i_1,i_2,\ldots,i_\ell) \in I(w_\circ)$, the set
$$ \bbP_{\ii_\circ,m} \seteq \{ \TT_{i_1}\cdots  \TT_{i_{k-1}}(\sfy_{i_k,m}) \ | \ 1 \le k \le \ell \} \text{ is linearly independent and
$|\bbP_{\ii_\circ,m}|=\ell$.}$$
For $\ii_\circ=(j_1,j_2,\ldots,j_\ell) \in I(w_\circ)$, we define $\widetilde{\ii}_0 = (i_k)_{k \in \Z_{>0}}$ as follows:
$$   i_k = j_k  \text{ for } 1\le k \le \ell \quad  \text{ and } \quad j_{k+\ell}=j_k^*.$$
Then Corollary~\ref{cor: roots} implies that
$$ \{ \TT_{i_1}\cdots  \TT_{i_{k-1}}(\sfy_{i_k,m}) \ | \  k \in \Z_{>0}  \}  \sqcup  \{ \TT^-_{i_1}\cdots  \TT^-_{i_{k-1}}(\sfy_{i_k,m-1}) \ | \  k \in \Z_{>0}  \}    \text{ is linearly independent}$$
for any $m\in\Z$.

\subsection{Proof of Theorem \ref{thm: braid main}} \label{subsec:proof of final main result}
We investigate the braid relations \eqref{eq:braid relations} on $\left\{  \TT_i \, | \, i \in I \,\right\}$ in the next five propositions.

\begin{proposition} \label{prop: commutation}
Assume $\sfc_{i,j}=0$. Then we have
$$\TT_i\TT_j   = \TT_j\TT_i.$$
\end{proposition}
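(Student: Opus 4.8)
The plan is to verify the identity $\TT_i\TT_j(\sfy_{k,m})=\TT_j\TT_i(\sfy_{k,m})$ on each algebra generator $\sfy_{k,m}$ of $\hA_{q}(\n)$; since $\TT_i\TT_j$ and $\TT_j\TT_i$ are algebra automorphisms (they are defined as such via Proposition~\ref{prop: ext of Lustig} and Theorem~\ref{thm: presentation}), this suffices. Two elementary facts drive everything. First, because $\sfc_{i,j}=0$ — hence $\sfc_{j,i}=0$ and $i\neq j$ — the first line of \eqref{eq:brd1pY} gives $\TT_i(\sfy_{j,m})=\sfy_{j,m}$ and $\TT_j(\sfy_{i,m})=\sfy_{i,m}$ for all $m$. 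Second, the quantum Serre relation \eqref{eq: q-serre y} for $\sfc_{i,j}=0$ reads $\sfy_{i,m}\sfy_{j,m}=\sfy_{j,m}\sfy_{i,m}$, so arbitrary powers of $\sfy_{i,m}$ and $\sfy_{j,m}$ commute in $\hA_{q}(\n)$.

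Next I would run a short case analysis on $k$. If $k=i$, then $\TT_i(\sfy_{i,m})=\sfy_{i,m+1}$ while $\TT_j$ fixes both $\sfy_{i,m}$ and $\sfy_{i,m+1}$, so both composites send $\sfy_{i,m}$ to $\sfy_{i,m+1}$; the case $k=j$ is symmetric. If $k\notin\{i,j\}$, note from \eqref{eq:brd1pY} that $\TT_i$ alters $\sfy_{k,m}$ only when $\sfc_{i,k}<0$, in which case $\TT_i(\sfy_{k,m})$ is a fixed $\Q(q^{1/2})$-linear combination of the monomials $\sfy_{i,m}^{r}\sfy_{k,m}\sfy_{i,m}^{b-r}$ with $b=-\sfc_{i,k}$ and involves no other generator; likewise for $\TT_j$ with $\sfy_{j,m}$. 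Hence if at most one of $i\sim k$, $j\sim k$ holds, one of the two operators fixes $\sfy_{k,m}$ while the other only introduces $\sfy_{i,m}$ (or $\sfy_{j,m}$), and the equality is immediate since each operator fixes the generators the other has introduced (first fact above) and is an algebra homomorphism.

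The only case requiring any work — and the only place the commutation $\sfy_{i,m}\sfy_{j,m}=\sfy_{j,m}\sfy_{i,m}$ is used — is $i\sim k$ and $j\sim k$ with $i\not\sim j$, which does occur (for instance $i,j$ the two ends of type $A_3$ and $k$ the middle node). Writing $\TT_j(\sfy_{k,m})=\sum_{l}c_l\,\sfy_{j,m}^{l}\sfy_{k,m}\sfy_{j,m}^{a-l}$ with $a=-\sfc_{j,k}$ and $\TT_i(\sfy_{k,m})=\sum_{r}d_r\,\sfy_{i,m}^{r}\sfy_{k,m}\sfy_{i,m}^{b-r}$ with $b=-\sfc_{i,k}$ (the $c_l,d_r$ being the explicit scalars coming from the divided powers in \eqref{eq:brd1pY}), I would apply $\TT_i$ to the former: it fixes each $\sfy_{j,m}$ and replaces the central $\sfy_{k,m}$ by $\sum_r d_r\,\sfy_{i,m}^{r}\sfy_{k,m}\sfy_{i,m}^{b-r}$, yielding $\sum_{l,r}c_ld_r\,\sfy_{j,m}^{l}\sfy_{i,m}^{r}\sfy_{k,m}\sfy_{i,m}^{b-r}\sfy_{j,m}^{a-l}$. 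Symmetrically $\TT_j$ applied to the latter yields $\sum_{l,r}c_ld_r\,\sfy_{i,m}^{r}\sfy_{j,m}^{l}\sfy_{k,m}\sfy_{j,m}^{a-l}\sfy_{i,m}^{b-r}$. Sliding the powers of $\sfy_{i,m}$ past those of $\sfy_{j,m}$ on the left, and the powers of $\sfy_{i,m}$ past those of $\sfy_{j,m}$ on the right, via the second fact, turns both into the common expression $\sum_{l,r}c_ld_r\,\sfy_{i,m}^{r}\sfy_{j,m}^{l}\,\sfy_{k,m}\,\sfy_{j,m}^{a-l}\sfy_{i,m}^{b-r}$, so the two automorphisms agree on $\sfy_{k,m}$.

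Finally, the identical argument with \eqref{eq:invbrd1pY} in place of \eqref{eq:brd1pY} gives $\TT_i^{-1}\TT_j^{-1}=\TT_j^{-1}\TT_i^{-1}$ (and the commutation then descends to $\frakK_q$ by Theorem~\ref{thm: indeed frakKq}). I do not expect any genuine obstacle: the point is simply that $\sfc_{i,j}=0$ makes $\TT_i$ and $\TT_j$ act on ``disjoint'' portions of the presentation \eqref{eq:presen}, and the only nontrivial bookkeeping is the double-adjacency case described above, which is resolved purely by the commuting of $\sfy_{i,m}$ and $\sfy_{j,m}$.
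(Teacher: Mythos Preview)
Your argument is correct, but it follows a genuinely different route from the paper's. You work entirely inside the presentation of $\hA_{q}(\n)$: the explicit formula \eqref{eq:brd1pY} together with the Serre relation $\sfy_{i,m}\sfy_{j,m}=\sfy_{j,m}\sfy_{i,m}$ for $\sfc_{i,j}=0$ is all you use, and the only nontrivial case (both $i\sim k$ and $j\sim k$) is dispatched by sliding powers of $\sfy_{i,m}$ past powers of $\sfy_{j,m}$. This is clean and self-contained.

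The paper instead exploits the isomorphism $\tUptheta_Q$ to $\bbK_q$. Because $\sfc_{i,j}=0$ one may pick a Dynkin quiver $Q$ in which $i$ and $j$ are simultaneously sources; then, via Proposition~\ref{prop: ext of Lustig} (and its corollary), both $\tUptheta_Q\circ\TT_i\TT_j(\sfy_{k,m})$ and $\tUptheta_Q\circ\TT_j\TT_i(\sfy_{k,m})$ are identified with $L_q^Q(s_is_j(\al_k),m)=L_q^Q(s_js_i(\al_k),m)$, and invertibility of $\tUptheta_Q$ finishes. No computation with the formulas \eqref{eq:brd1pY} is carried out in the nontrivial case. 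Your approach buys elementarity and avoids the $\frakK_q$ machinery; the paper's approach buys uniformity with the strategy used for the harder $3$-, $4$-, and $6$-moves, where the explicit formulas become unmanageable and the passage through $\tUptheta_Q$ is essential.
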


\begin{proof}
We verify $\TT_i\TT_j(\sfy_{k,m}) = \TT_j\TT_i(\sfy_{k,m})$ for any pair $(k,m) \in I \times \Z$.
In the case of $\sfc_{i,k}\ge 0$ and $\sfc_{j,k} \ge 0$,
it is straightforward to check it.
Let us consider the other case, that is, $\sfc_{i,k}<0$ or $\sfc_{j,k}<0$.
Since $\sfc_{i,j}=0$, it allows us to take a Dynkin quiver $Q$ with sources $i$ and $j$.
By Theorem \ref{thm: presentation} and Proposition~\ref{prop: ext of Lustig}, we have
\begin{align*}
\tUptheta_Q \circ \TT_i\TT_j (\sfy_{k,m}) = L^Q_q(s_is_j(\al_k),m) =  \tUptheta_Q \circ \TT_j\TT_i(\sfy_{k,m}).
\end{align*}
Since $\tUptheta_Q$ is invertible, we conclude $\TT_i\TT_j (\sfy_{k,m}) = \TT_j\TT_i(\sfy_{k,m})$.
This completes the proof.
\end{proof}

\begin{proposition} \label{prop: 3 move in simply-laced}
For a simply-laced $\g$, assume $\sfc_{i,j}=-1$. Then we have
$$\TT_i\TT_j\TT_i   = \TT_j\TT_i\TT_j.$$
\end{proposition}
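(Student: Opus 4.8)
The plan is to reduce the braid relation $\TT_i\TT_j\TT_i=\TT_j\TT_i\TT_j$ (for $\sfc_{i,j}=-1$, $\g$ simply-laced) to a purely local identity inside the rank-two subalgebra $\lan \sfy_{i,m},\sfy_{j,m}\ran\subseteq\hA_q(\n)_m$, and then to verify that identity using the already-established dictionary between $\hA_q(\n)$, $\frakK_{q,Q}$, and $\calA_q(\n)$. Concretely, by Theorem~\ref{thm: presentation} it suffices to check the relation after applying $\TT_i\TT_j\TT_i$ and $\TT_j\TT_i\TT_j$ to every generator $\sfy_{k,m}$, $(k,m)\in I\times\Z$. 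I would split into three cases according to $k$: (a) $k\notin\{i,j\}$ and $k$ not adjacent to $i$ or $j$; (b) $k\in\{i,j\}$; (c) $k$ adjacent to $i$ or $j$ (so $\sfc_{i,k}<0$ or $\sfc_{j,k}<0$).

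For case (a), both $\TT_i$ and $\TT_j$ act on $\sfy_{k,m}$ simply by a spectral shift (indeed by the identity on the index $m$ since $\delta_{i,k}=\delta_{j,k}=0$), so both sides equal $\sfy_{k,m}$ and there is nothing to prove. For case (c) — and this is the conceptual heart — I would choose a Dynkin quiver $Q$ with source $i$ such that $j$ is a source of $s_iQ$; such a $Q$ exists because $\sfc_{i,j}=-1$ forces $s_i s_j s_i = s_j s_i s_j$ and the associated commutation classes of $w_\circ$ are related by the combinatorial reflections $r_i,r_j$ via~\eqref{eq: si ri}. Then, by Proposition~\ref{prop: ext of Lustig} together with the identification $\sigma_{i,Q}=\tUptheta(s_iQ,Q)|_{\frakK_q}$ and~\eqref{eq:another expression}, the composite $\tUptheta_Q\circ\TT_i\TT_j\TT_i$ sends $\sfy_{k,m}$ to $L^Q_q(s_is_js_i(\al_k),m)$, while $\tUptheta_Q\circ\TT_j\TT_i\TT_j$ sends it to $L^Q_q(s_js_is_j(\al_k),m)$; since $s_is_js_i=s_js_is_j$ in $\weyl$ these coincide, and invertibility of $\tUptheta_Q$ gives the claim. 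The subtle point is that this "transport of structure" argument only applies cleanly when the intermediate reflected quivers keep the relevant vertices as sources/sinks at each step, which is exactly what the braid relation $s_is_js_i=s_js_is_j$ guarantees; I would make this bookkeeping explicit using Proposition~\ref{prop: Gamma Q 0} and the reflection rule~\eqref{eq: Qd properties}.

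For case (b), say $k=i$ (the case $k=j$ being symmetric), the relation becomes an identity among elements of the rank-two subalgebra generated by $\sfy_{i,m},\sfy_{j,m}$: using Theorem~\ref{thm: minimal pair dual pbw} (the BKM commutation relation~\eqref{eq: BKMcp}, valid here since $p_{\al_j,\al_i}=0$ in simply-laced type by~\eqref{eq: palbe}) I would compute $\TT_j(\sfy_{i,m})$ explicitly as a normalized combination of $\sfy_{i,m}$ and $\sfy_{j,m}$, then $\TT_i$ of that, and so on; this is the classical computation realizing the braid symmetry on the positive part of $U_q(\g)$ for type $A_2$, which can be imported essentially verbatim via the isomorphism $\varsigma_m:\hA_q(\n)_m\isoto\calU_q^-(\g)$ of Proposition~\ref{prop: stable and up to constant} and Remark~\ref{rmk: up ro constant}, since $\TT_i$ corresponds to Lusztig's $T''_{i,-1}$ up to a nonzero scalar in $\Q[[q^{1/2}]]$. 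The main obstacle I anticipate is precisely controlling these scalars: the formulas~\eqref{eq:brd1pY}–\eqref{eq:invbrd1pY} are normalized so that $\TT_i$ is a genuine algebra automorphism (not merely up to constants), so I must check that the scalar ambiguity between $\TT_i$ and $T''_{i,-1}$ cancels consistently along both sides of the braid word — equivalently, that $\TT_i\TT_j\TT_i$ and $\TT_j\TT_i\TT_j$ carry the \emph{same} overall normalization. This I would pin down by tracking the bar-involution: both composites are compatible with $\overline{(\cdot)}$ (each $\TT_i$ is, being induced by $\tUptheta(s_iQ,Q)$ which commutes with $\overline{(\cdot)}$ by~\eqref{eq:tUptheta}), and a bar-invariant element of $\hA_q(\n)_m$ with a prescribed leading dual-PBW term is unique, forcing the two normalizations to agree.
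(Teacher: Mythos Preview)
Your case-by-case strategy and the handling of Case~(a) are fine, and your Case~(b) idea of using the BKM commutation relation~\eqref{eq: BKMcp} is essentially what the paper does (it computes $\tUptheta_Q\circ\TT_i\TT_j(\sfy_{i,m})$ via~\eqref{eq: BKMcp} and obtains the key identity $\TT_i\TT_j(\sfy_{i,m})=\sfy_{j,m}$, from which the braid relation on $\sfy_{i,m}$ and $\sfy_{j,m}$ is immediate). One caution: your appeal to Proposition~\ref{prop: stable and up to constant} is circular, since that proposition is proved \emph{after} and \emph{using} the present one.

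The genuine gap is in Case~(c). Your transport argument requires that both braid words $\TT_i\TT_j\TT_i$ and $\TT_j\TT_i\TT_j$ be realized, via the same $\tUptheta_Q$, as successive $\sigma$-reflections along chains of quivers in which the relevant vertex is a source at each step. For $\TT_i\TT_j\TT_i$ this works with your $Q$ (source $i$, then $j$ a source of $s_iQ$, then $i$ a source of $s_js_iQ$). But for $\TT_j\TT_i\TT_j$ you would need $j$ to be a source of $Q$, and with $\sfc_{i,j}=-1$ this is impossible: once $i$ is a source and $i\sim j$, the arrow $i\to j$ forces $j$ not to be a source. Consequently the identity $\tUptheta_Q\circ\TT_j=\tUptheta_{s_jQ}$ is unavailable, and your claimed equality $\tUptheta_Q\circ\TT_j\TT_i\TT_j(\sfy_{k,m})=L^Q_q(s_js_is_j(\al_k),m)$ is unjustified. (This is exactly why the analogous transport argument \emph{does} work in Proposition~\ref{prop: commutation}: when $\sfc_{i,j}=0$ one can choose $Q$ with \emph{both} $i$ and $j$ as sources.) Attempting to salvage this by using a different quiver $Q'$ with source $j$ for the second word reduces the braid relation to an identity of the form $\tUptheta(Q',Q)=\tUptheta(s_js_is_jQ',\,s_is_js_iQ)$, which is no easier than what you are trying to prove.

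The paper's remedy is to abandon the transport argument for Case~(c) entirely: it first uses Case~(b) to establish $\TT_i\TT_j(\sfy_{i,m})=\sfy_{j,m}$, then writes out both $\TT_i\TT_j\TT_i(\sfy_{k,m})$ and $\TT_j\TT_i\TT_j(\sfy_{k,m})$ explicitly, reduces to the rank-three situation $\g=A_3$ with $(k,i,j)=(1,2,3)$, and checks the resulting identity in $\langle y_1,y_2,y_3\rangle$ by a short computation using $y_1y_3=y_3y_1$.
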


\begin{proof}
We claim that $\TT_i\TT_j\TT_i(\sfy_{k,m})   = \TT_j\TT_i\TT_j(\sfy_{k,m})$ for any pair $(k,m) \in I \times \Z$.
If $i \not\sim k$ and $j \not\sim k$, that is, $k$ is not adjacent to both $i$ and $j$ on the Dynkin diagram, then we have $\TT_i\TT_j\TT_i(\sfy_{k,m})   = \TT_j\TT_i\TT_j(\sfy_{k,m})$ by definition.
Let us consider the other two cases:
\smallskip

\noindent
{\it Case 1}. Assume $k=i$ or $k=j$.
We prove the case of $k=i$ here, since the proof for the other case is quite similar.
Let $Q$ be a Dynkin quiver with  source $i$.
By similar computation to the proof of Proposition~\ref{prop: ext of Lustig}, we have
\begin{equation*}
\begin{aligned}
\tUptheta_Q \circ \TT_i\TT_j (\sfy_{i,m}) & =  \tUptheta_Q \circ  \TT_i  \left(   \dfrac{ q^{1/2}\sfy_{i,m}\sfy_{j,m} -q^{-1/2} \sfy_{j,m} \sfy_{i,m} }{q-q^{-1}}  \right) \\
& \hspace{-9ex}= \left(   \dfrac{ q^{1/2}L_q^{Q}(\al_i,m+1)*L_q^{Q}(\al_i+\al_j,m) -q^{-1/2} L_q^{Q}(\al_i+\al_j,m)* L_q^{Q}(\al_i,m+1) }{q-q^{-1}}  \right) \\
& \hspace{-9ex} = \left(   \dfrac{ q^{1/2}L_q^{s_i Q}(\al_i,m)*L_q^{s_iQ}(\al_j,m) -q^{-1/2} L_q^{s_i Q}(\al_j,m) *L_q^{s_i Q}(\al_i,m) }{q-q^{-1}}  \right)  \\
&\hspace{-9ex}   = L_q^{s_iQ}(\al_i+\al_j,m) = L_q^{Q}(\al_j,m),
\end{aligned}
\end{equation*}
where $\tUptheta_Q \circ  \TT_i (\sfy_{i,m}) = L_q^Q(\al_i, m+1) = L_q^{s_i Q}(\al_i,m)$ and $\tUptheta_Q \circ  \TT_i (\sfy_{j,m}) = L_q^Q(\al_i+\al_j,m) = L_q^{s_iQ}(\al_j,m)$.
This implies
\begin{align} \label{eq: ij(i,m)=(j,m)}
\TT_i\TT_j (\sfy_{i,m}) = \sfy_{j,m},
\end{align}
which prove the current case because \eqref{eq: ij(i,m)=(j,m)} yields
\begin{align*}
& \TT_i\TT_j\TT_i (\sfy_{i,m}) = \TT_i\TT_j(\sfy_{i,m+1}) = \sfy_{j,m+1}, \qquad  \TT_j\TT_i\TT_j (\sfy_{i,m}) = \TT_j(\sfy_{j,m})= \sfy_{j,m+1}.
\end{align*}
\smallskip

\noindent
{\it Case 2}.
Assume $i \sim k$, \, $j \not\sim k$ or $i \not\sim k$, $j \sim k$.
We only consider the former case, since the other case is proved in a similar way.
Under the assumption of $i \sim k$ and  $j \not\sim k$,
we have
\begin{align}
\TT_i\TT_j\TT_i (\sfy_{k,m})
& =
\left( \dfrac{q^{1/2}\TT_i(\sfy_{k,m}) \sfy_{j,m} - q^{-1/2}\sfy_{j,m} \TT_i(\sfy_{k,m})}{q-q^{-1}} \right),  \label{eq: st1} \allowdisplaybreaks\\
\TT_j\TT_i\TT_j (\sfy_{k,m}) &
= \left( \dfrac{ q^{1/2} \sfy_{k,m}\TT_j(\sfy_{i,m}) - q^{1/2}\TT_j(\sfy_{i,m})\sfy_{k,m} }{q-q^{-1}} \right), \label{eq: st2}
\end{align}
where \eqref{eq: st1} follows from \eqref{eq: ij(i,m)=(j,m)}.
To see the equality between ~\eqref {eq: st1} and~\eqref{eq: st2},
it is enough to consider the case of $\g=A_3$, $k=1$, $i=2$, $j=3$, and $m=0$.
Set $y_t\seteq \sfy_{t,0}$ for $t=1,2,3$.
Under this setting,
~\eqref{eq: st1} and \eqref{eq: st2} become
\begin{align*}
& \dfrac{q^{1/2}\TT_2(y_{1}) y_{3} - q^{-1/2}y_{3} \TT_2(y_{1})}{q-q^{-1}}
 =  \left(\dfrac{ qy_{1}y_{2}y_{3}- y_{2}y_{1}y_{3}- y_{3}y_{1}y_{2}+ q^{-1}y_{3}y_{2}y_{1} }{(q-q^{-1})^2} \right), \\
& \dfrac{q^{1/2} y_{1}\TT_3(y_{2}) - q^{1/2}\TT_3(y_{2})y_{1} }{q-q^{-1}}
= \left(       \dfrac{qy_{1}y_{2}y_{3}-y_{1}y_{3}y_{2} -y_{2}y_{3}y_{1}+q^{-1}y_{3}y_{2}y_{1} }{(q-q^{-1})^2} \right),
\end{align*}
respectively.
Since $y_1y_3=y_3y_1$ by the defining relation of $\hA_{q}(\n)_m$,
we prove our claim.
\end{proof}

\begin{proposition} \label{prop: 3 move in C}
For $\g$  of type $B_n$, $C_n$ or $F_4$, assume $\sfc_{j,i}\sfc_{i,j}=1$. Then we have
$$\TT_{i}\TT_{j}\TT_{i}   = \TT_{j}\TT_{i}\TT_{j}.$$
\end{proposition}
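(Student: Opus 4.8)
The plan is to check $\TT_i\TT_j\TT_i(\sfy_{k,m})=\TT_j\TT_i\TT_j(\sfy_{k,m})$ on each generator $\sfy_{k,m}$ of $\hA_{q}(\n)$; since $\TT_i^{\pm 1}$ and $\TT_j^{\pm 1}$ commute with $\frakD_q$ (i.e.\ with the level shift $\sfy_{\bullet,m}\mapsto\sfy_{\bullet,m+1}$), it suffices to take $m=0$. As $\Dynkin$ is a tree and $i\sim j$, the index $k$ falls into exactly one of three cases: (a) $k\notin\{i,j\}$ and $k$ is adjacent to neither $i$ nor $j$; (b) $k\in\{i,j\}$; (c) $k\notin\{i,j\}$ and $k$ is adjacent to exactly one of $i,j$. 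Case (a) is immediate from \eqref{eq:brd1pY}, since then $\sfc_{i,k},\sfc_{j,k}\ge 0$ so $\TT_i$ and $\TT_j$ both fix $\sfy_{k,0}$ and the two sides equal $\sfy_{k,0}$.

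For case (b), say $k=i$ (the case $k=j$ is symmetric, the hypothesis $\sfc_{j,i}\sfc_{i,j}=1$ being symmetric). The hypothesis forces $d_i=d_j$, so $\al_i+\al_j$ is a root with $d_{\al_i+\al_j}=d_i\in\{1,2\}$, and \eqref{eq: palbe} gives $p_{\al_j,\al_i}=0$. Choosing a Dynkin quiver $Q$ with source $i$, applying $\tUptheta_Q$, and rewriting the $q_i$-commutation relation \eqref{eq: BKMcp} among normalized dual PBW vectors in the $[s_iQ]$-picture exactly as in Case~1 of the proof of Proposition~\ref{prop: 3 move in simply-laced} (this uses $p_{\al_j,\al_i}=0$ together with Proposition~\ref{prop: ext of Lustig} and Corollary~\ref{cor: HLO iso}), one obtains
\[
\tUptheta_Q\!\left(\TT_i\TT_j(\sfy_{i,0})\right)=\frac{q_i^{1/2}L_q^{s_iQ}(\al_i,0)*L_q^{s_iQ}(\al_j,0)-q_i^{-1/2}L_q^{s_iQ}(\al_j,0)*L_q^{s_iQ}(\al_i,0)}{q_i-q_i^{-1}}=L_q^{s_iQ}(\al_i+\al_j,0)=L_q^Q(\al_j,0),
\]
so $\TT_i\TT_j(\sfy_{i,0})=\sfy_{j,0}$. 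Applying $\TT_i^{-1}$ gives $\TT_j(\sfy_{i,0})=\TT_i^{-1}(\sfy_{j,0})$, hence $\TT_i\TT_j\TT_i(\sfy_{i,0})=\TT_i\TT_j(\sfy_{i,1})=\sfy_{j,1}$ and $\TT_j\TT_i\TT_j(\sfy_{i,0})=\TT_j\TT_i\TT_i^{-1}(\sfy_{j,0})=\TT_j(\sfy_{j,0})=\sfy_{j,1}$, which settles (b).

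Case (c) is the substantial one; by the symmetry $i\leftrightarrow j$ we may assume $k\sim i$ and $k\not\sim j$. Running the formulas \eqref{eq:brd1pY}--\eqref{eq:invbrd1pY}, one sees that $\TT_i\TT_j\TT_i(\sfy_{k,0})$ and $\TT_j\TT_i\TT_j(\sfy_{k,0})$ both lie in the subalgebra of $\hA_{q}(\n)$ generated by $\sfy_{i,0},\sfy_{i,1},\sfy_{j,0},\sfy_{k,0}$, whose relations involve only the Cartan entries among $\{i,j,k\}$ and the Boson relations internal to the index $i$; in other words, the identity is governed by the rank-$3$ sub-diagram $\g'$ of $\Dynkin$ on $\{k,i,j\}$. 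Since $i\sim j$ is a simple edge and $k\sim i$ is simple or double (we are not in type $G_2$), $\g'$ is of type $A_3$, $B_3$, or $C_3$. If $\g'$ is of type $A_3$ --- which happens, for instance, for three consecutive equal-norm nodes in $B_n$ or $C_n$ --- this parabolic subalgebra is a copy of $\hA_{q}(\n)$ for $A_3$ with the parameter $q$ replaced by $q_i=q^{d_i}$, and the desired equality follows from Proposition~\ref{prop: 3 move in simply-laced}, whose proof is a formal identity valid after this rescaling. If $\g'$ is of type $B_3$ or $C_3$ --- this includes both rank-$3$ parabolics of $F_4$ --- we verify $\TT_i\TT_j\TT_i(\sfy_{k,0})=\TT_j\TT_i\TT_j(\sfy_{k,0})$ by a direct computation in $\hA_{q}(\n)$ using only the quantum Serre relations \eqref{eq: q-serre y} and the Boson relations \eqref{eq: q-Boson1 y}, \eqref{eq: q-Boson2 y}, in the spirit of Case~2 of the proof of Proposition~\ref{prop: 3 move in simply-laced}. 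Along the way we also record, for the braid words $y$ of length $<h_{i,j}=3$, the identities $\varsigma_m\big(\TT^{\pm}_y(\sfy_{i,m})\big)=\bfc\,T^{\pm}_y(f_i)$ with $\bfc\in\Q[[q^{1/2}]]\setminus\{0\}$, as these feed into the proof of Proposition~\ref{prop: stable and up to constant}.

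The main obstacle is precisely the explicit rank-$3$ verification in the $B_3$/$C_3$ (and $F_4$-parabolic) subcases of (c): unlike case (b), there is no single Dynkin-quiver reflection transporting all of $\TT_i\TT_j\TT_i$, and one is forced into a moderately long manipulation with the Boson relations in order to see the level-shifted terms involving $\sfy_{i,1}$ cancel against one another so that the result lands back in the expected span. An alternative that bypasses most of this computation would be to transport both triple products into $\calU_q^-(\g)$ via $\varsigma_0$ on $\hA_{q}(\n)_0$, invoke Lusztig's braid relation $T_iT_jT_i=T_jT_iT_j$ \cite{LusztigBook}, and then pin down the remaining scalar using that both expressions are $\overline{(\cdot)}$-invariant and belong to the single canonical basis $\tUptheta_Q^{-1}(\sfL_q)$ of $\hA_{q}(\n)$ (well defined by Theorem~\ref{thm: indeed frakKq}), so that proportional basis vectors must coincide; we will, however, present the direct argument.
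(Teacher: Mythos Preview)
Your overall strategy—case analysis on $k$ and reduction to a rank-$3$ verification—is exactly the paper's. Case~(a) and Case~(b) are handled identically to the paper (Case~(b) being a verbatim rerun of Case~1 of Proposition~\ref{prop: 3 move in simply-laced}, valid since $\sfc_{i,j}\sfc_{j,i}=1$ forces $d_i=d_j$ and hence $p_{\al_j,\al_i}=0$).

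Where you diverge from the paper is in the execution of Case~(c), and the ``obstacle'' you describe is self-inflicted. With $k\sim i$ and $k\not\sim j$, write $\TT_i(\sfy_{k,m})=F(\sfy_{i,m},\sfy_{k,m})$ for the polynomial coming from~\eqref{eq:brd1pY}. Then $\TT_j\TT_i(\sfy_{k,m})=F(\TT_j(\sfy_{i,m}),\sfy_{k,m})$, and applying $\TT_i$ once more and feeding in the identity $\TT_i\TT_j(\sfy_{i,m})=\sfy_{j,m}$ that you just proved in Case~(b) gives
\[
\TT_i\TT_j\TT_i(\sfy_{k,m})=F\bigl(\sfy_{j,m},\,\TT_i(\sfy_{k,m})\bigr)\ \in\ \langle \sfy_{i,m},\sfy_{j,m},\sfy_{k,m}\rangle\subset\hA_q(\n)_m.
\]
No $\sfy_{i,m+1}$ ever appears, and no Boson relations are needed. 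This is precisely the mechanism behind~\eqref{eq: st1} in the simply-laced proof, and the paper uses it again here (the step marked $\dagger$ in~\eqref{eq: stepp1}). The comparison with $\TT_j\TT_i\TT_j(\sfy_{k,m})=\TT_j\TT_i(\sfy_{k,m})$ then takes place entirely in $\hA_q(\n)_m\cong\calU_q^-(\g)$ and uses only quantum Serre relations. A side benefit is that your alternative route via $\varsigma_m$ and Lusztig's $T_iT_jT_i=T_jT_iT_j$ becomes immediately available, since both sides now visibly live in $\hA_q(\n)_m$.

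A minor sharpening: your ``$B_3$'' subcase is already covered by the $A_3$ argument. When $k$ is on the short side of the double edge and $i,j$ are both long, one has $\sfc_{i,k}=-1$, so $\TT_i(\sfy_{k,m})$ is the two-term formula with parameter $q_i=q^2$; the resulting identity is literally the $A_3$ computation of Proposition~\ref{prop: 3 move in simply-laced} with $q\mapsto q^2$ (only the commutation $\sfy_{j,m}\sfy_{k,m}=\sfy_{k,m}\sfy_{j,m}$ is used). The only genuinely new computation is the $C_3$-shaped one, where $\sfc_{i,k}=-2$ forces a three-term formula; this is what the paper carries out explicitly in~\eqref{eq: stepp00}--\eqref{eq: stepp11}.
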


\begin{proof} Since the proofs for $B_n$, $C_n$ and $F_4$ are all similar, we only give the proof when $\g=C_n$.
The case of $i, j \in I \setminus \{ n-2, n-1 \}$ with $\sfc_{j,i}\sfc_{i,j}=1$ follows from Proposition \ref{prop: 3 move in simply-laced}, so
it is enough to prove that
$$\TT_{n-2}\TT_{n-1}\TT_{n-2}(\sfy_{n,m})=\TT_{n-1}\TT_{n-2}\TT_{n-1}(\sfy_{n,m}).$$
We see that
\fontsize{10}{10}
\begin{equation} \label{eq: stepp0}
\begin{aligned}
& \TT_{n-2}\TT_{n-1}\TT_{n-2}(\sfy_{n,m}) =  \TT_{n-2}\TT_{n-1}(\sfy_{n,m}) \\
& =
\dfrac{ q\sfy_{n,m}  \TT_{n-2}(\sfy_{n-1,m})^2 - (q+q^{-1})\TT_{n-2}(\sfy_{n-1,m})\sfy_{n,m} \TT_{n-2}(\sfy_{n-1,m}) + q^{-1} \TT_{n-2}(\sfy_{n-1,m})^2\sfy_{n,m}  }
{(q-q^{-1})(q^2-q^{-2})},
\end{aligned}
\end{equation}
\fontsize{11}{11}
while
\begin{equation} \label{eq: stepp1}
\begin{aligned}
&\TT_{n-1}\TT_{n-2}\TT_{n-1}(\sfy_{n,m}) = \TT_{n-1}\TT_{n-2} \left( \dfrac{q\sfy_{n,m}\sfy_{n-1,m}^{(2)} - \sfy_{n-1,m}\sfy_{n,m}\sfy_{n-1,m} + q^{-1}\sfy_{n-1,m}^{(2)}\sfy_{n,m}}{(q-q^{-1})^2}\right)\\
& \overset{\dagger}{=}
\dfrac{q\TT_{n-1}(\sfy_{n,m})\sfy_{n-2,m}^{ 2} -(q+q^{-1}) \sfy_{n-2,m}\TT_{n-1}(\sfy_{n,m})\sfy_{n-2,m} + q^{-1}\sfy_{n-2,m}^{2}\TT_{n-1}(\sfy_{n,m})}
{(q-q^{-1})(q^2-q^{-2}) },
\end{aligned}
\end{equation}
where $\overset{\dagger}{=}$ follows from~\eqref{eq: ij(i,m)=(j,m)}.
To see the equality between ~\eqref{eq: stepp0} and~\eqref{eq: stepp1}, it suffices to assume that $n=3$ and $m=0$. Set $y_i\seteq \sfy_{i,0}$ for $i=1,2,3$ and recall that  $q_1=q_2=q$, and  $q_3=q^2$.
By multiplying the numerators of \eqref{eq: stepp0} and \eqref{eq: stepp1} by $(q-q^{-1})(q^2-q^{-2})$, the first one becomes
\begin{align} \label{eq: stepp00}
\begin{split}
&  \big(qy_{3}  \TT_{1}(y_{2})^2 - (q+q^{-1})\TT_{1}(y_{2})y_{3} \TT_{1}(y_{2}) + q^{-1} \TT_{1}(y_{2})^2y_{3}  \big) \times (q-q^{-1})(q^2-q^{-2})  \\
& = \left\{   q^2y_{3}y_{2}y_{1}y_{2}y_{1}- qy_{3}y_{1}y_{2}^2y_{1} - qy_{3}y_{2}y_{1}^2y_{2}  +y_{3}y_{1}y_{2}y_{1}y_{2}       \right.   \allowdisplaybreaks \\
&\hspace{5ex}   -(q+q^{-1})         (q  y_{2}y_{3}y_{1}y_{2}y_{1} -  y_{1}y_{2}y_3y_{2}y_{1}    -  y_{2}y_{1}^2y_3y_{2}+q^{-1}y_{1}y_{2}y_{1}y_{3}y_{2})    \allowdisplaybreaks \\
&\hspace{15ex} \left. +   y_{2}y_{1}y_{2}y_{1}y_{3} -q^{-1}  y_{1}y_{2}^2y_{1}y_{3} - q^{-1}y_{2}y_{1}^2y_{2}y_{3} + q^{-2}y_{1}y_{2}y_{1}y_{2}y_{3}  \right\}  \times   (q+q^{-1})
  \allowdisplaybreaks \\
\end{split}
\end{align}
and the second one becomes
\begin{align} \label{eq: stepp11}
\begin{split}
& \big( q\TT_{2}(y_{3})y_{1}^{2} -(q+q^{-1}) y_{1}\TT_{2}(y_{3})y_{1} + q^{-1}y_{1}^{2}\TT_{2}(y_{3}) \big) \times  (q-q^{-1})(q^2-q^{-2})  \\
& = \left\{ q^2y_{3}y_{2}^{2}y_{1}^{2} - (q^2+1)y_{2}y_{3}y_{2}y_{1}^{2} + y_{2}^{2}y_{3}y_{1}^{2} \right.  \allowdisplaybreaks\\
&\hspace{5ex}   - (q^2+1)y_{1}y_{3}y_{2}^{2}y_{1} +(q^2+2+q^{-2})y_{1}y_{2}y_{3}y_{2}y_{1} - (1+q^{-2})y_{1}y_{2}^{2}y_{3}y_{1}    \allowdisplaybreaks\\
&\hspace{15ex} \left.  +y_{1}^2y_{3}y_{2}^{2} - (1+q^{-2})y_{1}^2y_{2}y_{3}y_{2} + q^{-2}y_{1}^2y_{2}^{2}y_{3} \right\}.  \allowdisplaybreaks\\
\end{split}
\end{align}
Now, we claim that ~\eqref{eq: stepp00} and ~\eqref{eq: stepp11} coincide with each other.
By the quantum Serre relations~\eqref{eq: q-serre y}, we have
\begin{eqnarray} &&
\parbox{90ex}{
\bna
\item  \label{it: C12}  $y_1^2y_2 - (q+q^{-1})y_1y_2y_1 + y_2y_1^2 = 0 \iff (q+q^{-1})y_1y_2y_1 =  y_1^2y_2+ y_2y_1^2 $,
\item \label{it: C21}  $y_2^2y_1 - (q+q^{-1})y_2y_1y_2 + y_1y_2^2 = 0 \iff (q+q^{-1})y_2y_1y_2 = y_2^2y_1+ y_1y_2^2$.
\ee
}\label{eq: C3 q-serre}
\end{eqnarray}
Using~\eqref{eq: C3 q-serre} ~\eqref{it: C12}, ~\eqref{it: C21} and the relation $y_1y_2^2y_1 = y_2y_1^2y_2$,
the equality of \eqref{eq: stepp00} and \eqref{eq: stepp11} follows from the computation as shown below:
\begin{align*}
&\left\{   q^2y_{3}y_{2} \underline{y_{1}y_{2}y_{1}}- qy_{3}y_{1}y_{2}^2y_{1} - qy_{3}y_{2}y_{1}^2y_{2}  +y_{3}y_{1} \underline{y_{2}y_{1}y_{2}}       \right.   \nonumber\allowdisplaybreaks\\
&\hspace{5ex}   -(q+q^{-1})         (q  y_{2}y_{3} \underline{y_{1}y_{2}y_{1}} -  y_{1}y_{2}y_3y_{2}y_{1}    -  y_{2}y_{1}^2y_3y_{2}+q^{-1} \underline{y_{1}y_{2}y_{1}}y_{3}y_{2})    \nonumber\allowdisplaybreaks \\
&\hspace{15ex} \left. +   y_{2} \underline{y_{1}y_{2}y_{1}}y_{3} -q^{-1}  y_{1}y_{2}^2y_{1}y_{3} - q^{-1}y_{2}y_{1}^2y_{2}y_{3} + q^{-2} \underline{y_{1}y_{2}y_{1}}y_{2}y_{3}  \right\}  \times  \underline{(q+q^{-1})}  \\
&=    q^2y_{3}y_{2}^2y_{1}^2 - (q^2+1) y_{2}y_{3}y_{2}y_{1}^2 +     y_{2}^2y_{3}y_{1}^2            \nonumber\allowdisplaybreaks\\
&\hspace{5ex}  -  (q^2 +1)y_{1}y_{3}y_{2}^2y_{1}  + (q^2+2+q^{-2})y_{1}y_{2}y_{3}y_{2}y_{1}  -(1+q^{-2})  y_{1}y_{2}^2y_{3}y_{1}         \nonumber\allowdisplaybreaks \\
& \hspace{15ex}  +y_{1}^2y_{3}y_{2}^2 - (1+q^{-2}) y_{1}^2y_{2}y_{3}y_{2} +  q^{-2}y_{1}^2y_{2}^2y_{3}+y_{3} (y_{1}y_{2}^2y_{1}   - y_{2}y_{1}^2y_{2})     \nonumber\allowdisplaybreaks
\end{align*}
where all underlined monomials in $y_i$ $(1 \le i \le 3)$ with the factor $(q+q^{-1})$ are replaced by \eqref{eq: C3 q-serre}, and then one can see that the last expression is equal to \eqref{eq: stepp11}.
This completes the proof.
\end{proof}

\begin{proposition} \label{prop: 4-move dobly-laced}
 For $\g$ of type $B_n$ or $C_n$ $($resp.~$F_4$$)$, we have
$$\TT_{n}\TT_{n-1}\TT_{n}\TT_{n-1}   = \TT_{n-1}\TT_{n}\TT_{n-1}\TT_{n}  \qquad \text{$($resp. } \TT_2\TT_3\TT_2\TT_3=\TT_3\TT_2\TT_3\TT_2)$$
\end{proposition}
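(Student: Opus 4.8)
\textbf{Plan of proof for Proposition~\ref{prop: 4-move dobly-laced}.}
The strategy is the same as in the proof of Proposition~\ref{prop: 3 move in C}: reduce the braid relation, which is an identity of operators on $\hA_q(\n)$, to a finite check on a small rank sub-Dynkin diagram and then to a finite set of monomial identities that follow from the quantum Serre relations~\eqref{eq: q-serre y} and the quantum Boson relations~\eqref{eq: q-Boson1 y},~\eqref{eq: q-Boson2 y}. First I would observe that, using Proposition~\ref{prop: stable and up to constant}~\eqref{it: stable} (and Corollary~\ref{cor: EiEjspan}), both words $\TT_n\TT_{n-1}\TT_n\TT_{n-1}$ and $\TT_{n-1}\TT_n\TT_{n-1}\TT_n$ preserve the single layer $\hA_q(\n)_m$, and by Corollary~\ref{cor: roots}~\eqref{it: a-red} it suffices to check the equality on the generators $\sfy_{k,m}$ for which $k \in \{n-1,n\}$ or $k$ adjacent to one of them. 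A $\Phi$-grading argument (as in Corollary~\ref{cor: roots}~\eqref{it: a-red}) localizes everything inside the subalgebra generated by $\sfy_{n-1,m},\sfy_{n,m}$ and at most one further generator $\sfy_{k,m}$ with $k\sim n-1$ (for type $C_n$, this is $k=n-2$; the pattern is parallel for $B_n$ and $F_4$). Hence it is enough to verify the braid relation in the rank $2$ case ($\g$ of type $B_2=C_2$) for $k \in \{1,2\}$ and in the rank $3$ case ($\g$ of type $C_3$, $B_3$, or the relevant rank-$3$ parabolic of $F_4$) for the generator attached to the trivalent chain, always with $m=0$ by applying $\frakD_q$-equivariance.

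For the rank $2$ core, I would expand $\TT_n\TT_{n-1}\TT_n\TT_{n-1}(\sfy_{k,0})$ and $\TT_{n-1}\TT_n\TT_{n-1}\TT_n(\sfy_{k,0})$ for $k=n-1,n$ directly from the defining formulas~\eqref{eq:brd1pY},~\eqref{eq:invbrd1pY}, using the already-established rank $2$ relations $\TT_n\TT_{n-1}\TT_n=\TT_{n-1}\TT_n\TT_{n-1}$ (Proposition~\ref{prop: 3 move in C}) and the elementary identities $\TT_i\TT_j(\sfy_{i,m})=\sfy_{j,m}$-type facts appearing in the proof of Proposition~\ref{prop: 3 move in simply-laced} (equation~\eqref{eq: ij(i,m)=(j,m)}) and its doubly-laced analogues. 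This will reduce each side to a $\Q(q^{1/2})$-linear combination of length-$\le 4$ words in $\sfy_{n-1,0},\sfy_{n,0}$, and the equality of the two sides then follows from the two quantum Serre relations between $\sfy_{n-1,0}$ and $\sfy_{n,0}$ (one of degree $1-\sfc_{n-1,n}$, one of degree $1-\sfc_{n,n-1}$, exactly the two curly-braced identities displayed in the proof of Proposition~\ref{prop: 3 move in C}) together with symmetry relations such as $\sfy_{n-1,0}\sfy_{n,0}^{2}\sfy_{n-1,0}=\sfy_{n,0}\sfy_{n-1,0}^{2}\sfy_{n,0}$. The rank $3$ check, needed because the generator $\sfy_{k,0}$ with $k\sim n-1$ is moved around under the four reflections, proceeds identically: one expands both sides, clears the common denominator $(q-q^{-1})(q^2-q^{-2})$ or its appropriate power, substitutes the $\TT$-images computed in the rank $2$ step, and verifies term-by-term using~\eqref{eq: q-serre y},~\eqref{eq: q-Boson2 y} (the relation $\sfy_{1,0}\sfy_{3,0}=\sfy_{3,0}\sfy_{1,0}$ when $1\not\sim3$) and the identity $\sfy_{1,0}\sfy_{2,0}^2\sfy_{1,0}=\sfy_{2,0}\sfy_{1,0}^2\sfy_{2,0}$, mirroring the underlined-monomial substitution bookkeeping in Proposition~\ref{prop: 3 move in C}.

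The main obstacle I anticipate is purely computational bulk rather than conceptual difficulty: the degree-$4$ braid word applied to the trivalent generator produces a long alternating sum of words of length up to $5$ or $6$ in three noncommuting variables, and matching the two sides requires a careful, systematic rewriting via the quantum Serre relations — keeping track of the $q$-powers through each commutation. To keep this manageable I would, exactly as in the proof of Proposition~\ref{prop: 3 move in C}, underline in each intermediate expression the subwords of the form $\sfy_i\sfy_j\sfy_i$ (with coefficient $q_i+q_i^{-1}$ or the $q_i^2$-analogue) that are to be replaced, apply the substitution to all of them at once, and then collect coefficients; the resulting normal form should visibly coincide with the analogous normal form of the other side. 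It is worth noting that no new phenomenon specific to the lacing number $2$ appears here beyond what was already handled in Proposition~\ref{prop: 3 move in C}; the $F_4$ case $\TT_2\TT_3\TT_2\TT_3=\TT_3\TT_2\TT_3\TT_2$ is literally the same rank $2$ computation with $(\sfc_{2,3},\sfc_{3,2})=(-1,-2)$ or $(-2,-1)$ depending on orientation conventions, plus the rank $3$ spillover onto $\sfy_{1,0}$ (and onto $\sfy_{4,0}$), each of which is of the same shape.
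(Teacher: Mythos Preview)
Your proposal contains a genuine error that undermines the rank-$2$ core of the argument. You write that you will use ``the already-established rank $2$ relations $\TT_n\TT_{n-1}\TT_n=\TT_{n-1}\TT_n\TT_{n-1}$ (Proposition~\ref{prop: 3 move in C})''. But Proposition~\ref{prop: 3 move in C} is stated under the hypothesis $\sfc_{j,i}\sfc_{i,j}=1$, i.e.\ for the \emph{simply-laced} pairs inside $B_n$, $C_n$, $F_4$; for the doubly-laced pair $(n-1,n)$ one has $\sfc_{n-1,n}\sfc_{n,n-1}=2$, the $3$-term braid relation is \emph{false}, and the correct relation is precisely the $4$-term one you are trying to prove. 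The ``doubly-laced analogues'' of $\TT_i\TT_j(\sfy_{i,m})=\sfy_{j,m}$ that you appeal to are therefore not available a priori: the identity one actually needs, e.g.\ $\TT_{n-1}\TT_n\TT_{n-1}(\sfy_{n-1,m})=\TT_n^{-1}(\sfy_{n-1,m})$, is essentially equivalent in difficulty to the proposition itself and is what the paper spends most of the proof establishing. There is also a circularity problem: Proposition~\ref{prop: stable and up to constant}, Corollary~\ref{cor: EiEjspan} and Corollary~\ref{cor: roots} are all consequences of Theorem~\ref{thm: braid main}, which in turn relies on Proposition~\ref{prop: 4-move dobly-laced}; you cannot invoke them here.

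The paper's proof is genuinely different from the purely computational route you sketch. Rather than expanding the $\TT$-words in the generators and rewriting with quantum Serre relations (which, as the introduction warns, ``does not seem feasible by hand for non-simply-laced types''), the paper transports the computation to $\frakK_q$ via the isomorphism $\tUptheta_Q$ for a Dynkin quiver $Q$ with an appropriately chosen source. There the images of $\sfy_{i,m}$ become $L_q^Q(\al_i,m)$, and the BKM relations~\eqref{eq: BKMc2} among dual PBW vectors allow one to recognise the long expressions as single canonical basis elements $L_q^Q(\be,m)$. This is how the key identity $\TT_{n-1}\TT_n(\sfy_{n-1,m+1})=\TT_n^{-1}(\sfy_{n-1,m+1})$ (equation~\eqref{eq: 4move n-1 0}) is obtained without brute force, and it is this identity that makes Cases~1 and~2 (for $k=n-1$ and $k=n$) collapse immediately to $\sfy_{k,m+1}$. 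Only Case~3 ($k=n-2$) requires a direct quantum-Serre computation of the kind you describe, and even there the paper first uses the identities from Cases~1 and~2 to simplify each side before reducing to $n=3$, $m=0$.
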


\begin{proof} Since the proofs for $B_n$, $C_n$ and $F_4$ are all similar, we only give the proof when $\g=B_n$.
We claim that
$$
\TT_{n} \TT_{n-1}\TT_{n}\TT_{n-1}(\sfy_{k,m}) =  \TT_{n-1}\TT_{n}\TT_{n-1}\TT_{n}(\sfy_{k,m})
$$
for all $1 \le k \le n$. The case when $k<n-2$ is obvious by definition.
Let us verify three non-trivial cases as follows, which prove our claim.
\smallskip

\noindent
{\it Case 1}.
Let $k=n-1$.  For the time being, we omit $m+1$ for simplicity of notation to write $\sfy_i = \sfy_{i,m+1}$ and $L_q^{\widetilde{Q}}(\be) = L_q^{\widetilde{Q}}(\be,m+1)$ for a Dynkin quiver $\widetilde{Q}$.
By taking a Dynkin quiver $Q$ with source at $n-1$,
\fontsize{10}{10}
\begin{align}
 & (q-q^{-1})  \tUptheta_{Q} \circ  \TT_{n-1}\TT_{n}\TT_{n-1}(\sfy_{n-1,m})=   (q-q^{-1}) \tUptheta_{Q} \circ \TT_{n-1}\TT_{n}(\sfy_{n-1})  \nonumber\allowdisplaybreaks \\
& = (q-q^{-1}) \tUptheta_{Q} \circ \TT_{n-1} \left(\dfrac{  q \sfy_{n-1}  \sfy_{n}^{(2)} - \sfy_{n}  \sfy_{n-1} \sfy_{n} +  q^{-1}\sfy_{n}^{(2)} \sfy_{n-1} }{(q-q^{-1})^{2}} \right)  \nonumber \allowdisplaybreaks\\
&=
\left(\dfrac{q  L_q^Q(\al_{n-1},m+2) *L_q^Q( \al_{n-1}+\al_n)  - q^{-1}L_q^Q( \al_{n-1}+\al_n) * L_q^Q(\al_{n-1},m+2)}{q^2-q^{-2}} \right) * L_q^Q( \al_{n-1}+\al_n)
\nonumber \allowdisplaybreaks\\
& \,\, -   L_q^Q( \al_{n-1}+\al_n) *
\left(
\dfrac{  q  L_q^Q(\al_{n-1},m+2) *L_q^Q( \al_{n-1}+\al_n)  - q^{-1}L_q^Q( \al_{n-1}+\al_n) * L_q^Q(\al_{n-1},m+2) }{(q^2-q^{-2})}
\right) \nonumber \allowdisplaybreaks \\
& =
\left(\dfrac{q  L_q^{s_{n-1}Q}(\al_{n-1}) *L_q^{s_{n-1}Q}( \al_n)  - q^{-1}L_q^{s_{n-1}Q}(\al_n) * L_q^{s_{n-1}Q}(\al_{n-1})}{q^2-q^{-2}} \right) * L_q^{s_{n-1}Q}( \al_n)
\nonumber \allowdisplaybreaks\\
& \,\, -   L_q^{s_{n-1}Q}( \al_n)
* \left(
\dfrac{  q  L_q^{s_{n-1}Q}(\al_{n-1}) *L_q^{s_{n-1}Q}(\al_n)  - q^{-1}L_q^{s_{n-1}Q}(\al_n) * L_q^{s_{n-1}Q}(\al_{n-1}) }{(q^2-q^{-2})}
\right)
\nonumber \allowdisplaybreaks \\
&  \overset{\ddagger}{=}
L_q^{s_{n-1}Q}(\al_{n-1}+\al_{n})*L_q^{s_{n-1}Q}(\al_{n}) - L_q^{s_{n-1}Q}(\al_{n})*L_q^{s_{n-1}Q}(\al_{n-1}+\al_{n})
\nonumber \allowdisplaybreaks \\
&  \overset{\star}{=}  (q-q^{-1}) L_q^{s_{n-1}Q}(  \al_{n-1}+2\al_n) = (q-q^{-1}) L_q^{Q}(  \al_{n-1}+2\al_n)
 \nonumber \allowdisplaybreaks\\
& \overset{\dagger}{=}
L_q^{Q}(   \al_n)*L_q^{Q}(   \al_{n-1}+ \al_n)  -L_q^{Q}(   \al_{n-1}+ \al_n)* L_q^{Q}(   \al_n),\label{eq: rstep1}
\end{align}
\fontsize{11}{11}

\noindent
 where $ \overset{\ddagger}{=}$, $\overset{\star}{=}$, and
$ \overset{\dagger}{=}$ follow from ~\eqref{eq: BKMc2}.
Since~\eqref{eq: BKMcp} tells us that
\begin{equation}\label{eq: rep}
\begin{aligned}
& L_q^{Q}(   \al_{n-1}+ \al_n,m+1) \\
& \hspace{5ex}=  \dfrac{q L_q^{Q}(   \al_{n},m+1)*L_q^{Q}(   \al_{n-1},m+1) - q^{-1}L_q^{Q}(   \al_{n-1},m+1)* L_q^{Q}(   \al_{n},m+1)}{q^2-q^{-2}},
\end{aligned}
\end{equation}
we have
\begin{equation}\label{eq: 4move n-1 0}
\begin{aligned}
\TT_{n-1}\TT_{n}(\sfy_{n-1,m+1})  & =
\dfrac{q \sfy_{n,m+1}^{(2)} \sfy_{n-1,m+1} - \sfy_{n,m+1} \sfy_{n-1,m+1} \sfy_{n,m+1} +q^{-1}  \sfy_{n-1,m+1} \sfy_{n,m+1}^{(2)} }{(q-q^{-1})^2} \\
& = \TT_{n}^{-1}(\sfy_{n-1,m+1}),
\end{aligned}
\end{equation}
by replacing $L_q^{Q}(   \al_{n-1}+ \al_n )$ in~\eqref{eq: rstep1} with~\eqref{eq: rep}.
By \eqref{eq: 4move n-1 0}, we have
\begin{equation}\label{eq: 4move n-1}
\begin{aligned}
& \TT_{n}\TT_{n-1}\TT_{n}\TT_{n-1}(\sfy_{n-1,m})= \TT_{n}\TT_{n-1}\TT_{n}(\sfy_{n-1,m+1}) = \TT_{n}\TT_{n}^{-1}(\sfy_{n-1,m+1}) = \sfy_{n-1,m+1}.
\end{aligned}
\end{equation}
and
\begin{align*}
  \TT_{n-1}\TT_{n}\TT_{n-1}\TT_{n}(\sfy_{n-1,m})
&=   \TT_{n-1}\TT_{n}\TT_{n}^{-1}(\sfy_{n-1,m}) =   \TT_{n-1}(\sfy_{n-1,m}) =\sfy_{n-1,m+1}.
\end{align*}

\noindent
{\it Case 2}.
Let us consider $k=n$. Take a Dynkin quiver $Q$ with source $n$.
By a similar argument as in {\it Case 1} with \eqref{eq: BKMc2}, we have
\begin{align*}
& \tUptheta_{Q} \circ\TT_{n}\TT_{n-1}\TT_{n}(\sfy_{n,m})   = \tUptheta_{Q} \circ\TT_{n}\TT_{n-1}(\sfy_{n,m+1})   \allowdisplaybreaks\\
& =\tUptheta_{Q} \circ\TT_{n} \left( \dfrac{q \sfy_{n,m+1}\sfy_{n-1,m+1} - q^{-1} \sfy_{n-1,m+1} \sfy_{n,m+1} }{q^2-q^{-2}} \right) \allowdisplaybreaks \\
& =\dfrac{ q  L_q^Q(\al_{n},m+2) *L_q^Q(\al_{n-1}+2\al_n,m+1) - q^{-1}  L_q^Q(\al_{n-1}+2\al_n,m+1)* L_q^Q(\al_{n},m+2) }{q^2-q^{-2}}\allowdisplaybreaks \\
& =\dfrac{ q  L_q^{s_nQ}(\al_{n},m+1) * L_q^{s_nQ}(\al_{n-1},m+1) - q^{-1}  L_q^{s_nQ}(\al_{n-1},m+1)* L_q^{s_nQ}(\al_{n},m+1) }{q^2-q^{-2}}\allowdisplaybreaks \\
&  = L_q^{s_nQ}(\al_{n-1}+\al_{n},m+1)   = L_q^{Q}(\al_{n-1}+\al_{n},m+1)  \\
&=   \dfrac{q L_q^Q(\al_{n-1},m+1)*L_q^Q(\al_{n},m+1) - q^{-1}L_q^Q(\al_{n},m+1)* L_q^Q(\al_{n-1},m+1) }{q^2-q^{-2}} \\
&= \tUptheta_{Q} \circ \TT_{n-1}^{-1} (\sfy_{n,m+1}).
\end{align*}
Hence, it yields
$$
\TT_{n-1}\TT_{n}\TT_{n-1}\TT_{n}(\sfy_{n,m}) = \sfy_{n,m+1},
$$
and
\begin{align*}
  \TT_{n}\TT_{n-1}\TT_{n}\TT_{n-1}(\sfy_{n,m}) &=   \TT_{n}\TT_{n-1}\TT_{n}\TT_{n-1}\TT_{n}(\sfy_{n,m-1}) = \TT_{n}(\sfy_{n,m}) = \sfy_{n,m+1}.
\end{align*}

\noindent
{\it Case 3}.
Let us consider $k=n-2$.
 We have
\begin{align}
& \TT_{n}\TT_{n-1}  \TT_{n} \TT_{n-1}(\sfy_{n-2,m})
=  \TT_{n} \TT_{n-1}  \TT_{n} \left( \dfrac{q\sfy_{n-2,m}\sfy_{n-1,m} - q^{-1}\sfy_{n-1,m}\sfy_{n-2,m} }{q^2-q^{-2}} \right) \allowdisplaybreaks \nonumber \\
&\overset{\star}{=}    \left( \dfrac{q\TT_{n} \TT_{n-1}  (\sfy_{n-2,m}) \sfy_{n-1,m} - q^{-1}\sfy_{n-1,m}\TT_{n} \TT_{n-1}  (\sfy_{n-2,m}) }{q^2-q^{-2}} \right). \label{eq: goal LHS}
\end{align}
Here $ \overset{\star}{=}$ follows from ~\eqref{eq: 4move n-1}.
On the other hand,
\begin{align}
&\TT_{n-1} \TT_{n}\TT_{n-1}  \TT_{n} (\sfy_{n-2,m})    =   \TT_{n-1}  \TT_{n} \TT_{n-1} (\sfy_{n-2,m})  \allowdisplaybreaks \nonumber \\
&    =   \TT_{n-1}  \TT_{n} \left( \dfrac{q\sfy_{n-2,m}\sfy_{n-1,m} - q^{-1}\sfy_{n-1,m}\sfy_{n-2,m} }{q^2-q^{-2}} \right) \allowdisplaybreaks \nonumber \\
&\overset{\dagger}{=}  \left( \dfrac{q \TT_{n-1} (\sfy_{n-2,m}) \TT_{n}^{-1} (\sfy_{n-1,m}) - q^{-1}\TT_{n}^{-1}(\sfy_{n-1,m} )\TT_{n-1} (\sfy_{n-2,m}) }{q^2-q^{-2}} \right).  \label{eq: goal RHS}
\end{align}
Here $ \overset{\dagger}{=}$ follows from ~\eqref{eq: 4move n-1 0}.
To see the equality between ~\eqref {eq: goal LHS} and~\eqref{eq: goal RHS}, it suffices to assume that $n=3$ and $m=0$. Set $y_i\seteq \sfy_{i,0}$ for $i=1,2,3$ and recall that  $q_1=q_2=q^2$, and $q_3=q$.
By direct computation, we have
\begin{equation} \label{eq: should be zero}
\begin{aligned}
& \dfrac{
(q\TT_{3} \TT_{2}  (y_{1})y_2  - q^{-1}y_2\TT_{3} \TT_{2}  (y_{1})) - (q\TT_{2} (y_{1}) \TT_{3}^{-1} (y_{2})-q^{-1}\TT_{3}^{-1} (y_{2})\TT_{2} (y_{1}))}{(q^2-q^{-2})} \\
& =   \left\{ (q^3+q)(y_{1}y_{2}y_{3}y_{2}y_{3} -  y_{1}y_{3}y_{2}y_{3}y_{2} )   + (q- q^{-1})( y_{3}^2y_{2}y_{1}y_{2}   - y_{2}y_{1}y_{2}y_{3}^2 )  \right.\\
&  \left. \hspace{4ex}  +(q^{-1}+q^{-3})(y_{3}y_{2}y_{3}y_{2}y_{1}   - y_{2}y_{3}y_{2}y_{3}y_{1})  +  qy_{3}^2y_{1}y_{2}^2 +  q^{-1}   y_{2}^2y_{1}y_{3}^2     -qy_{1}y_{2}^2y_{3}^2      -q^{-1}y_{3}^2y_{2}^2y_{1} \right\} \\
&   \hspace{12ex} \times (q-q^{-1})^{-1}(q^2-q^{-2})^{-2}.
\end{aligned}
\end{equation}
Now we claim that~\eqref{eq: should be zero} vanishes.
Note that
\begin{eqnarray} &&
\parbox{90ex}{
\bna
\item  \label{it: 21}  $y_2^2y_1 - (q^2+q^{-2})y_2y_1y_2 + y_1y_2^2 = 0 \iff (q^2+q^{-2})y_2y_1y_2 =  y_2^2y_1+ y_1y_2^2 $,
\item \label{it: 23}  $y_2^2y_3 - (q^2+q^{-2})y_2y_3y_2 + y_3y_2^2 = 0 \iff (q^2+q^{-2})y_2y_3y_2 = y_2^2y_3+ y_3y_2^2$.
\ee
}\label{eq: B3 q-serre}
\end{eqnarray}
Then we have
\begin{align*}
& \underline{(q^2+q^{-2})}  \times \left\{ (q^3+q)(y_{1}\underline{y_{2}y_{3}y_{2}}y_{3} -  y_{1}y_{3}\underline{y_{2}y_{3}y_{2}} )
+ (q- q^{-1})( y_{3}^2\underline{y_{2}y_{1}y_{2}}   - \underline{y_{2}y_{1}y_{2}}y_{3}^2 ) \right.\\
& \hspace{4ex} \left. +(q^{-1}+q^{-3})(y_{3}\underline{y_{2}y_{3}y_{2}}y_{1}   - \underline{y_{2}y_{3}y_{2}}y_{3}y_{1})
+  qy_{3}^2y_{1}y_{2}^2 +  q^{-1}   y_{2}^2y_{1}y_{3}^2     -qy_{1}y_{2}^2y_{3}^2      -q^{-1}y_{3}^2y_{2}^2y_{1} \right\} \\
& =
 (q^3+q) \left( y_{1}y_2^2y_3^2 - y_{1}y_{3}^2y_2^2   \right)  \quad (\because ~\eqref{eq: B3 q-serre}~\eqref{it: 23})\\
& \hspace{3ex}+ (q-q^{-1})  \left(  y_{3}^2y_2^2y_1+ y_{3}^2y_1y_2^2    -y_2^2y_1y_{3}^2- y_1y_2^2y_{3}^2  \right)
\quad (\because ~\eqref{eq: B3 q-serre}~\eqref{it: 21})\\
& \hspace{6ex}+ (q^{-1}+q^{-3})  \left(  y_3^2y_2^2y_{1}    -   y_2^2y_3^2y_{1}   \right) \quad (\because ~\eqref{eq: B3 q-serre}~\eqref{it: 23}) \\
& \hspace{9ex} +  (q^3+q^{-1})y_{3}^2y_{1}y_{2}^2 +  (q+q^{-3})   y_{2}^2y_{1}y_{3}^2     - (q^3+q^{-1})y_{1}y_{2}^2y_{3}^2      - (q+q^{-3})y_{3}^2y_{2}^2y_{1} \\
& =0,
\end{align*}
as desired,
where all underlined monomials in $y_i$ $(1 \le i \le 3)$ with the factor $(q^2+q^{-2})$ are replaced by \eqref{eq: B3 q-serre}.
\end{proof}

\begin{proposition} \label{prop: G2 barid}
For $\g=G_2$, we have
$$\TT_1\TT_2\TT_1\TT_2\TT_1\TT_2  = \TT_2\TT_1\TT_2\TT_1\TT_2\TT_1.$$
\end{proposition}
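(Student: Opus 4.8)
The plan is to follow the same scheme as Propositions~\ref{prop: 3 move in simply-laced},~\ref{prop: 3 move in C} and~\ref{prop: 4-move dobly-laced}. Both $\TT_1\TT_2\TT_1\TT_2\TT_1\TT_2$ and $\TT_2\TT_1\TT_2\TT_1\TT_2\TT_1$ are algebra automorphisms of $\hA_q(\n)$ that commute with the shift $\frakD_q$, so it suffices to verify that they agree on the generators $\sfy_{k,m}$, and after applying a power of $\frakD_q$ we may assume $m=0$. Since $I_\g=\{1,2\}$ for $\g=G_2$, only the cases $k=1$ and $k=2$ occur, and they are treated by the same method; I will describe $k=1$.

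First I would transport the computation into $\frakK_q$. The idea is to realize the six successive applications $\TT_2,\TT_1,\TT_2,\TT_1,\TT_2,\TT_1$ (read right to left) as a chain of reflections between heart subrings: fix a Dynkin quiver $Q$ of type $G_2$ with a source at the index applied first, and at each step pass to the quiver obtained by the combinatorial reflection, using $[s_iQ]=r_i[Q]$ from~\eqref{eq: si ri}; for $G_2$ there are only the two orientations $1\to2$ and $2\to1$, so these quivers simply alternate. Under the isomorphisms $\tUptheta_{Q'}$ of Theorem~\ref{thm: presentation}, $\sfy_{j,0}$ corresponds to $L_q^{Q'}(\al_j,0)$ and, whenever $i$ is a source of $Q'$, the action of $\TT_i$ is intertwined with $\sigma_{i,Q'}$; hence on the level of root labels the chain runs $\al_1\mapsto s_2\al_1\mapsto s_1s_2\al_1\mapsto\cdots$, following a (palindromic) path through $\Phi_+(G_2)$ and producing a single $\frakD_q$-increment at the last step, so one expects $\TT_1\TT_2\TT_1\TT_2\TT_1\TT_2(\sfy_{1,0})=\sfy_{1,1}$ (note $1^*=1$ for $G_2$), and likewise for $\TT_2\TT_1\TT_2\TT_1\TT_2\TT_1$. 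A step $L_q^{Q'}(\be,0)\mapsto L_q^{Q'}(s_i\be,0)$ is immediate from the Corollary following Proposition~\ref{prop: ext of Lustig} whenever $i\notin{\rm supp}(\be)$, while the step where $s_i\be\in\Phi_-$ is $\TT_i(\sfy_{i,m})=\sfy_{i,m+1}$ by definition; in the remaining four steps I would use the commutation relations of Theorem~\ref{thm: minimal pair dual pbw} — equations~\eqref{eq: BKMc1},~\eqref{eq: BKMc2},~\eqref{eq: BKMcp} — evaluated with the $G_2$ values of $p_{\be,\al}\in\{0,1,2\}$ from~\eqref{eq: palbe}, together with the minimal-pair structure of $\Gamma^Q$, to rewrite each $L_q^{Q'}(\be,0)$ explicitly in terms of the generators $\sfx^{Q'}_{1,m},\sfx^{Q'}_{2,m}$.

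Once both words, applied to $\sfy_{1,0}$, are written as explicit noncommutative expressions in $\sfy_{1,0},\sfy_{2,0},\sfy_{1,1},\sfy_{2,1}$, the core of the argument is to identify each of them with $\sfy_{1,1}$. One route is a direct verification: clearing denominators yields a polynomial identity in the $\sfy_{i,m}$ which reduces, upon repeated use of the two $G_2$ quantum Serre relations~\eqref{eq: q-serre y} for $\sfc_{1,2}=-3$, $\sfc_{2,1}=-1$ (the short one quartic in $\sfy_{1,m}$, the long one quadratic) together with the Boson relations~\eqref{eq: q-Boson1 y}--\eqref{eq: q-Boson2 y} between the shifted generators, to an equality of normal-form monomials — exactly the underlined-monomial bookkeeping used in Propositions~\ref{prop: 3 move in C} and~\ref{prop: 4-move dobly-laced}. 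As in Proposition~\ref{prop: 4-move dobly-laced} (cf.\ equation~\eqref{eq: 4move n-1 0}) I expect intermediate identities of the shape $\TT_1\TT_2\TT_1(\bcdot)=\TT_2^{-1}(\bcdot)$ to let the six-fold words be broken into shorter pieces, which keeps the bookkeeping manageable. An alternative route is to observe, as anticipated in the proof of Proposition~\ref{prop: stable and up to constant}, that iterating Remark~\ref{rmk: up ro constant} shows each six-fold word matches the corresponding composition of Lusztig's operators $T_i=T''_{i,-1}$ on $\calU_q^-(\g)$ up to a scalar in $\Q[[q^{\pm 1/2}]]\setminus\{0\}$; since $T_1T_2T_1T_2T_1T_2=T_2T_1T_2T_1T_2T_1$ holds classically on $\calU_q(\g)$, it then remains only to match the two accumulated scalars, which follows because each $\sigma_{i,Q}$ preserves the canonical basis $\sfL_q$ (Theorem~\ref{thm: indeed frakKq}), forcing both words to send the canonical basis element $\sfy_{1,0}$ to the canonical basis element $\sfy_{1,1}$.

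I expect the main obstacle to be the sheer size of this computation: because $h_{1,2}=6$ and the short-root Serre relation is quartic, the number of monomials in $\sfy_{1,m},\sfy_{2,m}$ that must be tracked — with their $q$-weights and the index shifts $m\mapsto m\pm1$ — is substantially larger than in the $B_n$, $C_n$, $F_4$ cases, so in practice this is the most computation-heavy part of the proof. It is precisely the $\sfc_{i,j}=-3$ situation whose reflection formula is established in Appendix~\ref{app: G} for Proposition~\ref{prop: ext of Lustig}, and having the reduction to the classical $G_2$ braid relation on $\calU_q^-(\g)$ available, via the scalar-matching argument above, is the efficient way both to organize the argument and to cross-check the explicit calculations.
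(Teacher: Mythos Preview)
Your main route is essentially the paper's own proof: reduce to generators, transport via $\tUptheta_{Q}$ into $\frakK_q$, and use the dual-PBW relations~\eqref{eq: BKMc1}--\eqref{eq: BKMcp} together with the $G_2$ values of $p_{\be,\al}$ to identify successive images as canonical-basis elements $L_q^{Q'}(\be,m)$, ultimately showing each six-fold word sends $\sfy_{k,m}$ to $\sfy_{k,m+1}$. Two refinements are worth noting. First, the intermediate identity the paper actually establishes is $\TT_1\TT_2\TT_1(\sfy_{2,m+1})=\TT_2^{-1}\TT_1^{-1}(\sfy_{2,m+1})$ (two inverse factors, not one); this is the $h_{i,j}=6$ analogue of~\eqref{eq: 4move n-1 0}, and it is proved by showing both sides map under $\tUptheta_Q$ to $L_q^{Q}(3\al_1+2\al_2)$. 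Second, the paper avoids redoing a long BKM computation by using the parameter shift $\scrS_{-2}$: after computing $\tUptheta_{Q'}\circ\TT_2\TT_1(\sfy_{2})$ explicitly as a polynomial in $\sfy_{1},\sfy_{2}$, one observes that applying $\TT_1$ and then $\tUptheta_Q$ replaces $L_q^{Q'}(\al_1),L_q^{Q'}(\al_2)$ by their $\scrS_{-2}$-shifts, so the same formula immediately yields the next step.

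Your alternative route via Lusztig's classical braid relation plus scalar-matching is a useful sanity check but not a substitute for the computation. The comparison with $T_i$ from Remark~\ref{rmk: up ro constant} only makes sense once you know each intermediate $\TT_{i_1}\cdots\TT_{i_p}(\sfy_{k,m})$ lies in $\hA_q(\n)_m$; but $\TT_i$ does not preserve $\hA_q(\n)_m$ (it sends $\sfy_{i,m}$ to $\sfy_{i,m+1}$), and the fact that the relevant words nonetheless stay in $\hA_q(\n)_m$ is exactly Corollary~\ref{cor: EiEjspan}, which is extracted \emph{from} the proofs of Propositions~\ref{prop: 3 move in simply-laced}--\ref{prop: G2 barid}. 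So the alternative presupposes the explicit BKM/Serre computations rather than replacing them. Once those computations are done, your scalar argument (each word preserves $\sfL_q$ by Theorem~\ref{thm: indeed frakKq}, so the image of a basis element is a basis element, forcing the scalar to be $1$) is a legitimate way to finish.
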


\begin{proof} It suffices to show that
$$\TT_1\TT_2\TT_1\TT_2\TT_1\TT_2(\sfy_{i,m}) = \TT_2\TT_1\TT_2\TT_1\TT_2\TT_1 (\sfy_{i,m}) \qquad (i=1,2).
$$
In this proof, we omit $*$-operation and $m+1$ frequently for simplicity of notation. Note that $q_1=q$ and $q_2=q^3$.
Let us take a Dynkin quiver $Q'$ with source $2$ and $\xi_2=5$ whose $\Gamma^{Q'}$ is described as
$$
\Gamma^{Q'} = \raisebox{3em}{ \scalebox{0.9}{\xymatrix@!C=4ex@R=2ex{
(i\setminus p)  & 0 & 1 & 2&3&4&5\\
1&   \al_1  \ar@{=>}[dr]  && 2\al_1+\al_2  \ar@{=>}[dr]     &&\al_1+\al_2 \ar@{=>}[dr] \\
2&&  3\al_1+\al_2  \ar@{-}[ul]\ar@{->}[ur]  && 3\al_1+2\al_2 \ar@{-}[ul]\ar@{->}[ur]  &&\al_2 \ar@{-}[ul]
}}},
$$
and set $Q\seteq s_2Q'$.
We remark that the height function $\xi$ of $Q$ is given by $\xi_1 = 4$ and $\xi_2 = 3$
(cf.~\eqref{eq: G2 Q} below for $\Gamma^Q$).
Then we have
\fontsize{10}{10}
\begin{align} \label{eq:Q' T2T1T2y2 in G2}
\begin{split}
&\tUptheta_{Q'} \circ\TT_2\TT_1\TT_2(\sfy_{2,m}) =\tUptheta_{Q'} \circ\TT_2\TT_1 (\sfy_{2})  \allowdisplaybreaks\\
&=\tUptheta_{Q'} \circ \TT_2 \left(  \dfrac{   q^{3/2} \sfy_{2}\sfy_{1}^3 -q^{1/2}[3]_1\sfy_{1}\sfy_{2}\sfy_{1}^2 + q^{-1/2}[3]_1\sfy_{1}^2\sfy_{2}\sfy_{1} - q^{-3/2} \sfy_{1}^3\sfy_{2} }{(q-q^{-1})(q^2-q^{-2})(q^3-q^{-3})}  \right) \allowdisplaybreaks\\
& = \left\{ q^{3/2}L^{Q'}_q(\al_2,m+2)L^{Q'}_q( \al_1+\al_2)^3 - q^{1/2}[3]_1L^{Q'}_q( \al_1+\al_2)L^{Q'}_q(\al_2,m+2)L^{Q'}_q( \al_1+\al_2)^2 \right. \allowdisplaybreaks\\
& \hspace{6ex} \left. - q^{-1/2}[3]_1L^{Q'}_q(\al_1+\al_2)^2L^{Q'}_q(\al_2,m+2)L^{Q'}_q( \al_1+\al_2) +q^{-3/2} L^{Q'}_q( \al_1+\al_2)^3L^{Q'}_q(\al_2,m+2) \right\} \allowdisplaybreaks\\
& \hspace{54ex}  \times (q-q^{-1})^{-1}(q^2-q^{-2})^{-1}(q^3-q^{-3})^{-1} \allowdisplaybreaks\\
& = \left\{ q^{3/2}L^{Q}_q(\al_2)L^{Q}_q(  \al_1)^3 - q^{1/2}[3]_1L^{Q}_q( \al_1)L^{Q}_q(\al_2)L^{Q}_q( \al_1)^2 \right. \allowdisplaybreaks\\
& \hspace{32ex} \left. - q^{-1/2}[3]_1L^{Q}_q(\al_1)^2L^{Q}_q(\al_2)L^{Q}_q( \al_1) +q^{-3/2} L^{Q}_q( \al_1)^3L^{Q}_q(\al_2) \right\} \allowdisplaybreaks\\
& \hspace{54ex}  \times (q-q^{-1})^{-1}(q^2-q^{-2})^{-1}(q^3-q^{-3})^{-1} \allowdisplaybreaks\\
& = L^{Q}_q(3\al_1+\al_2) = L^{Q'}_q(3\al_1+2\al_2)  \allowdisplaybreaks\\
& =  \dfrac{ q^{-1/2}  L^{Q'}_q(2\al_1+\al_2)  L^{Q'}_q( \al_1+\al_2) - q^{1/2} L^{Q'}_q( \al_1+\al_2) L^{Q'}_q(2\al_1+\al_2)   }{q-q^{-1}},	
\end{split}
\end{align}
\fontsize{11}{11}
by~\eqref{eq: BKMc2}.
Since
\begin{align*}
&  L^{Q'}_q( \al_1+\al_2) =  \dfrac{q^{3/2} L^{Q'}_q( \al_1) L^{Q'}_q( \al_2) - q^{-3/2}  L^{Q'}_q( \al_2) L^{Q'}_q( \al_1)}{q^3-q^{-3}} \allowdisplaybreaks\\
& \hspace{18ex}  \iff    \tUptheta_{Q'}^{-1}(L^{Q'}_q( \al_1+\al_2)) = \dfrac{q^{3/2} \sfy_{1}\sfy_{2}- q^{-3/2}  \sfy_{2}\sfy_{1}}{q^3-q^{-3}}, \allowdisplaybreaks\\
& L^{Q'}_q( 2\al_1+\al_2)  =\dfrac{q^{1/2} L_q^{Q'}( \al_1) L_q^{Q'}(\al_1+\al_2)    -  q^{-1/2}L_q^{Q'}(\al_1+\al_2)  L_q^{Q'}( \al_1) }{q^2-q^{-2}}\allowdisplaybreaks \\
& \hspace{18ex}  \iff   \tUptheta_{Q'}^{-1}(L^{Q'}_q( 2\al_1+\al_2)) = \left( \dfrac{q^{2} \sfy_{1}^2\sfy_{2}- (q+q^{-1}) \sfy_{1}  \sfy_{2}\sfy_{1} + q^{-2}  \sfy_{2}\sfy_{1}^2}{(q^3-q^{-3})(q^2-q^{-2})} \right),
\end{align*}
the equality \eqref{eq:Q' T2T1T2y2 in G2} implies that
\fontsize{10}{10}
\begin{align} \label{eq:T2T1y2}
\begin{split}
&\TT_2\TT_1\TT_2(\sfy_{2,m}) =\TT_2\TT_1 (\sfy_{2}) \allowdisplaybreaks\\
& =\left( q^{-1/2} (q^2\sfy_{1}^2\sfy_{2}-(q+q^{-1})\sfy_{1}\sfy_{2}\sfy_{1} +q^{-2}\sfy_{2}\sfy_{1}^2  ) (q^{3/2}\sfy_{1}\sfy_{2}-q^{-3/2}\sfy_{2}\sfy_{1})  \right. \allowdisplaybreaks\\
& \hspace{5ex} \left.  - q^{1/2} (q^{3/2}\sfy_{1}\sfy_{2}-q^{-3/2}\sfy_{2}\sfy_{1}) (q^2\sfy_{1}^2\sfy_{2}-(q+q^{-1})\sfy_{1}\sfy_{2}\sfy_{1} +q^{-2}\sfy_{2}\sfy_{1}^2  )  \right) \allowdisplaybreaks\\
& \hspace{45ex} \times (q-q^{-1})^{-1} (q^2-q^{-2})^{-1}(q^3-q^{-3})^{-2} \allowdisplaybreaks\\
& =\left\{   q^3\sfy_{1}^2\sfy_{2}\sfy_{1}\sfy_{2}-q^2(q^2+1+q^{-2})\sfy_{1}\sfy_{2}\sfy_{1}^2\sfy_{2} +(q+q^{-1})\sfy_{2}\sfy_{1}^3\sfy_{2}     \right. \allowdisplaybreaks\\
& \hspace{5ex}
+ q^2(q+q^{-1})\sfy_{1}\sfy_{2}\sfy_{1}\sfy_{2}\sfy_{1}  -(1+q^{-2}+q^{-4})\sfy_{2}\sfy_{1}^2\sfy_{2}\sfy_{1} +q^{-3}\sfy_{2}\sfy_{1}\sfy_{2}\sfy_{1}^2  \allowdisplaybreaks\\
& \hspace{10ex} \left. -  \sfy_{1}( \sfy_{2}^2\sfy_{1}- (q^3+q^{-3})\sfy_{2}\sfy_{1}\sfy_{2} +\sfy_{1}\sfy_{2}^2 ) \sfy_{1}
  \right\} \times (q-q^{-1})^{-1} (q^2-q^{-2})^{-1}(q^3-q^{-3})^{-2}  \allowdisplaybreaks\\
& \overset{\dagger}{=}\left\{    q^3\sfy_{1}^2\sfy_{2}\sfy_{1}\sfy_{2}-q^2(q^2+1+q^{-2})\sfy_{1}\sfy_{2}\sfy_{1}^2\sfy_{2} +(q+q^{-1})\sfy_{2}\sfy_{1}^3\sfy_{2}     \right. \allowdisplaybreaks\\
& \hspace{5ex} \left.
+ q^2(q+q^{-1})\sfy_{1}\sfy_{2}\sfy_{1}\sfy_{2}\sfy_{1}  -(1+q^{-2}+q^{-4})\sfy_{2}\sfy_{1}^2\sfy_{2}\sfy_{1} +q^{-3}\sfy_{2}\sfy_{1}\sfy_{2}\sfy_{1}^2 \right\} \allowdisplaybreaks\\
& \hspace{45ex}  \times  (q-q^{-1})^{-1} (q^2-q^{-2})^{-1}(q^3-q^{-3})^{-2}  \allowdisplaybreaks \\
& = \dfrac{
q^3\sfy_{1}^2\sfy_{2}\sfy_{1}\sfy_{2}-q^2 [3]_1\sfy_{1}\sfy_{2}\sfy_{1}^2\sfy_{2} +[2]_1\sfy_{2}\sfy_{1}^3\sfy_{2}   -q^{-2}[3]_1 \sfy_{2}\sfy_{1}^2\sfy_{2}\sfy_{1}  + q^2[2]_1\sfy_{1}\sfy_{2}\sfy_{1}\sfy_{2}\sfy_{1}   +q^{-3}\sfy_{2}\sfy_{1}\sfy_{2}\sfy_{1}^2
}{ (q-q^{-1})(q^2-q^{-2})(q^3-q^{-3})^{2}},
\end{split}
\end{align}
\fontsize{11}{11}
where $\overset{\dagger}{=}$ follows from~\eqref{eq: q-serre y}.
Therefore we obtain
\begin{align*}
&\tUptheta_{Q'} \circ \TT_2\TT_1 (\sfy_{2}) = L^{Q'}_q(3\al_1+2\al_2)\\
&=
\dfrac{q^3 \calK^2\calR\calK\calR-q^2[3]_1\calK\calR\calK^2\calR +[2]_1\calR\calK^3\calR   -q^{-2}[3]_1 \calR\calK^2\calR\calK  + q^2[2]_1\calK\calR\calK\calR\calK   +q^{-3}\calR\calK\calR\calK^2
}{ (q-q^{-1})(q^2-q^{-2})(q^3-q^{-3})^{2}}
\end{align*}
where
$\calK \seteq  L_q^{Q'}(\al_1)$ and $\calR \seteq  L_q^{Q'}(\al_2)$.
Here one can observe that
\begin{equation} \label{eq:shift by -2 in G2}
\begin{split}
    L_q^Q(\al_1,m+2) &= L_q^{s_2Q'}(\al_1,m+2) = L_q^{Q'}(\al_1+\al_2,m+2) = \scrS_{-2}(L_q^{Q'}(\al_1)), \\
    L_q^Q(3\al_1+\al_2) &= L_q^{s_2Q'}(3\al_1+\al_2) = L_q^{Q'}(3\al_1+2\al_2) = \scrS_{-2}(L_q^{Q'}(\al_2)).
\end{split}
\end{equation}
where $\scrS_{-2}$ is defined in \eqref{eq: shift of spectral parameters}.
By \eqref{eq:T2T1y2} and \eqref{eq:shift by -2 in G2}, we have
\begin{align*}
&  \tUptheta_{Q} \circ \TT_1\TT_2\TT_1\TT_2(\sfy_{2,m}) =\tUptheta_{Q} \circ \TT_1(\TT_2\TT_1 (\sfy_{2}) )  \allowdisplaybreaks\\
& =
\dfrac{
q^3 K^2RKR-q^2[3]_1KRK^2R +[2]_1RK^3R   -q^{-2}[3]_1 RK^2RK  + q^2[2]_1KRKRK   +q^{-3}RKRK^2
}{ (q-q^{-1})(q^2-q^{-2})(q^3-q^{-3})^{2}},
\end{align*}
where
$K \seteq L_q^{Q}(\al_1,m+2)=\scrS_{-2}(L_q^{Q'}(\al_1))$ and $R \seteq L_q^{Q}(\al_2+3\al_1)=\scrS_{-2}(L_q^{Q'}(\al_2))$.
Thus we have
\begin{equation} \label{eq:Q T1T2T1y2 in G2}
\tUptheta_{Q} \circ \TT_1\TT_2\TT_1 (\sfy_{2})=\scrS_{-2}( L_q^{Q'}(3\al_1+2\al_2)) = L_q^{Q}(3\al_1+2\al_2).
\end{equation}
On the other hand, we have
\begin{align} \label{eq:Q T2inv T1inv y2 in G2}
\begin{split}
&\tUptheta_{Q}  \circ\TT_2^{-1}\TT_1^{-1}(\sfy_{2}) \allowdisplaybreaks \\
& = \tUptheta_{Q}  \circ \TT^{-1}_2 \left(  \dfrac{   q^{3/2} \sfy_{1}^3\sfy_{2} -q^{1/2}[3]_1\sfy_{1}^2\sfy_{2}\sfy_{1} + q^{-1/2}[3]_1\sfy_{1}\sfy_{2,m}\sfy_{1}^2 - q^{-3/2} \sfy_{2}\sfy_{1}^3 }{(q-q^{-1})(q^2-q^{-2})(q^3-q^{-3})}  \right) \allowdisplaybreaks \\
& = \left\{ q^{3/2}  L_q^{Q}(\al_1+\al_2)^3 L_q^{Q}(\al_2,m) - q^{1/2}[3]_1L_q^{Q}(\al_1+\al_2)^2 L_q^{Q}(\al_2,m) L_q^{Q}(\al_1+\al_2) \right.\allowdisplaybreaks\\
&   \hspace{6ex}\left. + q^{-1/2}[3]_1 L_q^{Q}(\al_1+\al_2) L_q^{Q}(\al_2,m) L_q^{Q}(\al_1+\al_2)^2 - q^{-3/2}   L_q^{Q}(\al_2,m) L_q^{Q}(\al_1+\al_2)^3 \right\} \allowdisplaybreaks \\
& \hspace{52ex}  \times (q-q^{-1})^{-1}(q^2-q^{-2})^{-1}(q^3-q^{-3})^{-1}  \allowdisplaybreaks\\
& = \left\{ q^{3/2}  L_q^{Q'}(\al_1)^3 L_q^{Q'}(\al_2) - q^{1/2}[3]_1L_q^{Q'}(\al_1)^2 L_q^{Q'}(\al_2) L_q^{Q'}(\al_1) \right. \allowdisplaybreaks \\
&  \hspace{16ex} \left. + q^{-1/2}[3]_1 L_q^{Q'}(\al_1) L_q^{Q'}(\al_2) L_q^{Q'}(\al_1)^2 - q^{-3/2}   L_q^{Q'}(\al_2) L_q^{Q'}(\al_1)^3 \right\} \allowdisplaybreaks \\
& \hspace{52ex}  \times (q-q^{-1})^{-1}(q^2-q^{-2})^{-1}(q^3-q^{-3})^{-1}  \allowdisplaybreaks\\
&  = L_q^{Q'}(3\al_1+\al_2) = L_q^{Q}(3\al_1+2\al_2).
\end{split}
\end{align}
By combining \eqref{eq:Q T1T2T1y2 in G2} with \eqref{eq:Q T2inv T1inv y2 in G2}, we obtain
\begin{align*}
\TT_1\TT_2\TT_1\TT_2\TT_1\TT_2(y_{2,m}) = y_{2,m+1}.
\end{align*}
This implies that
\begin{align*}
\TT_2\TT_1\TT_2\TT_1\TT_2\TT_1(y_{2,m}) =\TT_2\TT_1\TT_2\TT_1\TT_2\TT_1\TT_2(y_{2,m-1})=\TT_2(y_{2,m})=y_{2,m+1}.
\end{align*}
Similarly, one can see that
\begin{align*}
\TT_1\TT_2\TT_1\TT_2\TT_1\TT_2(y_{1,m}) = y_{1,m+1}=\TT_2\TT_1\TT_2\TT_1\TT_2\TT_1(y_{1,m}),
\end{align*}
as desired. We complete the proof.
\end{proof}

\begin{proof}[Proof of Theorem~\ref{thm: braid main}]
In Propositions \ref{prop: commutation}--\ref{prop: G2 barid}, we have shown that
\begin{align*}
\underbrace{\TT_i\TT_j \cdots}_{\text{ $h_{i,j}$-times}} & =\underbrace{\TT_j\TT_i \cdots}_{\text{ $h_{i,j}$-times}},
\end{align*}
which amounts to a proof of the theorem.
\end{proof}

\appendix

\section{Interpretation of $\TT_1(\sfy_{2,m})$ when $\g=G_2$} \label{app: G}
\noindent
Recall that $q_1=q$, $q_2=q^3$ and
$$
\TT_1(\sfy_{2,m}) = \dfrac{   q^{3/2} \sfy_{2,m}\sfy_{1,m}^3 -q^{1/2}[3]_1\sfy_{1,m}\sfy_{2,m}\sfy_{1,m}^2 + q^{-1/2}[3]_1\sfy_{1,m}^2\sfy_{2,m}\sfy_{1,m} - q^{-3/2} \sfy_{1,m}^3\sfy_{2,m} }{(q-q^{-1})(q^2-q^{-2})(q^3-q^{-3})}.
$$
Let us take the Dynkin quiver $Q$ with source $1$ and $\xi_1=3$, whose $\Gamma^Q$ is described as follows:
\begin{align}\label{eq: G2 Q}
\Gamma^Q= \raisebox{2.3em}{ \scalebox{0.8}{\xymatrix@!C=4ex@R=2ex{
(i\setminus p)  & -2 & -1 & 0 & 1 & 2&3  \\
1&&   \al_1+\al_2  \ar@{=>}[dr]  && 2\al_1+\al_2  \ar@{=>}[dr]     &&\al_1  \\
2&\al_2 \ar@{->}[ur] &&  3\al_1+2\al_2  \ar@{-}[ul]\ar@{->}[ur]  && 3\al_1+\al_2 \ar@{-}[ul]\ar@{->}[ur]
}}}
\end{align}

\noindent
From the viewpoint of Corollary \ref{cor: HLO iso},
the relation~\eqref{eq: BKMcp} tells us that
\bnum
\item \label{it:1} $\dfrac{q^{-1/2}L_q^Q(2\al_1+\al_2,m)  * L_q^Q( \al_1,m)   -  q^{1/2} L_q^Q( \al_1,m)*L_q^Q(2\al_1+\al_2,m)}{q-q^{-1}} =L_q^Q(3\al_1+\al_2,m)$,
\item \label{it:2} $\dfrac{q^{1/2}L_q^Q(\al_1+\al_2,m)  * L_q^Q( \al_1,m)   -  q^{-1/2} L_q^Q( \al_1,m)*L_q^Q(\al_1+\al_2,m)}{q^2-q^{-2}} =L_q^Q(2\al_1+\al_2,m)$,
\item \label{it:3} $\dfrac{q^{3/2}L_q^Q(\al_2,m)  * L_q^Q( \al_1,m)   -  q^{-3/2} L_q^Q( \al_1,m)*L_q^Q(\al_2,m)}{q^3-q^{-3}} =L_q^Q(\al_1+\al_2,m)$,
\ee
since
\bnump
\item $p_{2\al_1+\al_2,\al_1}=2$ and $(2\al_1+\al_2,\al_1)=1$,
\item $p_{\al_1+\al_2,\al_1}=1$ and $(\al_1+\al_2,\al_1)=-1$,
\item $p_{\al_2,\al_1}=0$ and $(\al_2,\al_1)=-3$,
\ee
respectively. Replacing $L_q^Q(\al_1+\al_2,m)$ in~\eqref{it:2} with the LHS of~\eqref{it:3} and then replacing $L_q^Q(2\al_1+\al_2,m)$ in~\eqref{it:1} with the already-replaced one, we have
\begin{align*}
&L_q^Q(3\al_1+\al_2,m) \\
&= \dfrac{   q^{3/2} \sfx^Q_{2,m}(\sfx^Q_{1,m})^3 -q^{1/2}[3]_1\sfx^Q_{1,m}\sfx^Q_{2,m}(\sfx^Q_{1,m})^2 + q^{-1/2}[3]_1(\sfx^Q_{1,m})^2\sfx^Q_{2,m}\sfx^Q_{1,m} - q^{-3/2} (\sfx^Q_{1,m})^3\sfx^Q_{2,m} }{(q-q^{-1})(q^2-q^{-2})(q^3-q^{-3})}
\end{align*}
and hence
$$  \tUptheta_{Q} \circ \TT_1(\sfy_{2,m})   = L_q^Q(3\al_1+\al_2,m).$$

\end{document}